\documentclass[reqno]{amsart}
\usepackage{amsmath,amssymb}
\usepackage{amsmath,amssymb,cite}
\usepackage{graphics,epsfig,color}
\usepackage[mathscr]{euscript}
\usepackage{xcolor}
\usepackage{enumitem}
\usepackage{a4wide}
\usepackage{tikz}

%%% NEWTHEOREMS
\theoremstyle{plain}
\newtheorem{theorem}{Theorem}[section]
\newtheorem{proposition}[theorem]{Proposition}
\newtheorem{lemma}[theorem]{Lemma}
\newtheorem{corollary}[theorem]{Corollary}

\theoremstyle{definition}
\newtheorem{definition}[theorem]{Definition}

\newtheorem{remark}[theorem]{Remark}

%%% NUMBERING
\makeatletter
\@addtoreset{equation}{section}
\makeatother

%%%%%TYPEFACE
\newcommand{\mc}[1]{{\mathcal #1}}
\newcommand{\mb}[1]{{\mathbf #1}}
\newcommand{\mf}[1]{{\mathfrak #1}}

\newcommand{\bb}[1]{{\mathbb #1}}
\newcommand{\ms}[1]{{\mathscr #1}}
\newcommand{\mss}[1]{{\textsf #1}}
\newcommand{\mtt}[1]{{\mathtt #1}}

\newcounter{as}[section]

\title[One-dimensional parabolic equations asymptotics]{From
one-dimensional diffusion processes metastable behaviour to parabolic
equations asymptotics.  }

\author{Claudio Landim, and Christian Maura}

\address{IMPA, Estrada Dona Castorina 110, J. Botanico, 22460 Rio de
Janeiro, Brazil and Univ. Rouen Normandie, CNRS,
LMRS UMR 6085,  F-76000 Rouen, France. \\
e-mail: \texttt{landim@impa.br} }

\address{IMPA, Estrada Dona Castorina 110, J. Botanico, 22460 Rio de
Janeiro, Brazil.
e-mail: \texttt{christian.maura@impa.br} }

\keywords{Metastability, diffusions, parabolic differential equations}

\subjclass[2010]
{Primary
  60J60, %Diffusion processes 
  Secondary
  82C31. %Stochastic  methods  (Fokker-Planck,  Langevin,etc.)
         %applied to  problems  in  time-dependent  statistical
         %mechanics 
}

\begin{document}
 \maketitle

\begin{abstract}
Consider the one-dimensional elliptic operator given by
\begin{equation*}
(\ms L_\epsilon f)(x)  \;=\;
\mss b (x) \, f'(x)  \,+\,
\epsilon\, \mss a (x)\, f''(x) \;,
\end{equation*}
where the drift $\mss b\colon \bb R \to \bb R$ and the diffusion
coefficient $\mss a\colon \bb R \to \bb R$ are periodic $C^1(\bb R)$
functions satisfying further conditions, and $\epsilon>0$.  Consider
the initial-valued problem
\begin{equation*}
\left\{ \begin{aligned} &
\partial_{t}\,u_{\epsilon}\,=\,\mathscr{L}_{\epsilon}\,u_{\epsilon}\;,\\
& u_{\epsilon}(0,\,\cdot)=u_{0}(\cdot)\;,
\end{aligned}
\right.\end{equation*} for some bounded continuous function
$u_{0}$. We prove the existence of time-scales
$\theta_{\epsilon}^{(1)},\,\dots,\,\theta_{\epsilon}^{(\mathfrak{q})}$
such that $\theta_{\epsilon}^{(1)}\to\infty$,
$\theta_{\epsilon}^{(p+1)}/\theta_{\epsilon}^{(p)}\to\infty$,
$1\le p\le\mathfrak{q}-1$, probability measures $p(x,\cdot)$, $x\in
\bb R$, and kernels $R_{t}^{(p)}(m_j,m_k)$, where $\{m_j:j\in\bb Z\}$
represents the set of stable equilibrium of the ODE $\dot{\mtt x}(t) =
\mss b(\mtt x(t))$ such that 
\begin{equation*}
\lim_{\epsilon\to0} u_{\epsilon}(t\theta_{\epsilon}^{(p)}, x)
\;=\;\sum_{j,k\in \bb Z} p(x,m_j)\, R_{t}^{(p)} (m_j,m_k) \,u_{0}(m_k)\;,
\end{equation*}
for all $t>0$ and $x\in \bb R$. The solution $u_{\epsilon}$ asymptotic
behavior description is completed by the characterisation of its
behaviour in the intermediate time-scales $\varrho_{\epsilon}$ such
that $\varrho_{\epsilon}/\theta_{\epsilon}^{(p)}\to\infty$,
$\varrho_{\epsilon}/\theta_{\epsilon}^{(p+1)}\to0$ for some
$0\le p\le\mathfrak{q}$, where $\theta_{\epsilon}^{(0)}=1$,
$\theta_{\epsilon}^{(\mathfrak{q}+1)}=+\infty$.

The proof relies on the analysis of the diffusion $X_\epsilon(\cdot)$
induced by the generator $\ms L_\epsilon$ based on the resolvent
approach to metastability introduced in \cite{lms}.
\end{abstract}

\section{Introduction}
\label{sec0}

Fix $\epsilon>0$, and consider the one-dimensional elliptic operator
$\ms L_\epsilon$ given by
\begin{equation*}
{\color{blue}\ms L_\epsilon f } \;:=\;
\mss b \, \partial_x  f  \,+\,
\epsilon\, \mss a\, \partial^2_x f \;.
\end{equation*}
Denote by $C^k(\bb R)$, $k\in\bb N$, the functions
$f\colon \bb R \to\bb R$ with $k$ continuous derivatives.  We assume
that the drift $\mss b\colon \bb R \to \bb R$ and the diffusion
coefficient $\mss a\colon \bb R \to \bb R$ are periodic with, say,
period $1$. We suppose that $\mss a(\cdot)$, $\mss b(\cdot)$ are of
class $C^1(\bb R)$, and that their first derivatives are Lipschitz
continuous. We assume furthermore that the diffusion coefficient
$\mss a(\cdot)$ is strictly positive: $\mss a(x) \ge c_0 >0$, and that
in the interval $[0,1)$ the drift $\mss b$ vanishes at a finite number
of points represented by $x_j$, $1\le j\le q$, and that
$\mss b'(x_j) \neq 0$. This later condition and the periodicity of
$\mss b(\cdot)$ imply that the number of such points has to be even:
$q=2N$ for some $N\ge 1$.

Denote by $\mc M$, $\mc W \subset \bb R$ the stable, unstable
equilibrium points of the ODE $\dot x(t) = \mss b(x(t))$,
\begin{equation}
\label{76}
{\color{blue} \mc M} \,:=\,  \{ x \in \bb R:
\mss b(x ) = 0 \;,\; \mss b'(x) < 0\}
\;, \quad
{\color{blue} \mc W} \,:=\,  \{ x \in \bb R:
\mss b(x ) = 0 \;,\;  \mss b'(x ) > 0\} \;.
\end{equation}
Enumerate the elements in $\mc M$ and $\mc W$ as
$\mc M = \{m_k : k\in \bb Z \}$, $\mc W = \{\sigma_k : k\in \bb Z \}$,
and assume, without loss of generality, that
\begin{equation}
\label{68}
\sigma_k \,<\,  m_k \,<\, \sigma_{k+1}\;, \quad 
0 < \sigma_0 < m_0 < \dots < \sigma_{N-1} < m_{N-1} < 1\;.
\end{equation}

Fix a bounded, continuous function $u_0 : \bb R \to \bb R$, and
consider the initial-valued problem
\begin{equation}
\label{51}
\left\{
\begin{aligned}
& \partial_ t u_\epsilon\,=\, \ms L_\epsilon u_\epsilon\;, \\
& u_\epsilon (0, \cdot) = u_0(\cdot) \;.
\end{aligned}
\right.
\end{equation}
In this article, we investigate the asymptotic behavior of the
solution $u_\epsilon(t,x)$ of this equation.

The first main result of the article states that there exists a time-scale
$\theta^{(1)}_{\epsilon}$, $\theta^{(1)}_{\epsilon} \to \infty$, such that
\begin{equation*}
\lim_{\epsilon\to0}
u_{\epsilon}(\varrho_{\epsilon}, x)\;=\;
u_{0}(m_k) 
\end{equation*}
for all $k\in \bb Z$, $x \in (\sigma_k, \sigma_{k+1})$ and time-scale
$\varrho_{\epsilon}$ such that $\varrho_{\epsilon} \to +\infty$,
$\varrho_{\epsilon} / \theta^{(1)}_{\epsilon} \to 0$. Moreover, there
exists a Markov semigroup $(p_{t}:t\ge0)$ defined on
${\color{blue} \ms {M}_{0}} := \{\{m\}: m\in \mc M\}$ such that
\begin{equation*}
\lim_{\epsilon\to0}u_{\epsilon}(s\, \theta^{(1)}_{\epsilon}, x)
\;=\;\sum_{k\in \bb Z}
p_{s}(\{m_j\}, \{m_k\})\, u_{0}(m_k)
\end{equation*}
for all $s>0$, $j\in \bb Z$, $x \in (\sigma_j, \sigma_{j+1})$.  Hence,
the solution does not change until the time-scale
$\theta^{(1)}_{\epsilon}$, when it start to evolve according to the
semigroup $(p_t: t\ge 0)$.  This analysis is extended to longer
time-scales and provides a full description of the solution
$u_\epsilon$ asymptotic behavior.

We present an inductive procedure which provides

\begin{itemize}
\item[(a)] A sequence of time-scales $\theta^{(p)}_{\epsilon}$,
$1\le p\le \mf q$, such that
$\theta^{(p+1)}_{\epsilon} / \theta^{(p)}_{\epsilon} \to \infty$,
$1\le p <\mf q$, and $\theta^{(1)}_\epsilon = \theta_\epsilon$.

\item[(b)] A sequence of partitions
$\{\ms M_p(j), j \in \bb Z\} \cup \{\ms T_p\}$ of $\mc M$,
$1\le p\le \mf q$, where $\ms M_1(j) = \{m_j\}$, $j\in \bb Z$,
$\ms T_1= \varnothing$.

\item[(c)] A sequence of non-degenerate $\ms S_p$-valued
continuous-time Markov semigroups $(p^{(p)}_t : t\ge 0)$. Here,
$\ms S_p = \{\ms M_p(j): j\in \bb Z\}$,
$p^{(1)}_{s}(\cdot, \cdot) = p_{s}(\cdot, \cdot)$, and non-degenerate
means that there exists $k\neq j\in \bb Z$ such that
$p^{(p)}_t(\ms M_p(j), \ms M_p(k))>0$ for some (and therefore all)
$t>0$.
\end{itemize}

The partitions are constructed inductively as follows. Assume that the
construction has been carried out up to layer $p$. Denote by
$\{\mf R^{(p)}_k : k\in K\}$ the closed irreducible classes of the
Markov chain induced by the semigroup $(p^{(p)}_t : t\ge 0)$. Then, up
to relabelling to order the sets $\ms M_{p+1}(j)$,
\begin{equation*}
\ms M_{p+1}(k) \,=\, \bigcup_{i: \ms M_{p}(i) \in \mf R^{(p)}_k} \ms
M_p(i)\;. 
\end{equation*}
The construction ends after a finite number of steps: the Markov chain
induced by the semigroup $(p^{(\mf q)}_t : t\ge 0)$ has only one closed
irreducible class or only transient states.

The previous construction determines a tree with $\mf q$
generations. Each element of the tree is a subset of $\mc M$.  The
leaves are the sets $\{m_j\}$, $j\in \bb Z$. Each parent is the union
of its children and corresponds to a closed irreducible class of a
Markov chain. The root represents the unique closed irreducible class
of the last Markov chain or the union of its transient states.

The second main result of the article states that 
\begin{equation}
\label{107}
\lim_{\epsilon\to0}\,
u_{\epsilon}(\theta_{\epsilon}^{(p)}t\,,\,x) \ =\
\sum_{j\in\bb Z}\,
\mtt h_p (x , \ms M_p(j))\, \sum_{k\in\bb Z}
p_{t}^{(p)}(\ms M_p(j), \ms M_p(k))\,
\sum_{m'\in \ms M_p(k)}
\frac{\pi(m')}{\pi(\ms M_p(k))}
\,u_{0}(m')\ .
\end{equation}
for all $1\le p\le \mf q$, $t>0$, $x\in \bb R$.  In this formula,
$\mtt h_p (x , \ms M_p(j))$ has to be understood as the (asymptotic)
probability that the diffusion induced by the generator
$\ms L_\epsilon$ and starting from $x$ hits the set
$\cup_{i\in \bb Z} \ms M_p(i)$ at $\ms M_p(j)$, and
$\pi(m')/\pi(\ms M_p(k))$ as the proportion of time this diffusion
spends close to $m'$ when it visits the set $\ms M_p(k)$.
We provide in \eqref{74} and \eqref{104} explicit formulas for
$\mtt h_p (x , \cdot)$, $\pi(\cdot)$, respectively, as functions of
$\mss a(\cdot)$, $\mss b(\cdot)$.

For any intermediate scale $\varrho_\epsilon$ such that
$\varrho_\epsilon / \theta^{(p)}_{\epsilon} \to +\infty$,
$\varrho_\epsilon / \theta^{(p+1)}_{\epsilon} \to 0$, $1\le p< \mf
q$,
\begin{equation}
\label{107b}
\lim_{\epsilon\to0}\,
u_{\epsilon}(\varrho_\epsilon \,,\,x) \ =\
\sum_{k\in\bb Z}\,
\mtt h_p (x , \ms M_{p+1}(k))\, 
\sum_{m'\in \ms M_{p+1}(k)}
\frac{\pi(m')}{\pi(\ms M_{p+1}(k))}
\,u_{0}(m')\ .
\end{equation}
for all $x\in \bb R$. The right-hand side can be obtained from the
right-hand side of \eqref{107} by letting $t\to\infty$.  A similar
result can be derived for a time-scale $\varrho_\epsilon$ such that
$\varrho_\epsilon / \theta^{(\mf q)}_{\epsilon} \to +\infty$, but it
requires the initial condition $u_0(\cdot)$ to be periodic.

\subsection*{Resolvent approach to metastability}

The proof is based on the stochastic representation of the solution
$u_\epsilon(t, \cdot)$ to \eqref{51} in terms of the diffusion
$X_\epsilon(\cdot)$ induced by the generator $\ms L_\epsilon$.

The Freidlin and Wentzell \cite{fw} large deviations theory for
diffusions permit to characterise the asymptotic behaviour of the
solution $u_\epsilon(t, \cdot)$ in the intermediate time-scales
$(\varrho_\epsilon :\epsilon >0)$ such that
$\theta^{(p)}_{\epsilon} \prec \varrho_\epsilon \prec
\theta^{(p+1)}_{\epsilon}$, $1\le p<\mf q$, \cite{fk10a, fk10b,
kt16}. Here and below, for two positive sequences $\lambda_{\epsilon}$
and $\mu_\epsilon$, write
${\color{blue}\lambda_{\epsilon}\prec\mu_{\epsilon}}$ or
${\color{blue} \mu_{\epsilon} \succ \lambda_{\epsilon}}$ if
$\lambda_{\epsilon}/\mu_{\epsilon}\to0$ as $\epsilon\rightarrow0$.
Similar results were obtained in \cite{is15, is17} with purely
analytical methods and in \cite{Miclo2} with stochastic analysis
tools.

More recently, sharp estimates for the transition time between two
wells, the so-called Eyring-Kramers formula, have been obtained for
diffusion processes \cite{BEGK, LMS2, LS-22} based on potential theory
\cite{l-review}. These results leaded to the description of the
evolution of small perturbations of dynamical systems between the
deepest wells, the so-called metastable behavior \cite{RS,
LS-22b}. Similar results were obtained with analytical techniques,
\cite{LM, BLS} and references therein.

We follow here the resolvent approach to metastability \cite{llm,
lms}.  It consists in showing first that the solution of a resolvent
equation is asymptotically constant on each well. More precisely,
denote by $\ms E(\ms M_p(k))$, $k\in \bb Z$, the wells of the
diffusion $X_\epsilon(\cdot)$ observed at the time-scale
$\theta_{\epsilon}^{(p)}$. Fix $\lambda>0$, a function
$\mf g \colon \bb Z\to \bb R$, and let $\phi_\epsilon(\cdot)$ be the
solution of the resolvent equation
\begin{equation}
\label{41}
(\lambda - \theta_{\epsilon}^{(p)}\,
\ms L_{\epsilon})\, \phi_{p,\epsilon} \,=\, G \,,
\end{equation}
where
\begin{equation}
\label{55}
G (x) \,=\, \sum_{k\in \bb Z} \mf g (k)\,
\chi_{_{\ms E (\ms M_{p}(k))}} (x)\;,
\end{equation}
and $\color{blue} \chi_{_{\ms A}}$, $\ms A \subset \bb R$, represents
the indicator function of the set $\ms A$.  Theorem \ref{mt4} asserts
that on each well $\ms E(\ms M_p(k))$ $\phi_\epsilon(\cdot)$
converges, as $\epsilon\to 0$, to $\mf f(k)$, where $\mf f$ is the
solution of the reduced resolvent equation
\begin{equation*}
(\lambda - \mf L_p) \, \mf f = \mf g\,,    
\end{equation*}
where $\mf L_p$ is the generator of a $\bb Z$-valued Markov chain $\mf
X_p$. 

From this result, one deduces in Theorem \ref{mt3} that probability
of the event $\{X_\epsilon(t \theta_{\epsilon}^{(p)}) \in \ms E (\ms
M_{p}(k)) \}$  converges to the probability that $\mf X_p(t) =k$ for
all $k\in \bb Z$, $t>0$. From this convergence and the stochastic
representation of the solution of linear parabolic equation, one
completes the proof of \eqref{107}, \eqref{107b}.

% E^p ou E ?

This approach has been successfully applied in \cite{lls1, lls2} to
describe the asymptotic behavior at the critical time-scales of the
linear parabolic equations solution of 
\begin{equation*}
\partial_t u_\epsilon \, = \ms L^V_\epsilon u_\epsilon \;,
\quad \ms L^V_\epsilon f \,=\,
\epsilon \, \Delta f \,-\,
(\nabla V + \ell) \, \nabla f\,,
\end{equation*}
where $V$ is a Morse potential and $\ell$ a divergence free field
orthogonal to $V$. The conditions on the field $\ell$ are necessary
and sufficient for the measure $Z_\epsilon^{-1} \exp\{- V(x)/\epsilon\}
\, dx$ to be stationary for the diffusion process induced by the generator
$\ms L^V_\epsilon$ \cite{LS-22}.

In this article, we investigate the one-dimensional version of this
problem, allowing the diffusion coefficient $\mss a(\cdot)$ and the
drift $\mss b(\cdot)$ to be any periodic $C^1$ function whose
derivative is Lipschitz continuous. One of the main obstacles lies in
the fact that the stationary state is not explicitly known, preventing
computation of capacities. In \cite{ls}, for instance, assuming that
$\mss a(\cdot)$ is constant, based on an explicit formula for the
quasi-potential derived in \cite{fg1}, the evolution of the diffusion
among the deepest wells was obtained from the computation of the
capacities.

It turns out that the proof does not rely deeply on the periodicity of
the parameters $\mss a(\cdot)$, $\mss b(\cdot)$. It is plausible that
the random coefficients case \cite{BR86, Chel15} can be handled with
some adaptations, or their discrete versions \cite{BF08}. The
degenerate case in which the first $2n$, $n\ge 1$, derivatives of the
drift $\mss b(\cdot)$ vanish at the points where $\mss b(\cdot)$
vanishes seems also possible to tackle.

The article is organised as follows. The main results are presented in
Section \ref{sec7}. In Section \ref{sec1}, we introduce the
one-dimensional diffusion process $X_\epsilon(\cdot)$ and state the
results describing its asymptotic behavior at the critical
time-scales. In Section \ref{sec2}, we present some estimates on
hitting times needed throughout the paper. This section can be skipped
at a first reading. In Sections \ref{sec6} and \ref{sec5}, we derive
the asymptotic behavior of the resolvent equation solution in the
critical time-scales (Theorem \ref{mt4}). In Section \ref{sec8}, we
describe the metastable behavior of the diffusion at the critical
time-scales, and we establish the finite-dimensional distributions
convergence. In Section \ref{sec9}, we prove Theorems \ref{mt1} and
\ref{mt2}. In the first appendix, we present properties of the
hierarchical structure introduced in Section \ref{sec7}, and used
throughout the paper. In Appendix \ref{sec-ap1}, we present some
properties of elliptic equations weak solutions needed in the previous
sections.

\section{Notation and Results}
\label{sec7}

In this section, we state the main results of the article.  Denote by
$S\colon \bb R\to \bb R$ the potential defined by
\begin{equation}
\label{34}
S(x) \;=\; -\, \int_0^x \frac{\mss b(z)}{\mss a(z)}   \, dz\;.
\end{equation}
Let $\ms M_{1}(k)$, $k\in \bb Z$, be the sets
defined by
\begin{equation*}
{\color{blue}\ms M_{1}(k)} \,:=\, \{m_{k}\} \;, \;\;
\text{and set}\;\;
{\color{blue}\ms S_{1}}:= \{\ms M_{1}(k) : k\in \bb Z\}\;.
\end{equation*} 
The first time-scale encodes the size of the minimal energy barrier
between $S$-local minima. Let
\begin{equation}
\label{f01}
{\color{blue} h^{+}_{k}}  := S(\sigma_{k+1}) - S(m_{k}),
\hspace{10mm}
{\color{blue}  h^{-}_{k}} := S(\sigma_{k}) - S(m_{k}),
\end{equation}
which can be regarded as escape barriers on the right and left of the
set $\ms M_{1}(k)$, respectively. Clearly, as $\mss a(\cdot)$, $\mss
b(\cdot)$ are periodic functions,
\begin{equation}
\label{70b}
h^\pm_{k+N} \,=\, h^\pm _k\quad\text{for all}\;\;
k\in \bb Z\;. 
\end{equation}
For each $k\in \bb Z$ define the height (of the escape barrier) of
$\ms M_{1}(k)$ as the minimum of its right and left escape
barriers. Let $\mf h_1$ be the smallest height, and
$\theta_{\epsilon}^{(1)}$ the associated first time-scale:
\begin{equation}
\label{69b}
{\color{blue}   h_k}  \,:=\, \min\{h^{-}_{k}, h^{+}_{k}\}
\;, \; k\in \bb Z\;, \qquad
{\color{blue}\mf h_1}  \,:=\, \min\{h_k : k\in \bb Z\}\;,
\qquad
{\color{blue}\theta_{\epsilon}^{(1)}} \,:=\,
e^{\mf h_1/\epsilon}\;.
\end{equation}
By \eqref{70b}, $\mf h_1 = \min\{h_k : 0 \le k<N \}$, so that
$\mf h_1>0$.

For $k\in\bb Z$, let
\begin{equation}
\label{04}
\begin{gathered}
{\color{blue}\pi_{1}(k) \, :=\, \pi(\{m_k\})} \,=\,
\frac{1}{\mss a(m_k)}\, \sqrt{\frac{2\pi}{S''(m_k)}}
\,=\, \sqrt{\frac{-\, 2\pi}{\mss b'(m_k)\, \mss a(m_k)}} \;,
\\
{\color{blue} \sigma_1(k,k+1)}
\, :=\,  \sqrt{\frac{2\pi}{- S''(\sigma_{k+1})}}
\,=\, \sqrt{\frac{2\pi \, \mss a(\sigma_{k+1}) }{\mss b'(\sigma_{k+1})}}
\;\cdot
\end{gathered}
\end{equation}
Here, the indices $k$, $k+1$ of $\sigma_1(k,k+1)$ indicate that we
consider the $S$-local maximum between the local minima $m_k$ and
$m_{k+1}$.  Let $\color{blue} \sigma_1(k+1,k) = \sigma_1(k,k+1)$.
Define the jump rates
\begin{equation}
\label{73}
{\color{blue} R_1 (\ms M_{1}(k),\ms M_{1}(k\pm 1))} \; := \;
\frac{1}{\pi_{1}(k)\, \sigma_1(k,k\pm 1) }
\; \mb 1\{ h^{\pm }_{k} = \mf h_1\} \;.
\end{equation}
Therefore, the jump rate from $\ms M_{1}(k)$ to $\ms M_{1}(k\pm 1)$
vanishes if $h^{\pm }_{k} > \mf h_1$. [By definition
$\mf h_1 \le h^{\pm }_{j}$ for all $j\in\bb Z$.]  Denote by
$\color{blue} \bb X_1 (\cdot)$ the $\ms S_{1}$-valued Markov chain
induced by the generator $\mathbb{L}_1$ defined as
\begin{equation}
\label{72b}
{\color{blue} (\bb L_1 f)(\ms M_{1}(k))} \,:=\, 
\sum_{a = \pm 1} R_1(\ms M_{1}(k), \ms M_{1}(k+a))\,
\big[\, f(\ms M_{1}(k+a)) - f(\ms M_{1}(k))\, \big]\;, \quad k\in \bb
Z\;. 
\end{equation}

The first main result of this article reads as follows.  Denote by
$\color{blue} \mc D(m)$, $m\in\mc M$, the domain of attraction of $m$
for the ODE $\dot x(t) = \mss b(x(t))$:
$\mc D(m_k) = (\sigma_k, \sigma_{k+1})$, $k\in\bb Z$. For
$x\in \mc D(m_j)$, let
${\color{blue} \mtt h(x, \ms M_1(k))} = \delta_{j,k}$, where
$\color{blue} \delta_{j,k}$ represents Kronecker's delta, while for
$x=\sigma_j$,
$\mtt h(x, \ms M_1(k)) = (1/2) \delta_{k,j} + (1/2) \delta_{k,j-1}$.

\begin{theorem}
\label{mt1}
Fix a bounded continuous function $u_0\colon \bb R \to \bb R$.  Denote
by $u_\epsilon$ the solution of the parabolic equation
\eqref{51}. Then,
\begin{equation*}
\lim_{\epsilon\to 0} u_{\epsilon}(t\,\theta_{\epsilon}^{(1)},x)
\;=\; \sum_{j\in \bb Z} \mtt h(x, \ms M_1(j))\, \sum_{k\in \bb Z}
p_{t}(\ms M_1(j), \ms M_1(k))\,u_{0}(m_k)\;,
\end{equation*}
for all $t>0$, $x\in \bb R$. In this formula,
$p_t(\cdot, \cdot)$ represents the transition probability of the
Markov chain $\bb X_1(\cdot)$.  Moreover, for any sequence
$1 \prec \varrho_{\epsilon}\prec\theta_{\epsilon}^{(1)}$
\begin{equation*}
\lim_{\epsilon\to0}u_{\epsilon}(\varrho_{\epsilon}, x)
\;=\;u_{0}(m) \;,
\end{equation*}
for all $x\in\mathcal{D}(m)$, and for any sequence
$\epsilon^{-1} \prec \varrho_{\epsilon}\prec\theta_{\epsilon}^{(1)}$
\begin{equation*}
\lim_{\epsilon\to0}u_{\epsilon}(\varrho_{\epsilon}, \sigma_j)
\;=\; (1/2) \, u_{0}(m_{j-1}) \,+\, (1/2) \, u_{0}(m_j) \;,
\end{equation*}
for all $j\in \bb Z$.

\end{theorem}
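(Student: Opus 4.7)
The starting point is the stochastic representation
\[
u_\epsilon(t,x) \;=\; \bb E_x\big[\,u_0(X_\epsilon(t))\,\big],
\]
where $X_\epsilon$ is the diffusion with generator $\ms L_\epsilon$. Since $u_0$ is bounded and continuous, each of the three limits reduces to an appropriate convergence of the law of $X_\epsilon(\varrho_\epsilon)$ under $\bb P_x$ to a measure supported on $\mc M$, which I would extract from Theorem \ref{mt3} together with the hitting-time estimates of Section \ref{sec2}.

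For $x\in\mc D(m_j)\setminus\{\sigma_j\}$ at the critical scale, I first observe that on any bounded time interval $X_\epsilon$ stays within $O(\sqrt{\epsilon})$ of the solution $y(\cdot)$ of $\dot y=\mss b(y)$, $y(0)=x$, which enters any fixed neighborhood of $m_j$ in a time $T=T(x)$. The strong Markov property then lets me replace the initial condition by $m_j$ at the cost of a time-shift $T$ negligible against $\theta^{(1)}_\epsilon$. Theorem \ref{mt3} gives $\bb P_{m_j}[X_\epsilon(t\theta^{(1)}_\epsilon)\in\ms E(\ms M_1(k))]\to p_t(\ms M_1(j),\ms M_1(k))$; conditional on this event the position concentrates at $m_k$ because the in-well relaxation time is $O(1)$ (the quadratic approximation around $m_k$ has spectral gap $|\mss b'(m_k)|>0$, independent of $\epsilon$), and continuity and boundedness of $u_0$ close the argument. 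The intermediate scale statement for $x\in\mc D(m_j)\setminus\{\sigma_j\}$ is a by-product of the same argument: the exit time from $\mc D(m_j)$ is of order $\theta^{(1)}_\epsilon$, so at time $\varrho_\epsilon$ the diffusion is still in $\mc D(m_j)$ with probability tending to one, and in-well equilibration concentrates its law at $m_j$.

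The unstable starting point $x=\sigma_j$ is the genuinely delicate case; I would handle it via the scale function $s(x)=\int_0^x e^{S(y)/\epsilon}\,dy$ of $\ms L_\epsilon$, which yields the exact identity
\[
\bb P_{\sigma_j}\!\big[\tau_{m_j}<\tau_{m_{j-1}}\big]
\;=\;\frac{\int_{m_{j-1}}^{\sigma_j}e^{S(y)/\epsilon}\,dy}{\int_{m_{j-1}}^{m_j}e^{S(y)/\epsilon}\,dy}\,.
\]
Since $S''(\sigma_j)=-\mss b'(\sigma_j)/\mss a(\sigma_j)<0$, the point $\sigma_j$ is the unique maximum of $S$ on $[m_{j-1},m_j]$, so Laplace's method around an interior maximum produces identical Gaussian prefactors in numerator and denominator and the ratio tends to $1/2$. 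Linearising $\ms L_\epsilon$ around $\sigma_j$ shows that the escape time from a small neighborhood of $\sigma_j$ is of order $(1/\mss b'(\sigma_j))\log(1/\epsilon)$, much smaller than $\varrho_\epsilon$ under $\epsilon^{-1}\prec\varrho_\epsilon$; combining the $1/2$--$1/2$ split with the intermediate-scale conclusion at the stable-well position after escape gives the third claim. For the critical scale with $x=\sigma_j$, the same scale-function identity together with the strong Markov property reduces the problem to a weighted average, with weights $1/2$, of the critical-scale formulas starting from $m_{j-1}$ and $m_j$, which matches the stated formula via $\mtt h(\sigma_j,\ms M_1(k))=\tfrac12\delta_{k,j}+\tfrac12\delta_{k,j-1}$.

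I expect the main obstacle to be the in-well concentration on arbitrary intermediate scales: one must show that, uniformly over starting points in a compact subset of $\mc D(m_j)$, the law of $X_\epsilon(\varrho_\epsilon)$ concentrates at $m_j$ for every $1\prec\varrho_\epsilon\prec\theta^{(1)}_\epsilon$. This couples the quasi-stationary relaxation inside a single well with the global hitting-time bounds of Section \ref{sec2}, and is standard but technical.
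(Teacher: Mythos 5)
Your proposal follows the same overall architecture as the paper's proof: stochastic representation $u_\epsilon(t,x)=\bb E_x[u_0(X_\epsilon(t))]$, reduction to a starting point in $\mc M$, convergence of the finite-dimensional distributions (Theorem \ref{mt3}), and in-well concentration. Two of your sub-arguments differ from the paper's and one gap you flag is real; here is the comparison.

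For the reduction step you invoke the classical Freidlin–Wentzell approximation $\sup_{[0,T]}|X_\epsilon - y| = O(\sqrt\epsilon)$ to transport the initial point to $m_j$; the paper instead uses the hitting-time bound $\bb E_x^\epsilon[\tau(\mc M)]\le \mf c_0\,\epsilon^{-1}$ (Lemma \ref{l01}, Lemma \ref{l33} with $\mf h_0=0$), followed by conditioning on the hitting point of $\mc M$. The paper's route has the advantage of being uniform in $x$, and in particular it covers $x=\sigma_j$ within the same framework: the hitting point is $m_{j-1}$ or $m_j$ with asymptotic probability $\tfrac12$ each, computed from the same scale-function identity you wrote (see Lemma \ref{l02}, Corollary \ref{l26} and \eqref{89}). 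Your separate treatment of $x=\sigma_j$ via Laplace's method on $s(x)=\int_0^x e^{S(y)/\epsilon}dy$ is correct, so this is a matter of bookkeeping rather than substance; just note that the escape time from the linearised saddle is in fact $O(\log\epsilon^{-1})$, and the paper's stronger restriction $\varrho_\epsilon\succ\epsilon^{-1}$ comes from the cruder bound $\bb E_x^\epsilon[\tau(\mc M)]\le\mf c_0\epsilon^{-1}$ of Lemma \ref{l01}, not from the true escape scale.

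There are two points where ``continuity and boundedness of $u_0$'' do not close the argument on their own, and the paper has to work: (i) to pass from the per-$k$ convergence $\bb P_{m_j}^{1,\epsilon}[\mtt x(t)\in\ms E(m_k)]\to p_t(\ms M_1(j),\ms M_1(k))$ to the convergence of the sum $\bb E_{m_j}^{1,\epsilon}[u_0(\mtt x(t))]$, one must truncate the countable sum; the paper uses periodicity of the jump rates to get a uniform tail bound (see the choice of $\ell_0$ around \eqref{94}); and (ii) one must show that the probability of $\mtt x(t)$ lying outside every well $\ms E(m_k)$ is negligible, which is the content of Lemma \ref{l09}, specifically \eqref{33}. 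Your ``in-well relaxation'' intuition is the right picture but the paper's mechanism is simpler: shrink $r_0$ so that $u_0$ oscillates by at most $\delta$ on each $\ms E(m_k)$ among the finitely many $k$ reachable before time $t$, then let $\delta\to 0$; no spectral-gap estimate is needed. Similarly, the uniform in-well concentration you flag as the ``main obstacle'' on intermediate scales is precisely what Lemma \ref{l31} and Corollary \ref{l32} provide — a quantitative lower bound on the escape time of the form $C\epsilon\,t\,e^{-[S(\sigma)-S(m)-\delta]/\epsilon}$ — so the gap you identify is real but has a ready patch in the paper's Section \ref{sec2}. A final technical point you omit: the paper conditions on the random hitting time $\tau(\mc M)$, so the convergence of $\bb P_{m}^{\epsilon}[\mtt x(\theta_\epsilon^{(1)}t-s)\in\ms E(m_k)]$ must hold uniformly over $|s|\le\vartheta_\epsilon\prec\theta^{(1)}_\epsilon$, which is exactly the uniformity built into Proposition \ref{l11} and Lemma \ref{l36}.
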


\subsection*{Longer time-scales}

In this subsection, we present a recursive method to derive the
higher-order metastable structure of the process $X_\epsilon(\cdot)$.
At each iteration $q\geq 1$, the inductive procedure yields a triplet
\begin{equation}
\label{meta_trip}
{\color{blue}
\Gamma_{q}:= (\mc P_{q} \,,\, \mf h_{q} \,,\, \bb L_q)},   
\end{equation}
where $\mc P_{q} := \ms S_q \cup \{ \ms T_q\}$ is a partition of the
set $\mc M$,
${\color{blue} \ms S_q} := \{ \ms M_{q}(j) : j\in \bb Z\}$,
$\mf h_q>0$ is an energy barrier, and $\bb L_q$ is the generator of a
$\ms S_q$-valued Markov chain, denoted by $\bb X_q(\cdot)$.

In this construction, two subsets $A$, $B$ of $\bb R$ are said to be
equivalent, $\color{blue} A\sim B$, if $B=A+k$ for some $k\in \bb Z$,
where $\color{blue} A+k = \{x+k : x\in A\}$.

The first layer of this iterative construction has been carried out in
the previous section. The components of the triplet $\Gamma_1$ are
$\mc P_1 = \ms S_1 = \{ \{m_j\} : j\in \bb Z\}$,
$\ms T_1=\varnothing$, $\mf h_1$ defined in \eqref{69b}, the Markov
chain $\bb X_1(\cdot)$ induced by the generator $\bb L_1$ defined in
\eqref{72b}.

The recursive constructions of the triplets can be divided in three
stages.  We start showing that the first layer $\Gamma_1$ fullfils
conditions $\mc P_1$---$\mc P_{10}$ introduced below. Then, assuming
that the layer $\Gamma_p$ satisfy these conditions and that there is
one or more $\bb X_p$-closed irreducible class, we define a new layer
$\Gamma_{p+1}$. To complete the construction, we prove that the new
layer satisfy properties $\mc P_1$---$\mc P_{10}$, and that the number
of $\bb X_{p+1}$-closed irreducible classes, up to equivalences, is
strictly smaller than the $\bb X_p$-closed irreducible classes. This
last property ensures that the recursive procedure ends after a finite
number of steps.

We start with the properties of the first layer.  Recall the
definition of the $\Gamma_1$-components given in the previous
subsection.  It is clear that the following conditions are satisfied
for $q=1$.

\begin{enumerate}
\item[[$\mc P_1(q)$\!\!\!]] For every $k\in \bb Z$,
$\ms M_{q}(k)\neq \varnothing$ and $\ms M_{q}(k) \subset \mc M$.
\end{enumerate}

Two subsets $C$, $D$ of $\bb R$ are said to be {\it well ordered}, if
$C\cap D = \varnothing$ and $x<y$ or $x>y$ for every $x\in C$ and
$y\in D$. In the first case this relation is represented by
$\color{blue} C<D$, while in the second one it is denoted by
$\color{blue} C>D$. 

\begin{enumerate}

\item[[$\mc P_2(q)$\!\!\!]]  $\ms M_{q}(j) < \ms M_{q}(k)$ for $j<k$,
and $m_{\mss j_q} \in \ms M_q(0)$, where
\begin{equation}
\label{24b}
{\color{blue} \mss j_q } \,:=\, \min\{k\ge 0 : m_k \in \ms M_q\}\;,
\quad
{\color{blue}\ms M_{q}} := \bigcup_{k\in \bb Z}\ms M_{q}(k)\;.
\end{equation}
\end{enumerate}

The integer $\mss j_q$ is introduced to enumerate the sets
$\ms M_q(i)$, $i\in\bb Z$. The set $\ms M_q(0)$ will be the one which
contains $m_{\mss j_q}$.  Recall the equivalence relation $\sim$
introduced above, and denote by $\color{blue} \mf u_q\in \bb N$ the
number of equivalent classes of the set $\ms S_q$, so that
$\mf u_1=N$.

\begin{enumerate}

\item[[$\mc P_3(q)$\!\!\!]] 
$\ms M_q(j + \mf u_q) = \ms M_q(j) + 1$ for all $j\in \bb Z$.
In particular, $\ms M_q(j + \mf u_q) \sim \ms M_q(j)$.

\end{enumerate}

Let $C$ be a non empty subset of $\bb R$. We say that the elements in
$C$ have the same \emph{depth} if $S(x) = S(y)$ for all $x$,
$y \in C$. For a set $A\subset \bb R$ where all elements share the
same depth, let $S(A)$ denote the common value of $S$ at $A$:
\begin{equation}
\label{16}
{\color{blue} S(A)} \,:=\,  S(x),\;\; \forall\;x\in A.
\end{equation}

Since the sets $\ms M_{1}(k)$ are singletons,

\begin{enumerate}

\item[[$\mc P_4(q)$\!\!\!]] For all $k\in \bb Z$, the local minima in
$\ms M_{q}(k)$ have the same \emph{depth}.

\end{enumerate}

Given two subsets $A$, $B$ of $\bb R$ such that $A<B$ or $B<A$, define
the \emph{barrier} between $A$ and $B$ as
\begin{equation}
\label{09}
{\color{blue} \Lambda (A,B)}  \,:= \,
\sup_{x\in [x^{+}_{A}, x^{-}_{B}]}S(x)
\;\;\text{if}\;\;
A<B\quad\text{and}\quad 
{\color{blue}  \Lambda (A,B) }  \,:= \, \Lambda  (B,A)\;\;\text{if}\;\; B<A\;.
\end{equation}
In this formula, $x^{+}_{A}$ is the rightmost element of the closure of $A$, and
$x^{-}_{B}$ is the leftmost element on the closure of $B$:
$x^{+}_{A}:= \sup\{x\in A\}$, $x^{-}_{B}:= \inf\{x\in B\}$.

Condition $\mc P_5(1)$ below is empty as the sets $\ms M_{1}(k)$ are
singletons. Conditions $\mc P_6(1)$, $\mc P_7(1)$ follow from the
definition of the height $\mf h_1$ and the jump rates $R_1$.
Postulate $\mc P_5(q)$ states that the energetic barrier between the
elements of a set $\ms M_{q}(k)$ is bounded by $\mf h_{q-1}$, and
postulate $\mc P_6(q)$ that the energetic barrier between the set
$\ms M_{q}(k)$ and its neighbours is larger than $\mf h_q$. Postulate
$\mc P_7(q)$ asserts that the Markov chain $\bb X_q(\cdot)$ jumps only
to nearest neighbour sets, and that the jump is possible only in the
energetic barrier is equal to $\mf h_q$.

\begin{enumerate}

\item[[$\mc P_5(q)$\!\!\!]]  For every $k\in \bb Z$ for which
$\ms M_{q}(k)$ has two or more elements, and for every
$m',m''\in \ms M_{q}(k)$
\begin{equation}
\label{48}
\Lambda (\{m'\},\{m''\}) - S(\ms M_{q}(k))\, \le\, \mf h_{q-1} \;.
\end{equation}

\item[[$\mc P_6(q)$\!\!\!]] For every $k\in \bb Z$,
$\displaystyle \Lambda (\ms M_{q}(k),\ms M_{q}(k\pm
1))-S(\ms M_{q}(k)) \ge \mf h_{q}.$

\item[[$\mc P_7(q)$\!\!\!]] For every $k$, $l \in \bb Z$,
$R_q(\ms M_{q}(k),\ms M_{q}(l))>0$ if, and only if, $l = k \pm 1$
and
$\displaystyle \Lambda (\ms M_{q}(k), \ms M_{q}(k\pm 1))-S(\ms
M_{q}(k)) = \mf h_{q}$.

\end{enumerate}

We prove below that the next two conditions are fulfilled for $q=1$.

\begin{enumerate}

\item[[$\mc P_8(q)$\!\!\!]] Let $a=\pm 1$. If
$R_q(\ms M_{q}(k),\ms M_{q}(k+a))>0$ for some $k\in \bb Z$, then
\begin{equation*}
S(\ms M_{q}(k))\geq S(\ms M_{q}(k+a)).
\end{equation*}
Moreover, if $R_q(\ms M_{q}(k+a),\ms M_{q}(k)) = 0$, then the
previous inequality is strict.

\item[[$\mc P_9(q)$\!\!\!]]
The jump rates are $\mf u_q$-periodic:
$R_q(\ms M_{q}(k+ \mf u_q),\ms M_{q}(k+ \mf u_q \pm 1)) = R_q(\ms
M_{q}(k),\ms M_{q}(k\pm 1))$ for all $k\in \bb Z$.

\end{enumerate}

Condition $\mc P_8(1)$ is easy to check. Assume that
$R_{1}(\ms M_{1}(k),\ms M_{1}(k + 1))>0$ for some $k\in \bb Z$. The
same argument applies to $k-1$. By \eqref{f01}, \eqref{73},
$S(m_k) = S(\sigma_{k}) - h^+_k = S(\sigma_{k}) - \mf h_1$.  On the
other hand, by the definition \eqref{69b} of $\mf h_1$ and \eqref{f01},
$S(\sigma_{k}) - \mf h_1 \ge S(\sigma_{k}) - h^-_{k+1} = S(m_{k+1})$,
so that $S(m_k) \ge S(m_{k+1})$. If
$R_{1}(\ms M_{1}(k + 1),\ms M_{1}(k)) = 0$, then by \eqref{73},
$h^-_{k+1} > \mf h_1$ and the previous inequality is strict, which
proves the postulate $\mc P_8(1)$.

Postulate $\mc P_9(1)$ follows from the definition \eqref{73} of the
jump rates $R_1$ and from the $1$-periodicity of the functions
$\mss a(\cdot)$, $\mss b(\cdot)$.

Denote by $\color{blue} \mf n_q$ the number of $\bb X_q$-closed
irreducible classes, up to equivalence. The proof of postulate $\mc
P_{10}(1)$ is presented in Appendix \ref{sec4}. 

\begin{enumerate}

\item[[$\mc P_{10}(q)$\!\!\!]] Let $\mf n_0=N$, $\mf n_{q} < \mf n_{q-1}$. 

\end{enumerate}

\subsection*{Recursive procedure}

We may now start the recursive procedure, see Figure \ref{fig-1f}
below. Fix $p\geq 1$ and assume that the construction of the triplets
$\Gamma_{q}$, $1\le q\le p$, defined in \eqref{meta_trip}, has been
carried out, and that each $\Gamma_q$ satisfy the properties
$\mc P_1 (q)$ -- $\mc P_{10}(q)$ introduced above. In this subsection,
we present a recursive scheme which provides a triplet $\Gamma_{p+1} $
satisfying the same properties.

Denote by $\color{blue} \mf R_p(k)$, $k\in K$, where $\color{blue} K$
is a subset of $\bb N$, the closed irreducible classes of the Markov
chain $\bb X_{p}(\cdot)$, and by $\color{blue} \mf T_p$ the transient
states.  If $K$ is empty or a singleton, the construction is
completed.  Otherwise, one additional layer is formed.

If $K$ is empty, $\mf n_p=0$. If $K$ is a singleton, that is, if there
is only one closed irreducible class, by postulate $\mc P_7(p)$, this
irreducible class has to be connected.  By postulate $\ms P_9(p)$, it
cannot be bounded or semi-infinite. Hence, if there is only one
irreducibles class, it must be the all set $\ms S_p$. In this case, we
say that $\mf n_p=0$.

Assume, from now, that $K$ has at least two elements. By postulate
$\ms P_9(p)$, if $\bb X_{p}(\cdot)$ has two distinct closed
irreducibles classes, it has a countable number of them, and $\mf
n_p\ge 1$. Hence, the construction ends if $\mf n_p=0$ and it
continues if $\mf n_p\ge 1$.

Since $\bb X_{p}(\cdot)$ takes value in $\ms S_p$, there exist
disjoint subsets $I_k$ of $\bb Z$ such that
$\mf R_p(k) = \{\ms M_p(i) : i \in I_k\}$. Let
\begin{equation}
\label{24}
{\color{blue}\ms M^*_{p+1}(k)} \,:=\, \bigcup_{i\colon \ms M_p(i)  \in \mf
R_p(k)} \ms M_p(i) \;, \quad
{\color{blue}\ms M_{p+1}} \,:=\, \bigcup_{k\in K} \ms M^*_{p+1}(k)
\;.
\end{equation}
Hence, $\ms M_{p+1}$ contains all the $S$-local minima $m$ belonging
to some $\bb X_p$-recurrent state $\ms M_{p}(k)$.

Let ${\color{blue}\ms S_{p+1}} := \{ \ms M^*_{p+1}(k) : k\in K\}$.
The number of equivalent classes of $\ms S_{p+1}$, denoted by
$\mf u_{p+1}$, is, by construction, the number of $\bb X_p$-closed
irreducible classes, up to equivalence, which is represented by
$\mf n_p$. Thus $\mf u_{p+1} = \mf n_p\ge 1$.

\medskip\noindent ($\alpha$){\it \ Properties of the sets
$\ms M^*_{p+1}(i)$}.  By postulate $\mc P_1(p)$ and by definition of
the sets $\ms M^*_{p+1}(j)$, $j\in K$, $\ms M_{p+1}$, each set
$\ms M^*_{p+1}(j)$ is not empty, and $\ms M_{p+1} \subset \mc M$. 

By postulates $\mc P_2(p)$ and $\mc P_7(p)$, the sets
$\ms M^*_{p+1}(k)$ are ordered:
\begin{equation}
\label{78b}
\text{For $j\neq k \in K$ either}\;\;
\ms M^*_{p+1}(j) \,<\, \ms M^*_{p+1}(k) \;\;\text{or}\;\;
\ms M^*_{p+1}(k) \,<\, \ms M^*_{p+1}(j)\;.
\end{equation}
By postulates $\mc P_3(p)$, $\mc P_9(p)$, if $\mf R_p(k)$ is a
$\bb X_p$-recurrent class, then so is $1+\mf R_p(k)$. More precisely,
\begin{equation}
\label{75b}
\text{if $\{\ms M_p(j) : j\in I\}$ is a $\bb X_p$-recurrent
class for some set $I\subset K$, then $\{1+\ms M_p(j) : j\in I\}$}
\end{equation}
is also a $\bb X_p$-recurrent class.

\smallskip\noindent {\sl Claim A}: For each set
$\ms M^*_{p+1}(k)$, $k\in K$, there exist sets $\ms M^*_{p+1}(j)$,
$\ms M^*_{p+1}(j')$, $j$, $j'\in K$, such that
$\ms M^*_{p+1}(j) \,<\, \ms M^*_{p+1}(k) \,<\, \ms M^*_{p+1}(j')$.  In
particular, the set $K$ is countably infinite, and each element
$\ms M^*_{p+1}(k)$ of $\ms S_{p+1}$ is bounded.  \smallskip

Indeed, fix a set $\ms M^*_{p+1}(k)$, $k\in K$.  Since
$\mf u_{p+1}\ge 2$, the set $\ms S_{p+1}$ contains at least two
elements. Let $\ms M^*_{p+1}(j)$, $j\neq k$, be another element of
$\ms S_{p+1}$. In view of \eqref{78b}, assume without loss of
generality that $\ms M^*_{p+1}(j) < \ms M^*_{p+1}(k)$. Choose $\ell$
large enough so that $\ms M^*_{p+1}(k) \not> \ell + \ms M^*_{p+1}(j)$
[this means that $\ell + \ms M^*_{p+1}(j)$ contains an element larger
than an element of $\ms M^*_{p+1}(k)$]. By \eqref{75b},
$\ell + \ms M^*_{p+1}(j) = \ms M^*_{p+1}(j')$ for some $j'\in K$.
Since the sets are ordered, and
$\ms M^*_{p+1}(k) \not> \ms M^*_{p+1}(j')$,
$\ms M^*_{p+1}(k) < \ms M^*_{p+1}(j')$, so that
$\ms M^*_{p+1}(j) < \ms M^*_{p+1}(k) < \ms M^*_{p+1}(j')$, as claimed.

\smallskip\noindent {\sl Claim B}: $\ms M_{p+1} \cap \{m_0,
\dots, m_{N-1}\} \not = \varnothing$. 
\smallskip

Since, by hypothesis, the set
$\ms S_{p+1} = \{ \ms M^*_{p+1}(k) : k\in K\}$ has two or more
equivalent classes, there exists a non-empty set $\ms M^*_{p+1}(k)$,
$k\in K$. Let $j\in \bb Z$ such that
$[\,\ms M^*_{p+1}(k) + j\,] \cap [0,1) \neq \varnothing$.  By
\eqref{75b}, $\ms M^*_{p+1}(k) +j = \ms M^*_{p+1}(k')$ for some
$k'\in K$. Thus, $\ms M^*_{p+1}(k') \cap [0,1) \neq
\varnothing$. Since $\ms M^*_{p+1}(k') \subset \ms M_{p+1}$ and, as
already proved, $\ms M_{p+1} \subset \mc M$,
$\ms M_{p+1} \cap \{m_0, \dots, m_{N-1}\} \not = \varnothing$, as
claimed.

\medskip\noindent ($\beta$){\it \ Reordering the sets
$\ms M^*_{p+1}(i)$}.  Recall the definition of $\mss j_{p+1}$
introduced \eqref{24b}.  By Claim B,
$\mss j_{p+1} \in \{0, \dots, N-1\}$.  Denote by $\ms M_{p+1}(0)$ the
sets $\ms M^*_{p+1}(k)$, $k\in K$, which contains $m_{\mss
j_{p+1}}$. By \eqref{78b} and Claim A, we may relabel the
sets $\ms M^*_{p+1}(k)$, $k\in K$, to obtain an ordered family
$\ms M_{p+1}(i)$, $i\in \bb Z$ satisfying postulate $\mc
P_2(p+1)$. This family is indexed by $\bb Z$ because, by Claim
A, for each element there is a smaller and larger one. 

Since, as observed in Step ($\alpha)$, each set $\ms M^*_{p+1}(k)$,
$k\in K$, is not empty, the same property holds for the sets
$\ms M_{p+1}(i)$, $i\in \bb Z$. Moreover,
$\ms M_{p+1}(i) \subset \ms M_{p+1} \subset \mc M$, showing that
postulate $\mc P_1(p+1)$ holds.  Hence, up to this point, we proved
that postulates $\mc P_1(p+1)$, $\mc P_2(p+1)$ are in force.  The next
result, as well as the following two, is proved in Appendix
\ref{sec4}.

\begin{proposition}
\label{l12b}
Fix $p\geq 1$ and assume that the construction of the triplets
$\Gamma_{q}$, $1\le q\le p$, defined in \eqref{meta_trip}, has been
carried out, and that each $\Gamma_q$ satisfy the properties
$\mc P_1 (q)$ --- $\mc P_{10}(q)$. Assume, furthermore, that
$\mf u_{p+1}\ge 2$. Then, postulates $\mc P_1(p+1)$---$\mc P_4(p+1)$
are in force. 
\end{proposition}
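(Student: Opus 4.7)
The arguments in step ($\beta$) of the body already establish $\mc P_1(p+1)$ and $\mc P_2(p+1)$ (non-emptiness, the inclusion $\ms M_{p+1}\subset\mc M$, the strict ordering of the $\ms M_{p+1}(k)$, and the labelling pin $m_{\mss j_{p+1}}\in\ms M_{p+1}(0)$), so it suffices to verify $\mc P_3(p+1)$ and $\mc P_4(p+1)$. I would handle $\mc P_4(p+1)$ first, since it isolates the cleanest use of the inductive hypothesis, and then turn to $\mc P_3(p+1)$, which is the combinatorial part.

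\emph{Equal depth.} Fix $k\in K$ and two indices $i, i'$ with $\ms M_p(i),\ms M_p(i')\in \mf R_p(k)$, and set $f(i):= S(\ms M_p(i))$, well-defined by $\mc P_4(p)$. Since $\mf R_p(k)$ is irreducible and, by Claim A, bounded in $\bb R$, its index set is finite and the directed graph of positive-rate edges of $\bb X_p$ restricted to $\mf R_p(k)$ is strongly connected. Postulate $\mc P_8(p)$ says that $f$ is non-increasing along every such edge. A function non-increasing along every edge of a finite strongly connected digraph must be constant, so $S(\ms M_p(i))=S(\ms M_p(i'))$. Combined with $\mc P_4(p)$, every $m\in\ms M_{p+1}(k)=\bigcup_{i\colon\ms M_p(i)\in\mf R_p(k)}\ms M_p(i)$ shares the same depth, which is $\mc P_4(p+1)$.

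\emph{Periodicity.} Translation by $1$ acts on the state space of $\bb X_p$ via $\ms M_p(i)\mapsto\ms M_p(i)+1=\ms M_p(i+\mf u_p)$ by $\mc P_3(p)$, and by $\mc P_9(p)$ this action preserves all jump rates; it therefore permutes the collection $\{\mf R_p(k):k\in K\}$ of closed irreducible classes, which is exactly \eqref{75b}. By Claim A each $\mf R_p(k)$ is bounded, hence not translation-invariant, so the induced $\bb Z$-action on $\{\mf R_p(k):k\in K\}$ is free and partitions it into infinite orbits whose number, by definition, equals $\mf n_p=\mf u_{p+1}$. Transporting this to the ordered relabeling of step ($\beta$) using Claim A (which gives a bi-infinite total order on $\{\ms M^*_{p+1}(k)\}$), the number of sets lying in any fundamental domain $[a,a+1)$ with $a\notin\bigcup_k\ms M^*_{p+1}(k)$ equals $\mf u_{p+1}$; shifting by $1$ in $\bb R$ therefore advances the ordered index by exactly $\mf u_{p+1}$, yielding $\ms M_{p+1}(j+\mf u_{p+1})=\ms M_{p+1}(j)+1$.

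\emph{Main obstacle.} The delicate step is the last one: reconciling the orbit-counting with the ordered relabeling pinned by $m_{\mss j_{p+1}}\in\ms M_{p+1}(0)$. The argument is combinatorial once one knows that translation by $1$ is an order-preserving bijection on bounded subsets of $\mc M$ and acts freely on $\{\mf R_p(k)\}$, but writing it rigorously requires some care with the pin $\mss j_{p+1}\in\{0,\dots,N-1\}$ provided by Claim B. By contrast, $\mc P_4(p+1)$ is a direct consequence of $\mc P_8(p)$ plus strong connectivity.
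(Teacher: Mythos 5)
Your plan matches the paper's strategy closely. For $\mc P_4(p+1)$ the paper first uses $\mc P_7(p)$ and $\mc P_2(p)$ to identify the recurrent class $\mf R_p(k)$ as a contiguous block $\{\ms M_p(i): a\le i\le b\}$, notes that both rates $R_p(\ms M_p(i),\ms M_p(i+1))$ and $R_p(\ms M_p(i+1),\ms M_p(i))$ are positive inside the block, and then applies $\mc P_8(p)$ in both directions to force equality; your observation that a function non-increasing along every positive-rate edge of a finite strongly connected digraph must be constant is a clean abstraction of the same mechanism and works just as well. For $\mc P_3(p+1)$ the paper argues directly: $1+\ms M_{p+1}(0)=\ms M_{p+1}(k)$ for some $k\ge 0$ by \eqref{75b} and $\mc P_2(p+1)$; $k\ne 0$ because the sets are bounded; and $\ms M_{p+1}(0),\dots,\ms M_{p+1}(k-1)$ exhaust the equivalence classes, so $k=\mf u_{p+1}$. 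Your orbit-counting version is the same idea stated group-theoretically.

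One step in your periodicity argument is glossed over, as you yourself flag. The phrase ``the number of sets lying in any fundamental domain $[a,a+1)$ equals $\mf u_{p+1}$'' is ambiguous unless you count by a fixed marker, say $\min\ms M^*_{p+1}(k)\in[a,a+1)$ (each orbit contributes exactly one such representative); and the jump to ``shifting by $1$ advances the ordered index by exactly $\mf u_{p+1}$'' silently uses that the translation $T:\ms M_{p+1}(j)\mapsto 1+\ms M_{p+1}(j)$ is an order-preserving bijection of the $\bb Z$-indexed family, hence a constant shift $j\mapsto j+c$, and that the constant $c$ equals the orbit count. Both facts are true and easy, but you should record them; you also implicitly use that each $\ms M^*_{p+1}(k)$ has diameter $<1$ (otherwise it would meet its own translate $1+\ms M^*_{p+1}(k)$, contradicting disjointness of the recurrent classes). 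The paper sidesteps all of this by simply locating $1+\ms M_{p+1}(0)$ in the relabelled family and counting the indices between $0$ and $k$. Neither route is shorter in substance; the paper's is a bit easier to make airtight.
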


\medskip\noindent ($\gamma$){\it \ Construction of the \emph{height}
$\mf h_{p+1}$}.  By Proposition \ref{l12b} and postulates
$\mc P_2(p+1)$, $\mc P_4(p+1)$, the sets $\ms M_{p+1}(k)$,
$k\in\bb Z$, are ordered and the elements of each set have the same
depth. We may therefore define the heights of the left and right
escape barriers of the set $\ms M_{p+1}(k)$ as
\begin{equation}
\label{47b}
\begin{gathered}
{\color{blue} h^{p+1,\pm}_{k} }  \,:=\,
\Lambda\, \big (\ms M_{p+1}(k\pm 1) \,,\,
\ms M_{p+1}(k)\big)-S(\ms M_{p+1}(k))\;, \quad
k\in \bb Z \;.
\end{gathered}
\end{equation}
Define the height $h^{p+1}_{k}$ of the escape barrier of
$\ms M_{p+1}(k)$ as the minimum of its left and right heights, and let
$\mf h_{p+1}$ be the smallest $(p+1)$-order height:
\begin{equation}
\label{38b}
{\color{blue} h^{p+1}_{k}} := h^{p+1,-}_{k}\wedge h^{p+1,+}_{k} \;, \quad 
{\color{blue}\mf h_{p+1}} := \min
\big\{ \, h^{p+1}_{k} : k\in \bb Z\, \big\}\;.
\end{equation}
By postulates $\mc P_3(p+1)$, and the $1$-periodicity of the functions
$\mss a(\cdot)$, $\mss b(\cdot)$
\begin{equation}
\label{77}
h^{p+1}_{k+\mf u_{p+1}} \,=\, h^{p+1}_{k}\;\;\text{for all}\;\;
k\,\in\, \bb Z\;.
\end{equation}
Hence, the heights $h^{p+1}_{k}$, $k\in\bb Z$, take at most
$\mf u_{p+1}$ different values.  Since $\mf u_{p+1} = \mf n_p$, and,
by postulate $\mc P_{10}(p)$, $\mf n_p < N$,
$\mf h^{p+1} \in (0,\infty)$, and $\mf h^{p+1} = h^{p+1}_{k}$ for some
$k\in \bb Z$.

\begin{proposition}
\label{l11b}
Under the hypotheses of Proposition \ref{l12b}, postulates
$\mc P_5(p+1)$, $\mc P_6(p+1)$ are in force. Moreover,
$\mf h_{p+1}>\mf h_p$.
\end{proposition}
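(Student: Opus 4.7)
First, postulate $\mc P_6(p+1)$ is immediate from the definitions: since $\mf h_{p+1}=\min_{k\in \bb Z} h^{p+1}_k$ and $h^{p+1}_k=h^{p+1,-}_k\wedge h^{p+1,+}_k$, one has $h^{p+1,\pm}_k\ge \mf h_{p+1}$ for every $k\in \bb Z$, which is exactly the statement of $\mc P_6(p+1)$. The rest of the plan is devoted to $\mc P_5(p+1)$ and to the strict inequality $\mf h_{p+1}>\mf h_p$.

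The structural input for both is that, by construction, $\ms M_{p+1}(k)$ is a union of the sets $\ms M_p(l)$ forming a closed irreducible class $\mf R_p(k)$ of $\bb X_p$. By postulate $\mc P_7(p)$ this chain only performs nearest-neighbour jumps, so $\mf R_p(k)$ is a consecutive block $\{\ms M_p(l): l_{\min}(k)\le l\le l_{\max}(k)\}$. Irreducibility together with postulate $\mc P_8(p)$ forces all the sets $\ms M_p(l)$ in the block to share the same depth, equal to $S(\ms M_{p+1}(k))$, and for every consecutive pair inside the block both rates $R_p(\ms M_p(l),\ms M_p(l+1))$ are positive. Postulate $\mc P_7(p)$ then yields
\begin{equation*}
\Lambda(\ms M_p(l),\,\ms M_p(l+1)) \,-\, S(\ms M_{p+1}(k)) \;=\; \mf h_p \;.
\end{equation*}

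For $\mc P_5(p+1)$, I would fix $m',m''\in \ms M_{p+1}(k)$, say $m'\in \ms M_p(i_1)$ and $m''\in \ms M_p(i_2)$ with $i_1\le i_2$, and decompose $[m',m'']$ into the sub-intervals lying inside a single $\ms M_p(l)$, $i_1\le l\le i_2$, and the gaps $(x^+_{\ms M_p(l)},x^-_{\ms M_p(l+1)})$, $i_1\le l\le i_2-1$. Postulate $\mc P_5(p)$ bounds the former contributions to $\Lambda(\{m'\},\{m''\})-S(\ms M_{p+1}(k))$ by $\mf h_{p-1}$ (vacuously if $\ms M_p(l)$ is a singleton), while the display above bounds the latter by $\mf h_p$. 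Using the inductive hypothesis $\mf h_{p-1}<\mf h_p$, supplied by the previous application of this proposition, one obtains $\Lambda(\{m'\},\{m''\})-S(\ms M_{p+1}(k))\le \mf h_p$, which is $\mc P_5(p+1)$.

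For $\mf h_{p+1}>\mf h_p$, I would fix any $k\in \bb Z$ and take $l=l_{\max}(k)$, the largest index with $\ms M_p(l)\subset \ms M_{p+1}(k)$. Since $\ms M_p(l+1)$ lies outside $\mf R_p(k)$ and $\mf R_p(k)$ is closed, $R_p(\ms M_p(l),\ms M_p(l+1))=0$; postulate $\mc P_7(p)$ combined with $\mc P_6(p)$ then forces the strict bound $\Lambda(\ms M_p(l),\ms M_p(l+1))-S(\ms M_p(l))>\mf h_p$. Because $[x^+_{\ms M_p(l)},x^-_{\ms M_p(l+1)}]\subseteq [x^+_{\ms M_{p+1}(k)},x^-_{\ms M_{p+1}(k+1)}]$, this gives $h^{p+1,+}_k>\mf h_p$, and the symmetric argument at $l_{\min}(k)$ yields $h^{p+1,-}_k>\mf h_p$. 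By \eqref{77} the heights $h^{p+1,\pm}_k$ take only finitely many distinct values, so the minimum is attained and the strict inequality is preserved, giving $\mf h_{p+1}>\mf h_p$. I expect the only delicate point to be extracting the uniform-depth property of the block $\mf R_p(k)$ from $\mc P_8(p)$ and irreducibility; once this is available, the remaining arguments reduce to elementary interval decomposition and direct appeal to the inductively available postulates.
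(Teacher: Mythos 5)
Your proposal is correct and follows essentially the same route as the paper: both use the consecutive-block structure of the $\bb X_p$-closed irreducible class, invoke $\mc P_7(p)$ and $\mc P_4(p+1)$ to control the barriers between neighbouring blocks, decompose $[m',m'']$ into within-block pieces bounded via $\mc P_5(p)$ and between-block gaps bounded by $\mf h_p$, and derive the strict inequality $\mf h_{p+1}>\mf h_p$ from the vanishing rate at the boundary of the class together with $\mc P_6(p)$--$\mc P_7(p)$. The only cosmetic difference is the order (you prove $\mc P_5(p+1)$ before the strict height inequality), which is harmless since $\mc P_5(p+1)$ only uses $\mf h_{p-1}<\mf h_p$ from the previous layer; also, the uniform-depth property you flag as potentially delicate is already supplied by $\mc P_4(p+1)$, one of the hypotheses granted by Proposition \ref{l12b}, so no extra work is needed there.
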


\medskip\noindent ($\delta$){\it \ Construction of the Markov chain
$\bb X_{(p+1)}(\cdot)$}.  Denote by $m^+_{p+1,i}$, $m^-_{p+1,i}$,
$i\in \bb Z$, the rightmost, leftmost element of $\ms M_{p+1}(i)$,
respectively:
\begin{equation}
\label{49b}
{\color{blue} m^-_{p+1,i}}
\,:=\,  \min\{m\in \ms M_{p+1}(i)\}\;, \hspace{5mm}
{\color{blue} m^+_{p+1,i}}
\,:=\,  \max\{m\in \ms M_{p+1}(i)\}.
\end{equation}
Let $\ms W^{(p+1)}_{k,k+1}$ be the set of the absolute maximum of
$S$ on the intervals $[m^+_{p+1, k}, m^-_{p+1, k+1}]$:
\begin{equation}\label{max_br}
{\color{blue}\ms W^{(p+1)}_{k,k+1} } \; := \;
\operatorname*{arg\,max}_{x\;\in\;
[m^+_{p+1, k} \,,\,  m^-_{p+1, k+1}]} S(x) \;.
\end{equation}
Clearly, the elements of $\ms W^{(p+1)}_{k,k+1}$ have the same
height and
\begin{equation*}
S(\ms W^{(p+1)}_{k,k+1}) \,=\,  \Lambda
\big(\ms M_{p+1}(k), \ms M_{p+1}(k+1)\big) \;.
\end{equation*}

Let the weight $\pi_{p+1}(k)$ of the set of local minima
$\ms M_{p+1}(k)$ and the weight $\sigma_{p+1}(k,k+1)$ of the set of
maxima $\ms W^{(p+1)}_{k,k+1}$, $k\in \bb Z$, be defined as
\begin{equation}
\label{01}
{\color{blue}\pi_{p+1}(k)} \,:=\,
\sum_{m\in \ms M_{p+1}(k)}
\frac{1}{\mss a (m)}
\sqrt{\frac{2\pi}{S''(m)}}\,, \quad
{\color{blue}\sigma_{p+1}(k,k+1)} \,:= \,
\sum_{ \sigma\in \ms W^{(p+1)}_{k,k+1}}
\sqrt{\frac{2\pi}{-\, S''(\sigma)}}\;\cdot
\end{equation}
Define the $(p+1)$-th-level jump rates as
\begin{equation}
\label{50}
\begin{gathered}
{\color{blue}R_{p+1}(\ms M_{p+1}(k),\ms M_{p+1}(k+1))} \; :=
\; \frac{1}{\pi_{p+1}(k) \, \sigma_{p+1}(k,k+1)}\;
\mb 1\{\, h^{p+1,+}_{k} \,=\,  \mf h_{p+1} \,\} \;,
\\
{\color{blue}R_{p+1}(\ms M_{p+1}(k),\ms M_{p+1}(k-1))} \; :=
\; \frac{1}{\pi_{p+1}(k)\, \sigma_{p+1}(k-1,k)}
\; \mb 1\{\,  h^{p+1,-}_{k} \,=\,  \mf h_{p+1} \,\} \;.
\end{gathered}
\end{equation}
Let ${\color{blue}\bb X_{p+1}(\cdot)}$ be the $\ms S_{p+1}-$valued
Markov chain induced by the generator $\bb L_{p+1}$ given by
\begin{equation}
\label{83}
{\color{blue}(\bb L_{p+1}f)\, (\ms M_{p+1}(k))}
\;=\; \sum_{a = \pm 1} R_{p+1}(\ms M_{p+1}(k), \ms
M_{p+1}(k+ a)) \,
[\, f(\ms M_{p+1}(k+a)) - f(\ms M_{p+1}(k)) \,]\;,
\end{equation}
for any $f\colon \ms S_{p+1} \to \bb R$ and
$\ms M_{p+1}(k)\in \ms S_{p+1}$. 

\begin{proposition}
\label{l10b}
Under the hypotheses of Proposition \ref{l12b}, postulates
$\mc P_7(p+1)$---$\mc P_{10}(p+1)$ are in force. Moreover,
there exists $k\in \bb Z$ such that
\begin{equation}
\label{79}
R_{p+1}(\ms M_{p+1}(k),\ms M_{p+1}(k+1))
\,+\, R_{p+1}(\ms M_{p+1}(k),\ms M_{p+1}(k-1)) \,>\, 0\;.
\end{equation}
\end{proposition}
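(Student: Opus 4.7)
The plan is to verify postulates $\mc P_7(p+1)$–$\mc P_{10}(p+1)$ together with \eqref{79} in sequence, relying on the definitions in \eqref{01}, \eqref{38b}, \eqref{47b}, \eqref{50}, \eqref{83}, and the structural results granted by Propositions \ref{l12b} and \ref{l11b}. First, $\mc P_7(p+1)$ can be read directly off \eqref{50}: only nearest-neighbour rates are nonzero, and the indicator $\mb 1\{h^{p+1,\pm}_k = \mf h_{p+1}\}$ is equivalent via \eqref{47b} to the stated barrier condition. The prefactor $1/[\pi_{p+1}(k)\sigma_{p+1}(k,k\pm 1)]$ is strictly positive and finite because Claim A ensures $\ms M_{p+1}(k)$ is bounded, hence a finite subset of the locally finite set $\mc M$, while $\ms W^{(p+1)}_{k,k+1}$ is a finite set of $S$-local maxima in a bounded interval, and all summands in \eqref{01} are positive. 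Next, $\mc P_9(p+1)$ is a periodicity computation: by $\mc P_3(p+1)$ one has $\ms M_{p+1}(k+\mf u_{p+1}) = \ms M_{p+1}(k)+1$, and the $1$-periodicity of $\mss a,\mss b$ transfers to the weights $\pi_{p+1}$, $\sigma_{p+1}$ and to $h^{p+1,\pm}_k$, so \eqref{50} gives the $\mf u_{p+1}$-periodicity of $R_{p+1}$.

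Postulate $\mc P_8(p+1)$ follows from the symmetry $\Lambda(A,B) = \Lambda(B,A)$ built into \eqref{09}. Suppose $R_{p+1}(\ms M_{p+1}(k),\ms M_{p+1}(k+a)) > 0$ for some $a\in\{\pm 1\}$. Combining $\mc P_7(p+1)$ with \eqref{47b} gives
\begin{equation*}
\Lambda(\ms M_{p+1}(k),\ms M_{p+1}(k+a)) \ =\ S(\ms M_{p+1}(k)) + \mf h_{p+1},
\end{equation*}
while the minimum definition \eqref{38b} of $\mf h_{p+1}$ applied at index $k+a$ yields
\begin{equation*}
\Lambda(\ms M_{p+1}(k+a),\ms M_{p+1}(k)) - S(\ms M_{p+1}(k+a))\ \ge\ \mf h_{p+1}.
\end{equation*}
Symmetry of $\Lambda$ then produces $S(\ms M_{p+1}(k)) \ge S(\ms M_{p+1}(k+a))$, with strict inequality when the reverse rate vanishes, since $\mc P_7(p+1)$ forces $h^{p+1,-a}_{k+a} > \mf h_{p+1}$ strictly in that case. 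For \eqref{79}, the $\mf u_{p+1}$-periodic sequence $(h^{p+1}_k)_k$ attains its minimum $\mf h_{p+1}$ at some $k$, at which either $h^{p+1,+}_k$ or $h^{p+1,-}_k$ equals $\mf h_{p+1}$, giving a positive rate through \eqref{50}.

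The main obstacle is $\mc P_{10}(p+1)$, namely the strict decrease $\mf n_{p+1} < \mf u_{p+1} = \mf n_p$. The approach is structural: since by $\mc P_7(p+1)$–$\mc P_9(p+1)$ the chain $\bb X_{p+1}$ is a nearest-neighbour, $\mf u_{p+1}$-periodic chain on $\ms S_{p+1}\simeq \bb Z$, each closed irreducible class is either a finite interval of indices or the whole of $\ms S_{p+1}$. In the latter case $\mf n_{p+1} = 1 < \mf u_{p+1}$ since $\mf u_{p+1} \ge 2$. In the former case, the $\bb Z$-orbit under translation of each bounded closed class recovers all equivalent ones, so it suffices to count distinct closed-class representatives inside one period $\{\ms M_{p+1}(0),\dots,\ms M_{p+1}(\mf u_{p+1}-1)\}$. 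At the index $k$ with $h^{p+1}_k = \mf h_{p+1}$ produced by \eqref{79}, $\mc P_8(p+1)$ forces the pair $\ms M_{p+1}(k),\ms M_{p+1}(k+a)$ either to lie in a common strongly connected set (when the reverse rate is also positive) or to make one of them transient by the strict form of $\mc P_8(p+1)$ (when it vanishes); in either case one equivalence class of $\ms S_{p+1}$ is absorbed and does not contribute a new closed-class representative, yielding $\mf n_{p+1} \le \mf u_{p+1} - 1 < \mf n_p$. The careful bookkeeping, in particular handling closed classes that straddle period boundaries and translating the per-period count into the quotient count $\mf n_{p+1}$, is the delicate point and is carried out in Appendix \ref{sec4}.
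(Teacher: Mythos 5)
Your proof follows essentially the same route as the paper: read off $\mc P_7(p+1)$ from the definition \eqref{50}; derive $\mc P_8(p+1)$ from the barrier inequality $h^{p+1,\pm}_{k+a}\ge \mf h_{p+1}$ and the symmetry of $\Lambda$ (this is exactly $\mc P_6(p+1)$ in disguise, which is what the paper invokes); get $\mc P_9(p+1)$ from the periodicity of $\mss a,\mss b$ and $\mc P_3(p+1)$; get \eqref{79} from positivity and finiteness of the weights $\pi_{p+1},\sigma_{p+1}$; and get $\mc P_{10}(p+1)$ from the dichotomy at the index with $h^{p+1}_{k'}=\mf h_{p+1}$: either $\ms M_{p+1}(k')$ is transient, or it lies in a recurrent class with at least two members, and in either case $\mf u_{p+2}<\mf u_{p+1}$.

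Three small points. First, a convention issue: when $\ms S_{p+1}$ is a single closed class, the paper sets $\mf n_{p+1}=0$, not $1$; the inequality $\mf n_{p+1}<\mf u_{p+1}$ still holds either way. Second, a slight misattribution: when the reverse rate $R_{p+1}(\ms M_{p+1}(k+a),\ms M_{p+1}(k))$ vanishes, the transience of $\ms M_{p+1}(k)$ is a direct consequence of the nearest-neighbour structure — starting from $k$ the chain may jump to $k+a$ with positive probability and, by $\mc P_7(p+1)$, can then never come back — and not a consequence of the strict $S$-inequality in $\mc P_8(p+1)$, which is a parallel consequence of the same configuration, not its cause. Third, and more substantively, you defer the ``careful bookkeeping'' of the period count to Appendix~\ref{sec4}; this is circular, because Appendix~\ref{sec4} is precisely where the proof of Proposition~\ref{l10b} is supposed to live. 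The bookkeeping is short and should be spelled out: in the transient case, none of the $\ms M^*_{p+2}(j)$ contain $\ms M_{p+1}(k')$, so at most $\mf u_{p+1}-1$ of the sets $\ms M_{p+1}(i)$, $0\le i<\mf u_{p+1}$, contribute a $\ms M^*_{p+2}$-set, and, by the periodicity \eqref{75b}, every equivalence class of $\ms S_{p+2}$ has such a representative within one period, hence $\mf u_{p+2}\le \mf u_{p+1}-1$; in the recurrent case, $\ms M_{p+1}(k')$ and one of its two neighbours (whichever has the positive return rate) lie in the same $\ms M^*_{p+2}(j)$, so again the $\mf u_{p+1}$ period indices map onto at most $\mf u_{p+1}-1$ distinct classes of $\ms S_{p+2}$.
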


\noindent{\it Conclusion:}
According to Propositions \ref{l12b}--\ref{l10b}, if triplets
$\Gamma_1, \dots, \Gamma_p$, $p\ge 1$, have been constructed
fulfilling conditions $\mc P_1$---$\mc P_{10}$, and if
$\mf u_{p+1} = \mf n_{p}\ge 2$, then a new triplet $\Gamma_{p+1}$ can
be constructed fulfilling conditions $\mc
P_1(p+1)$---$\mc P_{10}(p+1)$. Moreover, $\mf u_1=N$, the sequence
$(\mf u_q :1\le q\le p+1)$ is strictly decreasing, and the sequence
$(\mf h_q :1\le q\le p+1)$ is strictly increasing.

Since the sequence $\mf u_q$ starts from $N$ and strictly decreases,
there exists $p \ge 1$ such that $\mf u_{p+1} = 0$ (ore, equivalently,
$\mf n_p=0$). Let
${\color{blue} \mf q} := \min\{ p\ge 1 : \mf u_{p+1} =0\}$. The
iterative construction ends with the triplet $\Gamma_{\mf q}$ and the
recursive method produced the triplets $\Gamma_r$, $1\le r\le \mf q$.
The last Markov chain $\bb X_{\mf q}(\cdot)$ has only transient states
or $\ms S_{\mf q}$ is a closed irreducible class. We describe further
this last layer in the next subsection.

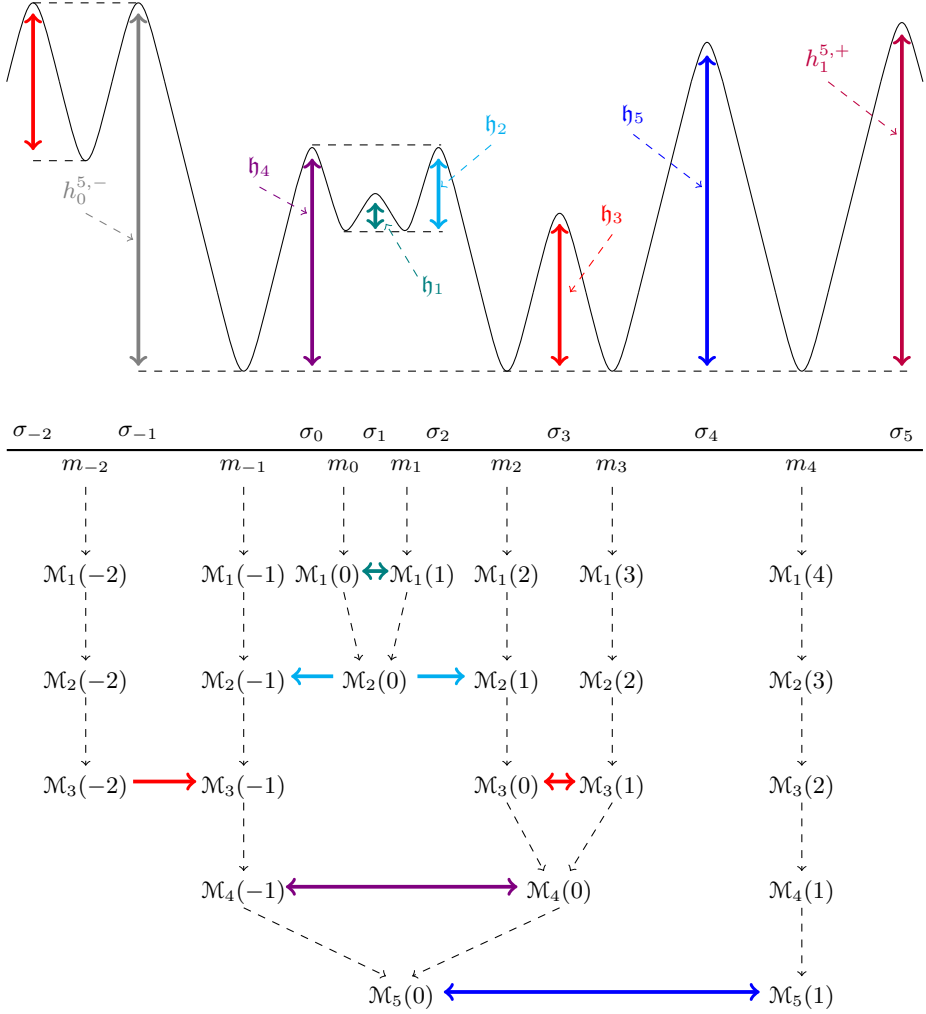
\begin{figure}
\centering
\begin{tikzpicture}[scale=0.7]
\draw [rounded corners] (0,3) .. controls (0.5,5) .. (1,3);
\draw[rounded corners] (1,3) .. controls (1.5,1) .. (2,3);
\draw[rounded corners] (2,3) .. controls (2.5,5) .. (3,3);  
\draw[rounded corners] (3,3) .. controls (3.5, 1) .. (4, -1);
\draw[rounded corners] (4,-1) .. controls (4.5, -3) .. (5,-1);
\draw[rounded corners] (5,-1) -- (5.5,1);
\draw[rounded corners] (5.5,1) .. controls (5.8,2) .. (6.1, 1);
\draw[rounded corners] (6.1, 1) .. controls (6.4, 0) .. (6.7, 0.5);
\draw[rounded corners] (6.7, 0.5) .. controls (7, 1) .. (7.3, 0.5);
\draw[rounded corners] (7.3, 0.5) .. controls (7.6, 0) .. (7.9, 1);
\draw[rounded corners] (7.9, 1) .. controls (8.2, 2) .. (8.5, 1);
\draw[rounded corners] (8.5, 1) -- (9, -1);
\draw[rounded corners] (9,-1) .. controls (9.5, -3) .. (10, -1);
\draw[rounded corners] (10,-1) .. controls (10.5, 1) .. (11, -1);
\draw[rounded corners] (11,-1) .. controls (11.5,-3) .. (12, -1);
\draw[rounded corners] (12,-1) .. controls (12.5, 1) .. (13, 3);
\draw[rounded corners] (13,3) .. controls (13.3, 4) .. (13.6, 3);
\draw[rounded corners] (13.6,3) .. controls (14.1,1) .. (14.6,-1);
\draw[rounded corners] (14.6,-1) .. controls (15.1,-3).. (15.6,-1);
\draw[rounded corners] (15.6,-1) .. controls (16.1,1) .. (16.6,3);
\draw[rounded corners](16.6,3) .. controls (17,4.5) .. (17.4,3);

\draw[to-to, line width=0.5mm, blue](13.3,3.5)--(13.3,-2.4);
%\fill(13.6,1)node[below,blue, font=\tiny]{$\mf h_5$};
\draw[to-to, line width = 0.5mm, teal](7,0.7)--(7,0.2);
%\fill(7.2,0.5)node[below,font=\tiny, teal]{$\mf h_1$};
\draw[to-to, line width = 0.5mm, cyan](8.2,1.54)--(8.2,0.2);
%\fill(8.4,1)node[below,cyan, font=\tiny]{$\mf h_2$};
\draw[to-to, line width=0.5mm, violet](5.8,1.54)--(5.8,-2.4);
%\fill(6,-1)node[below,violet,font=\tiny]{$\mf h_4$};
\draw[to-to, line width=0.5mm, red](10.5,0.3)--(10.5,-2.4);
%\fill(10.7,-1)node[below,red, font=\tiny]{$\mf h_3$};
\draw[to-to, line width = 0.5mm, gray](2.5,4.3)--(2.5,-2.4);
%\fill(2.9,1)node[below,gray,font=\tiny]{$h_0^{5,-}$};
\draw[to-to, line width=0.5mm, purple](17,3.9) -- (17,-2.4);

\draw[to-to,line width=0.5mm, red](0.5,4.3)--(0.5,1.7);

\draw[-to, dashed, gray](1.5,0.5)--(2.4,0); \fill(1.5,0.5)node[above,
gray, font=\small]{$h_0^{5,-}$}; 

\draw[-to, dashed, violet](4.8,1)--(5.7,0.5);
\fill(4.8,1)node[above,violet,font=\small]{$\mf h_4$};

\draw[-to, dashed, teal](7.8,-0.7)--(7.2,0.3);
\fill(8.1,-0.5)node[below, teal, font=\small]{$\mf h_1$};

\draw[-to, dashed, cyan](9.2,1.8)--(8.25,1);
\fill(9.3,1.8)node[above, cyan, font=\small]{$\mf h_2$};

\draw[-to, dashed, red](11.5,0.1)--(10.7,-1);
\fill(11.5,0.1)node[above,red,font=\small]{$\mf h_3$};

\draw[-to, dashed, blue](11.9,2)--(13.2,1);
\fill(11.9,2)node[above, blue, font=\small]{$\mf h_5$};

\draw[-to, dashed, purple](15.6,3)--(16.9,2);
\fill(15.6,3)node[above,purple,font=\small]{$h_1^{5,+}$};

\draw[dashed,black](0.5,4.5)--(2.5,4.5);
\draw[dashed, black](5.8,1.8)--(8.25,1.8);
\draw[dashed, black](2.5,-2.5)--(17.1,-2.5);
\draw[dashed, black](0.5,1.5)--(1.5,1.5);
\draw[dashed, black](6.4,0.15)--(8.28,0.15);

\draw[solid, thick, black](0,-4)--(17.4,-4);

\fill(6.4,-4)node[below, font=\small]{$m_0$};
\fill(7.6,-4)node[below, font=\small]{$m_1$};
\fill(9.5,-4)node[below, font=\small]{$m_2$};
\fill(11.5,-4)node[below, font=\small]{$m_3$};
\fill(15.1,-4)node[below, font=\small]{$m_4$};
\fill(4.5,-4)node[below, font=\small]{$m_{-1}$};
\fill(1.5,-4)node[below, font=\small]{$m_{-2}$};

\fill(0.5,-4)node[above, font=\small]{$\sigma_{-2}$};
\fill(2.5,-4)node[above, font=\small]{$\sigma_{-1}$};
\fill(5.8,-4)node[above, font=\small]{$\sigma_0$};
\fill(7,-4)node[above, font=\small]{$\sigma_1$};
\fill(8.2,-4)node[above, font=\small]{$\sigma_2$};
\fill(10.5,-4)node[above, font=\small]{$\sigma_3$};
\fill(13.3,-4)node[above, font=\small]{$\sigma_4$};
\fill(17,-4)node[above, font=\small]{$\sigma_5$};

\draw[-to, dashed](6.4,-4.7)--(6.4,-6);
\fill(6.1,-6)node[below, font=\small]{$\ms M_1(0)$};
\draw[-to, dashed](7.6,-4.7)--(7.6,-6);
\fill(7.9,-6)node[below, font=\small]{$\ms M_1(1)$};

\draw[to-to, teal, line width=0.5mm] (6.75,-6.3) -- (7.25,-6.3); 

\draw[-to, dashed](1.5,-4.7)--(1.5,-6);
\fill(1.5,-6)node[below, font=\small]{$\ms M_1(-2)$};
\draw[-to, dashed](4.5,-4.7)--(4.5,-6);
\fill(4.5,-6)node[below, font=\small]{$\ms M_1(-1)$};
\draw[-to, dashed](9.5,-4.7)--(9.5,-6);
\fill(9.5,-6)node[below, font=\small]{$\ms M_1(2)$};
\draw[-to, dashed](11.5,-4.7)--(11.5,-6);
\fill(11.5,-6)node[below, font=\small]{$\ms M_1(3)$};
\draw[-to, dashed](15.1,-4.7)--(15.1,-6);
\fill(15.1,-6)node[below, font=\small]{$\ms M_1(4)$};

\draw[-to, dashed](6.4,-6.7)--(6.7,-8);
\draw[-to, dashed](7.6,-6.7)--(7.3,-8);
\fill(7,-8)node[below, font=\small]{$\ms M_2(0)$};

\draw[-to, cyan, line width=0.5mm](7.8,-8.3)--(8.7,-8.3);
\draw[-to, cyan, line width=0.5mm](6.2,-8.3)--(5.4,-8.3);

\draw[-to, dashed](1.5,-6.7)--(1.5,-8);
\fill(1.5,-8)node[below, font=\small]{$\ms M_2(-2)$};
\draw[-to, dashed](4.5,-6.7)--(4.5,-8);
\fill(4.5,-8)node[below, font=\small]{$\ms M_2(-1)$};
\draw[-to, dashed](9.5,-6.7)--(9.5,-8);
\fill(9.5,-8)node[below, font=\small]{$\ms M_2(1)$};
\draw[-to, dashed](11.5,-6.7)--(11.5,-8);
\fill(11.5,-8)node[below, font=\small]{$\ms M_2(2)$};
\draw[-to, dashed](15.1,-6.7)--(15.1,-8);
\fill(15.1,-8)node[below, font=\small]{$\ms M_2(3)$};

\draw[-to, dashed](1.5,-8.7)--(1.5,-10);
\fill(1.5,-10)node[below, font=\small]{$\ms M_3(-2)$};

\draw[-to, red, line width=0.5mm](2.4,-10.3)--(3.6,-10.3);

\draw[-to, dashed](4.5,-8.7)--(4.5,-10);
\fill(4.5,-10)node[below, font=\small]{$\ms M_3(-1)$};
\draw[-to, dashed](9.5,-8.7)--(9.5,-10);
\fill(9.5,-10)node[below,font=\small]{$\ms M_3(0)$};

\draw[to-to, red, line width=0.5mm](10.2,-10.3)--(10.8,-10.3);

\draw[-to, dashed](11.5, -8.7)--(11.5,-10);
\fill(11.5,-10)node[below, font=\small]{$\ms M_3(1)$};
\draw[-to, dashed](15.1,-8.7)--(15.1,-10);
\fill(15.1,-10)node[below, font=\small]{$\ms M_3(2)$};

\draw[-to, dashed](4.5,-10.7)--(4.5,-12);
\fill(4.5,-12)node[below, font=\small]{$\ms M_4(-1)$};
\draw[-to, dashed](9.5,-10.7)--(10.2,-12);
\draw[-to, dashed](11.5,-10.7)--(10.7,-12);
\fill(10.5,-12)node[below, font=\small]{$\ms M_4(0)$};

\draw[to-to, violet, line width=0.5mm](5.3,-12.3)--(9.7,-12.3);

\draw[-to, dashed](15.1,-10.7)--(15.1,-12);
\fill(15.1,-12)node[below, font=\small]{$\ms M_4(1)$};

\draw[-to, dashed](4.5,-12.7)--(7.2,-14);
\draw[-to,dashed](10.5,-12.7)--(7.7,-14);
\fill(7.5,-14)node[below,font=\small]{$\ms M_5(0)$};
\draw[-to, dashed](15.1,-12.7)--(15.1,-14);
\fill(15.1,-14)node[below,font=\small]{$\ms M_5(1)$};
\draw[to-to, blue, line width=0.5mm](8.3,-14.3)--(14.3,-14.3);

\end{tikzpicture}
\caption{Here we represent a hierarchy structure with 5 different
levels. For the level one, the states $\ms M_1(k)$, $-2\leq k\leq 4$
are given by $\ms M_1(k) = \{m_k\}$. Moreover, $\ms M_1(0)$ and
$\ms M_1(1)$ belong to the same $\bb X_1(\cdot)-$recurrent class,
while the remaining states that appear in the figure are
absorbing. For the level 2, we have $\ms M_2(0) = \{m_0, m_1\}$ and
$\ms M_2(k) = \{m_k\}$, for $k\in \{-2,-1,2,3,4\}$. At this level,
$\ms M_2(0)$ is a $\bb X_2(\cdot)-$transient state, while the
remaining states are absorbing. Turning to level 3, we see the first
relabeling. Here $\mss j_3=2$ and the metastable states of level 3
that appear in the figure are given by $\ms M_3(-2)=\{m_{-2}\}$,
$\ms M_3(-1)=\{m_{-1}\}$, $\ms M_3(0) = \{m_2\}$, $\ms M_3(1)=\{m_3\}$
and $\ms M_3(2)=\{m_4\}$. At this level, $\ms M_3(-2)$ is
$\bb X_3(\cdot)-$transient, $\ms M_3(-1)$ and $\ms M_3(2)$ are
absorbing, and $\{\ms M_3(0), \ms M_3(1)\}$ is a recurrent class. At
level 4 we have the states $\ms M_4(-1) = \{m_{-1}\}$,
$\ms M_4(0) = \{m_2, m_3\}$ and $\ms M_4(1) = \{m_4\}$. Here
$\mss j_4 = 2$. At this level, the states $\ms M_4(-1)$ and
$\ms M_4(1)$ are absorbing, while the class
$\{\ms M_4(0), \ms M_4(1)\}$ is $\bb X_4(\cdot)-$recurrent. Finally,
at the level 5, the only states appearing in the figure are
$\ms M_5(0) = \{m_{-1},\; m_2,\; m_3\}$ and $\ms M_5(1) = \{m_4\}$,
while $\mss j_5=2$. Finally, as $\mf h_5 < h_0^{5,-}$, then
$R_5(\ms M_5(0), \ms M_5(1))>0,$ while
$R_5(\ms M_5(0), \ms M_)5(-1)) = 0$. Similarly, as
$\mf h_5<h_1^{5,+}$, then $R_5(\ms M_5(1), \ms M_5(0))>0,$ while
$R_5(\ms M_5(1), \ms M_5(2)) = 0$.}
\label{fig-1f}
\end{figure}

\subsection*{The parabolic equation}

The statement of the second main result of this article requires some
more notation. For $m\in\mathcal{M}$ and finite subsets $\ms M'$ of
$\mc M$, let the weights $\pi(m)$, $\pi_{p+1}(\ms M')$ be given by
\begin{equation}
\label{104}
{\color{blue}\pi(m_k)} \,:=\, \pi_1(k)\;, \quad
{\color{blue}\pi (\ms M')} \, :=\,
\sum_{m\in \ms M'}\pi(m)\;.
\end{equation}
Set also, for $m\in\mc M$, ${\color{blue}\pi(m)} := \pi(m_k)$ if $m=m_k$.

For $2\le p\le \mf q$, $x\in \bb R$, let $\mtt l_p(x)$, $\mtt r_p(x)$
the indices of the first set $\ms M_p(j)$ to the left, right of $x$,
respectively:
\begin{equation}
\label{85}
{\color{blue} \mtt l_p(x)} \,:=\, 
\max\{ j\in \bb Z : \exists\; m\in \ms M_p(j) \,,\,
m \le  x\,\}\;, \quad
{\color{blue}  \mtt r_p(x)} 
\,:=\, \min\{ k\in \bb Z : \exists\; m\in \ms M_p(k) \,,\,
m \ge x\,\}\;.
\end{equation}
Mind that either $\mtt r_p(x) = \mtt l_p(x)$ or
$\mtt r_p(x) = \mtt l_p(x) +1$.  

Suppose that $\mtt l_p(x) < \mtt r_p(x)$. In this case,
$m^+_{p,\mtt l_p(x)} < x < m^-_{p,\mtt r_p(x)}$, where the local
minima $m^\pm_{p,k}$ have been introduced in \eqref{49b}.  Let
$m_{\mtt l} = m^+_{p,\mtt l_p(x)}$, $m_{\mtt r} = m^-_{p,\mtt r_p(x)}$
to simplify notation.
Denote by $\mf S_p(x)$ the finite set where $S(\cdot)$ attains its global
maximum in the interval $[m_{\mtt l} , m_{\mtt r}]$:
\begin{equation*}
{\color{blue} \mf S_p(x)}  \,:=\, \big\{\sigma \in (m_{\mtt l} , m_{\mtt r})
\cap \mc W : S(\sigma) = \max_{y\in
[m_{\mtt l} , m_{\mtt r}]} S(y) \big\}\;,
\end{equation*}
and let
${\color{blue} \mf S^-_p(x)} : = \{\sigma\in \mf S_p(x) : \sigma <
x\}$. Mind that $\mf S^-_p(x)$ may be empty.

Let $\mtt h_p (x, \cdot)$, $x\in \bb R$, be the
probability measure on $\ms S_p$ given by
\begin{equation}
\label{74}
\begin{gathered}
{\color{blue} \mtt h_p (x, \ms M_p(k)) }
\,=\, \delta_{k,\mtt l_p(x)} \quad
\text{if $\mtt l_p(x) = \mtt r_p(x)$ }\;,
\\
{\color{blue} \mtt h_p (x, \ms M_p(\mtt r_p(x)) )}  \,:=\, 
1\,-\, \mtt h_p (x, \ms M_p(\mtt l_p(x)) ) \,=\,
\mf w_p(x)
\quad
\text{if $\mtt l_p(x) < \mtt r_p(x)$}\;.
\end{gathered}
\end{equation}
where
\begin{equation}
\label{92}
\begin{gathered}
{\color{blue}\mf w_p(x)}  \,:=\,
\frac{\sum_{\sigma \in \mf S^-_p(x)}  1/\sqrt{- S''(\sigma)}}
{\sum_{\sigma \in \mf S_p(x)} 1/\sqrt{- S''(\sigma)}} \;\;
\text{if}\;\; x\not\in \mf S_p(x)
\\
\text{and}\;\;
{\color{blue}\mf w_p(x)}  \,:=\,
\frac{1/2 \sqrt{- S''(x)}
\,+\, \sum_{\sigma \in \mf S^-_p(x)}  1/\sqrt{- S''(\sigma)}}
{\sum_{\sigma \in \mf S_p(x)} 1/\sqrt{- S''(\sigma)}} \;\;
\text{if}\;\; x \in \mf S_p(x)\;.
\end{gathered}
\end{equation}
We give in Remark \ref{rm1} an interpretation for
$\mtt h_p (x, \cdot)$ in terms of the diffusion on $\bb R$ induced by
the generator $\ms L_\epsilon$.

For $1\le p\le \mf q$, denote by $p_{t}^{(p)}(\cdot,\,\cdot)$, $t>0$,
the transition probability of the Markov chain $\bb X_p(\cdot)$.  Let
\textcolor{blue}{$\theta_{\epsilon}^{(0)}\equiv1$} and
\textcolor{blue}{$\theta_{\epsilon}^{(\mathfrak{q}+1)}\equiv\infty$}
for convenience.

\begin{theorem}
\label{mt2}
Fix a continuous function $u_0\colon \bb R \to \bb R$.  Denote by
$u_\epsilon$ the solution of the parabolic equation \eqref{51}.

\begin{enumerate}
\item[{\rm (a)}] For all $1\le p\le \mf q$, $x\in \bb R$ and $t>0$,
\begin{equation*}
\lim_{\epsilon\to0}\,
u_{\epsilon}(\theta_{\epsilon}^{(p)}t\,,\,x) \ =\
\sum_{k\in\bb Z}\,
\mtt h_p (x , \ms M_p(k))\, \sum_{\ell\in\bb Z}
p_{t}^{(p)}(\ms M_p(k), \ms M_p(\ell))\,
\sum_{m'\in \ms M_p(\ell)}
\frac{\pi(m')}{\pi(\ms M_p(\ell))}
\,u_{0}(m')\ .
\end{equation*}

\item[{\rm (b)}] For all $1\le p< \mf q$, $x\in \bb R$, and sequence
$(\varrho_{\epsilon})_{\epsilon>0}$ such that
$\theta_{\epsilon}^{(p)}\,\prec\,\varrho_{\epsilon}\,\prec\,
\theta_{\epsilon}^{(p+1)}$,
\begin{equation*}
\lim_{\epsilon\to0}\,u_{\epsilon}(\varrho_{\epsilon} \,,\,x)
\ =\ \sum_{k\in\bb Z}
\, \mtt h_{p+1} (x, \ms M_{p+1}(k) )
\sum_{m'\in \ms M_{p+1}(k)}
\frac{\pi(m')}{\pi(\ms M_{p+1}(k))} \,u_{0}(m')\;.
\end{equation*}

\item[{\rm (c)}] Fix $x\in \bb R$, and a sequence
$(\varrho_{\epsilon})_{\epsilon>0}$ such that
$\epsilon^{-1} \prec \varrho_{\epsilon} \prec
\theta_{\epsilon}^{(1)}$.  Denote by $m_{\mtt l}(x)$, $m_{\mtt r}(x)$
the $S$-local minimum to the left, right of $x$, respectively, and by
$\sigma$ the $S$-local maximum between $m_{\mtt l}(x)$ and
$m_{\mtt r}(x)$, Then,
\begin{equation*}
\lim_{\epsilon\to 0} u_{\epsilon}(\varrho_{\epsilon} \,,\,x)  \, =\,
\left\{
\begin{aligned}
& u_0(m_{\mtt l}(x)) \;\;\text{if}\;\; x<\sigma\,,
\\
& u_0(m_{\mtt r}(x)) \;\;\text{if}\;\; x>\sigma\,,
\\
& (1/2) \,[\, u_0(m_{\mtt l}(x)) + u_0(m_{\mtt r}(x))\,]
\;\;\text{if}\;\; x=\sigma\,.
\end{aligned}
\right.
\end{equation*}
\end{enumerate}
\end{theorem}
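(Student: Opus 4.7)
The strategy is to exploit the stochastic representation $u_\epsilon(t,x) = \bb E_x[u_0(X_\epsilon(t))]$ and reduce the three statements to the metastable convergence of the diffusion $X_\epsilon(\cdot)$ established in Theorems \ref{mt3} and \ref{mt4}, combined with two one-dimensional inputs: (i) the explicit scale function of $\ms L_\epsilon$, whose density is proportional to $e^{S(\cdot)/\epsilon}/\mss a(\cdot)$, and (ii) Laplace asymptotics around the $S$-extrema, which are the source of the weights $\pi(m)$ and $\sigma_p(k,k+1)$ defined in \eqref{01}.

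For part (a), fix $t>0$ and $x\in\bb R$. First, standard Freidlin--Wentzell type estimates ensure that on any scale $\succ \epsilon^{-1}$ the process $X_\epsilon$ lies in $\bigcup_k \ms E(\ms M_p(k))$ with probability $1-o(1)$, so
\[
u_\epsilon(\theta_\epsilon^{(p)} t, x) \;=\; \sum_{\ell\in\bb Z} \bb E_x\bigl[\, u_0(X_\epsilon(\theta_\epsilon^{(p)} t))\, \chi_{\ms E(\ms M_p(\ell))}(X_\epsilon(\theta_\epsilon^{(p)} t))\,\bigr] \;+\; o(1).
\]
I would then split the evolution into three phases. \emph{Phase 1:} starting from $x$, the process hits $\bigcup_k \ms E(\ms M_p(k))$ in a time $\prec \theta_\epsilon^{(p)}$, and the law of the hitting location is computed from the one-dimensional scale function. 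The saddle-point estimate on the integral $\int e^{S/\epsilon}\, dy$ concentrates near the global $S$-maxima between $m_{\mtt l}$ and $m_{\mtt r}$, yielding exactly the weights $\mf w_p(x)$ of \eqref{92}, so the hitting distribution converges to $\mtt h_p(x,\cdot)$. \emph{Phase 2:} from a hitting point inside $\ms E(\ms M_p(k))$, Theorem \ref{mt3} applied to the strong Markov extension gives that $X_\epsilon(\theta_\epsilon^{(p)} t)$ lies in $\ms E(\ms M_p(\ell))$ with probability $p_t^{(p)}(\ms M_p(k),\ms M_p(\ell)) + o(1)$. \emph{Phase 3:} conditional on $\{X_\epsilon(\theta_\epsilon^{(p)} t)\in\ms E(\ms M_p(\ell))\}$, the local equilibration of $X_\epsilon$ inside the well $\ms M_p(\ell)$ (which mixes on a scale $\prec \theta_\epsilon^{(p)}$, by induction using $\mf h_p>\mf h_{p-1}$) together with Laplace asymptotics around each $m'\in\ms M_p(\ell)$ shows that the conditional law of $X_\epsilon$ restricted to a continuity observable of $u_0$ converges to $\sum_{m'\in\ms M_p(\ell)} \frac{\pi(m')}{\pi(\ms M_p(\ell))}\, \delta_{m'}$, giving the $u_0$-average in the statement.

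For part (b), I would take $\theta_\epsilon^{(p)}\prec \varrho_\epsilon\prec \theta_\epsilon^{(p+1)}$ and either (i) let $t\to\infty$ in the conclusion of part (a) after checking uniformity, or (ii) argue directly: on this time scale the level-$p$ chain $\bb X_p(\cdot)$ has fully equilibrated within its $\bb X_p$-recurrent class, while no level-$(p+1)$ transition has yet occurred. Reversibility of the jump-rate definition \eqref{50} with respect to $\pi_p$ identifies the stationary measure of $\bb X_p$ on $\mf R_p(k)$ with the normalised weights $\pi_p(i)/\pi_p(\mf R_p(k))$, which, after unfolding via \eqref{01} and the definition $\ms M_{p+1}(k) = \bigcup_{\ms M_p(i)\in \mf R_p(k)}\ms M_p(i)$, become $\pi(m')/\pi(\ms M_{p+1}(k))$. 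Collapsing the level-$p$ hitting probabilities $\mtt h_p(x,\ms M_p(j))$ summed over $j$ with $\ms M_p(j)\subset\ms M_{p+1}(k)$ gives $\mtt h_{p+1}(x,\ms M_{p+1}(k))$, as can be verified from the same scale-function computation applied at level $p+1$ (the maxima of $S$ dominating the passage at level $p+1$ are a subset of those at level $p$ with the largest barriers).

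For part (c), the scale $\epsilon^{-1}\prec \varrho_\epsilon\prec \theta_\epsilon^{(1)}$ is sub-metastable: no escape from the basin containing $x$ occurs on this scale (by Freidlin--Wentzell), while the deterministic drift has already driven the ODE $\dot x=\mss b(x)$ into an $O(\sqrt\epsilon)$ neighbourhood of the attracting equilibrium. For $x\neq\sigma_j$, continuity of $u_0$ yields $u_0(X_\epsilon(\varrho_\epsilon))\to u_0(m_{\mtt l}(x))$ or $u_0(m_{\mtt r}(x))$ in probability. For $x=\sigma_j$, linearising around the saddle and using that the diffusion near an unstable equilibrium with $\mss b'(\sigma_j)>0$ is asymptotically symmetric gives the $1/2$--$1/2$ split across the two adjacent basins.

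The main obstacle is the within-well local equilibration of Phase 3 in part (a): Theorem \ref{mt3} only provides convergence of the hitting/occupation probabilities of the union of wells, whereas \eqref{107} requires convergence of $\bb E_x[u_0(X_\epsilon(\theta_\epsilon^{(p)} t))]$ against the finer $\pi$-weighted measure inside each well. Closing this gap requires quantitative control, uniform in the starting point of the well, of the time needed for $X_\epsilon$ to equilibrate among the local minima of $\ms M_p(\ell)$---which is precisely where the inductive property $\mf h_p > \mf h_{p-1}$ from Proposition \ref{l11b}, together with the capacity/resolvent estimates developed for Theorem \ref{mt4}, enter in an essential way.
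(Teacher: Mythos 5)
Your proposal reproduces the structure of the paper's proof (Section \ref{sec9}): write $u_\epsilon(\theta_\epsilon^{(p)}t,x)=\bb E_x[u_0(X_\epsilon(\theta_\epsilon^{(p)}t))]$, reduce the starting point from $x$ to a point of $\ms M_p$ using Lemma \ref{l33} and Corollary \ref{l26} to identify the weights $\mtt h_p(x,\cdot)$, then use the metastable convergence (Proposition \ref{l11}) plus a within-well equilibration step to produce the kernel $p_t^{(p)}\cdot\pi(m')/\pi(\ms M_p(\ell))$. You correctly flag the within-well equilibration (``Phase 3'') as the crux, and you name the right mechanism for closing it: induction on $p$ driven by the strict separation of scales $\mf h_{p-1}<\mf h_p$.

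One point of care: you phrase Phase 3 as ``conditional on $\{X_\epsilon(\theta_\epsilon^{(p)}t)\in\ms E(\ms M_p(\ell))\}$, the local equilibration \dots shows that the conditional law \dots converges to $\pi(\cdot)/\pi(\ms M_p(\ell))$.'' Conditioning on the final position is awkward since it is not a stopping-time event; the clean argument looks \emph{backwards} in time. The paper's Lemma \ref{l36} applies the Markov property at the intermediate time $t_\epsilon=\theta_\epsilon^{(p)}t+s-\theta_\epsilon^{(p-1)}s_0$, where $s_0$ is chosen so that the $\bb X_{p-1}$-chain restricted to the recurrent class $\ms M_p(\ell)$ has equilibrated after time $s_0$; the contribution of the event $\{\mtt x(t_\epsilon)\in\ms E(\ms M_p)\setminus\ms E(\ms M_p(\ell))\}$ is controlled by Corollary \ref{l07}, and on $\{\mtt x(t_\epsilon)\in\ms E(\ms M_p(\ell))\}$ the inductive hypothesis (at level $p-1$) gives precisely the $\pi$-weighted distribution inside $\ms E(\ms M_p(\ell))$ in the remaining window $\theta_\epsilon^{(p-1)}s_0$. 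For part (b), the paper does not collapse $\mtt h_p$ over the classes forming $\ms M_{p+1}(k)$: it restarts the argument at level $p+1$ directly (Corollary \ref{l40} to reach $\ms M_{p+1}$ quickly, Corollary \ref{l26} for the hitting weights), avoiding the extra scale-function verification your sketch defers. For part (c) the $1/2$--$1/2$ split at $x=\sigma$ comes in the paper from the scale-function identity \eqref{89} rather than from linearisation, but both give the same answer.
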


\begin{remark}
Assertion (a) for $p=1$ is Theorem \ref{mt1}. The claim for
$1\le p\le \mf q$ has to be understood as follows. Starting from $x$, the
set $\ms M_p(k)$ is reached with probability
$\mtt h_p (x , \ms M_p(k))$.  From $\ms M_p(k)$ it moves to
$\ms M_p(\ell)$ at time $t$ with probability
$p_{t}^{(p)}(\ms M_p(k), \ms M_p(\ell))$. Being at $\ms M_p(\ell)$,
its position among the elements of this set is distributed according
to the probability measure $\pi(\cdot)/ \pi(\ms M_p(\ell))$.

Assertion (b) has a similar interpretation. In the time-scale
$\varrho_\epsilon$, starting from $x$, the set $\ms M_p(k)$ is reached
with probability $\mtt h_p (x , \ms M_p(k))$. Being at $\ms M_p(k)$,
its position among the states of this set is distributed according to
the probability measure $\pi(\cdot)/ \pi(\ms M_p(k))$.

It is not difficult to show that the limit, as $t\to\infty$, of item
(a) right-hand side for $1\le p <\mf q$, converges to item (b)
right-hand side with $p+1$.
\end{remark}

\begin{remark}
In part (c) of the theorem, the condition
$\varrho_{\epsilon} \succ \epsilon^{-1}$ is imposed to include the
point $x=\sigma$. If one assumes that $x\neq \sigma$, then it is
enough to require that $\varrho_{\epsilon} \succ 1$.
\end{remark}

\subsection*{The last layer in the hierarchical construction.}

By construction, either all states of the Markov chain
$\bb X_{\mf q}(\cdot)$ are transient, or $\ms S_{\mf q}$ is an
irreducibles class. 

\smallskip
\noindent{\bf Claim 2.A:} Suppose that
$R_{\mf q} (\ms M_{\mf q} (j), \ms M_{\mf q} (j\pm 1)) = 0$ for some
$j\in\bb Z$. Then,
$R_{\mf q} (\ms M_{\mf q} (k), \ms M_{\mf q} (k\mp 1)) > 0$ for all
$k\in\bb Z$.

Suppose, without loss of generality, that
$R_{\mf q} (\ms M_{\mf q} (j), \ms M_{\mf q} (j-1))= 0$ for some
$j\in \bb Z$. In this case,
$R_{\mf q} (\ms M_{\mf q} (j), \ms M_{\mf q} (j+1))> 0$.  Indeed, if
this rate vanishes, as $\bb X_{\mf q}(\cdot)$ jumps only to nearest
neighbour sites, $\ms M_{\mf q} (j)$ would be an absorbing state.
This contradicts the fact that $\bb X_{\mf q}(\cdot)$ has no bounded
irreducibles class.

If $R_{\mf q} (\ms M_{\mf q} (k), \ms M_{\mf q} (k+1)) = 0$ for some
$k>j$, the Markov chain $\bb X_{\mf q}(\cdot)$ is confined to the set
$\{\ms M_{\mf q} (j), \dots, \ms M_{\mf q} (k)\}$. It has therefore, a
closed irreducible class contained in this set. This contradicts once
more the fact that $\bb X_{\mf q}(\cdot)$ has no bounded irreducibles
class.  Hence,
$R_{\mf q} (\ms M_{\mf q} (k), \ms M_{\mf q} (k+1)) > 0$ for all
$k>j$. By postulate $\mc P_9(\mf q)$,
$R_{\mf q} (\ms M_{\mf q} (k'), \ms M_{\mf q} (k'+1))> 0$ for all
$k'\in\bb Z$, as claimed. \smallskip

\smallskip
\noindent{\bf Claim 2.B:} Suppose that $S(1)< S(0)$. Then,
$R_{\mf q} (\ms M_{\mf q} (j), \ms M_{\mf q} (j-1))= 0$ for some
$j\in \bb Z$.

By construction, $\mf u_{\mf q} \ge 1$, and by postulate
$\mc P_3(\mf q)$,
$\ms M_{\mf q} (j + \mf u_{\mf q}) = \ms M_{\mf q} (j) +1$, $j\in \bb Z$.
Then, by postulate $\mc P_4(\mf q)$, which states that $S(\cdot)$ is
constant at each set $\ms M_{\mf q} (k)$,
\begin{equation}
\label{101}
S(\ms M_{\mf q} (\mf u_{\mf q})) \,<\, S(\ms M_{\mf q} (0))\,.
\end{equation}
It follows from this bound that $R_{\mf q} (\ms M_{\mf q} (j),  \ms
M_{\mf q} (j-1))= 0$ for some $0< j \le \mf u_{\mf q}$. Indeed, if all
rates were positive, by postulate $\mc P_8(\mf q)$, $S(\ms M_{\mf q}
(0)) \le S(\ms M_{\mf q} (\mf u_{\mf q}))$, in contradiction with
\eqref{101}. 

\smallskip
\noindent{\bf Claim 2.C:} Suppose that $S(1)= S(0)$. Then,
$R_{\mf q} (\ms M_{\mf q} (j), \ms M_{\mf q} (j\pm 1))> 0$ for all
$j\in \bb Z$. Moreover, the measure $\pi(\ms M_{\mf q} (j))$ satisfies
the detailed balance conditions.

By the argument leading to \eqref{101},
$S(\ms M_{\mf q} (\mf u_{\mf q})) = S(\ms M_{\mf q} (0))$.

Suppose, by contradiction, that
$R_{\mf q} (\ms M_{\mf q} (j), \ms M_{\mf q} (j- 1)) = 0$ for some
$j\in \bb Z$. Then, by Claim 2.A,
$R_{\mf q} (\ms M_{\mf q} (k), \ms M_{\mf q} (k+ 1)) > 0$ for all
$k\in \bb Z$. In particular,
$R_{\mf q} (\ms M_{\mf q} (j-1), \ms M_{\mf q} (j)) > 0$. Hence, by
postulate $\mc P_8(\mf q)$,
$S(\ms M_{\mf q} (j)) < S(\ms M_{\mf q} (j-1))$.

On the other hand, by postulate $\mc P_8(\mf q)$, since
$R_{\mf q} (\ms M_{\mf q} (k), \ms M_{\mf q} (k+ 1)) > 0$ for all
$k\in \bb Z$,
$S(\ms M_{\mf q} (k))$ is a non-increasing sequence. As
$S(\ms M_{\mf q} (\mf u_{\mf q})) = S(\ms M_{\mf q} (0))$, it is
constant. This contradicts the inequality $S(\ms M_{\mf q} (j)) <
S(\ms M_{\mf q} (j-1))$ obtained in the previous paragraph.

The measure $\pi(\ms M_{\mf q} (j))$ satisfies the detailed balance
conditions in view of the definition \eqref{50} of the jump rates
$R_{\mf q}$.
\smallskip

Recall from postulates $\mc P_3(\mf q)$, $\mc P_9(\mf q)$ that
$\ms M_{\mf q} (j+\mf u_{\mf q}) = \ms M_{\mf q} (j) +1$,
$j\in \bb Z$, and that the Markov chain $\bb X_{\mf q} (\cdot)$ jump
rates are $\mf u_{\mf q}$-periodic. It is therefore natural to
consider the evolution of $\bb X_{\mf q} (\cdot)$ among the
equivalence classes.

Fix a positive integer $\ell \ge 1$, and let
${\color{blue} \ms S_{\mf q, \ell} }:= \{\ms M_{\mf q, \ell}(j) : 0\le
j <\ell \mf u_{\mf q} \}$.  Denote by
$\color{blue} \bb X_{\mf q, \ell} (\cdot)$ the Markov chain
$\bb X_{\mf q} (\cdot)$ projected on the sets $\ms S_{\mf q,
\ell}$. Hence, $\bb X_{\mf q, \ell} (\cdot)$ is the
$\ms S_{\mf q, \ell}$-valued Markov chain whose jump rates are given
by \eqref{50}, where the sum is taken modulo $\ell\, \mf u_{\mf q}$.

By Claim 2.C, if $S(0)=S(1)$, the probability measure
$\pi_{\mf q, \ell}$ defined on
$\ms S_{\mf q, \ell}$ by
\begin{equation}
\label{102}
{\color{blue} \pi_{\mf q, \ell} (\ms M_{\mf q} (j))}
\,:=\, \frac{1}{\ell}\, \frac{\pi(\ms M_{\mf q} (j)) }{\sum_{0\le i<
\mf u_{\mf q}} \pi(\ms M_{\mf q} (i))}\;, \quad 0\le j< \ell\, \mf
u_{\mf q}\,.
\end{equation}
is reversible for the Markov chain $\bb X_{\mf q, \ell} (\cdot)$.

If $S(0) \neq S(1)$, there is no explicit formula for the stationary
state, which exists and is unique because the chain
$\bb X_{\mf q, \ell} (\cdot)$ is irreducible. We denote the
stationary state by $\pi_{\mf q, \ell}$, as well.

\begin{theorem}
\label{mt5}
Fix a continuous function $u_0\colon \bb R \to \bb R$, $\ell$-periodic
for some $\ell\ge 1$, and a sequence
$(\varrho_{\epsilon})_{\epsilon>0}$ such that
$\varrho_{\epsilon} \,\succ\, \theta_{\epsilon}^{(\mathfrak{q})}$.
Denote by $u_\epsilon$ the solution of the parabolic equation
\eqref{51}.  Then,
\begin{equation*}
\lim_{\epsilon\to0}\,u_{\epsilon}(\varrho_{\epsilon} , x)
\,=\, \sum_{k=0}^{\ell \mf u_{\mf q}- 1}
\pi_{\mf q, \ell} (\ms M_{\mf q}(k)))\,
\sum_{m \in \ms M_{\mf q} (k)}
\frac{\pi(m)}{\pi(\ms M_{\mf q}(k))}
\, u_{0}(m)
\end{equation*}
for all $x\in \bb R$.
\end{theorem}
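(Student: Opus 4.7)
The plan is to derive Theorem \ref{mt5} from Theorem \ref{mt2}(a) at $p=\mf q$ by letting the rescaled time tend to infinity and invoking the ergodicity of the projected finite-state chain $\bb X_{\mf q,\ell}(\cdot)$. Write $t_\epsilon:=\varrho_\epsilon/\theta_\epsilon^{(\mf q)}\to\infty$; the claim is equivalent to $\lim_{\epsilon\to 0}u_\epsilon(t_\epsilon\theta_\epsilon^{(\mf q)},x)=V_\infty$, where $V_\infty$ denotes the right-hand side of the theorem. Since $\mss a,\mss b$ are $1$-periodic and $u_0$ is $\ell$-periodic, the map $x\mapsto u_\epsilon(t,x)$ is $\ell$-periodic for every $t\ge 0$; equivalently, $X_\epsilon(\cdot)$ descends to a diffusion on the torus $\bb T_\ell:=\bb R/\ell\bb Z$.

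The first step identifies $V_\infty$ as the $t\to\infty$ limit of the right-hand side of Theorem \ref{mt2}(a). Set $F(j):=\sum_{m'\in\ms M_{\mf q}(j)}\pi(m')u_0(m')/\pi(\ms M_{\mf q}(j))$ for $j\in\bb Z$. By postulate $\mc P_3(\mf q)$, the $1$-periodicity of $\pi$, and the $\ell$-periodicity of $u_0$, the sequence $F$ is $\ell\mf u_{\mf q}$-periodic and hence descends to a function on the finite set $\ms S_{\mf q,\ell}$. By postulate $\mc P_9(\mf q)$, the inner sum
\[
H_t(k)\,:=\,\sum_{j\in\bb Z}p_t^{(\mf q)}(\ms M_{\mf q}(k),\ms M_{\mf q}(j))\,F(j)
\]
equals the expectation of $F$ under $\bb X_{\mf q,\ell}(\cdot)$ started from the class of $\ms M_{\mf q}(k)$. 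Claims 2.A--2.C show that $\bb X_{\mf q,\ell}$ is irreducible on the finite state space $\ms S_{\mf q,\ell}$, hence ergodic with unique stationary law $\pi_{\mf q,\ell}$; therefore $H_t(k)\to V_\infty$ exponentially fast as $t\to\infty$, uniformly in $k$. Since $\mtt h_{\mf q}(x,\cdot)$ is a probability measure on $\ms S_{\mf q}$, the right-hand side of Theorem \ref{mt2}(a) equals $\sum_k\mtt h_{\mf q}(x,\ms M_{\mf q}(k))H_t(k)$ and converges to $V_\infty$ as $t\to\infty$, uniformly in $x\in\bb R$.

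Fix $T>0$ large and set $s_\epsilon:=\varrho_\epsilon-T\theta_\epsilon^{(\mf q)}$, which is positive for small $\epsilon$. By the Markov property of $X_\epsilon(\cdot)$,
\[
u_\epsilon(\varrho_\epsilon,x)\,=\,\bb E_x\big[\,u_\epsilon(T\theta_\epsilon^{(\mf q)},X_\epsilon(s_\epsilon))\,\big].
\]
With $v_T(y):=\sum_k\mtt h_{\mf q}(y,\ms M_{\mf q}(k))H_T(k)$ denoting the limit provided by Theorem \ref{mt2}(a), decompose
\[
u_\epsilon(\varrho_\epsilon,x)-V_\infty \,=\,
\bb E_x\big[(u_\epsilon(T\theta_\epsilon^{(\mf q)},\cdot)-v_T)(X_\epsilon(s_\epsilon))\big]
\,+\,\bb E_x\big[v_T(X_\epsilon(s_\epsilon))-V_\infty\big].
\]
The second term is bounded in absolute value by $\sup_y|v_T(y)-V_\infty|\le\sup_k|H_T(k)-V_\infty|$, which tends to $0$ as $T\to\infty$ uniformly in $\epsilon$ by the previous paragraph. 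The main obstacle is to show that the first term tends to $0$ as $\epsilon\to 0$ for each fixed $T$: Theorem \ref{mt2}(a) only supplies \emph{pointwise} convergence in $y$, whereas here the integrand is evaluated at the random point $X_\epsilon(s_\epsilon)$ on $\bb T_\ell$. To resolve this one combines the uniform bound $\|u_\epsilon(t,\cdot)\|_\infty\le\|u_0\|_\infty$ with the fact that $v_T$ is continuous off the finite, Lebesgue-null set $\{\sigma_j\}\cap[0,\ell)$, and uses standard parabolic regularity on the compact torus $\bb T_\ell$---available since $s_\epsilon$ is much larger than any diffusive scale---to justify a dominated-convergence passage. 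Sending first $\epsilon\to 0$ and then $T\to\infty$ yields $u_\epsilon(\varrho_\epsilon,x)\to V_\infty$.
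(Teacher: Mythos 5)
Your identification of the limit $V_\infty$ as the $t\to\infty$ limit of the right-hand side of Theorem \ref{mt2}(a), via $\ell\mf u_{\mf q}$-periodicity and ergodicity of the projected finite chain $\bb X_{\mf q,\ell}(\cdot)$, is correct and matches the paper's first observation. The gap is in the step where you bound $\bb E_x\big[(u_\epsilon(T\theta^{(\mf q)}_\epsilon,\cdot)-v_T)(X_\epsilon(s_\epsilon))\big]$, which you partly acknowledge but then dismiss too quickly. Theorem \ref{mt2}(a) provides only \emph{pointwise} convergence $u_\epsilon(T\theta^{(\mf q)}_\epsilon,y)\to v_T(y)$ at each fixed $y$, whereas the law of $X_\epsilon(s_\epsilon)$ depends on $\epsilon$ and, as $\epsilon\to0$, concentrates on the bottoms of the deepest wells --- a Lebesgue-null set. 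Dominated convergence fails when both the integrand and the measure move: if $f_\epsilon\to f$ only pointwise and $\mu_\epsilon$ collapses onto a point $z_0$, one needs control of $f_\epsilon(z_\epsilon)$ along the mass-carrying sequence $z_\epsilon\to z_0$, which pointwise convergence at each fixed $y$ does not provide. Invoking ``standard parabolic regularity'' does not rescue this: the operator $\ms L_\epsilon$ degenerates as $\epsilon\to 0$, so any interior regularity constant blows up, and these estimates could not deliver uniformity over $\epsilon$ in any case.

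What actually closes the argument is a supremum, not an average. The paper bounds $|u_\epsilon(\varrho_\epsilon,x)-V_\infty|$ by $\sup_{y\in\bb T_\ell}|\bb E^{\bb T_\ell,\epsilon}_y[u_0(\mtt x(t_0\theta^{(\mf q)}_\epsilon))]-V_\infty|$, and then controls this supremum in two steps: first reducing the starting point to $\ms M_{\mf q,\ell}$ by introducing the event $\{\tau(\ms M_{\mf q,\ell})<\vartheta_\epsilon\}$ (Corollary \ref{l40}), and then invoking Lemma \ref{l36}, whose statement carries a $\sup_{x\in\ms E(\ms M_p(j))}$. This uniformity over the wells, ultimately inherited from the $\sup_{x\in\ms E(\ms M_p(k))}$ in Theorem \ref{mt4}, is exactly the ingredient your argument is missing. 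If you want to salvage your decomposition, the repair is not regularity theory but precisely these two estimates: the uniform-over-wells convergence from Theorem \ref{mt4}/Lemma \ref{l36}, combined with the analogue of \eqref{33} showing that $X_\epsilon(s_\epsilon)\in\ms E_{\mf q}$ with probability $1-o_\epsilon(1)$ uniformly over starting points --- which is the paper's route, reorganized.
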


\section{Sketch of the proof}
\label{sec1}

The proof of Theorems \ref{mt1} and \ref{mt2} is based on the
stochastic representation of the solutions of the parabolic equation
\eqref{51}. In particular, all quantities appearing in the statements
of these results have a simple stochastic interpretation in terms of a
diffusion process.

Consider the one-dimensional diffusion process 
\begin{equation}
\label{03}
dX_\epsilon (t) \;=\; \mss b(X_\epsilon (t))\, dt \;+\;
\sqrt{2\, \epsilon \, \mss a (X_\epsilon (t))} \,
dW_t\;,
\end{equation}
where $W_t$ is the Brownian motion on $\bb R$.

\begin{remark}
Let $\color{blue} \bb T=[0,1)$ be the one-dimensional torus of length
one, and let $\Pi\colon \bb R\to \bb T$ be the projection given by
$\color{blue} \Pi(x) = x - \lfloor x\rfloor$, $\lfloor x\rfloor$ being
the integer part of $x$.  
If the integral of $\mss b/\mss a$ on the interval $[0,1]$ vanishes,
$S(1)=0$, then the potential $S(\cdot)$ is periodic, and one can
define a probability measure $\mu_\epsilon$ on the torus $\bb T$ by
\begin{equation*}
\mu_\epsilon(d\theta) \,=\, \frac{1}{Z_\epsilon}\,
\frac{1}{\mss a(\theta)}\,
e^{-S(\theta)/\epsilon}\, d\theta\;,
\end{equation*}
where $Z_\epsilon$ is the normalization constant which turns
$\mu_\epsilon$ into a probability measure.  A simple computation shows
that the projected diffusion
${\color{blue} \mtt X_\epsilon(t)}: = \Pi( X_\epsilon(t))$ is
reversible with respect to the measure $\mu_\epsilon$. In particular,
$\mu_\epsilon$ is a stationary state for the diffusion
$\mtt X_\epsilon(\cdot)$.  Note, however, that we did not assume in
the previous section that $S(\cdot)$ is a periodic function.
\end{remark}

Denote by $\color{blue} N(x,r)$, $x\in \bb R$, $r>0$, the connected
component of the set $\{y\in \bb R : S(y) < S(x) + r\}$ which contains
$x$, and by $\color{blue} B(x,r)$ the open ball of radius $r$ centered
at $x$. Let Fix $\color{blue} 0<r_0<\mf h_1$, and let $\ms E(m)$,
$m\in \mc M$, be the well given by
\begin{equation}
\label{71}
{\color{blue}\ms E(m)} \,:=\, N(m ,r_0)\,\cap\, B(m ,r_0)\;.
\end{equation}
For each $m\in \mc M$ we say that $\ms E(m)$ is a well.  By definition
of $r_0$, $\mc E(m) \cap \mc E(m')=\varnothing$ if $m\neq m'$. In
particular, by \eqref{68}, the wells are ordered so that
$\mc E(m_j) < \mc E(m_k)$ for $j<k$. For $1\le p\le \mf q$, let
$\ms E(\ms M_{p}(k))$, $k\in \bb Z$, be the wells defined by
\begin{equation}
\label{52}
{\color{blue}\ms E (\ms M_{p}(k))}
\,:=\, \bigcup_{m\in \ms M_{p}(k)}\ms E(m)\;,
\end{equation}

Denote by $\color{blue} C(\bb R_{+}, \ms F)$, $\ms F$ a metric space,
the space of continuous trajectories with the locally uniform topology
and its Borel $\sigma$-algebra, and by
$\color{blue} D(\bb R_{+}, \ms F)$ the space of right-continuous
trajectories with left limits endowed with the Skorohod topology and
its Borel $\sigma$-algebra.  Let $\color{blue}\bb P^\epsilon_x$,
$\color{blue}\bb P^{p,\epsilon}_x$, $x\in \bb R$, be the probability
measure on $C(\bb R_{+}, \bb R)$ induced by the diffusion
$X_\epsilon(\cdot)$, $X_\epsilon(\theta^{(p)}_\epsilon\,\cdot)$
starting from $x$, respectively. Expectation with respect to
$\bb P^\epsilon_x$, $\bb P^{p,\epsilon}_x$ is represented by
$\color{blue}\bb E^\epsilon_x$, $\color{blue}\bb E^{p,\epsilon}_x$,
respectively.  Similarly, let $\color{blue}\mtt Q^{p}_{\nu}$,
$1\le p\le \mf q$, $\nu$ a probability measure on $\ms S_p$, be the
probability measure on $\ms D(\bb R_{+}, \ms S_p)$ induced by the
Markov chain $\bb X_p(\cdot)$ starting from $\nu$. Expectation with
respect to $\mtt Q^{p}_{\nu}$ is represented by
$\color{blue} \mtt E^{p}_{\nu}$. When $\nu$ is the Dirac measure
concentrated on a set $\ms M_p(k)$, $k\in\bb Z$, we denote
$\mtt Q^{p}_{\nu}$, $\mtt E^{p}_{\nu}$ by
$\color{blue}\mtt Q^{p}_{\ms M_p(k)}$,
$\color{blue}\mtt E^{p}_{\ms M_p(k)}$, respectively.

The next theorem is the main result of this section. It describes the
asymptotic behavior of the diffusion $X_\epsilon(\cdot)$ in terms of a
Markov chain on a countable state space. Recall the definition of the
probability measure $\mtt h_p(x, \cdot)$, $1\le p\le \mf q$,
$x\in \bb R$, on $\ms S_p$, introduced in \eqref{74}.

\begin{theorem}[Convergence of the finite-dimensional distributions]
\label{mt3}
For each $1\le p\le \mf q$, 
\begin{equation*}
\lim_{\epsilon\rightarrow0}\,
\mathbb{P}_{x}^{p,\epsilon}\Big[\,
\bigcap_{j=1}^{\mf n} \big\{\, \mtt x(t_{j})
\in \ms E (\ms M_p(k_j))\,\big\}\,\Big]
\ =\ \mtt Q^{(p)}_{\mtt h_p(x, \cdot)}
\Big[\,\bigcap_{j=1}^{n}
\big\{\, \mtt x (t_{j}) = \ms M_p( k_{j}) \,\big\}\,\Big]
\end{equation*}
for all $\mf n\ge 1$, $0<t_1< \dots <t_{\mf n}$,
$k_1, \dots, k_{\mf n} \in \bb Z$, $x\in \bb R$.
\end{theorem}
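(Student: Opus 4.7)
The plan is to deduce Theorem \ref{mt3} from Theorem \ref{mt4} by the resolvent-to-finite-dimensional-distribution scheme introduced in \cite{lms}. The argument proceeds in two stages: first, the one-time marginal convergence is obtained by Laplace transform inversion; then the finite-dimensional statement is established by induction on $\mf n$ using the Markov property of $X_\epsilon$.

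For the one-time marginal, fix $k\in \bb Z$, $\lambda>0$, and let $\phi_{p,\epsilon}$ solve \eqref{41} with $G = \chi_{_{\ms E(\ms M_p(k))}}$. By stochastic representation,
\begin{equation*}
\phi_{p,\epsilon}(x) \;=\; \bb E^{p,\epsilon}_x\Big[\int_0^\infty e^{-\lambda t}\,
\chi_{_{\ms E(\ms M_p(k))}}(X_\epsilon(t\theta^{(p)}_\epsilon))\, dt\Big]\;.
\end{equation*}
Theorem \ref{mt4} gives that $\phi_{p,\epsilon}$ converges on each well $\ms E(\ms M_p(j))$ to $\mf f(j) := \int_0^\infty e^{-\lambda t} p^{(p)}_t(\ms M_p(j),\ms M_p(k))\, dt$, since $\mf f$ is the solution of $(\lambda - \mf L_p)\mf f = \mb 1_{\{k\}}$. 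For a general starting point $x\in\bb R$, one uses the scale-function computation for the 1D diffusion (together with the hitting-time bounds of Section \ref{sec2}) to show that the time to reach $\cup_{j} \ms E(\ms M_p(j))$ is negligible compared to $\theta^{(p)}_\epsilon$ and that the asymptotic entry distribution on the wells is precisely $\mtt h_p(x,\cdot)$. The strong Markov property at the hitting time then yields
\begin{equation*}
\phi_{p,\epsilon}(x) \;\longrightarrow\; \sum_{j\in\bb Z} \mtt h_p(x,\ms M_p(j))\,\mf f(j)\;.
\end{equation*}
By uniqueness of the Laplace transform, together with the continuity of $t\mapsto p^{(p)}_t(\ms M_p(j),\ms M_p(k))$ and the right-continuity of $t\mapsto \bb P^{p,\epsilon}_x[X_\epsilon(t\theta^{(p)}_\epsilon)\in \ms E(\ms M_p(k))]$, this inversion produces the case $\mf n=1$ for every $t>0$.

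For the inductive step, suppose the statement holds for $\mf n - 1$ times. Applying the Markov property at time $t_1\theta^{(p)}_\epsilon$,
\begin{equation*}
\bb P^{p,\epsilon}_x\Big[\bigcap_{j=1}^{\mf n}\{X_\epsilon(t_j\theta^{(p)}_\epsilon)\in \ms E(\ms M_p(k_j))\}\Big]
\;=\; \bb E^{p,\epsilon}_x\big[\, \chi_{_{\ms E(\ms M_p(k_1))}}(X_\epsilon(t_1\theta^{(p)}_\epsilon))\, F_\epsilon(X_\epsilon(t_1\theta^{(p)}_\epsilon))\,\big]\;,
\end{equation*}
where $F_\epsilon(y):=\bb P^{p,\epsilon}_y\big[\bigcap_{j=2}^{\mf n}\{X_\epsilon((t_j-t_1)\theta^{(p)}_\epsilon)\in \ms E(\ms M_p(k_j))\}\big]$. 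Since $r_0<\mf h_1$ and the wells are disjoint, the nearest $\ms M_p$-sets on both sides of any $y\in \ms E(\ms M_p(k_1))$ coincide with $\ms M_p(k_1)$, so by \eqref{74} the entry distribution $\mtt h_p(y,\cdot)$ is the Dirac mass at $\ms M_p(k_1)$. The inductive hypothesis therefore gives, for each such $y$,
\begin{equation*}
F_\epsilon(y) \;\longrightarrow\; \mtt Q^{(p)}_{\ms M_p(k_1)}\Big[\bigcap_{j=2}^{\mf n}\{\mtt x(t_j-t_1)=\ms M_p(k_j)\}\Big]\;.
\end{equation*}
Combining this convergence with the one-dimensional marginal convergence at time $t_1$ and the Markov property of the limit chain $\bb X_p$ yields the asserted identity for $\mf n$.

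The principal technical obstacle is to justify the passage to the limit inside the expectation in the inductive step: mere pointwise convergence of $F_\epsilon(y)$ on $\ms E(\ms M_p(k_1))$ is not sufficient; one needs either uniformity in $y\in \ms E(\ms M_p(k_1))$, or an auxiliary step in which the process is first evolved for an infinitesimally small fraction $\delta\theta^{(p)}_\epsilon$ of macroscopic time so that, by the hitting-time estimates of Section \ref{sec2}, the conditional law of $X_\epsilon$ inside the well becomes close to the measure $\pi(\cdot)/\pi(\ms M_p(k_1))$ concentrated on $\ms M_p(k_1)$, after which the induction applies uniformly and one lets $\delta\downarrow 0$ using continuity of $t_1\mapsto p^{(p)}_{t_1}(\cdot,\cdot)$. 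This localization, and the verification that the uniformity required can be read off from Theorem \ref{mt4} applied to shifted resolvent data, is where the argument is the most delicate.
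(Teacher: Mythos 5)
Your two-stage plan (one-time marginal, then induction via the Markov property) matches the paper's overall architecture, but the first stage has a genuine gap and the second stage relies on a uniformity that you flag but do not secure.

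The gap is in the Laplace inversion. Theorem \ref{mt4} gives, for each fixed $\lambda>0$,
\begin{equation*}
\int_0^\infty e^{-\lambda t}\,\bb P^{p,\epsilon}_x\big[\mtt x(t)\in\ms E(\ms M_p(k))\big]\,dt
\ \longrightarrow\
\int_0^\infty e^{-\lambda t}\, p^{(p)}_t(\cdot,\ms M_p(k))\,dt\,.
\end{equation*}
From this you deduce pointwise-in-$t$ convergence of the integrands, citing uniqueness of the Laplace transform, right-continuity of the prelimit, and continuity of the limit. That implication is false in general: Laplace transform convergence of a family of bounded, continuous functions $g_\epsilon$ to a continuous $g$ does not imply $g_\epsilon(t)\to g(t)$ (take $g_\epsilon=g+\tfrac12\sin(t/\epsilon)$). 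Probabilities of being in a well at time $t$ are not monotone in $t$, so no monotonicity rescues the argument. The paper's route uses \cite[Theorem 2.3]{lms} to upgrade the resolvent estimate of Theorem \ref{mt4} to convergence in the Skorohod topology of the trace process (Theorem \ref{l29}); this is a genuinely stronger statement, obtained using the process structure (tightness, time-change estimates \eqref{23}, Corollary \ref{l37}), not mere scalar Laplace inversion. Fixed-time marginals are then recovered from the trace process by the comparison in Lemma \ref{l08} together with the mass-counting argument in Proposition \ref{l11}, which turns $\liminf\ge$ (for all $k$) into equality.

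Two further remarks. First, Proposition \ref{l11} is stated for arbitrary sequences $x_\epsilon\in\mc E(\ms M_p(j))$ and $t_{j,\epsilon}\to t_j$; this is exactly the uniformity needed to pass to the limit inside the conditional expectation in your inductive step, and it is not recovered by the $\delta$-smoothing remedy you sketch unless you already have a quantitative version of the one-marginal result. Second, your justification for $\mtt h_p(y,\cdot)=\delta_{\ms M_p(k_1)}$ when $y\in\ms E(\ms M_p(k_1))$ is not quite what \eqref{74} says: for $y$ in the well of the leftmost minimum $m^-_{p,k_1}$ with $y<m^-_{p,k_1}$, one has $\mtt l_p(y)=k_1-1<\mtt r_p(y)=k_1$, so the nearest $\ms M_p$-sets do \emph{not} coincide. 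The conclusion is still correct, but the correct reason is that all the global maxima in $[m^+_{p,k_1-1},m^-_{p,k_1}]$ lie to the left of $y$ (because $r_0<\mf h_1\le\mf h_p$), so $\mf S^-_p(y)=\mf S_p(y)$ and $\mf w_p(y)=1$ in \eqref{92}. Finally, the paper's proof of Theorem \ref{mt3} also needs Corollary \ref{l26} and Lemma \ref{l33} to show that, starting from a general $x$, $\ms M_{p-1}$ is reached in a time $\prec\theta^{(p)}_\epsilon$ with entry distribution converging to $\mtt h_p(x,\cdot)$; your proposal gestures at this but does not say how the entry probability is matched to \eqref{74}, which is precisely what \eqref{63}, \eqref{89} compute.
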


Theorems \ref{mt1} and \ref{mt2} are simple consequences of this
result. We present a proof at the end of this section. Following the
method introduced in \cite{lms}, we prove Theorem \ref{mt3}, in two
steps. We first show in Theorem \ref{mt4} that the solutions of the
resolvent equations are asymptotically constant on each well. Then,
following \cite{llm}, we deduce from this result the convergence of
the finite-dimensional distributions of $X_\epsilon$ in the
time-scales $\theta^{(p)}_\epsilon$.

For $1\le p\le \mf q$, $\lambda>0$, and a bounded function
$g\colon \ms S_{p}\to \bb R$, denote by $G \colon \bb R \to \bb R$ the
lifted function defined by \eqref{55} with
$\mf g(k) = g (\ms M_{p}(k))$, $k\in \bb Z$.  Let $\phi_{p,\epsilon}$
be the unique weak solution of the resolvent equation \eqref{41}.  We
refer to Appendix \ref{sec-ap1} for the definition of weak solutions,
and a proof of the existence and uniqueness.

\begin{theorem}
\label{mt4}
For every $1\le p\le \mf q$, $\lambda>0$ and bounded function
$g\colon \ms S_{p}\to \bb R$,
\begin{equation*} 
\lim_{\epsilon \to 0}
\sup_{k\in \bb Z} \sup_{x\in \ms E(\ms M_{p}(k))}
|\, \phi_{p,\epsilon}(x)-f (\ms M_{p}(k))\,| \,=\,  0\,,
\end{equation*}
where $f\colon \ms S_{p} \to \bb R$ is the unique bounded solution of
the reduced resolvent equation
\begin{equation}
\label{42}
(\lambda - \bb L_p)f = g\,.    
\end{equation}
\end{theorem}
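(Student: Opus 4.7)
My plan is to follow the resolvent approach of \cite{lms}, adapted to the one-dimensional periodic setting. The starting point is the stochastic representation
\begin{equation*}
\phi_{p,\epsilon}(x) \;=\; \bb E^{p,\epsilon}_x
\Big[\int_0^\infty e^{-\lambda t}\, G(X_\epsilon(t\,\theta^{(p)}_\epsilon))\, dt\Big]\;,
\end{equation*}
from which the maximum principle immediately yields the uniform bound
$\|\phi_{p,\epsilon}\|_\infty \le \|g\|_\infty/\lambda$. For each $k\in\bb Z$ fix a reference point $m^*_k\in\ms M_p(k)$ (say $m^-_{p,k}$), and set $f_\epsilon(k):=\phi_{p,\epsilon}(m^*_k)$. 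The sequence $(f_\epsilon)_{\epsilon>0}$ is bounded in $\ell^\infty(\bb Z)$, and by $\mf u_p$-periodicity of the coefficients we also have $f_\epsilon(k+\mf u_p)=f_\epsilon(k)$, so a diagonal extraction produces subsequential limits $f\in\ell^\infty(\bb Z)$.

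Next I would prove the asymptotic constancy of $\phi_{p,\epsilon}$ on each well $\ms E(\ms M_p(k))$. Using the strong Markov property at the hitting time $H_k^\epsilon$ of $m^*_k$,
\begin{equation*}
\phi_{p,\epsilon}(x)-f_\epsilon(k)
\;=\;\bb E^{p,\epsilon}_x\!\Big[\int_0^{H^\epsilon_k} e^{-\lambda t} G(X_\epsilon(t\theta^{(p)}_\epsilon))\,dt\Big]
\;+\;\bb E^{p,\epsilon}_x\!\big[(e^{-\lambda H^\epsilon_k}-1)\,f_\epsilon(k)\big]\;,
\end{equation*}
so it suffices to show that $\theta^{(p)}_\epsilon\cdot H_k^\epsilon$ (in the original time scale) is negligible, uniformly in $x\in\ms E(\ms M_p(k))$, compared to $\theta^{(p)}_\epsilon$. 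This is the content of the hitting time estimates announced for Section \ref{sec2}: by postulate $\mc P_5(p)$ every barrier inside $\ms M_p(k)$ has height at most $\mf h_{p-1}$, so inside a well the diffusion equilibrates on a time scale of order $\theta^{(p-1)}_\epsilon \ll \theta^{(p)}_\epsilon$, making $H_k^\epsilon\to 0$ in probability under $\bb P^{p,\epsilon}_x$. Combined with the uniform bound, dominated convergence gives
$\sup_{x\in \ms E(\ms M_p(k))} |\phi_{p,\epsilon}(x)-f_\epsilon(k)|\to 0$ for each $k$, and a compactness argument upgrades this to uniformity in $k$ using $\mf u_p$-periodicity.

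The final step is to identify any subsequential limit $f$ as the unique bounded solution of $(\lambda-\bb L_p)f=g$. I would test the resolvent equation \eqref{41} against appropriate equilibrium potentials (capacitors between $\ms E(\ms M_p(k))$ and $\bigcup_{j\ne k}\ms E(\ms M_p(j))$, suitably truncated to remain in $\bb Z$-periodic form), use Dirichlet-form/Green-identity manipulations, and let $\epsilon\to0$ with the help of the asymptotic Eyring--Kramers identities encoded in the weights $\pi_p$ and $\sigma_p$ of \eqref{01} and the jump rates \eqref{50}. This produces the identity
\begin{equation*}
\lambda\, f(\ms M_p(k))\;-\;\sum_{a=\pm 1} R_p(\ms M_p(k),\ms M_p(k+a))\,[\,f(\ms M_p(k+a))-f(\ms M_p(k))\,]\;=\;g(\ms M_p(k))
\end{equation*}
for every $k\in\bb Z$, i.e.\ $(\lambda-\bb L_p)f=g$. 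Uniqueness of the bounded solution of this reduced equation (since $\lambda>0$ and $\bb L_p$ generates a Markov semigroup on $\ms S_p$) forces convergence of the full sequence, completing the proof.

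The main obstacle is the step connecting $\phi_{p,\epsilon}$ to the reduced generator $\bb L_p$: one must transfer the sharp capacity estimates from the previous layer to layer $p$ without access to an explicit invariant measure (unlike the reversible setting of \cite{lls1,lls2}), and show that the contribution of the transient set $\mf T_p$ and of jumps to non-neighbour classes vanish in the limit. This is where the hierarchical postulates $\mc P_5(p)$--$\mc P_8(p)$, together with the hitting time estimates of Section \ref{sec2}, do the heavy lifting, since they guarantee that in the time scale $\theta^{(p)}_\epsilon$ only transitions across barriers of height exactly $\mf h_p$ contribute to order one.
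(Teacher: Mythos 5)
Your proposal follows essentially the same three-step strategy as the paper: (i) uniform bound via the stochastic representation, (ii) local ergodicity via hitting-time estimates, (iii) plugging an (approximation of the) equilibrium potential into the weak formulation and passing to the limit via Laplace asymptotics, followed by uniqueness of the reduced resolvent equation. One small slip: $f_\epsilon$ is \emph{not} $\mf u_p$-periodic, because $g\colon\ms S_p\to\bb R$ is merely bounded and hence the lifted $G$ in \eqref{55} need not be periodic; the diagonal extraction still goes through (bounded scalar sequence for each $k$), so this is harmless, but the justification you give is wrong.

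There are two genuine gaps. First, the local ergodicity you establish is on $\ms E(\ms M_p(k))$ only, but the test-function argument requires a stronger statement: after integrating by parts, the boundary terms in the analogue of \eqref{65} involve $\phi_{p,\epsilon}(\sigma^{p,\pm}\pm\delta_\epsilon)$ at points sitting at distance $\delta_\epsilon(\epsilon)\sim\sqrt{\epsilon\log\epsilon^{-1}}$ from a saddle, far outside the wells. The paper establishes constancy on the much larger intervals $J^{p,\pm}_k$ reaching almost to the saddles (Proposition \ref{l06}, Lemma \ref{l14}), and explicitly flags this point in the remark preceding Proposition \ref{l06}; your argument stopping at the well boundary is not enough to evaluate those boundary terms. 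Second, and more importantly, for $p>1$ the obstacle is not, as you describe it, contributions from the transient set $\mf T_p$ or from non-neighbour jumps, but rather that crossing the first saddle $\sigma^{p,-}_{k,k+1}$ on the right of $\ms M_p(k)$ does not land the diffusion in $\ms M_p(k+1)$: the set $\ms W^{(p)}_{k,k+1}$ of global maxima of $S$ between the two wells may contain several points, with $S$-local minima in between. The paper deals with this through Lemma \ref{l28}, which expresses $\phi_{p,\epsilon}(\sigma^{p,-}_{k,k+1}+\delta^{p,-}_{k,k+1})$ as a convex combination with explicit weight $\varpi^+_{p,k}$; without this the boundary terms would produce a single factor $\sqrt{-S''(\sigma^{p,-}_{k,k+1})}$ instead of the full normalisation $\sigma_p(k,k+1)=\sum_{\sigma\in\ms W^{(p)}_{k,k+1}}\sqrt{2\pi/(-S''(\sigma))}$ that enters the jump rate \eqref{50}, and the identification of $\bb L_p$ would fail. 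The postulates $\mc P_5$--$\mc P_8$ you invoke do not by themselves resolve this; they feed into Lemmas \ref{l15}, \ref{l17}, \ref{l27} which are what make Lemma \ref{l28} provable.
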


\begin{remark}
\label{rm1}
Denote by $\tau (\mathscr{A})$, $\mathscr{A} \subset \bb R$,
the hitting time of the set $\mathscr{A}$:
\begin{equation}
\label{81}
{\color{blue}\tau (\ms A)} \ :=\
\inf\big\{ \, t>0\,:\, X_\epsilon (t) \in \ms A \,\big\} \;.
\end{equation}
By Corollary \ref{l26},
\begin{equation*}
\mtt h_p (x, \ms M_p(j)) \,=\,
\bb P_{x} \big[ \tau (\ms M_p(j)) \,=\,
\tau (\ms M_p)  \big] \,+\, o_\epsilon (1)
\end{equation*}
for all $2\le p\le \mf q$, $j\in \bb Z$, $x\in \bb R$. Here and below,
$\color{blue} o_\epsilon (1)$ stands for an expression which converges
to $0$ as $\epsilon\to 0$, uniformly in the parameters on which it
depends.
\end{remark}

\section{Hitting times}
\label{sec2}

In this section, we present general estimates on the hitting times of
the diffusion $X_\epsilon(\cdot)$. Denote by $\mtt x (\cdot)$ an
element of $\mc D(\bb R_+, \bb R)$, and by
$\color{blue}\bb P^{\vartheta, \epsilon}_x$, $x\in \bb R$, the
probability measure on $\ms D(\bb R_{+}, \bb R)$ induced by the
diffusion $X_\epsilon(\cdot)$ starting from $x$ and speeded-up by
$\vartheta$. Expectation with respect to
$\bb P^{\vartheta, \epsilon}_x$ is represented by
$\color{blue}\bb E^{\vartheta, \epsilon}_x$.

For every closed set $A\subset \bb T$ let
$\tau(A)$ be the first time the trajectory $\mtt x(\cdot)$ hits the
set $A$, that is
\begin{equation}
\label{hit1}
{\color{blue} \tau(A)} \,:=\,
\inf\{t\geq 0: \mtt x (t) \in A\}.
\end{equation}
If $A$ is a singleton or a pair, say $A = \{a\}$, $A = \{a, b\}$ for
some $a$, $b\in \bb R$, we write $\tau(a)$, $\tau(a,b)$ for
$\tau(\{a\})$, $\tau(\{a, b\})$, respectively. 

\begin{proposition}
\label{l04}
Let $F\colon \bb R \to \bb R$ be a bounded and measurable
function. Fix $\lambda>0$, $\vartheta > 0$, and denote by
$\psi = \psi_{\epsilon, \vartheta, \lambda, F}$ the solution of the
resolvent equation
\begin{equation*}
\{\lambda - \vartheta \, \ms L_{\epsilon}\}\, \psi = F\,.
\end{equation*}
Then,
\begin{equation}
\label{54}
{\color{blue} \Vert\psi \Vert_\infty} \,:=\,
\sup_{x\in \bb R} |\psi(x)| \,\le\, \frac{1}{\lambda}\,
\Vert F\Vert_\infty \;.
\end{equation}
Moreover,
\begin{equation*}
\Big|\, \psi(x) - \psi(a)\,
\bb P^{\epsilon}_{x}[\, \tau(a)< \tau(b)\,]
- \psi(b)\, \bb P^{\epsilon}_{x}
[\, \tau(b)< \tau(a)\,] \, \Big| \, \le \,
2\, \Vert F \Vert_\infty \;
\bb E^{\epsilon}_{x} [\, \tau(a,b) /\vartheta \, ]\;,
\end{equation*}
and
\begin{equation}
\label{08}
\big|\, \psi(x) - \psi(a)\, \big| \, \le \,
2\, \Vert F \Vert_\infty \; \Big\{\, \bb E^{\epsilon}_{x} [\,
\tau(a,b) /\vartheta \, ]
\;+\; \frac{1}{\lambda} \, 
\bb P^{\epsilon}_{x} [\, \tau(b)< \tau(a)\, ]\,\Big\}  
\end{equation}
for all $a < x < b$. 
\end{proposition}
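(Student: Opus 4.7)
\medskip\noindent\textit{Proof plan.} The natural starting point is the stochastic (Feynman--Kac) representation for the resolvent. Since the generator of the time-changed diffusion $\mtt x(t) = X_\epsilon(\vartheta t)$ is precisely $\vartheta\,\ms L_\epsilon$, the equation $(\lambda - \vartheta\,\ms L_\epsilon)\psi = F$ yields
\begin{equation*}
\psi(x) \;=\; \bb E^{\vartheta,\epsilon}_x\!\Big[\int_0^\infty e^{-\lambda t}\, F(\mtt x(t))\, dt\Big],
\end{equation*}
and from this the sup bound \eqref{54} is immediate. One minor preliminary point is that $\psi$ is only a weak solution, so I would invoke Appendix \ref{sec-ap1} (uniform ellipticity together with the $C^1$ regularity of the coefficients yields interior regularity) to justify applying Itô/Dynkin to $e^{-\lambda t}\psi(\mtt x(t))$; this is the only mildly technical step, but it is standard.

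Next, fix $a<x<b$ and set $\tau:=\tau(a,b)$. Optional stopping applied to the local martingale $e^{-\lambda t}\psi(\mtt x(t)) + \int_0^t e^{-\lambda s} F(\mtt x(s))\, ds$ at the bounded stopping time $\tau\wedge T$ and letting $T\to\infty$ (using the sup bound plus the fact that $\tau<\infty$ a.s., which follows from uniform ellipticity on the bounded interval $[a,b]$) gives
\begin{equation*}
\psi(x) \;-\; \bb E^{\vartheta,\epsilon}_x\!\big[e^{-\lambda\tau}\psi(\mtt x(\tau))\big] \;=\; \bb E^{\vartheta,\epsilon}_x\!\Big[\int_0^\tau e^{-\lambda t}F(\mtt x(t))\,dt\Big],
\end{equation*}
whose modulus is bounded by $\Vert F\Vert_\infty\, \bb E^{\vartheta,\epsilon}_x[\tau]=\Vert F\Vert_\infty\,\bb E^\epsilon_x[\tau(a,b)/\vartheta]$ by the time change.

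For the second inequality of the proposition, observe that $\mtt x(\tau)\in\{a,b\}$ by continuity of paths, so
\begin{equation*}
\bb E^{\vartheta,\epsilon}_x\!\big[e^{-\lambda\tau}\psi(\mtt x(\tau))\big] \;=\; \psi(a)\,\bb E^{\vartheta,\epsilon}_x\!\big[e^{-\lambda\tau}\mathbf{1}\{\tau(a)<\tau(b)\}\big] \;+\; \psi(b)\,\bb E^{\vartheta,\epsilon}_x\!\big[e^{-\lambda\tau}\mathbf{1}\{\tau(b)<\tau(a)\}\big].
\end{equation*}
Replacing $e^{-\lambda\tau}$ by $1$ costs, via $|1-e^{-\lambda\tau}|\le \lambda\tau$ together with $\Vert\psi\Vert_\infty\le \Vert F\Vert_\infty/\lambda$, an additional term of size at most $\Vert F\Vert_\infty\,\bb E^\epsilon_x[\tau(a,b)/\vartheta]$. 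Since the probabilities $\bb P^{\vartheta,\epsilon}_x$ and $\bb P^\epsilon_x$ agree on events involving only the ordering of hits (the time change is deterministic), adding the two errors gives the factor $2$ in the second claim.

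Finally, the last bound \eqref{08} is a direct algebraic consequence. Writing
\begin{equation*}
\psi(x)-\psi(a) \;=\; \Big\{\psi(x)-\psi(a)\bb P^\epsilon_x[\tau(a)<\tau(b)]-\psi(b)\bb P^\epsilon_x[\tau(b)<\tau(a)]\Big\} \;+\; \big(\psi(b)-\psi(a)\big)\,\bb P^\epsilon_x[\tau(b)<\tau(a)],
\end{equation*}
applying the just-proven inequality to the first bracket and the trivial estimate $|\psi(b)-\psi(a)|\le 2\Vert\psi\Vert_\infty \le 2\Vert F\Vert_\infty/\lambda$ from \eqref{54} to the second term yields \eqref{08}. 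The only genuine obstacle in this entire argument is justifying the Itô/Dynkin computation for a weak solution, which is handled by the regularity theory collected in Appendix \ref{sec-ap1}; the rest is manipulation of the Feynman--Kac formula and the strong Markov property.
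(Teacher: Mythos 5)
Your proof is correct and follows essentially the same approach as the paper. The only stylistic difference is in how you obtain the identity $\psi(x) = \bb E^{\vartheta,\epsilon}_x[e^{-\lambda\tau}\psi(\mtt x(\tau))] + \bb E^{\vartheta,\epsilon}_x[\int_0^\tau e^{-\lambda t}F(\mtt x(t))\,dt]$: you invoke optional stopping for the local martingale $e^{-\lambda t}\psi(\mtt x(t)) + \int_0^t e^{-\lambda s}F(\mtt x(s))\,ds$, which requires the interior $\mc H^2_{\mathrm{loc}}$ regularity of the weak solution (indeed established in Appendix \ref{sec-ap1}); the paper instead starts from the Feynman--Kac representation $\psi(x)=\bb E^{\vartheta,\epsilon}_x[\int_0^\infty e^{-\lambda s}F(\mtt x(s))\,ds]$, splits the integral at $\tau/\vartheta$ and applies the strong Markov property, thereby deriving the same identity without directly applying It\^o/Dynkin to $\psi$. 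Both routes yield the same error decomposition and the same factor of $2$, and the final algebraic step for \eqref{08} is identical.
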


\begin{proof}
Recall from \cite[Section 6.5]{Fri} the stochastic representation of
the solution of the resolvent equation:
\begin{equation}
\label{07}
\psi(x) = \bb
E^{\vartheta, \epsilon}_{x}\Big[\, \int_{0}^{\infty}e^{-\lambda s}\,
F ( \mtt x (s)) \; ds\,\Big]\;.
\end{equation}
Assertion \eqref{54} is a simple consequence of this formula.

Fix $a < x < b$ and let $\color{blue} \tau := \tau(a,b)$. By the
stochastic representation \eqref{07} of the resolvent equation, and
changing the measure from $\bb P^{\vartheta, \epsilon}_{x}$ to
$\bb P^{\epsilon}_{x}$, 
\begin{equation*}
\begin{aligned}
\psi(x) \, & =\, \bb E^{\epsilon}_{x}\Big[\,
\int_{0}^{\infty}e^{-\lambda s}\, F ( \mtt x (s\vartheta)) \;
ds\,\Big]
\\
\, & =\, \bb E^{\epsilon}_{x}\Big[\,
\int_{0}^{\tau/\vartheta}e^{-\lambda
s}\, F(\mtt x(s\vartheta) )\, ds\, \Big]
\,+\, \bb
E^{\epsilon}_{x}\Big[\, \int_{\tau/\vartheta}^{\infty}
\,  e^{-\lambda s}\,  F(\mtt x(s\vartheta) )\, ds\,
\Big]\, .
\end{aligned}
\end{equation*}
The absolute value of the first term is bounded by
$\Vert F \Vert_\infty \; \bb E^{\epsilon}_{x} [\, \tau/\vartheta \,
]$.  After a change of variables, by the strong Markov property and
the stochastic representation of the solution of the resolvent
equation, the second term is seen to be equal to
\begin{equation*}
\bb E^{\epsilon}_{x}\big[\,
e^{-\lambda \tau/\vartheta}\, 
\psi (\mtt x (\tau))\,\big]\;.
\end{equation*}
Decomposing this expectation according to the events
$\{\tau(a)< \tau(b) \}$ and
$\{\tau(b)< \tau(a) \}$ the previous expectation
can be written as
\begin{equation*}
\psi (a) \, \bb E^{\epsilon}_{x}\big[\,
e^{-\lambda \tau/\vartheta}
\,,\, \tau(a)< \tau(b)\, \big]
\,+\,
\psi (b) \, \bb E^{\epsilon}_{x}\big[\,
e^{-\lambda \tau/\vartheta}
\,,\, \tau(b)< \tau(a)\, \big]
\;.
\end{equation*}
Add and subtract $1$ to $\exp \{-\lambda \tau/\vartheta\}$ to
rewrite this expression as
\begin{equation}
\label{59}
\psi (a) \, \bb P^{\epsilon}_{x}\big[\,
\tau(a)< \tau(b)\, \big]
\,+\,
\psi (b) \, \bb P^{\epsilon}_{x}\big[\, 
\tau(b)< \tau(a)\, \big] \;+\; R(\epsilon)
\;,
\end{equation}
where
\begin{equation*}
\big| R(\epsilon) \big| \,\le\, \max\big\{ |\psi (a)| \,,\,
|\psi (b)| \big\}
\, \bb E^{\epsilon}_{x}\Big[\, 1 - e^{-\lambda \tau/\vartheta} \, \Big] \;.
\end{equation*}
As the absolute value of $\psi (\cdot)$ is uniformly bounded
by $(1/\lambda) \, \Vert F\Vert_\infty$, and $1 - e^{-x} \le x$ for
$x\ge 0$,
\begin{equation*}
\big| R(\epsilon) \big| \,\le\, \Vert F \Vert_\infty
\, \bb E^{\epsilon}_{x}\big[\,\tau/\vartheta \, \big] \;.
\end{equation*}
This proves the second assertion of the lemma.

To prove the last assertion of the lemma, return to \eqref{59}, and
write the sum of the first two terms as
\begin{equation*}
\psi (a) \, 
\,+\,
\big[\, \psi (b) \,-\, \psi (a) \big]
\, \bb P^{\epsilon}_{x}\big[\, 
\tau(b)< \tau(a)\, \big] \;.
\end{equation*}
To complete the proof it remains to recall the previous estimates and
that the absolute value of $\psi (\cdot)$ is uniformly bounded
by $(1/\lambda) \, \Vert F\Vert_\infty$.
\end{proof}

The next result provides bounds on the expectation of the hitting time
$\tau(a,b)$.

\begin{lemma}
\label{l01}
Given $a < b$, let $\color{blue} K = K_{a,b} = K^{^{\nearrow}}_{a,b} \wedge
K^{^\nwarrow}_{a,b}$, where
\begin{equation*}
K^{^{\nearrow}}_{a,b} \,:=\, \sup_{a \le y\le x \le b} \{\,
S(x)-S(y)\,\}\,, \quad
K^{^\nwarrow}_{a,b} \,:=\,
\sup_{a \le y\le x \le b} \{\, S(y)-S(x)\,\} \;.
\end{equation*}
Then, 
\begin{equation*}
\sup_{x\;\in\; [a,b]}\bb
E^{\epsilon}_{x}\big[\, \tau(a,b)\, \big]
\, \leq\, \mf c_0\, \epsilon^{-1}\, e^{K_{a,b}/\epsilon}
\end{equation*}
for all $\epsilon > 0$, where
${\color{blue} \mf c_0 }:= \sup_{x\in \bb R} \mss a (x)^{-1}$.
\end{lemma}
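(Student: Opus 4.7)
Let $v(x) := \bb E^{\epsilon}_{x}[\tau(a,b)]$. By the standard correspondence between hitting times of one-dimensional diffusions and elliptic boundary-value problems, $v\in C^{2}(a,b) \cap C([a,b])$ is the unique solution of
\begin{equation*}
\ms L_{\epsilon} v \,=\, -\,1 \;\;\text{on}\;\; (a,b)\,, \qquad v(a) \,=\, v(b) \,=\, 0\,.
\end{equation*}
Using $S'(x) = -\mss b(x)/\mss a(x)$, rewrite the generator in divergence form
\begin{equation*}
\ms L_{\epsilon} v \;=\; \epsilon\, \mss a(x)\, e^{S(x)/\epsilon}\, \frac{d}{dx}\!\Bigl(e^{-S(x)/\epsilon}\, v'(x)\Bigr)\,,
\end{equation*}
so that the boundary-value problem becomes $(e^{-S/\epsilon}\, v')'(x) = -\,e^{-S(x)/\epsilon}/[\epsilon\, \mss a(x)]$.

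Integrating twice and using the two boundary conditions yields an explicit representation for $v$. Concretely, one can integrate $(e^{-S/\epsilon}\,v')'$ either from $a$ to $x$ or from $x$ to $b$, which produces two equivalent formulas for $v$, one being a (weighted) iterated integral on $\{a\le z\le t\le b\}$, the other on $\{a\le t\le z\le b\}$. The kernel in both cases is of the form $e^{[S(t)-S(z)]/\epsilon}/\mss a(z)$.

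The crucial observation is then the geometric one: in the first representation the domain of integration enforces $z\le t$, so the exponent $S(t)-S(z)$ is bounded above by $K^{\nearrow}_{a,b}$; in the second representation $t\le z$, so $S(t)-S(z)\le K^{\nwarrow}_{a,b}$. Choosing whichever representation corresponds to the smaller of $K^{\nearrow}_{a,b}$ and $K^{\nwarrow}_{a,b}$ (i.e.\ to $K_{a,b}$) and pulling $\mss a(z)^{-1}\le\mf c_0$ out of the integral gives
\begin{equation*}
\sup_{x\in[a,b]} v(x) \;\le\; \mf c_0\, \epsilon^{-1}\, e^{K_{a,b}/\epsilon}\,,
\end{equation*}
up to an absolute constant coming from the integration over $[a,b]$ (which is bounded in the regime relevant to the paper, since the intervals considered later sit inside a fundamental period). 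The two terms in the explicit formula for $v$ are both bounded the same way because one of them carries a factor $\int_{x}^{b} e^{S/\epsilon}/\int_{a}^{b} e^{S/\epsilon}\le 1$, while the other is directly controlled.

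\textbf{Main obstacle.} The only real subtlety is that neither of the two natural representations of $v$ yields both sides of the bound simultaneously: a single integration in one direction furnishes only the factor $e^{K^{\nearrow}/\epsilon}$ (or $e^{K^{\nwarrow}/\epsilon}$). Obtaining the minimum $K_{a,b}=K^{\nearrow}_{a,b}\wedge K^{\nwarrow}_{a,b}$ requires writing both representations and choosing the one adapted to the geometry of $S$ on $[a,b]$. Once this choice is made, the estimates on the kernel are elementary.
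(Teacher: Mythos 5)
Your proposal is correct, and the key computational idea is the one the paper uses: write the generator in divergence form $(e^{-S/\epsilon}v')' = \pm\, e^{-S/\epsilon}/(\epsilon\mss a)$, express the relevant ODE solution as an iterated integral with kernel $e^{[S(t)-S(z)]/\epsilon}/\mss a(z)$, and observe that integrating from $a$ (resp.\ from $b$) constrains the exponent by $K^{\nearrow}_{a,b}$ (resp.\ $K^{\nwarrow}_{a,b}$), which after taking the minimum yields $K_{a,b}$. The one structural difference from the paper's argument is the choice of auxiliary ODE: you impose Dirichlet data $v(a)=v(b)=0$ so that $v=\bb E^\epsilon_\cdot[\tau(a,b)]$ directly, which forces you to track a normalising term involving $v'(a)$ and to argue that the accompanying ratio $\int_a^x e^{S/\epsilon}/\int_a^b e^{S/\epsilon}$ (or $\int_x^b e^{S/\epsilon}/\int_a^b e^{S/\epsilon}$) is $\le 1$ and that the remaining term has the right sign to be discarded; the paper instead solves the Cauchy problem $\ms L_\epsilon\zeta_\epsilon=1$, $\zeta_\epsilon(a)=\zeta_\epsilon'(a)=0$ (formula \eqref{40}), getting a single-term iterated integral without any normalisation, and then recovers the bound $\bb E^\epsilon_{x_0}[\tau]\le\sup_{[a,b]}\zeta_\epsilon$ via optional stopping for $\zeta_\epsilon(X_\epsilon(t\wedge\tau))-t\wedge\tau$. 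Both routes are essentially equivalent and both carry the same suppressed area factor $(b-a)^2/2$, which you correctly flag as harmless in the regime where the lemma is applied.
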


\begin{proof}
Consider the equation
\begin{equation}
\label{36}
\begin{cases} (\ms L_{\epsilon}u)(x) = 1 \;\; \text{if}\ x\in (a, b)
\\
u(a) = u'(a) = 0\, .
\end{cases}
\end{equation}
By a direct computation it can be verified that the function
$\zeta_{\epsilon}: (a, b) \to \bb R$ given by
\begin{equation}
\label{40}
\zeta_{\epsilon}(x) =
\frac{1}{\epsilon}\int_{a}^{x}e^{\frac{S(y)-S(a)}{\epsilon}}
\int_{a}^{y} \frac{1}{\mss a (z)} \, e^{\frac{S(a)-S(z)}{\epsilon}}dzdy
\end{equation}
solves the previous equation. Note that $\zeta_{\epsilon}$ is of class
$C^{2}$ on $(a, b)$ and positive. Let $\xi_{\epsilon}$ be an extension
of $\zeta_{\epsilon}$ to an element in $C^{2}(\bb R)$. In other words,
$\xi_\epsilon \in C^{2}(\bb R)$, has compact support, and satisfies
$\xi_{\epsilon}(x) = \zeta_{\epsilon}(x)$ for every $x\in (a, b)$.

Fix $x_{0}\in [a, b]$.  Since $\xi_\epsilon$ is in the domain of
the generator $\ms L_{\epsilon}$, the following expression is a
martingale:
\begin{equation*}
M_{t} \,=\, \xi_{\epsilon}(X_{\epsilon} (t)) -
\int_{0}^{t}\ms [\ms
L_{\epsilon}\xi_{\epsilon}](X_{\epsilon} (s)) \; ds\,.
\end{equation*}
Let $\tau := \tau(a,b)$. By \eqref{36} and the definition of
$\xi_{\epsilon}$
\begin{equation*}
M_{t\wedge \tau} \,=\,
\zeta_{\epsilon}(X_{\epsilon}(t\wedge \tau)) \,-\,  t\wedge
\tau, \quad\forall t\geq 0.
\end{equation*}
Therefore,
\begin{equation*}
\zeta_{\epsilon}(x_0) \; = \; \bb
E^\epsilon_{x_0}\big[M_{0}\big] \; = \; \bb
E^\epsilon_{x_0}\big[M_{t\wedge \tau}\big]
\;=\; \bb E^\epsilon_{x_{0}}\big[\zeta_{\epsilon}(\mtt x (t\wedge
\tau))\big] \;-\; \bb
E^\epsilon_{x_{0}}\big[t\wedge \tau\big]\, ,
\end{equation*}
so that
\begin{equation}
\label{10}
\bb E^\epsilon_{x_{0}} \big[ \, \tau \,\big ] 
\,=\, \lim_{t\to\infty}
\bb E^\epsilon_{x_{0}} \big[\, t \wedge \tau
\, \big]  \; = \;
\bb E^\epsilon_{x_{0}}\big[\, \zeta_{\epsilon}(
\mtt x(\tau)) \, \big] \,-\,  \zeta_{\epsilon}(x_0) \;.
\end{equation}
In particular, by the explicit expression of the positive function
$\zeta_{\epsilon}$, and a direct computation, 
\begin{equation*}
\bb E^\epsilon_{x_{0}} [\tau ]   \,\leq\,
\sup_{x\;\in\; [a, b]} \zeta_{\epsilon}(x) \;
\le \; \mf c_0 \, \epsilon^{-1}\, e^{K^{^\nearrow}_{a,b}/\epsilon} 
\end{equation*}
for all $\epsilon>0$, $x_0\in [a,b]$, where, recall,
$\mf c_0 = \sup_{x\in \bb R} [1/\mss a(x)]$, and
$K^{^\nearrow}_{a,b} = \sup_{a \le y\le x \le b} \{\, S(x)-S(y)\,\}$.

By choosing $\zeta_\epsilon (\cdot)$ as the solution of 
\begin{equation*}
\begin{cases}
(\ms L_{\epsilon}u)(x) = 1 \;\; \text{if}\ x\in (a, b)
\\
u(b) = u'(b) = 0\,,
\end{cases}
\end{equation*}
the same argument yields the alternative bound
\begin{equation*}
\sup_{x\;\in\; [a,b]}\bb
E^\epsilon_{x}\big[\, \tau(a,b)\, \big]
\, \leq\, \mf c_0 \,  \epsilon^{-1}\, e^{K^{^\nwarrow}_{a,b}/\epsilon}
\end{equation*}
for all $\epsilon > 0$, where
$K^{^\nwarrow}_{a,b}\,:=\, \sup_{a \le y\le x \le b} \{\, S(y)-S(x)\,\}$.  This
completes the proof of the lemma.
\end{proof}

The next result shows that the set $\ms M_p$ is attained in the
time-scale $\epsilon^{-1} e^{\mf h_{p-1}/\epsilon}$. It will be
improved in Corollary \ref{l40}, after we prove the metastable
behavior of the diffusion $X_\epsilon(\cdot)$ at time-scale
$\theta^{(p)}_\epsilon$.  

\begin{lemma}
\label{l33}
There exists finite constant $C_0 = C_0(\mf q, \mtt a(\cdot))$ such
that
\begin{equation*}
\sup_{x\in \bb R}
\bb E_{x}^{\epsilon} [\, \tau (\ms M_p) \, ]
\,\le\, C_0 \, \epsilon^{-1}\, e^{\mf h_{p-1}/\epsilon}
\end{equation*}
for all $1\le p\le \mf q$, provided $\mf h_0=0$.
\end{lemma}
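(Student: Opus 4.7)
The plan is to argue by induction on $p$, using Lemma \ref{l01} as the main analytic tool and the hierarchical postulates $\mc P_1(q)$--$\mc P_{10}(q)$ as geometric input.

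\emph{Inductive step} $(p-1\Rightarrow p)$. Apply the strong Markov property at $\tau(\ms M_{p-1})$ to write
\begin{equation*}
\bb E_x^\epsilon[\tau(\ms M_p)] \;\le\; \bb E_x^\epsilon[\tau(\ms M_{p-1})] \;+\; \sup_{y\in\ms M_{p-1}} \bb E_y^\epsilon[\tau(\ms M_p)].
\end{equation*}
The first term is $O(\epsilon^{-1}e^{\mf h_{p-2}/\epsilon})$ by the inductive hypothesis and is dominated by the target since $\mf h_{p-1}>\mf h_{p-2}$. For the supremum, fix $y\in\ms M_{p-1}\setminus\ms M_p$; then $y$ lies in some $\bb X_{p-1}$-transient class $\ms M_{p-1}(j_0)$. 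Postulates $\mc P_6(p-1)$--$\mc P_9(p-1)$ furnish a sequence $j_0\to j_1\to\cdots\to j_k$ with $k\le\mf u_{p-1}$ in which each transition has positive $\bb X_{p-1}$-rate (so the crossed barrier has height exactly $\mf h_{p-1}$ by $\mc P_7(p-1)$), the depths $S(\ms M_{p-1}(j_i))$ are non-increasing by $\mc P_8(p-1)$, and $\ms M_{p-1}(j_k)\subset\ms M_p$. At each step I would take $[a_i,b_i]$ to be the connected component of $\{z\in\bb R:S(z)<S(\ms M_{p-1}(j_i))+\mf h_{p-1}\}$ containing $\ms M_{p-1}(j_i)$: using $\mc P_5(p-1)$ to control depths of local minima inside this well, one verifies $K^\nearrow_{[a_i,b_i]}\wedge K^\nwarrow_{[a_i,b_i]}\le\mf h_{p-1}$, so Lemma \ref{l01} gives the per-step bound $\mf c_0\epsilon^{-1}e^{\mf h_{p-1}/\epsilon}$. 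Chaining the at most $\mf u_{p-1}$ estimates via strong Markov, absorbing the subdominant $O(\log\epsilon^{-1})$ transit through each unstable equilibrium on the barrier, yields $\sup_y\bb E_y^\epsilon[\tau(\ms M_p)]\le C\epsilon^{-1}e^{\mf h_{p-1}/\epsilon}$.

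\emph{Base case} $(p=1)$. The target is $\sup_x\bb E_x^\epsilon[\tau(\mc M)]\le C_0\epsilon^{-1}$ (recall $\mf h_0=0$). Lemma \ref{l01} alone is insufficient: any interval $[m_{k-1},m_k]$ carries $K\ge\mf h_1>0$ and therefore contributes an unwanted exponential factor. My argument has two ingredients: (i) a Freidlin--Wentzell-style deterministic-approximation estimate shows that, uniformly in $x$, the diffusion enters a fixed neighborhood $B(m,\delta)$ of some local minimum in time $O(\log\epsilon^{-1})$ (the logarithm accounting for possible starts near an unstable equilibrium $\sigma_k$); (ii) inside such a neighborhood the local quadratic shape of $S$ at $m$ makes the diffusion essentially Ornstein--Uhlenbeck, and explicit Laplace-asymptotic analysis of the scale-function formula \eqref{40} gives $\sup_{y\in B(m,\delta)}\bb E_y^\epsilon[\tau(m)]=O(\epsilon^{-1/2})$. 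Combining yields $\sup_x\bb E_x^\epsilon[\tau(\mc M)]=O(\epsilon^{-1/2})=o(\epsilon^{-1})$.

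The principal obstacle is the base case: Lemma \ref{l01} unavoidably carries an exponential prefactor $e^{K/\epsilon}$, so the sub-exponential estimate needed at level $p=1$ must come from exploiting the one-dimensional, non-degenerate character of the diffusion and the quadratic shape of $S$ near each local minimum. Once the base case is settled, the inductive step is a clean chaining of Lemma \ref{l01} applications along the downhill $\bb X_{p-1}$-paths supplied by the metastable hierarchy.
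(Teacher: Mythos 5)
The overall skeleton matches the paper: strong Markov at $\tau(\ms M_{p-1})$ followed by barrier estimates. But there are two issues worth flagging.

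\emph{Base case.} You are right that a literal application of Lemma \ref{l01} to the natural interval $[m_{k-1},m_k]$ gives $K_{m_{k-1},m_k}=h^+_{k-1}\wedge h^-_k\ge\mf h_1>0$, hence an exponential prefactor. However, the remedy does not require Freidlin--Wentzell deterministic approximation or OU asymptotics; it stays entirely inside the machinery of Lemma \ref{l01}. Anchor the comparison function of \eqref{40} at the \emph{interior} local maximum rather than at an endpoint: the solution $\tilde\zeta_\epsilon$ of $\ms L_\epsilon u=1$ on $(m_{k-1},m_k)$ with $\tilde\zeta_\epsilon(\sigma_k)=\tilde\zeta_\epsilon'(\sigma_k)=0$ is nonnegative (its integral representation has a positive integrand on both sides of $\sigma_k$), so the optional stopping step gives $\bb E_{x}[\tau(m_{k-1},m_k)]\le\max\{\tilde\zeta_\epsilon(m_{k-1}),\tilde\zeta_\epsilon(m_k)\}$, and since $S$ is monotone on each of $[m_{k-1},\sigma_k]$ and $[\sigma_k,m_k]$ the corresponding double integrals carry no exponential factor, yielding $\mf c_0/\epsilon$ directly. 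Your approach would work, but it imports tools the paper deliberately avoids, and your claimed rate $O(\epsilon^{-1/2})$ is also not justified by the sketch.

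\emph{Inductive chaining.} The per-step estimate on $[a_i,b_i]$ is sound --- the sublevel-set construction together with $\mc P_5(p-1)$ and Proposition \ref{l13} does give $K_{a_i,b_i}\le\mf h_{p-1}$ --- but ``chaining the at most $\mf u_{p-1}$ estimates'' treats the passage through transient wells as a deterministic path. The diffusion exits each $[a_i,b_i]$ at \emph{either} endpoint with probability bounded away from $0$, so it may move back up the hierarchy of transient $(p-1)$-wells before absorption in $\ms M_p$. One must close the argument with a renewal/geometric-trials step (the expected number of visits to each transient well is uniformly bounded by periodicity), and the constant then depends on the transient chain, not merely on $\mf u_{p-1}$. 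This is the role that $\mc P_7(p-1)$, Proposition \ref{l13} and Lemma \ref{l27} play in the paper's version, which works from the single target interval $[m,m^-_{p-1,\ell}]$ rather than a step-by-step chain.
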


\begin{proof}
The proof is carried by induction on $p$. By Lemma \ref{l01}, as
$\ms M_1 = \mc M$ contains all $S$-local minima,
$\bb E_{x}^{\epsilon} [\, \tau (\ms M_1) \, ] \le \mf c_0 \,
\epsilon^{-1}$ for all $x\in\bb R$.

We turn to $p\ge 2$. Bound $\tau (\ms M_p)$ by
$\tau (\ms M_{p-1}) + \tau (\ms M_p) \circ \vartheta_{\tau (\ms
M_{p-1})}$, where $(\vartheta_r : r\ge 0)$ represents the group of
translations of a trajectory. By the strong Markov property,
\begin{equation*}
\sup_{x\in \bb R}
\bb E_{x}^{\epsilon} [\, \tau (\ms M_p) \, ]
\,\le\, \sup_{x\in \bb R}
\bb E_{x}^{\epsilon} [\, \tau (\ms M_{p-1}) \, ]
\,+\, \sup_{m\in \ms M_{p-1}}
\bb E_{m}^{\epsilon} [\, \tau (\ms M_p) \, ]
\,. 
\end{equation*}
Fix $m\in \ms M_{p-1}$. If $m\in \ms M_{p}$, $\tau (\ms
M_p)=0$, and there is nothing to prove.

Assume, therefore that $m\in \ms M_{p-1}(j)$ for some $j\in \bb Z$,
and that $\ms M_{p-1}(j) \cap \ms M_p = \varnothing$. This means that
$\ms M_{p-1}(j)$ is a $\bb X_{p-1}$-transient state. By periodicity of
the jump rates, $\ms M_{p-1}(j)$ may reach a $\bb X_{p-1}$-recurrent
state in a finite number of jumps.  Therefore, by Proposition
\ref{l13}, Lemma \ref{l27}, and postulate $\mc P_7(p-1)$, if
$\ms M_{p-1}(j)$ can reach a set $\ms M_p(\ell) \subset \ms M_p$ with
jumps to the right ($j<\ell$),
$K^{^\nearrow}_{m,m^-_{p-1,\ell}} \le \mf h_{p-1}$.  Thus, by Lemma
\ref{l01},
\begin{equation*}
\bb E_{m}^{\epsilon} [\, \tau (\ms M_p) \, ] \,\le\, \mf c_0\,
\epsilon^{-1}\, e^{\mf h_{p-1}/\epsilon}\;.
\end{equation*}
A similar bound can be obtained if $\ms M_{p-1}(j)$ can reach the set
$\ms M_p(\ell) \subset \ms M_p$ with jumps to the left.  To complete
the proof, it remains to recollect all previous estimates.
\end{proof}

The next result is the main estimate needed in the proof of Corollary
\ref{l32} which states that the diffusion $X_\epsilon(\cdot)$ may not
escape from a well quickly.

\begin{lemma}
\label{l31}
Fix $b_l < b_r$ such that $S(x) < S(b_l) = S(b_r)$ for all
$b_l<x<b_r$. Let $a\in (b_l, b_r)$ such that
$S(a) = \min \{S(x) : x\in [b_l,b_r]\}$.  Then, for all $t>0$,
$\delta>0$, there exist a finite contant $C_0$, depending only on
$\mtt a(\cdot)$, and $\eta = \eta(\delta)$,
$0<\eta < \min\{b_r-a, a-b_l\}/2$ such that
\begin{equation*}
\bb P_{x}^{\epsilon}
\big [\, \tau(b_l, b_r) \le t \,\big]
\,\le\, \frac{C_0}{\eta^2} \, \big\{\, \epsilon\, t + (x-a)^2
e^{K(x)/\epsilon} \, \big\}\, e^{- [S(b_r) - S(a) - \delta]/\epsilon}
\end{equation*}
for all $x\in (b_l, b_r)$. Here,
\begin{equation*}
K(x) \,=\, K^{^\nearrow}_{a,x} \;\;\text{if $x>a$ and}\;\;
K(x) \,=\, K^{^\nwarrow}_{x,a} \;\;\text{if $x<a$} \;.
\end{equation*}
\end{lemma}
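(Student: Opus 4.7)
The plan is to construct an explicit nonnegative $C^2$ test function $\psi$ on $[b_l,b_r]$ satisfying $\ms L_\epsilon \psi = \epsilon\,\mss a$ and vanishing to second order at $a$, and then to apply Dynkin's formula at the stopping time $\tau\wedge t$, where $\tau := \tau(b_l,b_r)$. Motivated by the computation behind Lemma~\ref{l01}, I take
\begin{equation*}
\psi(x) \,:=\, \int_a^x e^{S(y)/\epsilon}\Bigl(\int_a^y e^{-S(z)/\epsilon}\,dz\Bigr)dy \;\text{ for }\; x\ge a,
\end{equation*}
and symmetrically $\psi(x) := \int_x^a e^{S(y)/\epsilon}\bigl(\int_y^a e^{-S(z)/\epsilon}\,dz\bigr)dy$ for $x\le a$. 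Differentiating and using $S' = -\mss b/\mss a$ one checks that $\ms L_\epsilon \psi = \epsilon\,\mss a$ on $(b_l,a)\cup(a,b_r)$; moreover $\psi(a)=\psi'(a)=0$ and $\psi''(a\pm)=1$, so $\psi\in C^2([b_l,b_r])$ and $\psi\ge 0$.

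Set $M := \psi(b_l)\wedge\psi(b_r)$ and $\phi := \psi/M$. Since $\phi(b_l),\phi(b_r)\ge 1$ and $\phi\ge 0$ on $[b_l,b_r]$, one has $\mathbf{1}\{\tau\le t\}\le \phi(\mtt x(\tau\wedge t))$. Applying Dynkin's formula to an arbitrary $C^2$ extension of $\phi$ to $\bb R$ and using $\ms L_\epsilon \phi = \epsilon\,\mss a/M \le \epsilon\,\|\mss a\|_\infty/M$,
\begin{equation*}
\bb P_x^\epsilon[\tau\le t] \,\le\, \bb E_x^\epsilon\bigl[\phi(\mtt x(\tau\wedge t))\bigr] \,\le\, \frac{\psi(x) \,+\, \epsilon\,\|\mss a\|_\infty\, t}{M}.
\end{equation*}

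It remains to bound $\psi(x)$ from above and $M$ from below. For the upper bound, estimating the integrand pointwise by $e^{[S(y)-S(z)]/\epsilon}$ and recognising the resulting supremum as $K(x)$ yields $\psi(x) \le \tfrac12 (x-a)^2 e^{K(x)/\epsilon}$ on each side. For the lower bound, exploit that $S(b_l)=S(b_r)$ is the maximum of $S$ on $[b_l,b_r]$ and $S(a)$ the minimum: by continuity of $S$, given $\delta>0$ one can pick $\eta\in(0,\min(b_r-a,a-b_l)/2)$ small enough that $S(y)\ge S(b_r)-\delta/2$ on $[b_r-\eta,b_r]\cup[b_l,b_l+\eta]$ and $S(z)\le S(a)+\delta/2$ on $[a-\eta,a+\eta]$. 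Restricting the outer integration in $\psi(b_r)$ (resp.\ $\psi(b_l)$) to the $\eta$-interval at $b_r$ (resp.\ $b_l$) and the inner to the $\eta$-interval on the appropriate side of $a$ yields $M \ge \eta^2 e^{[S(b_r)-S(a)-\delta]/\epsilon}$. Substituting into the Dynkin bound produces the stated estimate with $C_0 = \|\mss a\|_\infty\vee 1$. The only delicate point is the joint choice of $\eta$, determined by the modulus of continuity of $S$ at $a$ and at the common level $S(b_l)=S(b_r)$; the rest is scale-function bookkeeping of the same flavour as in Lemma~\ref{l01}.
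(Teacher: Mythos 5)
Your proposal is correct and follows essentially the same route as the paper's proof: both build an explicit solution of a Poisson-type equation that vanishes to second order at $a$, apply Dynkin's formula at $\tau\wedge t$, and estimate the double integral at $x$ from above and at $b_l$, $b_r$ from below. The only differences are cosmetic — the paper uses $\ms L_\epsilon\zeta_\epsilon=1$ with an extra $\tfrac{1}{\epsilon}\cdot\tfrac{1}{\mss a}$ weight whereas you use $\ms L_\epsilon\psi=\epsilon\mss a$, and you phrase the endpoint argument via $\mathbf 1\{\tau\le t\}\le\phi(\mtt x(\tau\wedge t))$ rather than directly dividing by $\min\{\zeta_\epsilon(b_l),\zeta_\epsilon(b_r)\}$.
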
 

\begin{proof}
Denote by $\zeta_\epsilon$ the solution of the equation \eqref{36} in
the interval $(b_l, b_r)$. Let $\tau = \tau(b_l, b_r)$. By the
displayed equation above \eqref{10},
\begin{equation}
\label{37}
\bb E^\epsilon_{x}\big[\,
\zeta_{\epsilon}(\mtt x (t\wedge \tau))\, \big]
\,=\,
\zeta_{\epsilon}(x) \,+\,
\bb E^\epsilon_{x}\big[\, t\wedge \tau\, \big]
\end{equation}
for all $x \in (b_l, b_r)$, $t>0$.

We estimate separately each term of the previous identity. Assume,
without loss of generality, that $a\le x\le b_r$. The second
term on the right-hand site is bounded by $t$. The first one, by the
explicit formula \eqref{40} for $\zeta_\epsilon$, is bounded by
\begin{equation*}
\frac{\mf c_0}{\epsilon}\, (x-a)^2\, e^{\sup_{a\le z\le y\le x}
[S(y)-S(z)]/\epsilon} \,=\,
\frac{\mf c_0}{\epsilon}\, (x-a)^2\, e^{K^{^\nearrow}_{a,x}/\epsilon}
\end{equation*}
where $\mf c_0$, $K^{^\nearrow}_{a,x}$ have been introduced in the
statement of Lemma \ref{l01}. .

We turn to the term on the left-hand side of \eqref{37}. Since
$\zeta_\epsilon(\cdot)$ is positive, it is bounded below by
\begin{equation*}
\bb E^\epsilon_{x}\big[\,
\zeta_{\epsilon}(\mtt x (\tau))\, \mtt 1\{ \tau \le t\} \big]
\,\ge\, \min_{x'\in \{b_l,b_r\}} \zeta_{\epsilon}(x')\,\,
\bb P^\epsilon_{x}\big[\,\tau \le t \big]\,.
\end{equation*}
We estimate $\zeta_{\epsilon}(b_r)$, the same argument applies to
$b_l$. Fix $\delta >0$. There exists $0<\eta< (b_r-a)/2$ such that
$|S(y) - S(b_r)|\le \delta/2$ for all $|y-b_r|\le \eta$,
$|S(z) - S(a)|\le \delta/2$ for all $|z-a|\le \eta$.
By the explicit formula \eqref{40} for $\zeta_{\epsilon} (\cdot)$,
\begin{equation*}
\zeta_{\epsilon}(b_r) \,\ge\, \frac{c_0}{\epsilon}\,
\int_{b_r-\eta}^{b_r} dy\int_a^{a+\eta} e^{[S(y)-S(z)]/\epsilon}\,
dz \,\ge\, \frac{c_0}{\epsilon}\, \eta^2\,
e^{[S(b_r)-S(a) - \delta]/\epsilon}
\end{equation*}
for some positive constant $c_0$ depending only on $\mtt a(\cdot)$. 
To complete the proof, it remains to recollect all previous
estimates. 
\end{proof}

Fix $2\le p\le \mf q$, $k\in \bb Z$, $m\in \ms M_p(k)$. Denote 
by $m_{p-1, \mtt l}$, $m_{p-1, \mtt r}$ the first element to left,
right of $m$ which belongs to $\ms M_{p-1}\setminus \ms M_p$,
respectively:
\begin{equation*}
{\color{blue} m_{p-1, \mtt r}} \,:=\,
\min\big\{m' \in \ms M_{p-1}\setminus \ms M_p :
m'>m \big\}\,, \quad 
{\color{blue} m_{p-1, \mtt r}} \,:=\,
\max \big\{m' \in \ms M_{p-1}\setminus \ms M_p : m'<m \big \} \;.
\end{equation*}

\begin{lemma}
\label{l34b}
Fix $2\le p\le \mf q$.  There exists $\kappa>0$ such that
\begin{equation}
\label{44}
\Lambda (m, m_{p-1, \mtt r}) \,-\, S(\ms M_p(k)) \,> \, \mf h_{p-1}
\,+\, \kappa \;,
\quad
\Lambda (m, m_{p-1, \mtt l}) \,-\, S(\ms M_p(k)) \, > \, \mf h_{p-1}
\,+\, \kappa
\end{equation}
for all $k\in\bb Z$.
\end{lemma}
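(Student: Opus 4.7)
The plan is to argue structurally from the hierarchical properties $\mc P_1(q)$--$\mc P_{10}(q)$ available at levels $q=p-1$ and $q=p$, and then to invoke periodicity to produce a uniform positive gap $\kappa$. I treat the right estimate in detail; the left one follows by mirror symmetry.

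First I would locate $m_{p-1,\mtt r}$ inside the $\ms M_{p-1}$-hierarchy. Let $\ms M_{p-1}(i_0)\subset \ms M_p(k)$ be the set containing $m$. Since the sets $\ms M_{p-1}(i)$ are linearly ordered (postulate $\mc P_2(p-1)$) and since, by the very definition of $\ms M_p$ in \eqref{24}, a set $\ms M_{p-1}(i)$ is $\bb X_{p-1}$-recurrent if and only if it is contained in $\ms M_p$, there exists a smallest index $j_0>i_0$ such that $\ms M_{p-1}(j_0)\cap\ms M_p=\varnothing$, and necessarily $m_{p-1,\mtt r}=m^-_{p-1,j_0}$. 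All intermediate sets $\ms M_{p-1}(i_0+1),\dots,\ms M_{p-1}(j_0-1)$ are $\bb X_{p-1}$-recurrent and therefore each contained in some $\ms M_p(k')$.

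The key dichotomy concerns $\ms M_{p-1}(j_0-1)$, adjacent from the left to the transient $\ms M_{p-1}(j_0)$. If $\ms M_{p-1}(j_0-1)\subset \ms M_p(k)$, then since $\mf R_{p-1}(k)$ is a closed class of $\bb X_{p-1}$ and $\ms M_{p-1}(j_0)$ is transient, the jump rate $R_{p-1}(\ms M_{p-1}(j_0-1),\ms M_{p-1}(j_0))$ must vanish (else $\ms M_{p-1}(j_0)$ would be absorbed into the closed class). Postulates $\mc P_6(p-1)$ and $\mc P_7(p-1)$ then force
\[
\Lambda\big(\ms M_{p-1}(j_0-1),\ms M_{p-1}(j_0)\big)-S(\ms M_p(k))\;>\;\mf h_{p-1},
\]
using postulate $\mc P_4(p)$ to identify $S(\ms M_{p-1}(j_0-1))=S(\ms M_p(k))$. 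Monotonicity of $\Lambda$ with respect to interval enlargement, combined with $m\le m^+_{p-1,j_0-1}$ and $m_{p-1,\mtt r}=m^-_{p-1,j_0}$, transfers this strict inequality to $\Lambda(m,m_{p-1,\mtt r})$. In the complementary case $\ms M_{p-1}(j_0-1)\subset \ms M_p(k')$ with $k'\neq k$, the interval $[m,m_{p-1,\mtt r}]$ contains the barrier interval $[m^+_{p,k},m^-_{p,k+1}]$, so by $\mc P_6(p)$ and Proposition \ref{l11b},
\[
\Lambda(m,m_{p-1,\mtt r})-S(\ms M_p(k))\;\ge\;\mf h_p\;>\;\mf h_{p-1}.
\]

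Both branches yield strict inequality. To produce the uniform constant, I would observe that postulates $\mc P_3$ and $\mc P_9$ at levels $p-1$ and $p$, together with the $1$-periodicity of $\mss a$ and $\mss b$, leave only finitely many configurations $(k,m,m_{p-1,\mtt r})$ up to unit translation; the minimum of the finitely many positive gaps supplies $\kappa>0$. The main obstacle, as I see it, is the bookkeeping in the adjacency step: one has to make sure that the indexing of $\ms M_{p-1}$-sets crossed by the interval $[m,m_{p-1,\mtt r}]$ is consistent with the ordering inherited by $\ms M_p$, so that the two alternatives above are exhaustive and the relevant $\ms M_{p-1}$-barrier really dominates $\Lambda(m,m_{p-1,\mtt r})$.
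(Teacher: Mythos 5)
Your proof is correct and follows essentially the same route as the paper's: you identify $m_{p-1,\mtt r}$ as the leftmost element of the first $\bb X_{p-1}$-transient set $\ms M_{p-1}(j_0)$ to the right of $m$, split into the two cases according to whether $\ms M_{p-1}(j_0-1)$ lies in $\ms M_p(k)$ or in some other $\ms M_p(k')$, use $\mc P_6(p-1)$, $\mc P_7(p-1)$ with the vanishing jump rate in the first case and $\mc P_6(p)$ together with $\mf h_p>\mf h_{p-1}$ (Proposition \ref{l11b}) in the second, and then invoke periodicity for a uniform $\kappa$. One small imprecision: in the first case the justification for $R_{p-1}(\ms M_{p-1}(j_0-1),\ms M_{p-1}(j_0))=0$ is not that $\ms M_{p-1}(j_0)$ ``would be absorbed into the closed class''; rather, a positive rate would mean the class containing $\ms M_{p-1}(j_0-1)$ is not closed (it could escape to a transient state), a contradiction — but the conclusion and the rest of the chain of inequalities are exactly as in the paper.
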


\begin{proof}
We prove the assertion for $m_{p-1, \mtt r}$, the same argument apply
to $m_{p-1, \mtt l}$. By definition,
$m_{p-1, \mtt r}\in \ms M_{p-1}(\ell)$ for some $\ell\in\bb Z$. On the
other hand, as $m \in \ms M_p$, and $m< m_{p-1, \mtt r}$,
$m\in \ms M_{p-1}(j)$ for some $j\le \ell$. Note that $j$ cannot be equal
to $\ell$ because $m\in \ms M_p$ and
$m_{p-1, \mtt r} \not\in \ms M_p$. Moreover, by definition of
$m_{p-1, \mtt r}$, $\ms M_{p-1}(i) \subset \ms M_p$ for all
$j\le i<\ell$. Recall that $m\in \ms M_p(k)$.

\smallskip
\noindent{{\bf Case A}: Suppose that
$\ms M_{p-1}(\ell-1) \subset \ms M_p(k')$ for some $k'\neq k$.}
\smallskip

In this case, as the sets $\ms M_{p}(i)$ are ordered, $k<k'$. By
postulate $\mc P_6(p)$,
$\Lambda (\ms M_p(k), \ms M_p(k+1)) - S(\ms M_p(k)) \ge \mf h_p$.
Since $k' \ge k +1$ and $\ms M_{p-1}(\ell-1) \subset \ms M_p(k')$,
$\Lambda (\ms M_p(k), \ms M_p(k+1)) \le \Lambda (\ms M_p(k), \ms
M_p(k')) \le \Lambda (\ms M_p(k), \ms M_{p-1}(\ell-1))$. As
$\ell-1<\ell$ and $m_{p-1, \mtt r} \in \ms M_{p-1}(\ell)$,
$\Lambda (\ms M_p(k), \ms M_{p-1}(\ell-1)) \le \Lambda (\ms M_p(k),
m_{p-1, \mtt r})$. Finally, as $m\in \ms M_p(k)$, $\Lambda (\ms M_p(k),
m_{p-1, \mtt r}) \le \Lambda (m, m_{p-1, \mtt
r})$. Collecting all previous estimates yields  that
\begin{equation*}
\Lambda (m, m_{p-1, \mtt r})
\,-\, S(\ms M_p(k)) \, \ge \, \mf h_{p}\;,
\end{equation*}
as claimed since $\mf h_{p} > \mf h_{p-1}$.

\smallskip
\noindent{{\bf Case B}: Suppose that
$\ms M_{p-1}(\ell-1) \subset \ms M_p(k)$.}
\smallskip

As $\ms M_{p-1}(\ell-1) \subset \ms M_p$,
$\ms M_{p-1}(\ell) \cap \ms M_p = \varnothing$, $\ms M_{p-1}(\ell-1)$
is $\bb X_{p-1}$-recurrent and $\ms M_{p-1}(\ell)$ is $\bb X_{p-1}$
transient. Hence,
$R_{p-1} (\ms M_{p-1}(\ell-1), \ms M_{p-1}(\ell))=0$. Thus, by
postulate $\mc P_6(p-1)$ and $\mc P_7(p-1)$,
\begin{equation*}
\Lambda (\ms M_{p-1}(\ell-1), \ms M_{p-1}(\ell))
\,-\, S(\ms M_{p-1}(\ell-1)) \,> \, \mf h_{p-1}\;.
\end{equation*}
As $m\in \ms M_{p-1}(j)$, $j\le \ell -1$, and $m_{p-1, \mtt r}\in \ms
M_{p-1}(\ell)$, $\Lambda (\ms M_{p-1}(\ell-1), \ms M_{p-1}(\ell)) \le
\Lambda (m, m_{p-1, \mtt r})$, so that
\begin{equation*}
\Lambda (m, m_{p-1, \mtt r})
\,-\, S(\ms M_{p-1}(\ell-1)) \,> \, \mf h_{p-1}\;.
\end{equation*}
Since $\ms M_{p-1}(\ell-1)$ and $\{m\}$ are contained in
$\ms M_{p}(k)$, by postulate $\mc P_4(p)$,
$S(\ms M_{p-1}(\ell-1)) = S(m) = S(\ms M_{p}(k))$. Since
$\mtt a(\cdot)$, $\mtt b(\cdot)$ are periodic,
$\Lambda (m^+_{p,k}, m_r) \,-\, S(\ms M_p(k)) $ takes only a finite
number of distinct values. Therefore, in the previous inequality we
may substitute $\mf h_{p-1}$ by $\mf h_{p-1}+\kappa$ for some
$\kappa>0$, as claimed.
\end{proof}

\begin{lemma}
\label{l35}
Fix $2\le p\le \mf q$. There exists $\kappa>0$ satisfying the
following property.  For all $k\in \bb Z$, $m\in \ms M_p(k)$, there
exist $m_{p-1, \mtt l} < \sigma_l < m < \sigma_r < m_{p-1, \mtt r}$
such that
\begin{equation}
\label{46}
\begin{gathered}
S(m) \,\le\, S(x) \, < \,  S(\sigma_l)  \,=\, S(\sigma_r)
\;\;
\text{for all}\;\; x\in (\sigma_l,\sigma_r) \;,
\\
\text{and}\;\; 
S(\sigma_r) \,>\, S(m) \,+\, \mf h_{p-1} \,+\, \kappa \;.
\end{gathered}
\end{equation}
\end{lemma}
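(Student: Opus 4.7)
The plan is to take $\sigma_l, \sigma_r$ as the endpoints of the connected component of $\{S<L\}$ containing $m$, at a level $L$ just above $S(m)+\mf h_{p-1}$ but strictly below both the exit barriers supplied by Lemma~\ref{l34b} and below $S(m)+\mf h_p$.

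First, invoking Lemma~\ref{l34b} and using the $1$-periodicity of $\mss a$ and $\mss b$ (which makes the quantities $\Lambda(m,m_{p-1,\mtt r})-S(\ms M_p(k))$ and $\Lambda(m_{p-1,\mtt l},m)-S(\ms M_p(k))$ take only finitely many values as $k$ varies), I would fix $\kappa_0>0$ such that
\begin{equation*}
\Lambda(m_{p-1,\mtt l},m)\;\wedge\;\Lambda(m,m_{p-1,\mtt r}) \;>\; S(m)+\mf h_{p-1}+\kappa_0
\end{equation*}
for every $k\in\bb Z$ and $m\in\ms M_p(k)$. Combining with Proposition~\ref{l11b} (which provides $\mf h_p>\mf h_{p-1}$), I set
\begin{equation*}
\kappa \,:=\, \tfrac{1}{2}\min\{\kappa_0,\,\mf h_p-\mf h_{p-1}\}\,,\qquad
L \,:=\, S(m)+\mf h_{p-1}+\kappa\,,
\end{equation*}
and define $\sigma_r:=\inf\{x>m:S(x)\ge L\}$, $\sigma_l:=\sup\{x<m:S(x)\ge L\}$. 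Because $\Lambda(m,m_{p-1,\mtt r})>L$ while $m$ and $m_{p-1,\mtt r}$ are both local minima of $S$, the supremum of $S$ on $[m,m_{p-1,\mtt r}]$ is attained strictly inside, so $m<\sigma_r<m_{p-1,\mtt r}$ and continuity of $S$ gives $S(\sigma_r)=L$; symmetrically on the left. By construction, $S<L$ on $(\sigma_l,m)\cup(m,\sigma_r)$, and $S(m)<L$ extends this to $(\sigma_l,\sigma_r)$.

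The remaining step is to verify $S(m)\le S(x)$ throughout $(\sigma_l,\sigma_r)$. I would argue by contradiction: assume $m'\in\mc M\cap(\sigma_l,\sigma_r)$ satisfies $S(m')<S(m)$, so $\Lambda(m,m')<L<S(m)+\mf h_p$. If $m'\in\ms M_p(k')$, then $k'=k$ is excluded by postulate $\mc P_4(p)$ (equal depths inside $\ms M_p(k)$), while $k'\ne k$ forces $\Lambda(m,m')\ge\Lambda(\ms M_p(k),\ms M_p(k\pm 1))\ge S(m)+\mf h_p$ by postulate $\mc P_6(p)$, a contradiction. Otherwise $m'\in\mc M\setminus\ms M_p$; since $(\sigma_l,\sigma_r)\subset(m_{p-1,\mtt l},m_{p-1,\mtt r})$ contains no point of $\ms M_{p-1}\setminus\ms M_p$ by definition of the endpoints, in fact $m'\notin\ms M_{p-1}$, so $m'$ lies in a $\bb X_q$-transient set $\ms M_q(i')$ for some $q\le p-2$. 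Iterating the hierarchical descent through postulates $\mc P_7$ and $\mc P_8$ (as in the proof of Lemma~\ref{l33}), I obtain a finite chain $m'=m^{(0)},m^{(1)},\ldots,m^{(a)}$ of local minima with strictly decreasing $S$-values and successive barriers bounded by $\mf h_{p-1}$, terminating at $m^{(a)}\in\ms M_p$. Since the union of the intermediate intervals forms a connected subset of $\bb R$ containing $m'$ and $m^{(a)}$, this yields $\Lambda(m',m^{(a)})\le S(m')+\mf h_{p-1}$. Combining with $\Lambda(m,m')<S(m)+\mf h_{p-1}+\kappa$ and comparing the relative positions of $m, m'$ and $m^{(a)}$, one obtains $\Lambda(m,m^{(a)})<S(m)+\mf h_p$, reducing to the first case and producing the same contradiction.

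The main obstacle will be making this hierarchical descent argument fully precise when $m'\in\mc M\setminus\ms M_{p-1}$: one must chain $\bb X_{q'}$-transitions at successive levels $q'\in\{q,q+1,\ldots,p-1\}$ while controlling simultaneously the $S$-values (monotone by $\mc P_8$) and the positional range swept by the path (so that the bound on $\Lambda(m',m^{(a)})$ in terms of $S(m')+\mf h_{p-1}$ really holds). This will rely on the structural hierarchy facts (Proposition~\ref{l13}, Lemma~\ref{l27}) already exploited in Lemma~\ref{l33}. The case $p=2$ is immediate since $(\mc M\setminus\ms M_2)\cap(m_{1,\mtt l},m_{1,\mtt r})=\varnothing$ by the very definition of $m_{1,\mtt l}, m_{1,\mtt r}$, so only the first case arises.
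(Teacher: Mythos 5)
Your construction of $\sigma_l,\sigma_r$ as the endpoints of the connected component of the sublevel set $\{S<L\}$ containing $m$ is a genuinely different, and in one respect cleaner, route than the paper's: it automatically delivers $S(\sigma_l)=S(\sigma_r)=L$ and $S<L$ on $(\sigma_l,\sigma_r)$, so the paper's final "equalize the two endpoints" adjustment step is unnecessary, and the second line of \eqref{46} is built into the choice of $L$. Your choice $\kappa=\tfrac12\min\{\kappa_0,\mf h_p-\mf h_{p-1}\}$ is correct and, via postulate $\mc P_6(p)$, guarantees $S(\sigma^{p,+}_{k-1,k})\wedge S(\sigma^{p,-}_{k,k+1})\ge S(m)+\mf h_p>L$, which forces $(\sigma_l,\sigma_r)\subset(\sigma^{p,+}_{k-1,k},\sigma^{p,-}_{k,k+1})$. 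The containment $m_{p-1,\mtt l}<\sigma_l$ and $\sigma_r<m_{p-1,\mtt r}$ follows from Lemma \ref{l34b} as you argue.

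The gap is in how you verify $S(m)\le S(x)$ on $(\sigma_l,\sigma_r)$. Once the containment $(\sigma_l,\sigma_r)\subset(\sigma^{p,+}_{k-1,k},\sigma^{p,-}_{k,k+1})$ is in hand, Proposition \ref{l13} gives exactly this for free: any local minimum $m'\in(\sigma_l,\sigma_r)$ either lies in $\ms M_p(k)$ (hence $S(m')=S(m)$ by $\mc P_4(p)$) or lies in $\mss M_p(k)$ (hence $S(m')>S(\ms M_p(k))=S(m)$ by Proposition \ref{l13}); since $S\ge L>S(m)$ at the endpoints and $S$ attains its minimum on $[\sigma_l,\sigma_r]$ at a critical point, the inequality extends to all of $(\sigma_l,\sigma_r)$. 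This is what the paper does, after first truncating $\sigma_r$ at $\sigma^{p,-}_{k,k+1}$ to secure the containment. Instead of invoking Proposition \ref{l13}, you attempt to re-derive its content through a level-by-level descent using $\mc P_7,\mc P_8$ and Lemma \ref{l23}-type arguments, and you acknowledge yourself that this descent is not yet precise (controlling simultaneously the $S$-values and the positional range of the chain so that $\Lambda(m',m^{(a)})\le S(m')+\mf h_{p-1}$). That chain-of-barriers bound does require care — the intermediate transitions change level and the argument needs the ordering of the sets at each level, which is exactly the bookkeeping done in the proofs of Proposition \ref{l13}, Corollary \ref{l21} and Lemma \ref{l22} in Appendix \ref{sec4}. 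Your side remark that the case $k'\neq k$ in Case 1 needs handling is actually vacuous: once $(\sigma_l,\sigma_r)\subset(\sigma^{p,+}_{k-1,k},\sigma^{p,-}_{k,k+1})$, no $m'\in\ms M_p(k')$ with $k'\neq k$ can lie in that interval. In short: the construction is fine and even streamlines the endpoint equality, but replace the hierarchical-descent sketch with a direct citation of Proposition \ref{l13} after noting the containment; as written, Case 2 is incomplete.
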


\begin{proof}
To prove this assertion, let $\sigma_r$, $\sigma_l$ be the first
$S$-local maxima on the right, left of $m$ which attain
$\Lambda (m_{p-1, \mtt l} , m)$, $\Lambda (m, m_{p-1, \mtt r})$,
respectively:
\begin{gather*}
\sigma_r \,=\, \min\big\{\sigma' \in \mc W :
\sigma' > m \,,\, S(\sigma') = \Lambda (m, m_{p-1, \mtt r})\, 
\big\}\,,
\\
\sigma_l \,=\, \max \big\{ \sigma' \in \mc W :
\sigma' < m \,,\, S(\sigma') = \Lambda (m_{p-1, \mtt l},m)\, 
\big \} \;.
\end{gather*}
By definition,
$m_{p-1, \mtt l} < \sigma_l < m < \sigma_r < m_{p-1, \mtt r}$.  By
Lemma \ref{l34b}, the second line of \eqref{46} holds for some
$\kappa>0$.

Recall from \eqref{64} the definition of $\sigma_{j,j+1}^{p,\pm}$,
$j\in\bb Z$. If $\sigma_r> \sigma_{k,k+1}^{p,-}$, replace $\sigma_r$
by $\sigma_{k,k+1}^{p,-}$. By postulate $\mc P_6(p)$ and the
definition of $\sigma_{k,k+1}^{p,-}$,
$S(\sigma_{k,k+1}^{p,-}) - S(m) = S(\sigma_{k,k+1}^{p,-}) - S(\ms
M_p(k)) \ge \mf h_p > \mf h_{p-1}$. Thus, if
$\sigma_r> \sigma_{k,k+1}^{p,-}$ and we perform the replacement, the
second line of \eqref{46} still holds for some $\kappa>0$, and
$\sigma_r \le \sigma_{k,k+1}^{p,-}$. A similar procedure has to be
carried out for $\sigma_l$.

Since
$\sigma_{k-1,k}^+ \le \sigma_l < m < \sigma_r \le \sigma_{k,k+1}^-$,
and $m\in\ms M_p(k)$, by Proposition \ref{l13}, and the previous
bounds,
\begin{equation}
\label{45}
S(m) \,\le\, S(x)\;\; \text{for all}
\;\; x\in (\sigma_l,\sigma_r)\,, \;\;
\text{and}\;\; 
S(\sigma_l) \wedge S(\sigma_r)
\,>\, S(m) \,+\, \mf h_{p-1} \,+\, \kappa
\end{equation}
for some $\kappa>0$. If $S(x) \ge S(\sigma_l)\wedge S(\sigma_r)$ for
some $m\le x< \sigma_r$, replace $\sigma_r$ by the first point
$\sigma'$ to the right of $m$ such that
$S(\sigma') = S(\sigma_l)\wedge S(\sigma_r)$. Do the same thing for
$\sigma_l$. Clearly \eqref{45} still holds. Moreover,
$S(\sigma_l) = S(\sigma_r)$ and $S(x) < S(\sigma_l)$ for all
$x\in (\sigma_l,\sigma_r)$. The constant $\kappa$ can be taken
independent of $k$ due to the periodicity of $\mtt a(\cdot)$,
$\mtt b(\cdot)$. This completes the proof of the lemma.
\end{proof}

We have now all elements to show that the diffusion cannot leave a
well in a short time.  Let
${\color{blue} \ms E_p} := \cup_{k\in\bb Z} \ms E(\ms M_p(k))$, where
$\ms E (\ms M_p(k))$ has been introduced in \eqref{52}.

\begin{corollary} 
\label{l32}
Fix $2\le p\le \mf q$. Then, there exist $\kappa_0>0$ and a finite
$C_0=C_0(\mtt a(\cdot), \kappa_0)$, such that
\begin{equation*}
\sup_{k\in \bb Z} \sup_{m\in \ms M_p(k)} \bb P_m^{\epsilon}
\big [\, \tau(\ms E_{p-1}\setminus \ms E_p)
\,\le\,  e^{[\mf h_{p-1}+\kappa_0]/\epsilon}  \,\big]
\,\le\, C_0 \, \epsilon \, e^{- \kappa_0/\epsilon}\;.
\end{equation*}
\end{corollary}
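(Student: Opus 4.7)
The plan is to combine Lemma~\ref{l35} with the escape estimate of Lemma~\ref{l31}. Fix $k\in\bb Z$ and $m\in\ms M_p(k)$, and let $\sigma_l<m<\sigma_r$ be the points supplied by Lemma~\ref{l35}, so that $S(m)\le S(x)<S(\sigma_l)=S(\sigma_r)$ on $(\sigma_l,\sigma_r)$ and $S(\sigma_r)-S(m)>\mf h_{p-1}+\kappa$ for some $\kappa>0$. By the $1$-periodicity of $\mss a(\cdot),\mss b(\cdot)$ together with postulates $\mc P_3(p)$ and $\mc P_9(p)$, only finitely many local configurations of $(m,\sigma_l,\sigma_r)$ and of the surrounding critical points arise as $k$ and $m\in\ms M_p(k)$ vary, hence $\kappa$ can be chosen independent of them.

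The key step is the geometric inclusion
\begin{equation*}
(\sigma_l,\sigma_r)\,\cap\,\big(\ms E_{p-1}\setminus\ms E_p\big)\,=\,\varnothing.
\end{equation*}
Since distinct wells are disjoint, $\ms E_{p-1}\setminus\ms E_p=\bigcup_{m'\in\ms M_{p-1}\setminus\ms M_p}\ms E(m')$. By Lemma~\ref{l35}, $m_{p-1,\mtt l}<\sigma_l<m<\sigma_r<m_{p-1,\mtt r}$, and by the very definition of $m_{p-1,\mtt l},m_{p-1,\mtt r}$ no point of $\ms M_{p-1}\setminus\ms M_p$ belongs to $(m_{p-1,\mtt l},m_{p-1,\mtt r})$. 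Moreover, because $r_0<\mf h_1\le h^\pm_j$ for every $j\in\bb Z$, every well $\ms E(m')$, $m'\in\mc M$, is strictly contained between the two $S$-local maxima adjacent to $m'$. For $m'=m_{p-1,\mtt r}$, the left-adjacent $S$-local maximum lies in $[\sigma_r,m_{p-1,\mtt r})$, so $\ms E(m_{p-1,\mtt r})\subset(\sigma_r,+\infty)$; the symmetric argument on the left and for all farther elements of $\ms M_{p-1}\setminus\ms M_p$ yields the claim. Consequently, starting from $m$ the diffusion must exit $(\sigma_l,\sigma_r)$ before reaching $\ms E_{p-1}\setminus\ms E_p$, and therefore
\begin{equation*}
\bb P_m^\epsilon\big[\,\tau(\ms E_{p-1}\setminus\ms E_p)\le t\,\big]\,\le\,\bb P_m^\epsilon\big[\,\tau(\sigma_l,\sigma_r)\le t\,\big].
\end{equation*}

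To bound the right-hand side, apply Lemma~\ref{l31} with $b_l=\sigma_l$, $b_r=\sigma_r$ and $a=x=m$: the first line of \eqref{46} guarantees that $m$ minimizes $S$ on $[\sigma_l,\sigma_r]$, and the term $(x-a)^2e^{K(x)/\epsilon}$ vanishes. For every $\delta>0$ this gives
\begin{equation*}
\bb P_m^\epsilon\big[\,\tau(\sigma_l,\sigma_r)\le t\,\big]\,\le\,\frac{C}{\eta(\delta)^2}\,\epsilon\,t\,\exp\!\left(-\,\frac{S(\sigma_r)-S(m)-\delta}{\epsilon}\right).
\end{equation*}
Choosing $\delta=\kappa/4$, $\kappa_0=\kappa/8$, and $t=e^{[\mf h_{p-1}+\kappa_0]/\epsilon}$, the lower bound $S(\sigma_r)-S(m)>\mf h_{p-1}+\kappa$ forces the resulting exponent to be at most $-(\kappa-\delta-\kappa_0)/\epsilon=-(5\kappa/8)/\epsilon\le-\kappa_0/\epsilon$, yielding the announced inequality with $C_0$ absorbing $C/\eta(\delta)^2$. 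The supremum over $k\in\bb Z$ and $m\in\ms M_p(k)$ is harmless thanks to the uniformity of $\kappa$ and $\eta(\delta)$. I expect the main obstacle to be the geometric step: the interval $(\sigma_l,\sigma_r)$ may contain further elements of $\ms M_p$, but their wells lie in $\ms E_p$ and thus cancel out of $\ms E_{p-1}\setminus\ms E_p$, whereas the size constraint $r_0<\mf h_1$ is precisely what prevents the wells around points of $\ms M_{p-1}\setminus\ms M_p$ from leaking across the barriers at $\sigma_l$ or $\sigma_r$.
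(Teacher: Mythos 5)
Your proof is correct and follows essentially the same route as the paper: invoke Lemma~\ref{l35} to produce the confining interval $(\sigma_l,\sigma_r)$ and then apply the exit estimate of Lemma~\ref{l31} with $a=x=m$. The only differences are cosmetic constant choices ($\kappa_0=\kappa/8$, $\delta=\kappa/4$ versus the paper's $\kappa_0=\delta=\kappa/3$) and that you spell out the geometric inclusion $(\sigma_l,\sigma_r)\cap(\ms E_{p-1}\setminus\ms E_p)=\varnothing$ — using $r_0<\mf h_1$ to keep wells trapped between their adjacent saddles — which the paper states without elaboration.
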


\begin{proof}
Fix $2\le p\le \mf q$, $k\in \bb Z$, $m\in \ms M_p(k)$. Let
$\kappa>0$, $\sigma_l$, $\sigma_r\in \bb R$ be the elements given by
Lemma \ref{l35} and set $\kappa_0 = \kappa/3$. As
$m_{p-1, \mtt l} < \sigma_l < m < \sigma_r < m_{p-1, \mtt r}$,
starting from $m$, $\tau(\sigma_l, \sigma_r) \le \tau(\ms
E_{p-1}\setminus \ms E_p)$. Hence,
\begin{equation*}
\bb P_m^{\epsilon} \big [\, \tau(\ms E_{p-1}\setminus \ms E_p) \,\le\,
e^{[\mf h_{p-1}+\kappa_0]/\epsilon} \,\big] \,\le\, \bb P_m^{\epsilon}
\big [\, \tau(\sigma_l, \sigma_r) \le e^{[\mf
h_{p-1}+\kappa_0]/\epsilon} \,\big]\;.
\end{equation*}
By Lemma \ref{l35}, the hypotheses of Lemma \ref{l31} are fulfilled,
and $S(m) = \min \{ S(x) : x\in [\sigma_l, \sigma_r]\}$. Therefore, by
Lemma \ref{l31} with $a = x = m$, $\delta = \kappa_0$,
$t=e^{[\mf h_{p-1}+\kappa_0]/\epsilon}$
\begin{equation*}
\bb P_m^{\epsilon}
\big [\, \tau(\sigma_l, \sigma_r) \le e^{[\mf h_{p-1}+\kappa_0]/\epsilon}
\,\big] \,\le\, C_0\, \epsilon\,  e^{[\mf h_{p-1}+\kappa_0]/\epsilon}
\, e^{- [S(\sigma_r) - S(m) - \kappa_0]/\epsilon}
\end{equation*}
for some finite constant $C_0=C_0(\mtt a(\cdot), \kappa_0)$, and whose
value may change from line to line. By Lemma \ref{l35}, $S(\sigma_r) -
S(m) \ge \mf h_{p-1} + \kappa = \mf h_{p-1} + 3 \kappa_0$. in view of
the previous estimates,
\begin{equation*}
\bb P_m^{\epsilon}
\big [\, \tau(\ms E_{p-1}) \,\le\, e^{[\mf h_{p-1}+\kappa_0]/\epsilon}
\,\big] \,\le\, C_0\,  \epsilon \, e^{- \kappa_0/\epsilon}\;. 
\end{equation*}
As $\mtt a(\cdot)$, $\mtt b(\cdot)$ are periodic, this estimate is
uniform over $k\in\bb Z$.
\end{proof}

The next results provide bounds on the probability of the event
$\{\tau(a) < \tau(b)\}$.

\begin{lemma}
\label{l02}
Fix $a < b $. 
Then, 
\begin{equation*}
\begin{gathered}
\bb P^{\epsilon}_{x}\big[\tau(b)\;<\; \tau(a)\big] \,\leq\,
\frac{(x-a)}{\eta}   \,
\exp\Big\{ \,\big[ \, \sup_{z\in [a,x] } S(z) - \inf_{y\in [b-\eta, b]} S(y)
\big]/\epsilon \,\Big\} \;,
\\
\bb P^{\epsilon}_{x}\big[\tau(a)\;<\;
\tau(b)\big] \,\leq\,
\frac{(b-x)}{\eta}   \,
\exp\Big\{ \,\big[ \, \sup_{z\in [x,b] } S(z) - \inf_{y\in [a,a+\eta]} S(y)
\big]/\epsilon \,\Big\} 
\end{gathered}
\end{equation*}
for all $\epsilon > 0$, $a<x<b$, $0<\eta< b-a$.
\end{lemma}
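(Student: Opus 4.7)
The hitting probability has a completely explicit form in one dimension, so the proof is a direct calculation: derive the formula, then bound the numerator from above and the denominator from below using an $\eta$-sized interval near the target point.

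The plan is to apply the standard stochastic representation of the Dirichlet problem. Set $u(y) := \bb P^{\epsilon}_y[\tau(b)<\tau(a)]$ for $y\in[a,b]$. Then $u$ solves $\ms L_\epsilon u = \mss b\, u' + \epsilon\,\mss a\, u'' = 0$ on $(a,b)$ with boundary conditions $u(a)=0$, $u(b)=1$. Dividing by $\epsilon\mss a(\cdot)$ gives $u'' = (S'/\epsilon)\, u'$, since, by the definition \eqref{34} of the potential, $S'(y) = -\mss b(y)/\mss a(y)$. Integrating once yields $u'(y) = C\, e^{S(y)/\epsilon}$ for some constant $C>0$, and imposing the boundary conditions at $a$ and $b$ produces the explicit formula
\begin{equation*}
\bb P^{\epsilon}_{x}\big[\tau(b) < \tau(a)\big] \;=\;
\frac{\int_a^x e^{S(y)/\epsilon}\, dy}{\int_a^b e^{S(y)/\epsilon}\, dy}\;, \qquad a<x<b\;.
\end{equation*}

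From here the first inequality is immediate: the numerator is at most $(x-a)\exp\{\sup_{z\in[a,x]} S(z)/\epsilon\}$, while restricting the denominator's integral to the subinterval $[b-\eta,b]\subset[a,b]$ gives a lower bound of $\eta\,\exp\{\inf_{y\in[b-\eta,b]} S(y)/\epsilon\}$. Taking the quotient yields exactly the claimed bound.

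For the second inequality, observe that $\bb P^{\epsilon}_{x}[\tau(a)<\tau(b)] = 1 - u(x) = \big(\int_x^b e^{S(y)/\epsilon}dy\big)/\big(\int_a^b e^{S(y)/\epsilon}dy\big)$. Bounding the numerator by $(b-x)\exp\{\sup_{z\in[x,b]} S(z)/\epsilon\}$ and restricting the denominator's integral to $[a,a+\eta]$ to get a lower bound of $\eta\,\exp\{\inf_{y\in[a,a+\eta]} S(y)/\epsilon\}$ gives the second inequality.

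There is essentially no obstacle here; the only mild subtleties are noting that $a<x<b$ together with $0<\eta<b-a$ ensures the subintervals $[b-\eta,b]$ and $[a,a+\eta]$ are contained in $[a,b]$ so the bounds on the denominator are legitimate, and recalling that $u\in C^2((a,b))\cap C([a,b])$ (so the stochastic representation applies) by standard results on one-dimensional diffusions with locally Hölder coefficients, which is guaranteed here by the Lipschitz assumption on $\mss a, \mss b$ and the uniform ellipticity $\mss a\ge c_0>0$.
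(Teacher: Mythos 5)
Your proof is correct and follows essentially the same route as the paper: both derive the explicit one-dimensional hitting probability $\bb P^\epsilon_x[\tau(b)<\tau(a)]=\int_a^x e^{S(y)/\epsilon}dy\big/\int_a^b e^{S(y)/\epsilon}dy$ (the paper via the optional stopping theorem applied to $\zeta_\epsilon(X_{t\wedge\tau})$ with $\zeta_\epsilon$ the solution normalized by $\zeta_\epsilon(a)=0,\ \zeta'_\epsilon(a)=1$, you via the Dirichlet problem directly) and then bound the numerator from above by $\sup S$ on $[a,x]$ and the denominator from below by restricting to $[b-\eta,b]$. The only cosmetic difference is the derivation of the explicit formula; the estimates are identical.
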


\begin{proof}
Consider the
equation
\begin{equation}
\label{hit1:eq1}
\begin{cases}
(\ms L_{\epsilon}u)(x) = 0 &\text{if}\ x\in (a,b) \\ u(a) =
0, u'(a) = 1
\end{cases}
\end{equation}
and let $\zeta_{\epsilon}$ be its unique solution. By a direct
computation, it can be verified that
\begin{equation}
\label{hit1:eq2}
\zeta_{\epsilon}(x) = \int_{a}^{x}e^{\frac{S(y)-S(a)}{\epsilon}}dy.
\end{equation}

Fix $x \in (a,b)$, $0<\eta<b-a$.  Let $\tau := \tau(a,b)$. By
\eqref{hit1:eq1},
$Y_{t} := \zeta_{\epsilon}(X_{\epsilon} (t\wedge \tau))$ is a well
defined bounded martingale. By the optional stopping theorem and
\eqref{hit1:eq2},
\begin{equation}
\label{63}
\bb P^\epsilon_{x}\big[\tau(b)< \tau(a)\big] \,=\,
\frac{\zeta_{\epsilon}(x)}{\zeta_{\epsilon}(b)} \,\le\,
\int_{a}^{x}e^{S(y)/ \epsilon}\, dy \Big/
\int_{b-\eta}^{b}e^{S(z)/ \epsilon}\, dz\;.
\end{equation}
To complete the proof of the first bound, it remains to estimate in
the last ratio $S(y)$ by $\sup_{y'\in [a,x]} S(y')$ and $S(z)$ by
$\inf_{z'\in [b-\eta,b]} S(z')$.

To obtain the second estimate, replace the boundary conditions in
\eqref{hit1:eq1} by $u(b)=0$, $u'(b)=-1$, note that the solution is
given by
\begin{equation*}
\zeta_{\epsilon}(x) \,=\, 
\int_{x}^{b} e^{\frac{S(y)-S(b)}{\epsilon}}\, dy\, ,
\end{equation*}
and repeat the previous arguments.
\end{proof}

\begin{corollary}
\label{l26}
Fix $a<b$ such that $\max\{S(a), S(b)\} < \max_{x\in [a,b]}
S(x)$. Denote by $\mf S$ the finite set where $S(\cdot)$ attains its
global maximum in the interval $[a,b]$:
\begin{equation*}
\mf S \,=\, \big\{\sigma \in (a,b) \cap \mc W : S(\sigma) = \max_{x\in
[a,b]} S(x) \big\}\;,
\end{equation*}
and enumerate $\mf S$ as $\mf S = \{\sigma_1, \dots,
\sigma_\ell\}$. Fix $x\not\in \mf S$. Then,
\begin{equation*}
\bb P^\epsilon_{x}\big[\tau(b)< \tau(a)\big] \,=\,
\frac{\sum_{i\in I_x}  1/\sqrt{- S''(\sigma_i)}}
{\sum_{1\le  j \le \ell} 1/\sqrt{- S''(\sigma_j)}} \,+\, o_\epsilon (1)\;,
\end{equation*}
where $I_x = \{i\in \bb Z : 1\le i\le\ell\,,\, \sigma_i <x\}$.
\end{corollary}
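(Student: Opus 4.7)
The plan is to extract the statement from the exact formula already derived in the proof of Lemma \ref{l02} and then apply the Laplace method. From \eqref{63} and \eqref{hit1:eq2},
\begin{equation*}
\bb P^\epsilon_{x}\big[\tau(b)< \tau(a)\big]
\,=\, \frac{\zeta_{\epsilon}(x)}{\zeta_{\epsilon}(b)}
\,=\, \frac{\int_a^x e^{S(y)/\epsilon}\, dy}{\int_a^b e^{S(y)/\epsilon}\, dy}\;,
\end{equation*}
so the problem reduces to determining the sharp $\epsilon\to 0$ asymptotics of the numerator and denominator.

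First I would verify the non-degeneracy of the maxima: since each $\sigma_i\in \mf S\subset \mc W$ satisfies $S'(\sigma_i)=-\mss b(\sigma_i)/\mss a(\sigma_i)=0$ and $S''(\sigma_i)=-\mss b'(\sigma_i)/\mss a(\sigma_i)<0$, each $\sigma_i$ is a non-degenerate interior maximum of $S$ on $[a,b]$. Moreover, since $\max\{S(a),S(b)\}<S(\sigma_i)$ for every $i$, the boundary points $a$, $b$ contribute a lower-order exponential weight.

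Next, I apply the standard Laplace method around each $\sigma_i$: for any open neighborhood $U_i$ of $\sigma_i$ on which $S$ is strictly concave with $S(\sigma_i)$ as maximum,
\begin{equation*}
\int_{U_i} e^{S(y)/\epsilon}\, dy
\,=\, \sqrt{2\pi\epsilon}\; \frac{e^{S(\sigma_i)/\epsilon}}{\sqrt{-\, S''(\sigma_i)}}\,\big(1+ o_\epsilon(1)\big)\;,
\end{equation*}
while outside $\bigcup_i U_i$ the integrand is bounded by $\exp\{(S(\sigma_1)-\delta)/\epsilon\}$ for some $\delta>0$, contributing an exponentially negligible term. Since $x\notin \mf S$, we can choose the neighborhoods $U_i$ disjoint and contained either in $(a,x)$ or $(x,b)$ according to whether $\sigma_i<x$ or $\sigma_i>x$. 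Summing over the $\sigma_i$'s contained in $(a,x)$ for the numerator, and over all of $\mf S$ for the denominator, gives
\begin{equation*}
\int_a^x e^{S(y)/\epsilon}\, dy
\,=\, \sqrt{2\pi\epsilon}\,e^{S(\sigma_1)/\epsilon}
\Big(\sum_{i\in I_x}\frac{1}{\sqrt{-\,S''(\sigma_i)}}\Big)\,\big(1+ o_\epsilon(1)\big)\;,
\end{equation*}
and analogously for $\int_a^b$ with the sum taken over $1\le j\le \ell$. The common factor $\sqrt{2\pi\epsilon}\,e^{S(\sigma_1)/\epsilon}$ (which is where we use that all $\sigma_i$ share the same height $S(\sigma_i)=\max_{[a,b]}S$) cancels in the ratio, yielding the claimed formula.

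There is no real obstacle here; the only mildly delicate point is to keep the error term uniform, i.e.\ to confirm that the Laplace expansion has a $(1+o_\epsilon(1))$ multiplicative error even when several global maxima coexist. This follows because the regularity hypotheses on $\mss a$, $\mss b$ give $S\in C^2$ with $S''$ Lipschitz, so a second-order Taylor expansion at each $\sigma_i$ with a uniform remainder is available, and the standard Laplace argument extends verbatim to a finite collection of non-degenerate maxima of identical height.
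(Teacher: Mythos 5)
Your proposal is correct and follows essentially the same route as the paper: start from the exact ratio $\int_a^x e^{S/\epsilon}\big/\int_a^b e^{S/\epsilon}$ provided by the first identity in \eqref{63}, normalize by $(2\pi\epsilon)^{-1/2}e^{-S(\sigma_1)/\epsilon}$, and conclude by a second-order Taylor (Laplace) expansion at each non-degenerate global maximum $\sigma_i$; the paper's one-line proof is precisely the argument you have written out in detail.
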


\begin{proof}
The first identity in equation \eqref{63} provides an explicit formula
for $\bb P^\epsilon_{x} [\tau(b)< \tau(a)]$. multiply the numerator
and the denominator by
$(2\pi \epsilon)^{-1/2} e^{- S(\sigma_1)/\epsilon}$. It remains to use
second order Taylor expansion to conclude.
\end{proof}

If $x\in \mf S$, the same argument shows that
\begin{equation}
\label{89}
\bb P^\epsilon_{x}\big[\tau(b)< \tau(a)\big] \,=\,
\frac{1}
{\sum_{1\le  j \le \ell} 1/\sqrt{- S''(\sigma_j)}}
\Big\{ \sum_{i\in I_x}  \frac{1}{\sqrt{- S''(\sigma_i)}} \,+\,
\frac{1}{2} \frac{1}{\sqrt{- S''(x)} } \, \Big\}
\,+\, o_\epsilon (1)\;.
\end{equation}

\section{Proof of Theorem \ref{mt4} for $p=1$}
\label{sec6}

The proof is divided in two steps. We first show that the solution of
the resolvent equation \eqref{41} is asymptotically constant on each
well and then that all limit points are solution of the reduced
resolvent equation \eqref{42}.  More precisely, in Proposition
\ref{l06}, based on local ergodic properties of the diffusion, we show
that
\begin{equation}
\label{05}
\lim_{\epsilon \to 0} \max_{k\in \bb Z}
\sup_{x,y \in \ms E(m_k)}
|\phi_{1,\epsilon}(x) - \phi_{1,\epsilon}(y)| = 0\,.
\end{equation}
Then, in Lemma \ref{l05}, we prove that
\begin{equation}
\label{06}
\lim_{\epsilon \to 0} \max_{k\in  \bb Z}
|\phi_{1,\epsilon}(m_k) - f(\ms M_1(k))| = 0\,,
\end{equation}
where $f(\cdot)$ is the solution of the reduced resolvent equation
\eqref{42}.  Theorem \ref{mt4} for $p=1$ is an easy consequence of
these two results.

\subsection*{Local ergodicity}

We start with the local ergodicity.  We show that the solution of the
resolvent equation is asymptotically constant on each well by
guaranteeing that, for any $k\in \bb Z$ and $x\in \ms E(m_{k})$, the
process $X_{\epsilon}(t)$, starting at $x$, hits the critical point
$m_{k}$, before leaving the well $\ms E(m_{k})$, in a time-scale much
smaller than the metastable scale $\theta^{(1)}_{\epsilon}$.

Let
\begin{equation}
\label{14}
{\color{blue} \delta_{\epsilon}(k)}
\,:=\,  \sqrt{\frac{2\, \epsilon \log
\epsilon^{-1}}{-\, S''(\sigma_{k})}}\;,
\quad
{\color{blue} \eta_{\epsilon}(k)}
\,:=\, \frac{\epsilon}{\delta_{\epsilon}(k)}
\;=\;
\sqrt{\frac{-\, \epsilon \, S''(\sigma_{k})}
{2 \,  \log \epsilon^{-1}}}\;,
\quad k\in \bb Z\;.
\end{equation}
Denote by $J_k$, $J^+_k$, $J^-_k$ the intervals given by
\begin{equation*}
{\color{blue} J^+_k} \,:=\,
[\, m_k, \sigma_{k+1} - \delta_{\epsilon}(k+1)\, ]\;, \quad
{\color{blue} J^-_k} \, :=\,  [\,\sigma_k + \delta_{\epsilon}(k),
m_k\, ]\;, \quad
{\color{blue} J_k} \, := \, J^-_k \cup J^+_k\;.
\end{equation*}
The next result establishes the local ergodicity. Recall the
definition of $\mf c_0$ introduced in Lemma \ref{l01}.

\begin{remark}
The result below is much stronger than what is required to prove the
local ergodicity stated in \eqref{05} since it establishes that, for
each $k\in \bb Z$, $\phi_{1,\epsilon}$ is asymptotically constant in a
slightly smaller interval than the one formed by the two local maxima
containing the local minimum $m_k$. (In particular, for $\epsilon$
small, $\phi_{1,\epsilon}$ is almost a step function). This stronger
result is needed, however, in the proof of the convergence of
$\phi_{1,\epsilon}$. More precisely, in the proof of Lemma
\ref{as1}. See below equation \eqref{19}.
\end{remark}

\begin{proposition}
\label{l06}
For each $k\in \bb Z$, 
\begin{equation*}
\sup_{x\in  J_k}
|\phi_{1,\epsilon}(x) - \phi_{1,\epsilon}(m_{k})| \,\le\,
2\, \Vert g\Vert_\infty\, \Big\{\,
\frac{\mf c_0}{\epsilon\, \theta^{(1)}_\epsilon}
\,+\, \frac{1}{\lambda} \,
\frac{2 \, \sqrt{2} \, \sqrt{\epsilon}}
{\delta_\epsilon (k) \wedge \delta_\epsilon (k+1) } \,\Big\}
\end{equation*}
for all $\epsilon$ sufficiently small. In particular, as
$\mss a(\cdot)$, $\mss b(\cdot)$ are periodic, \eqref{05} holds.
\end{proposition}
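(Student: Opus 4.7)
The plan is to apply Proposition \ref{l04}, and specifically the estimate \eqref{08}, with $\vartheta=\theta_\epsilon^{(1)}$ and $F=G$ (so $\Vert F\Vert_\infty=\Vert g\Vert_\infty$), separately on each half $J_k^\pm$ of the interval. For $x\in J_k^+$ I would take $a=m_k$, $b=\sigma_{k+1}$; for $x\in J_k^-$ I would apply a symmetric variant of \eqref{08} --- obtained from the second assertion of Proposition \ref{l04} together with the sup bound $\Vert\phi_{1,\epsilon}\Vert_\infty\le\Vert g\Vert_\infty/\lambda$ --- with $a=\sigma_k$, $b=m_k$. In both cases the problem reduces to bounding (i) the mean hitting time of $\{m_k,\sigma_{k+1}\}$ and (ii) the escape probability $\bb P_x^\epsilon[\tau(\sigma_{k+1})<\tau(m_k)]$ (and their symmetric counterparts on the left half).

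The hitting-time term falls out of Lemma \ref{l01}. Between two consecutive critical points of $S$ the drift $\mss b$ has constant sign, so $S$ is strictly monotone on $[m_k,\sigma_{k+1}]$ and therefore $K_{m_k,\sigma_{k+1}}=0$. Lemma \ref{l01} then yields $\bb E_x^\epsilon[\tau(m_k,\sigma_{k+1})]\le\mf c_0/\epsilon$, which produces the first term $\mf c_0/(\epsilon\,\theta_\epsilon^{(1)})$ inside the braces after division by the time-scale.

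For the escape probability I would invoke Lemma \ref{l02} with the specific choice $\eta=\delta_\epsilon(k+1)/\sqrt{2}$. Monotonicity of $S$ on $[m_k,\sigma_{k+1}]$ makes the supremum and infimum in Lemma \ref{l02} explicit: $\sup_{[m_k,x]}S=S(x)\le S(\sigma_{k+1}-\delta_\epsilon(k+1))$ and $\inf_{[\sigma_{k+1}-\eta,\sigma_{k+1}]}S=S(\sigma_{k+1}-\eta)$. A quadratic Taylor expansion of $S$ at $\sigma_{k+1}$, combined with the defining identity $\tfrac{1}{2}|S''(\sigma_{k+1})|\,\delta_\epsilon(k+1)^{2}=\epsilon\log\epsilon^{-1}$, shows that the exponent appearing in Lemma \ref{l02} is at most $-\tfrac{1}{2}\log\epsilon^{-1}+o(1)$, where the $o(1)$ absorbs the cubic remainder $O(\delta_\epsilon(k+1)^{3}/\epsilon)=O(\sqrt{\epsilon}(\log\epsilon^{-1})^{3/2})$. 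Exponentiating gives a factor $\sqrt{\epsilon}(1+o(1))$; combined with the prefactor $(x-m_k)/\eta\le\sqrt{2}/\delta_\epsilon(k+1)$ (using $x-m_k<\sigma_{k+1}-m_k<1$, which holds by \eqref{68} and periodicity), this produces $\bb P_x^\epsilon[\tau(\sigma_{k+1})<\tau(m_k)]\le 2\sqrt{2\epsilon}/\delta_\epsilon(k+1)$ for $\epsilon$ small. The symmetric argument on $J_k^-$ yields the analogous bound with $\delta_\epsilon(k)$, and feeding both estimates back into \eqref{08} yields the claimed inequality with the minimum $\delta_\epsilon(k)\wedge\delta_\epsilon(k+1)$.

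The concluding statement \eqref{05} is then immediate: the $1$-periodicity of $\mss a$ and $\mss b$ forces $|S''(\sigma_k)|$ to take only finitely many positive values, so $\delta_\epsilon(k)$ is bounded above and away from $0$ uniformly in $k$, and the right-hand side of the bound tends to $0$ independently of $k$. The delicate technical point is the calibration of $\eta$ relative to $\delta_\epsilon(k+1)$: the quadratic Taylor term must produce enough exponential decay to offset the $1/\delta_\epsilon$ prefactor, while the cubic remainder must remain $o(\epsilon)$ after division by $\epsilon$. This balance is precisely what forces the logarithmic factor in the definition \eqref{14} of $\delta_\epsilon(k)$, and is the one place where the argument does not reduce to a direct invocation of the previously stated lemmas.
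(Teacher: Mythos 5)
Your proposal is correct and reproduces the paper's proof essentially line by line: apply Proposition \ref{l04} (estimate \eqref{08}) with $F=G$, $\vartheta=\theta_\epsilon^{(1)}$, $a=m_k$, $b=\sigma_{k+1}$ on $J_k^+$; use Lemma \ref{l01} with $K=0$ (monotonicity of $S$) for the hitting-time term; use Lemma \ref{l02} with $\eta=\delta_\epsilon(k+1)/\sqrt{2}$ together with a quadratic Taylor expansion at $\sigma_{k+1}$ for the escape probability; and treat $J_k^-$ symmetrically. Your explicit remark about the symmetric version of \eqref{08} (needed so that the reference point is $m_k$ rather than $\sigma_k$ on $J_k^-$) and your estimate of the cubic remainder $O(\delta_\epsilon^3/\epsilon)=O(\sqrt{\epsilon}(\log\epsilon^{-1})^{3/2})$ are both correct and consistent with the paper's $(1+o(\epsilon))$ factor.
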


\begin{proof}
Fix $k\in \bb Z$, and assume that $x\in J^+_k$. The same argument
applies to $x\in J^-_k$. The proof relies on Proposition \ref{l04}
with $a=m_k$, $b=\sigma_{k+1}$, $F=G$ and $\vartheta =
\theta^{(1)}_\epsilon$. We need to estimate the two terms on the
right-hand side of \eqref{08}.

On the one hand, since $S$ is monotone on $J^+_k$, by Lemma
\ref{l01},
\begin{equation*}
\sup_{x\in J^+_k}
\bb E^\epsilon_{x}\big[\, \tau(m_k, \sigma_{k+1})
\,\big] \,\leq\, \frac{\mf c_0}{\epsilon}
\end{equation*}
for all $\epsilon > 0$.

On the other hand, since
\begin{equation*}
\sup_{x \in [m_k, \sigma_{k+1} - \delta_{\epsilon}(k+1)]}  S(x)
\,=\, S(\sigma_{k+1} - \delta_{\epsilon}(k+1)) \;, \quad
\inf_{x \in [\sigma_{k+1} - \eta, , \sigma_{k+1}] } S(x)
\,=\, S(\sigma_{k+1} - \eta)\;, 
\end{equation*}
for all $\eta < \sigma_{k+1} - m_k$, as $\sigma_{k+1} - m_k<1$, by
Lemma \ref{l02} with
$\eta= (1/\sqrt{2}) \, \delta_{\epsilon}(k+1)$, 
\begin{equation*}
\sup_{x\in J^+_k}
\bb P^\epsilon_{x}\big[\,\tau(\sigma_{k+1})\; < \;
\tau(m_{k})\,\big] \,\leq\,
\frac{\sqrt{2}}{\delta_\epsilon (k+1)}\, e^{-r/\epsilon}\;, 
\end{equation*}
where
$r= S(\sigma_{k+1} - \eta) - S(\sigma_{k+1} -
\delta_{\epsilon}(k+1))$.  By a Taylor expansion and the definition
\eqref{14} of $\delta_{\epsilon}(k+1)$,
$r = (1/2) (1+o(\epsilon)) \, \epsilon \, \log \epsilon^{-1} $. In
this formula and below, $\color{blue} o(\epsilon)$ represents an
expression which depends only on $\mss a(\cdot)$, $\mss b(\cdot)$ and
converges to $0$ as $\epsilon\to 0$. Thus,
\begin{equation*}
\sup_{x\in J^+_k}
\bb P_{x}^\epsilon\big[\, \tau(\sigma_{k+1})\; < \;
\tau(m_{k})\,\big] \,\leq\,
\frac{2 \sqrt{2}}{\delta_\epsilon (k+1)}\, \sqrt{\epsilon} 
\end{equation*}
for all $\epsilon$ sufficiently small.  To complete the proof of the
assertion of the lemma for $x\in J^+_k$, it remains to recall the
statement of Proposition \ref{l04}. The same argument applies to
$J^-_k$.
\end{proof}

\subsection*{Convergence to $f$}

In this subsection, we prove \eqref{06}.
Let $f_\epsilon \colon \ms S_1 \to \bb R$ be given by
\begin{equation}
\label{82}
{\color{blue} f_\epsilon (\ms M_1(k))} \,:=\, \phi_\epsilon(m_k)\;.
\end{equation}
We wish to prove that $f_\epsilon(\cdot)$ converges to $f(\cdot)$
pointwisely. By Proposition \ref{l04}, $f_\epsilon(\cdot)$ is
uniformly bounded by $(1/\lambda)\, \Vert g\Vert_\infty$. Thus, since,
by Lemma \ref{l30}, the reduced resolvent equation \eqref{42} has a
unique bounded solution, it is enough to show that all limit points of
the sequence $f_{\epsilon}(\cdot)$ solve the resolvent equation.

\begin{proposition}
\label{l05}
For all $k\in \bb Z$,
\begin{equation*}
\lim_{\epsilon \to 0} 
|\, f_\epsilon(\ms M_1(k))  - f(\ms M_1(k))\, | \,=\,  0\,,
\end{equation*}
where $f(\cdot)$ is the unique bounded solution of the reduced
resolvent equation \eqref{42}.
\end{proposition}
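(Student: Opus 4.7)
The plan is to argue by compactness and uniqueness. By Proposition~\ref{l04}, $\|f_\epsilon\|_\infty \le \|g\|_\infty/\lambda$ uniformly in $\epsilon$, so from any sequence $\epsilon_n \to 0$ a diagonal extraction yields a subsequence (still denoted $\epsilon_n$) along which $f_{\epsilon_n}(\ms M_1(k)) \to F(\ms M_1(k))$ for every $k \in \bb Z$, with $F\colon \ms S_1 \to \bb R$ bounded. Since the reduced resolvent equation~\eqref{42} has a unique bounded solution (Lemma~\ref{l30}), it suffices to show that every such subsequential limit satisfies $(\lambda - \bb L_1) F = g$; the full sequence will then converge to $f$.

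To identify the equation satisfied by $F$, fix $k \in \bb Z$, start the diffusion at $m_k$, and let $\tau := \tau\bigl(\bigcup_{j \ne k} \ms E(m_j)\bigr)$ be the first time the process enters a neighbouring well. By the stochastic representation~\eqref{07} at speedup $\theta^{(1)}_\epsilon$ and the strong Markov property at $\tau$,
\begin{equation*}
f_\epsilon(\ms M_1(k)) \;=\; \bb E^\epsilon_{m_k}\Bigl[\int_0^{\tau/\theta^{(1)}_\epsilon} e^{-\lambda s}\, G(X_\epsilon(s\,\theta^{(1)}_\epsilon))\, ds\Bigr] \;+\; \bb E^\epsilon_{m_k}\bigl[\, e^{-\lambda\tau/\theta^{(1)}_\epsilon}\, \phi_{1,\epsilon}(X_\epsilon(\tau))\,\bigr]\;.
\end{equation*}
Up to time $\tau$ the diffusion spends almost all of its time in $\ms E(m_k)$ (the excursions being exponentially shorter than $\tau$), so $G(X_\epsilon(s\,\theta^{(1)}_\epsilon))$ can be replaced by $g(\ms M_1(k))$ in the first integrand at cost $o_\epsilon(1)$, and the first term equals $g(\ms M_1(k))\,\bb E^\epsilon_{m_k}[(1-e^{-\lambda\tau/\theta^{(1)}_\epsilon})/\lambda] + o_\epsilon(1)$. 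In the second term $X_\epsilon(\tau)$ lies in some $\ms E(m_\ell)$ with $\ell = k \pm 1$, and by Proposition~\ref{l06} $\phi_{1,\epsilon}(X_\epsilon(\tau)) = f_\epsilon(\ms M_1(\ell)) + o_\epsilon(1)$.

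The bulk of the work is the Eyring--Kramers sharp asymptotics for $(\tau, X_\epsilon(\tau))$ at the critical scale $\theta^{(1)}_\epsilon = e^{\mf h_1/\epsilon}$. Combining a Laplace expansion of the explicit formula~\eqref{40} for the mean exit time at the saddles of height $\mf h_1$ with the exit-probability identity of Corollary~\ref{l26} (and~\eqref{89}), one obtains, setting $\mu_1(k) = R_1(\ms M_1(k),\ms M_1(k-1)) + R_1(\ms M_1(k),\ms M_1(k+1))$,
\begin{equation*}
\bb E^\epsilon_{m_k}\bigl[e^{-\lambda\tau/\theta^{(1)}_\epsilon}\bigr] \,\to\, \frac{\mu_1(k)}{\lambda+\mu_1(k)}\,, \qquad \bb P^\epsilon_{m_k}\bigl[X_\epsilon(\tau)\in\ms E(m_{k+a})\bigr] \,\to\, \frac{R_1(\ms M_1(k),\ms M_1(k+a))}{\mu_1(k)}\,,
\end{equation*}
$a = \pm 1$, with $\tau/\theta^{(1)}_\epsilon$ asymptotically $\mathrm{Exp}(\mu_1(k))$ and asymptotically independent of the exit side. (If $\mu_1(k)=0$ then $\tau/\theta^{(1)}_\epsilon \to \infty$ and both exponential expectations tend to their degenerate limits, giving the consistent relation $\lambda F(\ms M_1(k)) = g(\ms M_1(k))$.) Inserting these limits into the previous identity yields
\begin{equation*}
(\lambda + \mu_1(k))\, F(\ms M_1(k)) \;=\; g(\ms M_1(k)) \;+\; \sum_{a=\pm 1} R_1(\ms M_1(k),\ms M_1(k+a))\, F(\ms M_1(k+a))\,,
\end{equation*}
which is exactly $(\lambda - \bb L_1) F = g$ at $\ms M_1(k)$. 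The main obstacle is isolating the prefactor $\pi_1(k)\,\sigma_1(k,k\pm 1)$ in the Eyring--Kramers formula so that only saddles at height $\mf h_1$ contribute (matching the indicator in~\eqref{73}) and establishing the asymptotic independence of $\tau$ and $X_\epsilon(\tau)$; both rest on a careful but standard Laplace-method analysis of the nested integrals in~\eqref{40}, using that the diffusion returns to $m_k$ many times before crossing the effective saddle to equilibrate the distribution of the exit side.
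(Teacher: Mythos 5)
Your compactness-plus-uniqueness framework (extract a bounded subsequential limit $F$ of $f_\epsilon$, show it solves the reduced resolvent equation, invoke Lemma~\ref{l30}) is the same as the paper's. Where you diverge is in how you identify the limiting equation: the paper tests the weak formulation of the resolvent equation against the explicitly constructed function $Q^{1,k}_\epsilon = J^{1,k}_\epsilon/\mss a$ and uses \eqref{20}, \eqref{32}, \eqref{22b}, Lemma~\ref{l03} to read off $(\lambda - \bb L_1)F = g$ directly, whereas you propose the classical Eyring--Kramers exit-time decomposition: stop at the first hitting time $\tau$ of a neighbouring well, use the strong Markov property, and plug in the law of $(\tau/\theta^{(1)}_\epsilon, X_\epsilon(\tau))$.

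The gap is that you assert the core distributional facts rather than prove them. The claim that $\bb E^\epsilon_{m_k}\bigl[e^{-\lambda\tau/\theta^{(1)}_\epsilon}\bigr] \to \mu_1(k)/(\lambda+\mu_1(k))$ together with the asymptotic independence of $\tau$ and the exit side is precisely the hard content of metastability theory --- it is what the resolvent approach of the paper (following \cite{lms}) is designed to \emph{derive}, not assume. A Laplace expansion of \eqref{40} yields only the \emph{mean} hitting time of a two-point boundary, and Corollary~\ref{l26} yields only the exit-side probabilities; neither controls the Laplace transform of $\tau$ at scale $\theta^{(1)}_\epsilon$, nor the joint law of $(\tau, X_\epsilon(\tau))$. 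In particular, the exponential limit law and the factorization of the joint distribution need a renewal or coupling argument exploiting the rapid equilibration inside the well, which is a substantial additional ingredient you gesture at (``the diffusion returns to $m_k$ many times'') but do not establish. There is also a mismatch of boundary types: \eqref{40} and \eqref{63} are set up for a two-point boundary $\{a,b\}$, whereas your $\tau$ is the hitting time of a union of wells $\bigcup_{j\ne k}\ms E(m_j)$; in one dimension this can be reduced to a two-point problem, but that reduction is not carried out. Finally, replacing $G(X_\epsilon(\cdot))$ by $g(\ms M_1(k))$ in the time integral up to $\tau$ requires a quantitative bound on the fraction of time spent outside $\ms E(m_k)$ before $\tau$, which is not just ``excursions being exponentially shorter.'' So while the probabilistic route is a legitimate alternative to the paper's analytic one, as written it relies on exactly the metastable exit statistics that the paper's test-function construction is built to avoid proving.
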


To prove the convergence of the sequence $f_\epsilon(\cdot)$, we
construct a test function $Q_{\epsilon}^{1,k}$ with support contained
in an interval $(a,b)$. As $\phi_{1,\epsilon}$ is a weak solution of the
resolvent equation \eqref{41},
\begin{equation}
\label{17}
\begin{aligned}
& \lambda  \, \int_{\bb R} Q_{\epsilon}^{1,k} (x)\,\phi_{1,\epsilon} (x)  \, 
e^{-S(x)/\epsilon}   \, dx
\,+\, \epsilon \, \theta^{(1)}_{\epsilon}\,
\int_{\bb R} [\, \partial_x \, (\mss a \, Q_{\epsilon}^{1,k}
)\, ] (x)
\, (\partial_x \phi_{1,\epsilon}) (x) \,  e^{-S(x)/\epsilon}   \, dx
\\
& \quad \,=\, \int_{\bb R}   Q_{\epsilon}^{1,k}(x)\,  G(x)  \,
e^{-S(x)/\epsilon}    \, dx\;.
\end{aligned}
\end{equation}
Taking the limit as $\epsilon\to 0$ in this identity, we show that any
limit point of the sequence $f_\epsilon(\cdot)$ solves the reduced
resolvent equation \eqref{42}. 

\smallskip\noindent {\it A test function}.  Let
$\color{blue} h_{k,\epsilon}\colon \bb R \to [0,1]$, $k\in \bb Z$, be
the equilibrium potential defined as the solution of the Poisson
equation
\begin{equation*}
\begin{cases}
\ms L_{\epsilon}u = 0, &\text{if } x\not\in \ms E \,
\\
u(x) = 1 &\text{if } x \in \ms E(m_k)\,, 
\\
u(x) = 0 &\text{if } x \in \ms E \setminus  \ms E (m_k) \;,
\end{cases}
\quad\text{where} \quad {\color{blue} \ms E}\,:=\, \bigcup_{k\in \bb Z} \ms
E(m_k)\;. 
\end{equation*}
Clearly the equilibrium potential $h_{k,\epsilon}$ has a stochastic
representation as
$h_{k,\epsilon} (x) = \bb P^\epsilon_x [ \tau_\epsilon (\ms E(m_k)) <
\tau_\epsilon ( \ms E \setminus \ms E(m_k))\,]$.  The test function
$Q_{\epsilon}^{1,k}$ is an approximation of $h_{k,\epsilon}
(\cdot)$. Its precise definition requires some notation.

Recall from \eqref{60} the definition of the length
$\eta_k(\epsilon)$. Denote by $C_{\epsilon}(k)$,
$\Lambda_{\epsilon}(k)$, $A_{\epsilon}(k)$ the \emph{core} sets given
by 
\begin{equation}
\label{60}
\begin{gathered}
{\color{blue} C_{\epsilon}(k)} \, :=\,  [\sigma_{k} +
\delta_{\epsilon}(k) + \eta_{\epsilon}(k) \,,\, \sigma_{k+1} -
\eta_{\epsilon}(k+1) - \delta_{\epsilon}(k+1) ],
\\
{\color{blue}  \Lambda_{\epsilon}(k)} :=
[\sigma_{k} - \delta_{\epsilon}(k),
\sigma_{k}+\delta_{\epsilon}(k)]\;, 
\quad
{\color{blue}  A_{\epsilon}(k) }:= \Lambda_ {\epsilon}(k) \cup
C_{\epsilon}(k) \cup \Lambda_ {\epsilon}(k+1)\;;
\end{gathered}
\end{equation} 
the \emph{binding} sets 
\begin{equation}
\label{gluing:interv} 
\begin{gathered}
{\color{blue} B_ {\epsilon}^{-}(k)} \,:=\,  (\sigma_{k}
- \delta_{\epsilon}(k) - \eta_{\epsilon}(k)
\,,\, \sigma_{k}-\delta_{\epsilon}(k)), \quad
{\color{blue} B_{\epsilon}^{+}(k)} \,:=\,  (\sigma_{k}+\delta_{\epsilon}(k),
\sigma_{k}+ \delta_{\epsilon}(k) + \eta_{\epsilon}(k) ) 
\\
{\color{blue}  B_ {\epsilon}(k)} \, := \,  B_ {\epsilon}^{-}(k) \cup
B_ {\epsilon}^{+}(k) \cup B_ {\epsilon}^{-}(k+1) \cup B_ {\epsilon}^{+}(k+1)  
\end{gathered}
\end{equation}
and the $m_{k}$-\emph{neighborhood}
\begin{equation}
\label{well_neighb}
{\color{blue}  \Omega_{\epsilon}(m_{k})} \, := \, A_{\epsilon}(k)\cup
B_{\epsilon}(k) \;.
\end{equation} 

This decomposition separates $\Omega_{\epsilon}(m_{k})$ into three
types of of sets, see Figure \ref{fig-f2}.  (i) A slowly growing
sequence of intervals, denoted by $C_\epsilon(k)$, that properly
contains $\ms E(m_{k})$, on which we know the asymptotic behaviour of
$\phi_\epsilon$ thanks to the local ergodicity results of the previous
subsection. (ii) Shrinking neighbourhoods of the unstable equilibrium
points $\sigma_{k}$, denoted by $\Lambda_ {\epsilon}(k)$. We have no
information on the behaviour of $\phi_{\epsilon}$ in the interior of
these sets. In these intervals, $\phi_\epsilon$ changes from values
close to $0$ to values close to $1$. (iii) Shrinking sets, denoted by
$B^\pm_{\epsilon}(k)$, needed to connect the definition of the
test function in the two previous class of sets.

\begin{figure}
\centering
\begin{tikzpicture}[scale=0.8]
\draw[rounded corners] (1,3) .. controls (2.5,5) .. (4,3);  
\draw[rounded corners] (4,3) .. controls (5.5, 1) .. (7, -1);
\draw[rounded corners] (7,-1) .. controls (8.5, -3) .. (10,-1);
\draw[rounded corners] (10,-1) -- (11.5,1);
\draw[rounded corners] (11.5,1) .. controls (13,2.8) .. (14.5, 1);

\fill(2.5,4.5)node[above]{$\sigma_{k}$};
\fill(13,2.4)node[above]{$\sigma_{k+1}$};
\fill(8.5,-2.6)node[below]{$m_k$};

\draw[dashed, purple](1,3) -- (1,-4);
\draw[dashed, purple](1.3,3.3) -- (1.3,-4);
\draw[dashed, purple](3.7,3.3) -- (3.7,-4);
\draw[dashed, purple](4,3) -- (4,-4);

\draw[dashed, purple](11.5,1)--(11.5,-4);
\draw[dashed,purple](11.8,1.3)--(11.8,-4);
\draw[dashed, purple](14.2,1.3)--(14.2,-4);
\draw[dashed,purple](14.5,1)--(14.5,-4);

\draw[dashed, teal](7,-1)--(7,-4);
\draw[dashed, teal](10,-1)--(10,-4);

\draw[solid, thick, black](1,-4)--(14.5,-4);
\draw[line width=0.5mm, purple](1,-4.2)--(1.3,-4.2) node[midway, below, purple, font=\tiny]{$B_\epsilon^-(k)$};
\draw[line width=0.5mm, brown](1.3,-3.8)--(3.7,-3.8) node[midway, above, brown, font=\tiny]{$\Lambda_\epsilon(k)$};
\draw[line width = 0.5mm, purple](3.7,-4.2)--(4,-4.2) node[midway, below, purple, font=\tiny]{$B_\epsilon^+(k)$};
\draw[line width = 0.5mm, cyan](4,-3.8)--(11.5,-3.8) node[midway, above, cyan, font=\tiny]{$C_\epsilon(k)$};
\draw[line width=0.5mm, teal](7,-4.2)--(10,-4.2) node[midway, below, teal, font=\tiny]{$\ms E(m_k)$};
\draw[line width=0.5mm, purple](11.5, -4.2)--(11.8,-4.2) node[midway, below, purple, font=\tiny]{$B_\epsilon^-(k+1)$};
\draw[line width=0.5mm, brown](11.8,-3.8)--(14.2,-3.8) node[midway, above, brown, font=\tiny]{$\Lambda_\epsilon(k+1)$};
\draw[line width=0.5mm, purple](14.2,-4.2)--(14.5,-4.2) node[midway, below, purple, font=\tiny]{$B_\epsilon^+(k+1)$};

\end{tikzpicture}
\caption{The core and the binding sets.}
\label{fig-f2}
\end{figure}
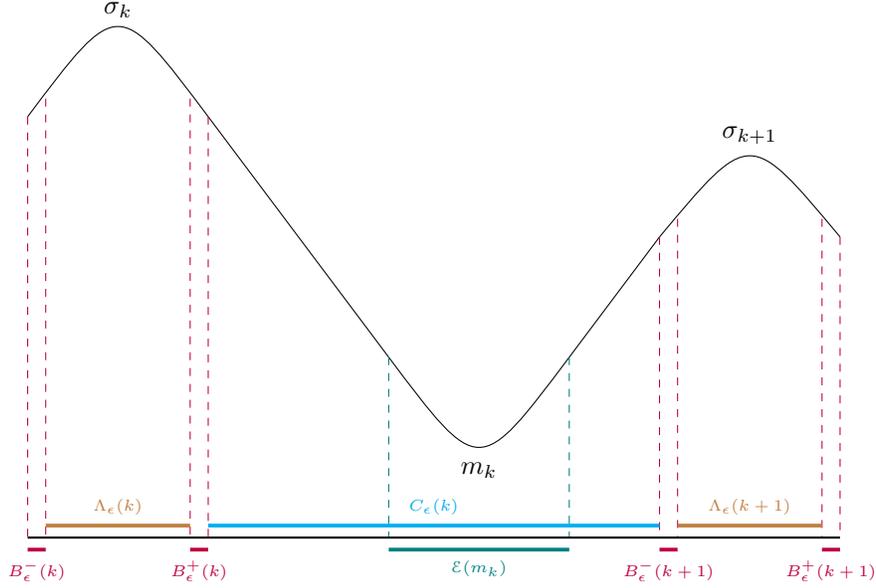

Fix $\sigma \in \mc W$, and denote by $\ms L_{\sigma, \epsilon}$ the
linearization of the generator $\ms L_\epsilon$ around $\sigma$
(centered at the origin and acting on $C^2(\bb R)$):
\begin{equation}
\label{18}
(\ms L_{\sigma, \epsilon} \psi) (x)\,=\, 
x \, \mss b'(\sigma) \, \psi '(x)
+ \epsilon \, \mss a(\sigma) \, \psi ''(x)\;.
\end{equation}
Let $\rho_{\sigma,\epsilon}\colon \bb R \to [0,1]$ be the solution of
the Poisson equation
\begin{equation*}
\ms L_{\sigma, \epsilon} u =0\,,\quad u(-\infty)=0\,,\quad
u(+\infty)=1\;.
\end{equation*}
A simple computation yields that
\begin{equation}
\label{13}
\rho_{\sigma, \epsilon}(x) \,=\,
\sqrt{\frac{-\, S''(\sigma)}{2\pi\epsilon}}
\int_{-\infty}^{x}e^{S''(\sigma) y^{2}/2 \epsilon } \,dy\,.
\end{equation}

The test function $Q_{\epsilon}^{1,k}$ is defined as
$Q_{\epsilon}^{1,k} = J_{\epsilon}^{1,k}/\mss a$, where
$ J_{\epsilon}^{1,k}$ is equal to $\rho_{\sigma_k, \epsilon}$ in the
neighbourhood $\Lambda_ {\epsilon}(k)$ of $\sigma_k$ and as
$1 - \rho_{\sigma_{k+1}, \epsilon}$ in $\Lambda_ {\epsilon}(k+1)$. More
precisely, recall that $B_{\epsilon}(k)$ stands for the binding
set. We first define the test function on
$\bb R \setminus B_{\epsilon}(k)$. Let
$J_{\epsilon}^{1,k}\colon \bb R \setminus B_{\epsilon}(k) \to \bb R$
be given by
\begin{equation}
\label{12}
{\color{blue} J_{\epsilon}^{1,k}(x) } \;:=\;
\begin{cases} \rho_{\sigma_k,\epsilon} (x-\sigma_k),
&\text{if } x\in \Lambda_{\epsilon}(k)
\\
1 , &\text{if } x\in C_{\epsilon}(k)
\\
(1 - \rho_{\sigma_{k+1},\epsilon})  (x- \sigma_{k+1}), &\text{if }
x\in \Lambda_ {\epsilon}(k+1)
\\
0, &\text{ otherwise. }
\end{cases}
\end{equation} 
We extend the definition of $J_{\epsilon}^{1,k}$ to $B_{\epsilon}(k)$
by linear interpolation. 

The test function $Q_{\epsilon}^{1,k} \colon \bb R \to \bb R$ is given
by $\color{blue} Q_{\epsilon}^{1,k} := J_{\epsilon}^{1,k}/\mss a$.  It
has compact support, it is continuous, piecewise smooth, and its
derivative is discontinuous at the boundary of $B_{\epsilon}(k)$.

Since $Q_{\epsilon}^{1,k}$ is piecewise smooth and continuous,
equation \eqref{17} holds. Multiply both sides of this equation
by $\exp\{S(m_k) /\epsilon\}/\sqrt{2\pi\, \epsilon}$, 
to get that
\begin{equation}
\label{20}
\begin{aligned}
& \frac{\lambda}{\sqrt{2\pi\, \epsilon}}\, \int_{\bb R} J_{\epsilon}^{1,k} (x)\,
\frac{1}{\mss a(x)} \, \phi_{\epsilon} (x) \, e^{-[S(x) - S(m_k)] /\epsilon} \, dx
\\
&\quad 
\,+\, \epsilon \, \theta^{(1)}_{\epsilon}\,
\frac{1}{\sqrt{2\pi\, \epsilon}}\, \int_{\bb R} (\, \partial_x \,
J_{\epsilon}^{1,k} )(x) \, (\partial_x \phi_{\epsilon}) (x) \,
e^{-[S(x)-S(m_k)]/\epsilon} \, dx
\\
& \quad \,=\, \frac{1}{\sqrt{2\pi\, \epsilon}}\,
\int_{\bb R} J_{\epsilon}^{1,k}(x)\, \frac{1}{\mss a(x)} \,  G(x) \,
e^{-[S(x)-S(m_k)] /\epsilon} \, dx\;.
\end{aligned}
\end{equation}

\subsection*{The resolvent equation}

We estimate each term of \eqref{20} separately.  By the definition of
$Q^{1,k}_{\epsilon}$, and since $\ms E(m_k) \subset C_\epsilon(k)$,
\begin{equation}
\label{32}
\begin{aligned}
& \frac{1}{\sqrt{2\pi\, \epsilon}}\, \int_{\bb R}
J_{\epsilon}^{1,k}(x)\, \frac{1}{\mss a(x)} \,  G(x) \,
e^{-[S(x)-S(m_k)] /\epsilon} \, dx
\\
&\quad \;=\; \frac{g(\ms M_1(k))}{\sqrt{2\pi\, \epsilon}}\, \int_{\ms E(m_k)}
\frac{1}{\mss a(x)} \,  
e^{-[S(x)-S(m_k)] /\epsilon} \, dx \,=\,  g(\ms M_1(k))
\, \frac{1}{\sqrt{2\pi}}\, \pi_1(m_k)
\, (1+ o_\epsilon(1)) \,,
\end{aligned}
\end{equation}
where $\pi_1(\cdot)$ has been introduced in \eqref{04}.  On the other
hand, since $\phi_{\epsilon}(\cdot)$ is uniformly bounded and
$J_{\epsilon}^{1,k}$ vanishes on $\Omega_\epsilon (k)^c$, by
Proposition \ref{l06},
\begin{equation}
\label{22b}
\begin{aligned}
& \frac{\lambda}{\sqrt{2\pi\, \epsilon}}\, \int_{\bb R} J_{\epsilon}^{1,k} (x)\,
\frac{1}{\mss a(x)} \, \phi_{\epsilon} (x) \, e^{-[S(x) - S(m_k)] /\epsilon} \, dx
\\
&\quad \;=\;
\frac{\lambda}{\sqrt{2\pi\, \epsilon}}\, \int_{\Omega_\epsilon (k)}  J_{\epsilon}^{1,k} (x)\,
\frac{1}{\mss a(x)} \, \phi_{\epsilon} (x) \, e^{-[S(x) - S(m_k)]
/\epsilon} \, dx
\\ & \quad 
\,=\, \lambda\, f_{\epsilon}(\ms M_1(k))\, \frac{1}{\sqrt{2\pi}}\,
\pi_1(m_k) \, + \, o_\epsilon(1) \;.
\end{aligned}
\end{equation}

We turn to the second term on the left-hand side of \eqref{20}. Recall
the definition of $\sigma_1(j,j+1)$, $j\in \bb Z$, introduced in
\eqref{04}. Since the test function $J_{\epsilon}^{1,k}$ vanishes on
$\Omega_\epsilon (k)^c$, the integral can be taken over the set
$\Omega_{\epsilon} (m_{k})$.

\begin{lemma}
\label{l03}
For each $k\in \bb Z$,
\begin{equation*}
\begin{aligned}
& \epsilon \, \theta^{(1)}_{\epsilon}\, \frac{1}{\sqrt{2\pi\,\epsilon}}\,
\int_{\Omega_{\epsilon} (m_{k})} (\, \partial_x \,
J_{\epsilon}^{1,k} )(x) \, (\partial_x \phi_{\epsilon}) (x) \,
e^{-[S(x)-S(m_k)]/\epsilon} \, dx
\\
&\quad \,=\, -\, \frac{1}{\sqrt{2\pi}}\, \sum_{i=\pm 1}
\frac{1}{\sigma_1(k,k+i)}\, \mtt 1 \{h^i_k = \mf h_1\}
\, [\, f_{\epsilon}(\ms M_1(k+i))- f_{\epsilon}(\ms M_1(k))\,]
\,+\, o_\epsilon(1)  \;. 
\end{aligned}
\end{equation*}
\end{lemma}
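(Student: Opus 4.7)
\medskip

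The plan is to split the integral using the decomposition of $\Omega_{\epsilon}(m_{k})$ into the two saddle neighborhoods $\Lambda_\epsilon(k)$, $\Lambda_\epsilon(k+1)$, the four binding intervals $B_\epsilon^\pm(k)$, $B_\epsilon^\pm(k+1)$, and the core $C_\epsilon(k)$. The core contributes nothing since $J_\epsilon^{1,k}\equiv 1$ there. I expect the two $\Lambda$-contributions to produce exactly the two terms ($i=\pm 1$) in the claimed sum, and each of the four binding contributions to be $o_\epsilon(1)$.

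\medskip

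For the contribution on $\Lambda_\epsilon(k)$ (the one on $\Lambda_\epsilon(k+1)$ is symmetric and yields the $i=+1$ term), I will use that $\partial_x J_\epsilon^{1,k}(x)=\rho'_{\sigma_k,\epsilon}(x-\sigma_k)$ is a scaled Gaussian density matched to the quadratic approximation of $S$ at $\sigma_k$. Setting
\[
B(x)\;:=\;\partial_x J_\epsilon^{1,k}(x)\,e^{-[S(x)-S(m_k)]/\epsilon},
\]
a third order Taylor expansion of $S$ around $\sigma_k$ together with $|x-\sigma_k|\le\delta_\epsilon(k)=O(\sqrt{\epsilon\log\epsilon^{-1}})$ shows, because of the cancellation of the quadratic terms and because $|x-\sigma_k|^3/\epsilon=o_\epsilon(1)$, that $B(x)=\sqrt{-S''(\sigma_k)/(2\pi\epsilon)}\,e^{-h^{-}_k/\epsilon}(1+o_\epsilon(1))$ uniformly on $\Lambda_\epsilon(k)$. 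Integrating $\int B\,\partial_x\phi_\epsilon\,dx$ by parts, the main piece is the boundary contribution $[B\phi_\epsilon]_{\sigma_k-\delta_\epsilon(k)}^{\sigma_k+\delta_\epsilon(k)}$; since $\sigma_k-\delta_\epsilon(k)\in J^+_{k-1}$ and $\sigma_k+\delta_\epsilon(k)\in J^-_k$, Proposition~\ref{l06} identifies the boundary values of $\phi_\epsilon$ as $f_\epsilon(\ms M_1(k-1))+o_\epsilon(1)$ and $f_\epsilon(\ms M_1(k))+o_\epsilon(1)$, respectively. After multiplying by the overall prefactor, the factor $\theta^{(1)}_\epsilon\,e^{-h^{-}_k/\epsilon}=e^{(\mf h_1-h^{-}_k)/\epsilon}$ collapses (since $h^{-}_k\ge\mf h_1$) to $\mtt 1\{h^{-}_k=\mf h_1\}+o_\epsilon(1)$, and $\sqrt{-S''(\sigma_k)/(2\pi)}=1/\sigma_1(k,k-1)$ produces the announced $i=-1$ coefficient. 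The remainder $\int_{\Lambda_\epsilon(k)}\phi_\epsilon\,B'(x)\,dx$ from the integration by parts will be controlled through the linearization identity $\rho''_{\sigma_k,\epsilon}(y)=(S''(\sigma_k)y/\epsilon)\rho'_{\sigma_k,\epsilon}(y)$: the quadratic parts of $\partial_x^2 J_\epsilon^{1,k}$ and $(S'/\epsilon)\partial_x J_\epsilon^{1,k}$ then cancel, leaving a multiplicative factor $(x-\sigma_k)^2/\epsilon=O(\log\epsilon^{-1})$ relative to $B$; integration over the interval $\Lambda_\epsilon(k)$ of length $O(\sqrt{\epsilon\log\epsilon^{-1}})$ and application of the prefactor give a bound $O(\sqrt{\epsilon}(\log\epsilon^{-1})^{3/2})=o_\epsilon(1)$.

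\medskip

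The main obstacle is the negligibility of the four binding-set contributions. I focus on $B_\epsilon^-(k)\subset J^+_{k-1}$; the other three are treated identically, using $\phi_\epsilon(m_k)$, $\phi_\epsilon(m_k)$, $\phi_\epsilon(m_{k+1})$ in place of $\phi_\epsilon(m_{k-1})$ on $B_\epsilon^+(k)\subset J^-_k$, $B_\epsilon^-(k+1)\subset J^+_k$, $B_\epsilon^+(k+1)\subset J^-_{k+1}$, respectively. By construction, $\partial_x J_\epsilon^{1,k}$ is the constant $c=\rho_{\sigma_k,\epsilon}(-\delta_\epsilon(k))/\eta_\epsilon(k)$ on $B_\epsilon^-(k)$; a Gaussian-tail estimate gives $\rho_{\sigma_k,\epsilon}(-\delta_\epsilon(k))=O(\epsilon/\sqrt{\log\epsilon^{-1}})$, hence $c=O(\sqrt{\epsilon})$. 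A pointwise control on $|\partial_x\phi_\epsilon|$ would be too crude, so the trick is to use Proposition~\ref{l06} on $J^+_{k-1}\supset B_\epsilon^-(k)$ to get $|\phi_\epsilon(x)-\phi_\epsilon(m_{k-1})|=o_\epsilon(1)$ uniformly on the binding interval, then rewrite $\partial_x\phi_\epsilon=\partial_x(\phi_\epsilon-\phi_\epsilon(m_{k-1}))$ and integrate by parts. Both the resulting boundary term and the interior term $(1/\epsilon)\int_{B_\epsilon^-(k)}(\phi_\epsilon-\phi_\epsilon(m_{k-1}))\,S'\,e^{-[S-S(m_k)]/\epsilon}\,dx$ inherit the $o_\epsilon(1)$ factor. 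Combining with the elementary bounds $e^{-[S-S(m_k)]/\epsilon}\le C\,e^{-h^{-}_k/\epsilon}/\epsilon$, $|S'|=O(\sqrt{\epsilon\log\epsilon^{-1}})$ and $|B_\epsilon^-(k)|=O(\sqrt{\epsilon/\log\epsilon^{-1}})$ on that interval, one gets $c\int_{B_\epsilon^-(k)}\partial_x\phi_\epsilon\,e^{-[S-S(m_k)]/\epsilon}\,dx=o_\epsilon(1)\cdot e^{-h^{-}_k/\epsilon}/\sqrt{\epsilon}$, and after the global prefactor $\sqrt{\epsilon}\,\theta^{(1)}_\epsilon/\sqrt{2\pi}$ is applied the bound becomes $o_\epsilon(1)\cdot e^{(\mf h_1-h^{-}_k)/\epsilon}=o_\epsilon(1)$. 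Summing the two $\Lambda$-contributions and absorbing the four vanishing binding-interval contributions completes the proof.
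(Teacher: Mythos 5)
Your proposal is correct and follows essentially the same route as the paper, which splits the integral over $\Omega_\epsilon(m_k)$ into the core (zero contribution), the binding sets (shown to be $o_\epsilon(1)$ in Lemma~\ref{as2} via the same integration by parts against $\phi_\epsilon-\phi_\epsilon(\mtt l)$, the constancy of $\partial_x J^{1,k}_\epsilon$ there, and the Gaussian-tail bound on that constant), and the two $\Lambda$-intervals (Lemma~\ref{as1}, via integration by parts, the third-order Taylor expansion as in your display for \eqref{19}, Proposition~\ref{l06} at the endpoints, and the collapse $\theta^{(1)}_\epsilon e^{-h^{\pm}_k/\epsilon}\to\mtt 1\{h^{\pm}_k=\mf h_1\}$). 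Your treatment of the bulk remainder on $\Lambda_\epsilon(k)$ via the ODE identity for $\rho''_{\sigma_k,\epsilon}$ and the estimate $O(\sqrt{\epsilon}(\log\epsilon^{-1})^{3/2})$ matches the paper's bound $C_0\delta_\epsilon(k+1)^3/\epsilon$ after \eqref{31}.
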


The proof is carried out estimating the integral in each region
forming $\Omega_{\epsilon} (m_{k})$. By definition,
$\partial_x \, J_{\epsilon}^{1,k}$ vanishes on $C_\epsilon
(k)$. Lemma \ref{as2} takes care of the integral on the set
$B_\epsilon(k)$, where $J^{1,k}_{\epsilon}$ is linear, and Lemma
\ref{as1} on the set
$\Lambda_{\epsilon}(k) \cup \Lambda_{\epsilon}(k+1)$.

\begin{lemma}
\label{as2}
For all $k\in \bb Z$,
\begin{equation*}
\lim_{\epsilon\to 0}
\Big|\, \frac{\epsilon\, \theta_{\epsilon}^{(1)}}{\sqrt{2\pi\, \epsilon}}\,
\int_{B_\epsilon  (k)} (\, \partial_x \,
J_{\epsilon}^{1,k} )(x) \, (\partial_x \phi_{\epsilon}) (x) \,
e^{-[S(x)-S(m_k)]/\epsilon} \, dx  \, \Big| \,=\, 0\;.
\end{equation*}
\end{lemma}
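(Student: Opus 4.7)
The integral decomposes into four pieces, one per component of $B_\epsilon(k) = B^-_\epsilon(k) \cup B^+_\epsilon(k) \cup B^-_\epsilon(k+1) \cup B^+_\epsilon(k+1)$. On each component $J_\epsilon^{1,k}$ is linear, so $\partial_x J_\epsilon^{1,k}$ is a constant slope $L_\epsilon^{(\ell)}$. A standard Gaussian tail estimate applied to the explicit formula \eqref{13} yields $1 - \rho_{\sigma_k,\epsilon}(\delta_\epsilon(k)) = O(\epsilon/\sqrt{\log\epsilon^{-1}})$, and since each component has length $\eta_\epsilon(k) = \epsilon/\delta_\epsilon(k)$, each slope $L_\epsilon^{(\ell)}$ is of order $\sqrt{\epsilon}$ uniformly in $k$.

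The essential step is to subtract a locally constant value from $\phi_\epsilon$ before integrating by parts. Each component $B_\epsilon^{(\ell)}$ is contained in some interval $J_j^\pm$ (for instance $B^+_\epsilon(k) \subset J^-_k$ and $B^-_\epsilon(k+1) \subset J^+_k$, with analogous inclusions for $B^-_\epsilon(k)$ and $B^+_\epsilon(k+1)$), and on this interval Proposition \ref{l06} gives $|\phi_\epsilon(x) - f_\epsilon(\ms M_1(j))| = o_\epsilon(1)$ uniformly. Writing $\widetilde\phi := \phi_\epsilon - f_\epsilon(\ms M_1(j))$ (which has the same derivative as $\phi_\epsilon$), I integrate by parts to obtain
\[
\int_{B_\epsilon^{(\ell)}} (\partial_x \widetilde\phi)(x)\, e^{-[S(x)-S(m_k)]/\epsilon}\, dx
\,=\, \Big[\, \widetilde\phi\, e^{-[S-S(m_k)]/\epsilon}\, \Big]_{\partial B_\epsilon^{(\ell)}}
\,+\, \frac{1}{\epsilon}\int_{B_\epsilon^{(\ell)}} \widetilde\phi(x)\, S'(x)\, e^{-[S(x)-S(m_k)]/\epsilon}\, dx.
\]
Since $\sigma_k$ is a non-degenerate local maximum of $S$, $S'$ has constant sign on each component, so $\epsilon^{-1}|S'(x)|\,e^{-S/\epsilon}$ integrates explicitly to a difference of endpoint exponentials; combining with the uniform bound $|\widetilde\phi| = o_\epsilon(1)$, both the boundary and the bulk terms are controlled by $o_\epsilon(1)\cdot \max_{x\in \overline{B_\epsilon^{(\ell)}}} e^{-[S(x)-S(m_k)]/\epsilon}$. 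A Taylor expansion of $S$ at $\sigma_k$, together with the relation $\delta_\epsilon(k)^2\, |S''(\sigma_k)| = 2\epsilon \log\epsilon^{-1}$, shows that this maximum equals $\epsilon^{-1}(1+o(1))\, e^{-h/\epsilon}$, where $h \in \{h^-_k, h^+_k\}$ is the appropriate escape barrier from \eqref{f01}.

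Multiplying by $L_\epsilon^{(\ell)} = O(\sqrt\epsilon)$ and by the prefactor $\epsilon\theta_\epsilon^{(1)}/\sqrt{2\pi\epsilon} = (2\pi)^{-1/2}\,\sqrt\epsilon\, e^{\mf h_1/\epsilon}$ produces a total of order $o_\epsilon(1)\cdot e^{(\mf h_1 - h)/\epsilon}$, which is $o_\epsilon(1)$ since $h^\pm_k \ge \mf h_1$ by the definition \eqref{69b} of $\mf h_1$; summing the four contributions proves the lemma. The main obstacle is the $1/\epsilon$ factor produced by differentiating the Gibbs weight $e^{-S/\epsilon}$: combined with $\theta_\epsilon^{(1)} = e^{\mf h_1/\epsilon}$ it generates a contribution that does not decay in the critical cases $h = \mf h_1$, precisely the cases in which the jump rates $R_1$ are nonzero. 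The preliminary subtraction of the asymptotically constant value of $\phi_\epsilon$ supplied by Proposition \ref{l06} is exactly what converts this borderline $O(1)$ term into $o_\epsilon(1)$.
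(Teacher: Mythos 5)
Your argument is correct and follows essentially the same route as the paper's proof: decompose over the four binding subintervals, bound the constant slope of $J_\epsilon^{1,k}$ via a Gaussian tail estimate, subtract the nearly constant value of $\phi_\epsilon$ provided by Proposition~\ref{l06} before integrating by parts, and combine the exponential factors with $h^\pm_k \ge \mf h_1$. The only small stylistic difference is in the bulk term: you observe that $\epsilon^{-1}S'e^{-S/\epsilon}$ is an exact derivative and integrate it explicitly, whereas the paper bounds $|S'(x)|\le C_0(\delta_\epsilon(k)+\eta_\epsilon(k))$ and multiplies by the interval length $\eta_\epsilon(k)$, using $\delta_\epsilon(k)\eta_\epsilon(k)=\epsilon$; both give the same bound, and yours is marginally cleaner.
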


\begin{proof}
To fix ideas, we estimate the integral on the set
$B^-_\epsilon(k)$. By definition of $J^{1,k}_\epsilon$,
\eqref{13}, \eqref{14}, and an elementary Gaussian estimate,
\begin{equation*}
J^{1,k}_{\epsilon} (\sigma_k - \delta_\epsilon(k)) \,=\, 
\rho_{\sigma_k,\epsilon}(- \delta_\epsilon(k)) \,\le\,
\sqrt{\frac{-\, \epsilon}{2\pi S''(\sigma_k)} }\,
\frac{1}{\delta_\epsilon(k)} \, 
e^{ S''(\sigma_k) \delta_\epsilon( k)^2/2 \epsilon }\;.
\end{equation*}
Therefore, on $B^-_\epsilon(k)$, since $J^{1,k}_\epsilon$ has been
defined by linear interpolation, and the length of the binding
interval $B^-_\epsilon(k)$ is $\eta_\epsilon(k)$, $\partial_x \,
J^{1,k}_{\epsilon}$ is constant and 
\begin{equation*}
\big|\, (\partial_x \,  J^{1,k}_{\epsilon}) (x) \,\big|\,\le\,
\frac{1}{\eta_\epsilon(k)}\, 
\sqrt{\frac{- \, \epsilon}{2\pi S''(\sigma_k)}} \,
\frac{1}{\delta_\epsilon(k)}\, 
e^{S''(\sigma_k) \, \delta_\epsilon(k)^2/2 \epsilon }\;.
\end{equation*}

On the other hand, as $\phi_\epsilon$ belongs to $\mc H^1_{\rm loc}$,
an integration by parts yields that
\begin{equation*}
\begin{aligned}
\int_{B_\epsilon  (k)} (\partial_x \phi_{\epsilon}) (x) \,
e^{-[S(x)-S(m_k)]/\epsilon} \, dx
\,= & \, \big[\, \phi_{\epsilon} (\mtt r) - \phi_{\epsilon} (\mtt l)\,\big]
\, e^{-[S(\mtt r)-S(m_k)]/\epsilon}
\\
\,+ & \, \frac{1}{\epsilon}\, 
\int_{B_\epsilon  (k)} \big[ \, \phi_{\epsilon} (x) - \phi_{\epsilon}
(\mtt l)\, \big] \,
S'(x)\, e^{-[S(x)-S(m_k)]/\epsilon} \, dx
\;,
\end{aligned}
\end{equation*}
where $\mtt l$, $\mtt r$ represent the endpoints of the interval
$B_\epsilon (k)$.  By Proposition \ref{l06}, as the interval
$B_\epsilon (k)$ is contained in the set $J_k$,
$\sup_{x\in B_\epsilon (k)} |\phi_{\epsilon} (x) -
\phi_{\epsilon} (\mtt l)| \le C_0 \,
\sqrt{\epsilon}/\delta_\epsilon(k)$ for some finite constant $C_0$
which depends on $g(\cdot)$, $\lambda$, $\mss a(\cdot)$,
$\mss b(\cdot)$, but not on $\epsilon$, and which may change from line
to line. Since
$|S'(x)| = | S'(x) - S'(\sigma_k)|\le C_0\, |x-\sigma_k| \le C_0\,
(\delta_\epsilon(k) + \eta_\epsilon(k))$, the absolute value of the
previous expression is bounded by
\begin{equation*}
C_0 \, \frac{\sqrt{\epsilon}}{\delta_\epsilon(k)} \, \Big\{\, 1  \,+\,
\frac{1}{\epsilon}\, \big[\, \delta_\epsilon(k) + \eta_\epsilon(k)\, \big]\,
\eta_\epsilon(k) \Big\} \, \sup_{x\in B_\epsilon  (k)}
e^{-[S(x)-S(m_k)]/\epsilon}\,.
\end{equation*}

Recall that we have a term
$\exp\{S''(\sigma_k) \, \delta_\epsilon(k)^2/2 \epsilon \}$ coming
from the derivative of $J^{1,k}_\epsilon$.  Since
$\mf h_1 \le S(\sigma_k) - S(m_k)$,
\begin{equation*}
\theta_{\epsilon}^{(1)}\,
e^{S''(\sigma_k) \, \delta_\epsilon(k)^2/2 \epsilon } \,
\sup_{x\in B_\epsilon  (k)} e^{-[S(x)-S(m_k)]/\epsilon}
\,\le\,
e^{S''(\sigma_k) \, \delta_\epsilon(k)^2/2 \epsilon } \,
\sup_{x\in B_\epsilon  (k)} e^{-[S(x)-S(\sigma_k)]/\epsilon}
\end{equation*}
As the first derivatives of the functions $\mss a(\cdot)$,
$\mss b(\cdot)$ are Lipschitz continuous, and
$|x-\sigma_k|\le \delta_\epsilon(k) + \eta_\epsilon(k)$ for all
$x\in B^-_\epsilon(k)$, by a Taylor expansion,
\begin{equation*}
S''(\sigma_k) \, \frac{\delta_\epsilon(k)^2}{2 \epsilon}
\,+\, \frac{1}{\epsilon}\, \big\{\, S(\sigma_k) - S(x)\,\big\}
\;\le\; \frac{C_0}{\epsilon} \, \delta_\epsilon(k) \,
\{\eta_\epsilon(k)  + \delta_\epsilon(k)^2\}
\end{equation*}
for all $x\in B^-_\epsilon(k)$. By definition of
$\delta_\epsilon(k)$, $\eta_\epsilon(k)$, this quantity is bounded
uniformly in $\epsilon$.

Since $\eta_\epsilon(k) \le \delta_\epsilon(k)$, recollecting all
previous estimates, yields that expression appearing in the statement
of the lemma is bounded by
\begin{equation*}
C_0\, \sqrt{\epsilon}\; \frac{\sqrt{\epsilon}}
{\delta_\epsilon(k) \, \eta_\epsilon(k)}  \;
\frac{\sqrt{\epsilon}}{\delta_\epsilon(k)} \, \Big\{\, 1  \,+\,
\frac{1}{\epsilon}\, \delta_\epsilon(k)\,
\eta_\epsilon(k) \Big\} \,\le\,
C_0\, \frac{\sqrt{\epsilon}}{\delta_\epsilon(k)}\;,
\end{equation*}
where we used the definition of $\eta_\epsilon(k)$.  By definition of
$\delta_\epsilon(k)$, this quantity vanishes as $\epsilon\to 0$, which
completes the proof of the lemma.
\end{proof}

\begin{remark}
One may obtain a much better estimate for
$\sup_{x\in B_\epsilon (k)} |\phi_{\epsilon} (x) - \phi_{\epsilon}
(\mtt l)|$. It is enough to repeat the proof of Proposition \ref{l06}
with $m_k$, $a$, $b$ replaced by $\mtt l$, $\mtt l$, $\sigma_k$,
respectively, and setting
$\eta = \sqrt{1-\kappa} \, \delta_\epsilon(k)$ for some $0<\kappa<1$.
The prof of \cite[Theorem 6.3.1]{le} also provides bounds for the
$L^2(B_\epsilon(k))$ norm of $\partial_x \phi_\epsilon$ which can
replace the bound for
$\sup_{x\in B_\epsilon (k)} |\phi_{\epsilon} (x) - \phi_{\epsilon}
(\mtt l)|$ used in the previous proof.
\end{remark}

We turn to the set $\Lambda_\epsilon (k) \cup \Lambda_\epsilon
(k+1)$. 

\begin{lemma}
\label{as1}
For all $k\in \bb Z$, 
\begin{equation*}
\begin{aligned}
& \frac{\epsilon\, \theta_{\epsilon}^{(1)}}{\sqrt{2\pi\,\epsilon}}\,
\int_{\Lambda_{\epsilon}(k) \cup \Lambda_{\epsilon}(k+1)} (\, \partial_x \,
J_{\epsilon}^{1,k} )(x) \, (\partial_x \phi_{\epsilon}) (x) \,
e^{-[S(x)-S(m_k)]/\epsilon} \, dx
\\
&\quad \,=\, -\,  \frac{1}{\sqrt{2\pi}} \sum_{i=\pm 1} 
\frac{1}{\sigma_1(k,k+i)}\, \mtt 1 \{h^i_k = \mf h_1\}
\, \big[\, f_{\epsilon}(\ms M_1(k+i))-f_{\epsilon}(\ms M_1 (k))\,\big]
\,+\, o_\epsilon(1)  \;.
\end{aligned}
\end{equation*}
\end{lemma}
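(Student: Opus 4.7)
The plan is to handle the two subintervals $\Lambda_\epsilon(k)$ and $\Lambda_\epsilon(k+1)$ separately. On each, the test function derivative $\partial_x J_\epsilon^{1,k}$ has a closed-form Gaussian shape, and the key observation is that when multiplied by the Boltzmann weight $e^{-[S - S(m_k)]/\epsilon}$, the Gaussians cancel exactly, leaving a simple integral of $\partial_x \phi_\epsilon$ which is evaluated by local ergodicity. I describe the argument on $\Lambda_\epsilon(k+1)$; the computation on $\Lambda_\epsilon(k)$ is identical up to an overall sign (because $\partial_x J_\epsilon^{1,k} = +\rho'_{\sigma_k,\epsilon}$ there rather than $-\rho'_{\sigma_{k+1},\epsilon}$) and an index shift.

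On $\Lambda_\epsilon(k+1)$, by \eqref{13},
\begin{equation*}
\partial_x J_\epsilon^{1,k}(x) \;=\; -\sqrt{\frac{-\,S''(\sigma_{k+1})}{2\pi\epsilon}}\,\exp\!\Big\{\frac{S''(\sigma_{k+1})\,(x-\sigma_{k+1})^2}{2\epsilon}\Big\}.
\end{equation*}
Substituting $y = x - \sigma_{k+1}$ and Taylor expanding $S(\sigma_{k+1}+y) = S(\sigma_{k+1}) + \tfrac12 S''(\sigma_{k+1})\,y^2 + R(y)$ with $|R(y)| \le C|y|^3$, the quadratic exponents cancel, leaving the prefactor $e^{-R(y)/\epsilon}$. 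Since $|y| \le \delta_\epsilon(k+1) = O(\sqrt{\epsilon\,\log\epsilon^{-1}})$ on the interval, $|R(y)|/\epsilon = O(\sqrt{\epsilon}\,(\log\epsilon^{-1})^{3/2}) = o_\epsilon(1)$ uniformly, hence $e^{-R(y)/\epsilon} = 1 + o_\epsilon(1)$. The integrand thus reduces to $-\sqrt{-S''(\sigma_{k+1})/(2\pi\epsilon)}\,e^{-h_k^+/\epsilon}\,(1+o_\epsilon(1))\,(\partial_x\phi_\epsilon)(\sigma_{k+1}+y)$, with $h_k^+ = S(\sigma_{k+1}) - S(m_k)$.

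Integrating in $y \in [-\delta_\epsilon(k+1),\delta_\epsilon(k+1)]$, only $\partial_x\phi_\epsilon$ carries $y$-dependence, so the integral equals $[\phi_\epsilon(\sigma_{k+1}+\delta_\epsilon(k+1)) - \phi_\epsilon(\sigma_{k+1}-\delta_\epsilon(k+1))]$ times the constant prefactor. By Proposition \ref{l06}, as $\sigma_{k+1}+\delta_\epsilon(k+1) \in J^-_{k+1}$ and $\sigma_{k+1}-\delta_\epsilon(k+1) \in J^+_k$, these two values equal $f_\epsilon(\ms M_1(k+1)) + o_\epsilon(1)$ and $f_\epsilon(\ms M_1(k)) + o_\epsilon(1)$ respectively. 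Multiplying by the external factor $\epsilon\theta_\epsilon^{(1)}/\sqrt{2\pi\epsilon}$, using $\sqrt{-S''(\sigma_{k+1})} = \sqrt{2\pi}/\sigma_1(k,k+1)$ from \eqref{04} and $\theta_\epsilon^{(1)} = e^{\mf h_1/\epsilon}$, the $\Lambda_\epsilon(k+1)$ integral equals
\begin{equation*}
-\,\frac{1}{\sqrt{2\pi}\,\sigma_1(k,k+1)}\;e^{(\mf h_1 - h_k^+)/\epsilon}\;[\,f_\epsilon(\ms M_1(k+1))-f_\epsilon(\ms M_1(k))\,] \;+\; o_\epsilon(1).
\end{equation*}
Since $\mf h_1 \le h_k^+$ always, the exponential equals $1$ when $h_k^+ = \mf h_1$ and vanishes exponentially otherwise, producing the indicator $\mtt 1\{h_k^+ = \mf h_1\}$; this is the $i=+1$ term. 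The same argument on $\Lambda_\epsilon(k)$, with endpoints in $J^-_k$ and $J^+_{k-1}$ and using $\sigma_1(k,k-1) = \sigma_1(k-1,k)$, produces the $i=-1$ term.

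The main obstacle is the simultaneous calibration of the scale $\delta_\epsilon$: it must be small enough that the cubic Taylor remainder is negligible ($\delta_\epsilon^3/\epsilon \to 0$), yet large enough that $\sigma_k \pm \delta_\epsilon(k)$ lies in the region $J_k$ where Proposition \ref{l06} controls $\phi_\epsilon - f_\epsilon$ to leading order. The logarithmic scaling in \eqref{14} is tuned precisely for both conditions. A secondary concern is that when $h_k^\pm > \mf h_1$ the exponentially small prefactor $e^{(\mf h_1 - h_k^\pm)/\epsilon}$ must dominate the uniform bound $\|\phi_\epsilon\|_\infty \le \|g\|_\infty/\lambda$ from \eqref{54}; this is immediate since the latter is $O(1)$.
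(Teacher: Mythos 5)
The route you take — direct substitution of the Gaussian form of $\partial_x J_\epsilon^{1,k}$ and exact cancellation of the quadratic exponents — is conceptually close to what happens in the paper, but there is a genuine gap at the point where you integrate. After the cancellation you are left (up to harmless prefactors) with
\begin{equation*}
\int_{\Lambda_\epsilon(k+1)} e^{-R(x-\sigma_{k+1})/\epsilon}\,(\partial_x\phi_\epsilon)(x)\,dx\,,
\qquad |R(y)|\le C|y|^3\,,
\end{equation*}
and you assert that, since $e^{-R/\epsilon}=1+o_\epsilon(1)$ uniformly, "only $\partial_x\phi_\epsilon$ carries $y$-dependence," so the integral equals $\phi_\epsilon(\sigma_{k+1}+\delta_\epsilon)-\phi_\epsilon(\sigma_{k+1}-\delta_\epsilon)$ times a constant. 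That conclusion does not follow. The error term is
\begin{equation*}
\int_{\Lambda_\epsilon(k+1)} \big(e^{-R/\epsilon}-1\big)\,\partial_x\phi_\epsilon\,dx\,,
\end{equation*}
and to show it is $o_\epsilon(1)$ you would need a bound on $\int_{\Lambda_\epsilon(k+1)}|\partial_x\phi_\epsilon|\,dx$. None is available from \eqref{54}, which controls only $\|\phi_\epsilon\|_\infty$; the interval $\Lambda_\epsilon(k+1)$ is exactly the region where $\phi_\epsilon$ jumps by an $O(1)$ amount over a width $O(\sqrt{\epsilon\log\epsilon^{-1}})$, so the derivative is large there. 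The same gap reappears in your "secondary concern": when $h_k^+>\mf h_1$, the exponentially small prefactor must kill the remaining integral, but you only invoke the $L^\infty$ bound on $\phi_\epsilon$, which does not control $\int e^{-R/\epsilon}\partial_x\phi_\epsilon$.

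The fix is precisely the integration by parts the paper performs from the outset: move the derivative off $\phi_\epsilon$, so every term in the end depends on $\phi_\epsilon$ itself (uniformly bounded) and on derivatives of $J_\epsilon^{1,k}$ and $e^{-[S-S(m_k)]/\epsilon}$ (explicitly computable). Integrating by parts on the error term gives boundary contributions of size $o_\epsilon(1)\cdot\|\phi_\epsilon\|_\infty$ and a bulk term bounded by $\|\phi_\epsilon\|_\infty\cdot\sup|R'|/\epsilon\cdot|\Lambda_\epsilon(k+1)|=O(\sqrt{\epsilon}(\log\epsilon^{-1})^{3/2})$, both negligible. Once you perform this step, your computation is equivalent to the paper's (the cancellation argument you describe is exactly how the paper estimates the boundary terms and shows the bulk term in \eqref{31} is small). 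As presented, though, the argument has a hole: you cannot commute the $o_\epsilon(1)$ replacement with the integration against $\partial_x\phi_\epsilon$.
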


\begin{proof}
Consider the interval $\Lambda_{\epsilon}(k+1)$. A similar argument
applies to $\Lambda_{\epsilon}(k)$.  Recall from \eqref{13} the
definition of $\rho_{\sigma_k,\epsilon}$, and that
$J_{\epsilon}^{1,k} (x)= 1-\rho_{\sigma_{k+1},\epsilon}
(x-\sigma_{k+1})$ on $\Lambda_{\epsilon}(k+1)$. Integrate by
parts. The contribution of the boundary terms is given by
\begin{equation*}
-\,  \frac{ \sqrt{ -\, S''(\sigma_{k+1})}}  {2\pi} \,
\theta_{\epsilon}^{(1)} \,
\sum_{i = \pm 1} i\,  \phi_{\epsilon}(x_i) \, e^{(1/2\epsilon)\, 
S''(\sigma_{k+1}) \,  (\sigma_{k+1}-x_i)^2}
e^{(1/\epsilon) (S(m_{k})-S(x_i))}\,,
\end{equation*}
where $x_i = \sigma_{k+1} + i \delta_{\epsilon}(k+1)$. If
$\mf h_1 < S(\sigma_{k+1})-S(m_{k})$, this expression vanishes
exponentially fast as $\epsilon\to 0$. On the other hand, if
$\mf h_1 = S(\sigma_{k+1})-S(m_{k})$, by the definition of
$\delta_{\epsilon}(k+1)$ given in \eqref{14} and a Taylor expansion,
\begin{equation}
\label{19}
\Big|\, 
\frac{1}{\epsilon} \,
\big[\, S(\sigma_{k+1})-S(\sigma_{k+1} \pm \delta_{\epsilon}(k+1)) \,\big]
\,+\, \frac{1}{2\epsilon} \, S''(\sigma_{k+1}) \,
\delta_{\epsilon}(k+1)^2 \,\Big| \,\le \, C_0\,
\frac{\delta_{\epsilon}(k+1)^3}{\epsilon} \,=\, o_\epsilon(1)\;.
\end{equation} 
Hence, by Proposition \ref{l06} which establishes that
$\phi_{\epsilon}(\sigma_{k+1} + \delta_{\epsilon}(k+1))$,
$\phi_{\epsilon}(\sigma_{k+1} - \delta_{\epsilon}(k+1))$ are close to
$\phi_{\epsilon}(m_{k+1})$, $\phi_{\epsilon}(m_{k})$, respectively,
and by the formula for the jump rates presented in \eqref{73},
the expression in the penultimate displayed equation is equal to
\begin{equation*}
\begin{aligned}
& -\, (1+o_\epsilon(1))\,
\frac{ \sqrt{ -\, S''(\sigma_{k+1}) }}  {2\pi} \, \mtt 1 \{h^+_k = \mf h_1\}
(\phi_{\epsilon}(m_{k+1})-\phi_{\epsilon}(m_{k})) \\
&\quad
\;=\; -\, (1+o_\epsilon(1))\, \frac{1}{\sqrt{2\pi}}\,
\frac{1}{\sigma_1(k,k+1)}\,  \mtt 1 \{h^+_k = \mf h_1\}\, 
\big[\, f_{\epsilon}(\ms M_1(k+1))-f_{\epsilon}(\ms M_1(k))\,\big] \;. 
\end{aligned}
\end{equation*} 

We turn to the bulk part in the integration by parts formula. It is
given by
\begin{equation}
\label{31}
-\, \frac{\theta_{\epsilon}^{(1)}}{\sqrt{2\pi\,\epsilon}}\,
\int_{\sigma_{k+1}-\delta_{\epsilon}(k+1)}^{\sigma_{k+1}+\delta_{\epsilon}(k+1)}
\phi_{\epsilon}(x)
\big\{\, \epsilon\, (\partial^2_x \, J_{\epsilon}^{1,k} )(x)
\,+\,  \frac{\mss b (x)}{\mss a (x)}
\, (\partial_x \, J_{\epsilon}^{1,k}) (x)\, \big\}\,
e^{-[S(x)-S(m_k)]/\epsilon} \, dx\;.
\end{equation}
This equation explains the construction of the test function
$J_{\epsilon}^{1,k}$, as we show below that the expression inside
braces is small. We estimate individually each term of the previous
integral.

By \eqref{54}, the solution of the resolvent equation
$\phi_{\epsilon}$ is bounded by a constant, uniformly in
$\epsilon$. By the definition \eqref{12} of
$J_{\epsilon}^{1,k} (\cdot)$, and since $S'(\sigma_{k+1})=0$,
the expression inside braces is equal
to
\begin{equation*}
\sqrt{\frac{-\, S''(\sigma_{k+1})}{2\pi\epsilon}}
\, \Big\{\, S'(x) - S'(\sigma_{k+1})
\,-\, S''(\sigma_{k+1}) (x-\sigma_{k+1})\,\Big\}\,
e^{S''(\sigma_{k+1}) (x-\sigma_{k+1})^{2}/2 \epsilon }\;.
\end{equation*}
Since $S''(\cdot)$ is Lipschitz-continuous and
$|x-\sigma_{k+1}| \le \delta_\epsilon (k+1)$, the expression inside
braces is bounded by $C_0 \delta_\epsilon (k+1)^2$.

Since $\theta_{\epsilon}^{(1)} \le \exp\{ [S(\sigma_{k+1}) -
S(m_{k})]/\epsilon \}$, on $\Lambda_\epsilon(k+1)$,
the Taylor expansion performed in \eqref{19} yields that
\begin{equation*}
\theta_{\epsilon}^{(1)} \, e^{S''(\sigma_{k+1}) (x-\sigma_{k+1})^{2}/2 \epsilon }\,
e^{-[S(x)-S(m_k)]/\epsilon} 
\;\le\;  e^{C_0 \delta_\epsilon (k+1)^3/\epsilon}\,\le\, C_0\,.
\end{equation*}

Since the length of the interval $\Lambda_\epsilon(k+1)$ is
$\delta_\epsilon (k+1)$, putting together the previous estimates,
yields that the absolute value of the integral \eqref{31} is bounded
by
\begin{equation*}
C_0 \, \frac{1}{\epsilon}\ \, \delta_\epsilon (k+1)^3 \;. 
\end{equation*}
By definition of $\delta_\epsilon (k+1)$, this expression vanishes as
$\epsilon\to 0$.  This completes the proof of the assertion.
\end{proof}

We have all the elements to prove the convergence of the sequence
$f_{\epsilon} (\cdot)$. 

\begin{proof}[Proof of Proposition \ref{l05}.] 

On the one hand, by \eqref{54}, the sequence
$(f_{\epsilon})_{\epsilon>0}$ is uniformly bounded. On the other hand,
by \eqref{20}, \eqref{32}, \eqref{22b}, Lemma \ref{l03}, and the
definition \eqref{72b} of the generator $\bb L_1$, any pointwise limit
of the sequence $f_\epsilon(\cdot)$ is a bounded solution of the
reduced resolvent equation \eqref{42}. To complete the proof, it
remains to recall the uniqueness of bounded solutions stated in Lemma
\ref{l30}.
\end{proof}

\begin{proof}[Proof of Theorem \ref{mt4} for $p=1$.]
The proof is a straightforward consequence of Propositions \ref{l06}
and \ref{l05}.
\end{proof}

\subsection*{Reduced resolvent equation}
\label{sec-ap2}

As we did not find a reference, we prove in this subsection the
reduced resolvent equation solution uniqueness.

\begin{lemma}
\label{l30}
Fix $1\le p\le \mf q$, and a bounded function
$g\colon \ms S_p \to \bb R$. For every $\lambda>0$, the reduced
resolvent equation \eqref{42} has a unique bounded solution.
\end{lemma}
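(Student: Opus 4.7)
The plan is to reduce to a standard martingale argument for a nice continuous-time Markov chain. By postulate $\mc P_7(p)$ and Proposition \ref{l10b}, $\bb X_p(\cdot)$ jumps only between nearest neighbour sets, and by postulate $\mc P_9(p)$ the jump rates $R_p(\ms M_p(k), \ms M_p(k\pm 1))$ are $\mf u_p$-periodic in $k$. Consequently the total jump rate
\[
\lambda_p(k) \,:=\, R_p(\ms M_p(k), \ms M_p(k+1)) \,+\, R_p(\ms M_p(k), \ms M_p(k-1))
\]
is uniformly bounded in $k\in\bb Z$, so $\bb X_p(\cdot)$ is non-explosive and extends to a genuine $\ms S_p$-valued Markov process defined on $[0,\infty)$.

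For existence, I would define
\[
f(\ms M_p(k)) \,:=\, \mtt E^{p}_{\ms M_p(k)}\!\Big[\,\int_0^\infty e^{-\lambda t}\, g(\bb X_p(t))\, dt\,\Big],
\]
which is bounded by $\|g\|_\infty/\lambda$ since $g$ is bounded. Applying the strong Markov property at an arbitrary time $s>0$ and Dynkin's formula for the bounded generator $\bb L_p$ (valid because the total jump rates are uniformly bounded, so $f$ lies in the domain), one checks that $f$ solves $(\lambda - \bb L_p) f = g$ in the pointwise sense on $\ms S_p$.

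For uniqueness, suppose $f\colon\ms S_p\to\bb R$ is bounded and satisfies $(\lambda - \bb L_p) f = 0$. Then a standard computation using that $\lambda_p(\cdot)$ is bounded shows that
\[
M_t \,:=\, e^{-\lambda t}\, f(\bb X_p(t)) \,-\, \int_0^t e^{-\lambda s}\,[(\bb L_p f)(\bb X_p(s)) - \lambda f(\bb X_p(s))]\, ds \,=\, e^{-\lambda t} f(\bb X_p(t))
\]
is a martingale under each $\mtt Q^{p}_{\ms M_p(k)}$. In particular,
\[
f(\ms M_p(k)) \,=\, \mtt E^{p}_{\ms M_p(k)}\!\big[\, e^{-\lambda t}\, f(\bb X_p(t))\, \big]
\]
for every $t\ge 0$. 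Since $|e^{-\lambda t} f(\bb X_p(t))|\le \|f\|_\infty e^{-\lambda t}\to 0$, the dominated convergence theorem yields $f(\ms M_p(k))=0$ for all $k\in\bb Z$, proving uniqueness.

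The only mildly delicate point is justifying that $f$ (in either step) lies in the domain of $\bb L_p$ and that the Dynkin martingale formula applies; this is where the uniform boundedness of $\lambda_p(\cdot)$ coming from postulate $\mc P_9(p)$ is essential, since otherwise one would need to worry about explosion and about localising the martingale. With that boundedness in hand, the argument is entirely standard.
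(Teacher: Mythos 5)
Your proof is correct, and it takes a genuinely different route from the paper. The paper splits into three cases ($1\le p<\mf q$; $p=\mf q$ with $S(0)=S(1)$; $p=\mf q$ with $S(0)\neq S(1)$) and argues directly on the discrete linear system: for $p<\mf q$ it exploits the finiteness of each closed irreducible class $\ms M_{p+1}(j)$ (where the restricted system is a finite nonsingular linear system) and a finite-range recursion across transient states; for $p=\mf q$ it uses a discrete maximum-principle argument via \eqref{105}--\eqref{106}, showing that any nonzero bounded solution of the homogeneous equation would be forced to diverge along $k$, contradicting boundedness. Your approach instead is a single, soft probabilistic argument: by $\mc P_9(p)$ the total jump rate $\lambda_p(k)$ is $\mf u_p$-periodic hence uniformly bounded, so $\bb X_p(\cdot)$ is non-explosive and $\bb L_p$ is a bounded operator on $\ell^\infty(\ms S_p)$; existence is the resolvent representation $f=\int_0^\infty e^{-\lambda t}P^{(p)}_t g\,dt$ (checked via the strong Markov property at the first jump time), and uniqueness is the Dynkin martingale $e^{-\lambda t}f(\bb X_p(t))$ together with dominated convergence as $t\to\infty$. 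Your argument is shorter, works uniformly across all three of the paper's cases without case distinctions, and makes no use of the one-dimensional nearest-neighbour structure; the paper's argument is more elementary in that it never invokes the process $\bb X_p(\cdot)$ or semigroup theory, and as a by-product it identifies the solution explicitly on each closed irreducible class. One small remark worth keeping in mind: the identity $(\lambda-\bb L_p)f=g$ for your candidate $f$ is most cleanly checked via the strong Markov property at the first jump time (solving the resulting linear relation for $f(\ms M_p(k))$), rather than "at an arbitrary time $s>0$," but this is cosmetic; the substance of the argument is sound.
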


\begin{proof}
We consider three cases: $1\le p<\mf q$, $p=\mf q$ and $S(0)=S(1)$,
$p=\mf q$ and $S(0)\neq S(1)$.  Assume that $1\le p<\mf q$.  Recall from
\eqref{83} the definition of the generator $\bb L_p$, and that the
$\bb X_p(\cdot)$-closed irreducible classes are the sets
$\ms M_{p+1}(j)$, $j\in \bb Z$.

Fix $j\in \bb Z$. Denote by $\pi_{p,j}$ the probability measure
defined on the set
$\{ \ms M_p(k) : \ms M_p(k) \subset \ms M_{p+1}(j)\}$ by
$\pi_{p,j} (\ms M_p(k)) = \pi(\ms M_p(k)) / \pi(\ms M_{p+1}(j))$,
where $\pi(\cdot)$ has been introduced in \eqref{104}.  By the
definition of the jump rates $R_p$, $\pi_{p,j}$ satisfies the detailed
balance conditions. Since $\ms M_{p+1}(j)$ is a closed irreducible
class the resolvent equation \eqref{42} is autonomous on this set.  As
the set $\ms M_{p+1}(j)$ is finite, it is immediate to show that the
solution restricted to $\ms M_{p+1}(j)$ is unique. This characterise
the solution $f$ on the closed irreducible classes.

By \eqref{50} jumps are allowed only to nearest-neighbor sets. As each
transient state is surrounded by closed irreducible classes, which are
at finite distance, an elementary recursive argument permits to extend
the solution uniqueness to all $\bb X_p$-transient equivalent classes
between two irreducible classes.

Suppose that $p=\mf q$ and $S(1) = S(0)$. By Claim 2.C, stated just
before Theorem \ref{mt5},
${\color{blue} R_{\mf q} (k,k\pm 1)} := R_{\mf q} (\ms M_{\mf q} (k),
\ms M_{\mf q} (k\pm 1)) > 0$ for all $k\in \bb Z$.  We have to show
that the constant function equal to $0$ is the unique bounded solution
to \eqref{42} with $g=0$. Denote by $f$ a solution. Suppose that
$f(\ms M_{\mf q} (k)) \neq 0$. Assume, without loss of generality,
that it is positive. Since $f$ is the solution of the resolvent
equation with $g=0$,
\begin{equation}
\label{105}
R_{\mf q} (k,k + 1) \, \{F(k+1) - F(k)\}
\,+\, R_{\mf q} (k,k - 1) \, \{F(k-1) - F(k)\}
\,=\, \lambda\, F(k) \;,
\end{equation}
where $F(k) = f(\ms M_{\mf q} (k))$,
$R_{\mf q} (k,k \pm 1) = R_{\mf q} (\ms M_{\mf q} (k), \ms M_{\mf q}
(k\pm 1))$, $k\in \bb Z$.

Suppose that $F(k-1) \le F(k)$. In this case, since $F(k)>0$, and
$\pi_{\mf q} (\cdot)$ is a reversible measure,
\begin{equation*}
F(k+1) - F(k) \,>\,
\frac{R_{\mf q} (k,k - 1)}{R_{\mf q} (k,k + 1)}
\, \{F(k) - F(k-1)\} \,=\,
\frac{\pi_{\mf q} (k) \, R_{\mf q} (k,k - 1)}
{\pi_{\mf q} (k+1) \, R_{\mf q} (k+1,k)}
\, \{F(k) - F(k-1)\} \;.
\end{equation*}
In particular, $F(k+1) - F(k)>0$. Mind the inequality has been changed
to a strict inequality. Iterating this argument yields that
\begin{equation*}
F(k+j+1) - F(k+j) \,>\,
\frac{\pi_{\mf q} (k) \,  R_{\mf q} (k+1,k)}
{\pi_{\mf q} (k+j+1) \,  R_{\mf q} (k+j+1,k + j)}
\, \{F(k+1) - F(k)\} \,>\, 0 
\end{equation*}
for all $j\ge 0$. Since the jump rates and the measure $\pi_{\mf q}
(\cdot)$ are bounded above by a finite constant and below by a
positive constant, $F(k+j)\to \infty$ as $j\to\infty$, which
contradicts the assumption that $f(\cdot)$ is bounded.

If $F(k-1) > F(k)$, the same argument yields that $F(k-j)\to \infty$
as $j\to\infty$, which completes the proof of the lemma in the case
$p=\mf q$ and $S(1) = S(0)$.  

Suppose that $p=\mf q$ and $S(1) \neq S(0)$. To fix ideas, assume
without loss of generality that $S(1)<S(0)$. By Claims 2.A and 2.B stated
just before Theorem \ref{mt5}, 
$R_{\mf q} (\ms M_{\mf q} (k), \ms M_{\mf q} (k+1)) > 0$ for all
$k\in \bb Z$ and
$R_{\mf q} (\ms M_{\mf q} (j), \ms M_{\mf q} (j-1)) = 0$ for some
$j\in\bb Z$ (and therefore, for all $j+n\mf u_{\mf q}$, $n\in \bb Z$).

Fix $k_0 \in \bb Z$ such that
$R_{\mf q} (\ms M_{\mf q} (k_0), \ms M_{\mf q} (k_0-1)) = 0$. Assume
that $F(k_0) \neq 0$, say $F(k_0) > 0$. Since $f(\cdot)$ is the
solution of the resolvent equation,
\begin{equation}
\label{106}
R_{\mf q} (k_0,k_0 + 1) \, \{F(k_0+1) - F(k_0)\}
\,=\, \lambda\, F(k_0) \,>\, 0\;,
\end{equation}
so that $F(k_0+1) > F(k_0)$.
Let $k_1$ the next integer to the right of $k_0$ such that
$R_{\mf q} (\ms M_{\mf q} (k_1), \ms M_{\mf q} (k_1-1)) = 0$.
By the resolvent equation \eqref{105} applied inductively, $F(k)$ is
seen to be increasing in the interval $k_0\le k\le k_1$. Hence, as
$R_{\mf q} (\ms M_{\mf q} (k_1), \ms M_{\mf q} (k_1-1)) = 0$, by
the bound \eqref{106} for $k_1$ instead of $k_0$, 
\begin{equation*}
R_{\mf q} (k_1,k_1 + 1) \, \{F(k_1+1) - F(k_1)\}
\,=\, \lambda\, F(k_1) \,\ge\, \lambda\, F(k_0)\;.
\end{equation*}
Adding the estimates, as $k_1\ge k_0+1$, yields that
\begin{equation*}
F(k_1+1) - F(k_0) \,\ge\, \lambda\,
\Big\{ \frac{1}{R_{\mf q} (k_1,k_1 + 1)} +
\frac{1}{R_{\mf q} (k_0,k_0 + 1)} \,\Big\} F(k_0)\;.
\end{equation*}
Thus, $F(k_0+j) \to\infty$ as $j\to\infty$, in contradiction with the
boundedness of $f(\cdot)$.

If $F(k_0) = 0$ for all $k_0 \in \bb Z$ such that
$R_{\mf q} (\ms M_{\mf q} (k_0), \ms M_{\mf q} (k_0-1)) = 0$. By
\eqref{106}, $F(k_0+1) = 0$. Then, by \eqref{105} for $k=k_0+1$,
$F(k_0+2) = 0$. By induction, one concludes that $F(k) = 0$ for all
$k\ge k_0$. By Postulate $\mc P_9(\mf q)$, there exists a sequence
$(k_j : j\ge 1)$ such that $k_j\to-\infty$,
$R_{\mf q} (\ms M_{\mf q} (k_j), \ms M_{\mf q} (k_j-1)) = 0$. This
completes the proof of the lemma.
\end{proof}

\section{Proof of Theorem \ref{mt4} for $p>1$}
\label{sec5}

Following the approach for the first time scale presented in the
previous section, we prove Theorem \ref{mt4} in two steps. We first
show that the solution of the resolvent equation \eqref{41} is
asymptotically constant in each well and then that all limit points
are solution of the reduced resolvent equation \eqref{42}.

The proof is similar to the one presented in the previous section,
with one major difference. While for $p=1$, as soon as the diffusion
crossed the saddle point, we very high probability it lands in a new
well. This is not the case in general for $p>1$, since there might be
$S$-local minima separating two wells. Hence, crossing a saddle
point, the diffusion may land in a local minima which does not belong
to a level $p$ well. In particular, it may return to the original well
before hitting a new well. This possibility requires new estimates to
compute the magnitude of the transition times between two wells, see
Lemma \ref{l28} and Figure \ref{fig-f3}. 

% na figura 3 precisamos ilustrar este minimo.

Throughout this section, $p\in \{2, \dots, \mf q\}$ and a bounded
function $g\colon\ms S_p \to \bb R$ are fixed. Recall that we denote
by $\phi_{p, \epsilon}(\cdot)$ the solution of the resolvent equation
\eqref{41}.  By \eqref{54} and the definition of $G$,
\begin{equation}
\label{54b}
\sup_{x\in \bb R} |\phi_{p, \epsilon}(x)| \,\le\, \frac{1}{\lambda}\,
\max_{k\in \bb Z}  \big|\, g(\ms M_{p}(k)) \, \big| \;.
\end{equation}

\subsection*{Local Ergodicity}
 
Recall from \eqref{64} the definition of $\sigma^{p,\pm}_{j, j+1} $,
$j\in \bb Z$. For $k\in \bb Z$, let
\begin{equation}
\label{21}
{\color{blue} \delta^{p,-}_{k,k+1}}\,: = \, \delta^{p,-}_{k,k+1}
(\epsilon) \,=\,
\sqrt{\frac{2 \,\epsilon \log
\epsilon^{-1}}{-S''(\sigma^{p,-}_{k,k+1})}}\;,
\hspace{5mm}
{\color{blue} \delta^{p,+}_{k,k+1} } \,:=\, \delta^{p,+}_{k,k+1}
(\epsilon) \,=\,
\sqrt{\frac{2 \, \epsilon \log
\epsilon^{-1}}{- S''(\sigma^{p,+}_{k,k+1})}} \;\cdot
\end{equation}
Recall from \eqref{49b} that we denote by $m^{p,-}_{k}$ and
$m^{p,+}_{k}$ the leftmost and rightmost minima of $\ms M_{p}(k)$,
respectively.  Denote by $J^{p}_{k}, J^{p,+}_{k}, J^{p,-}_{k}$ the
subintervals of $\bb R$ given by
\begin{gather*}
{\color{blue}   J^{p,-}_{k} } \,:= \,
[\sigma^{p,+}_{k-1,k} + \delta^{p,+}_{k-1,k} \,,\,
m^{+}_{p,k}]\;, \quad 
{\color{blue}    J^{p,+}_{k}} \, := \, [m^{-}_{p,k} \,,\, \sigma^{p,-}_{k,k+1} -
\delta^{p,-}_{k,k+1}]\;, \quad
{\color{blue}   J^{p}_{k}} \,:= \, J^{p,-}_{k} \cup J^{p,+}_{k}\;.
\end{gather*}

\begin{lemma}
\label{l14}
There exists a finite constants $C_0$, $\epsilon_0>0$ depending only
on $S(\cdot)$, such that
\begin{equation*}
\sup_{x\in  J^p_k}
|\phi_{p,\epsilon}(x) - \phi_{p,\epsilon}(m)| \,\le\,
2\, \Vert g \Vert_\infty\, \Big\{\,
\frac{\mf c_0 \, \theta_\epsilon^{p-1}}{\epsilon\, \theta_\epsilon^p}
\,+\, \frac{C_0}{\lambda} \,
\frac{\sqrt{\epsilon}}
{\delta^{p,+}_{k-1,k}  \wedge \delta^{p,-}_{k,k+1}  }\,
\,\Big\}
\end{equation*}
for all $k\in\bb Z$, $\epsilon\le \epsilon_0$. In this formula,
$\mf c_0$ is the constant introduced in Lemma \ref{l01}.
\end{lemma}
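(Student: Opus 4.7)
The plan is to follow the template of Proposition \ref{l06}: apply Proposition \ref{l04} with $\vartheta = \theta^{(p)}_\epsilon$, $F = G$, and boundary points $a$, $b$ flanking the well, then bound separately the expected hitting time $\bb E^\epsilon_x[\tau(a,b)]$ and the wrong-side exit probability $\bb P^\epsilon_x[\tau(b) < \tau(a)]$. For definiteness I take $x \in J^{p,+}_k$ and choose $a = m^{-}_{p,k}$ and $b = \sigma^{p,-}_{k,k+1}$; the interval $J^{p,-}_k$ is handled symmetrically with the roles swapped.

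The wrong-side exit probability is controlled as in Proposition \ref{l06}. Since $\sigma^{p,-}_{k,k+1}$ is the leftmost global maximum of $S$ on $[m^+_{p,k}, m^-_{p,k+1}]$ and $x$ lies at distance at least $\delta^{p,-}_{k,k+1}$ from it, Lemma \ref{l02} applied with window $\eta = (1/\sqrt 2)\, \delta^{p,-}_{k,k+1}$, combined with the Taylor expansion performed in \eqref{19} around the saddle (using that the first derivatives of $\mss a$, $\mss b$ are Lipschitz), yields $\bb P^\epsilon_x[\tau(\sigma^{p,-}_{k,k+1}) < \tau(m^-_{p,k})] \le C\, \sqrt\epsilon / \delta^{p,-}_{k,k+1}$. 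Multiplied by $1/\lambda$, this produces the second term in the displayed bound.

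The serious obstacle is the expected hitting time. In contrast with the $p = 1$ case, in which $S$ is monotone on $[m_k,\sigma_{k+1}]$ and Lemma \ref{l01} trivially gives $\bb E^\epsilon_x[\tau] \le \mf c_0 /\epsilon$, for $p \ge 2$ the interval $[m^-_{p,k}, \sigma^{p,-}_{k,k+1}]$ contains the remaining elements of $\ms M_p(k)$ together with transient minima of $\mc M \setminus \ms M_p$, so the oscillation $K_{a,b}$ in Lemma \ref{l01} is of order $h^{p,+}_k \ge \mf h_p$, which would be off by a factor $e^{(\mf h_p - \mf h_{p-1})/\epsilon}$. What is needed is the sharper estimate
\[
\sup_{x\in [m^-_{p,k},\sigma^{p,-}_{k,k+1}]} \bb E^\epsilon_x\big[\tau(m^-_{p,k},\sigma^{p,-}_{k,k+1})\big]
\;\le\; C\,\epsilon^{-1}\, e^{\mf h_{p-1}/\epsilon} \;=\; C\, \theta^{(p-1)}_\epsilon/\epsilon.
\]
I would establish this by a strong Markov decomposition: Lemma \ref{l33} bounds the mean time to hit $\ms M_p$ from any starting point by $C\, \theta^{(p-1)}_\epsilon /\epsilon$; at the moment of hitting, the diffusion is either at a point of $\ms M_p(k)$ --- in which case postulate $\mc P_5(p)$ bounds the internal barriers by $\mf h_{p-1}$, so Lemma \ref{l01} gives the time to reach $m^-_{p,k}$ also by $C\,\theta^{(p-1)}_\epsilon/\epsilon$ --- or at a point of some neighbouring set $\ms M_p(k\pm 1)$, which necessarily lies past $b$, so $\tau(a,b)$ has already occurred. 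Iterating at most a uniformly bounded number of times (by periodicity $|\ms M_p(k)|$ is bounded in $k$) yields the claim. This is the content of Lemma \ref{l28} announced in the opening of this section; I expect its proof to be the main technical step.

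Combining the two estimates via Proposition \ref{l04} gives the stated bound with $m$ replaced by $m^-_{p,k}$. To replace $m^-_{p,k}$ by an arbitrary $m \in \ms M_p(k)$, I would apply Proposition \ref{l04} once more with both endpoints inside $\ms M_p(k)$; postulate $\mc P_5(p)$ directly bounds $K_{a,b}$ by $\mf h_{p-1}$, so Lemma \ref{l01} contributes a term of the same magnitude and the bound is preserved. Uniformity of the constants in $k$ follows from the $\mf u_p$-periodicity of the hierarchical structure, and the condition $\epsilon \le \epsilon_0$ absorbs lower-order corrections from the Taylor expansion near the saddles.
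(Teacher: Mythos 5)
Your handling of the wrong-side exit probability is essentially the paper's argument: Lemma \ref{l02} with window $\eta = \delta^{p,-}_{k,k+1}/\sqrt{2}$, Lemma \ref{l15} to identify where the supremum of $S$ on $J^{p,+}_k$ is attained, and the same Taylor expansion as in \eqref{19}. That part is fine.

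The expected hitting time is where there is a genuine gap. You correctly identify that Lemma \ref{l01} applied naively gives a barrier of order $\mf h_p$, and you try to fix this by a strong Markov decomposition at $\tau(\ms M_p)$ using Lemma \ref{l33}. But the decomposition does not close. Lemma \ref{l33} gets the diffusion onto $\ms M_p$ in time $\lesssim \theta^{(p-1)}_\epsilon/\epsilon$, and if the landing point lies in $\ms M_p(k\pm 1)$ then indeed $\tau(a,b)$ has occurred. But if it lands at some $m\in\ms M_p(k)$ with $m\neq m^-_{p,k}$, your claim that ``Lemma \ref{l01} gives the time to reach $m^-_{p,k}$ by $C\,\theta^{(p-1)}_\epsilon/\epsilon$'' is unsubstantiated: Lemma \ref{l01} bounds exit times from an interval, not hitting times of a single point, and the only relevant interval is again $[m^-_{p,k},\sigma^{p,-}_{k,k+1}]$, whose exit-time bound requires control of the \emph{same} oscillation $K_{a,b}$ you started with — the Markov step has accomplished nothing, since the Lemma \ref{l01} bound is already uniform over starting points in $[a,b]$. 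Moreover, postulate $\mc P_5(p)$, which you invoke, only controls barriers $\Lambda(\{m'\},\{m''\})$ between points $m',m''\in\ms M_p(k)$; it says nothing about the descents from local maxima to the transient local minima living in $(m^+_{p,k},\sigma^{p,-}_{k,k+1})$, and those are precisely what enter $K^{\nwarrow}_{a,b}$ in Lemma \ref{l01}.

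The missing ingredient is a purely deterministic landscape estimate, Lemma \ref{l17}, which states that every descent $S(\sigma)-S(m)$ with $\sigma\in[m^-_{p,k},\sigma^{p,-}_{k,k+1})\cap\mc W$ and $m\in[\sigma,\sigma^{p,-}_{k,k+1}]\cap\mc M$ is at most $\mf h_{p-1}$. Its proof rests on the structural results of Appendix \ref{sec4} (Proposition \ref{l13}, Lemmas \ref{l22}, \ref{l23}). Once Lemma \ref{l17} is in hand, the paper applies Lemma \ref{l01} \emph{directly} on the interval $[m^-_{p,k},\sigma^{p,-}_{k,k+1}]$ — no Markov decomposition and no use of Lemma \ref{l33} — noting that $K^{\nwarrow}_{a,b}\le\mf h_{p-1}$, whence $\bb E^\epsilon_x[\tau(a,b)]\le \mf c_0\,\epsilon^{-1}\theta^{(p-1)}_\epsilon$. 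You should also note that your citation of Lemma \ref{l28} as ``the main technical step'' is a misattribution: Lemma \ref{l28} expresses $\phi_{p,\epsilon}$ at the endpoint of $\Lambda^{p,-}_{k,k+1}$ as a convex combination of well values and is used later in the proof of Lemma \ref{as5}, not here. Finally, the replacement of $m^-_{p,k}$ by an arbitrary $m\in\ms M_p(k)$ is automatic, since $J^p_k = J^{p,-}_k\cup J^{p,+}_k$ already contains all of $\ms M_p(k)$, so the two one-sided estimates together give the stated supremum; the extra step you propose is unnecessary.
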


\begin{proof}
Fix $k\in \bb Z$, and assume that $x\in J^{p,+}_k$. The same argument
applies to $x\in J^{p,-}_k$. The proof relies on Proposition \ref{l04}
with $a=m^-_{p,k}$, $b=\sigma^{p,-}_{k,k+1}$, $F=G$ and
$\vartheta = \theta^p_\epsilon$. We need to estimate the two terms on
the right-hand side of \eqref{08}.

We claim that there exists a finite constant $C_0$, independent of
$\epsilon$, and $\epsilon_0>0$. such that
\begin{equation}
\label{56}
\sup_{x\in J^{p,+}_{k}}
\bb P^\epsilon_{x} \big[\, \tau (\sigma^{p,-}_{k, k+1})\; < \;
\tau (m^{-}_{p,k})\,\big] \,\le \,
C_0 \, 
\frac{\sqrt{\epsilon}}{ \delta^{p,-}_{k,k+1}} 
\end{equation}
for all $\epsilon \le \epsilon_0$. 

To prove this assertion, recall from Lemma \ref{l02} that
\begin{equation*}
\sup_{x\in J^{p,+}_{k}}
\bb P^\epsilon_{x}\big[\,\tau (\sigma^{p,-}_{k,k+1})\; < \;
\tau (m^{-}_{p,k})\,\big] \,\leq\,
\frac{1}{\eta}   \,
\exp\Big\{ \,\big[ \,
\sup_{z\in [m^-_{p,k},x] } S(z) -
\inf_{y\in [\sigma^{p,-}_{k,k+1}-\eta, \sigma^{p,-}_{k,k+1}]} S(y)
\big]/\epsilon \,\Big\}
\end{equation*}
for all $x\in J^{p,+}_k$ because $x-a\le 1$. Here,
$\eta= (1/\sqrt{2}) \, \delta^{p,-}_{k,k+1}$.  By Lemma
\ref{l15}, for $\epsilon$ sufficiently small (so that
$\delta^{p,-}_{k,k+1}$ is small),
\begin{equation*}
\sup_{z\in [m^-_{p,k} \,,\, \sigma^{p,-}_{k,k+1} -
\delta^{p,-}_{k,k+1}]} S(z) \, = \,
S(\sigma^{p,-}_{k,k+1} - \delta^{p,-}_{k,k+1})\;.
\end{equation*}
As $S(\cdot)$ in increasing on
$[\sigma^{p,-}_{k,k+1}-\eta, \sigma^{p,-}_{k,k+1}]$, for $\epsilon$
sufficiently small, the left-hand side of the penultimate displayed
equation is less than or equal to
\begin{equation*}
\frac{1}{\eta}   \,
\exp\Big\{\,\Big[  S(\sigma^{p,-}_{k,k+1} - \delta^{p,-}_{k,k+1}) \,-\,
S(\sigma^{p,-}_{k,k+1} - \eta ) \Big]/\epsilon\, \Big\}\;.
\end{equation*}
By definition of $\eta$, and a second order Taylor expansion, as $S''$
is Lipschitz-continuous and periodic, the previous expression is less
than or equal to
\begin{equation*}
\frac{\sqrt{2}}{\delta^{p,-}_{k,k+1}}  \,
\exp\Big\{\, \Big[\,
S''(\sigma^{p,-}_{k,k+1}) \,
(\delta^{p,-}_{k,k+1})^2 \,+\, C_0 \,
(\delta^{p,-}_{k,k+1})^3 \,\Big] /4\epsilon\, \Big\}
\end{equation*}
for some finite constant $C_0$ independent of $\epsilon$, whose value
may change from line to line. By definition of $\delta^{p,-}_{k,k+1}$,
this expression is bounded by
$C_0 \, \sqrt{\epsilon} / \delta^{p,-}_{k,k+1}$.  This proves
assertion \eqref{56}.

We claim that
\begin{equation}
\label{57}
\frac{1}{\theta^p_\epsilon}\, \sup_{x\in J^{p,+}_{k}} 
\bb E^\epsilon_{x} [\, \tau (m^{-}_{p,k}, \sigma^{p,-}_{k,k+1}) \, ]
\,\le\, \frac{\mf c_0 \, \theta^{p-1}_\epsilon}{\epsilon\,
\theta^p_\epsilon} \;,
\end{equation}
where $\mf c_0$ is the constant appearing in Lemma \ref{l01}.

By Lemma \ref{l01}, the left-hand side of \eqref{57} is bounded by
\begin{equation*}
\frac{\mf c_0 }{\epsilon\, \theta^p_\epsilon}\,
\exp\Big\{ \,
\sup_{m^{-}_{p,k} \le x \le y \le \sigma^{p,-}_{k,k+1}}
[S(x)-S(y)]/\epsilon\,\Big\}\;.
\end{equation*}
Let $m^*$ be the rightmost local minima of $S(\cdot)$ smaller than
$\sigma^{p,-}_{k,k+1}$ so that $S(\cdot)$ is monotone on the interval
$[m^*, \sigma^{p,-}_{k,k+1}]$. We may restrict the supremum in $x$ to
values in the interval $[m^{-}_{p,k}, m^*]$ because for
$x\in [m^*, \sigma^{p,-}_{k,k+1}]$, $S(x)-S(y)\le 0$ if
$x\le y\le \sigma^{p,-}_{k,k+1}$.

Since $S(\cdot)$ is monotone between its extremal points, the supremum
appearing in the previous formula is achieved on the extremal points
of $S(\cdot)$:
\begin{equation*}
\sup_{m^{-}_{p,k} \le x \le y \le \sigma^{p,-}_{k,k+1}}
[S(x)-S(y)] \,=\, \max_{\sigma \in [m^{-}_{p,k}, m^*] \cap \mc W}
\;\; \max_{m\in [\sigma, m^*] \cap \mc M} [S(\sigma)-S(m)]\;.
\end{equation*}
By Lemma \ref{l17}, this expression is bounded by $\mf h_{p-1}$. This
proves \eqref{57} since
$\theta^{p-1}_\epsilon = e^{\mf h_{p-1} /\epsilon}$, and completes the
proof of the proposition.
\end{proof}

By the previous result and the definition of $\delta^{p,-}_{k,k+1}$,
$\delta^{p,+}_{k-1,k}$, 
\begin{equation}
\label{05p}
\lim_{\epsilon \to 0} \max_{k\in \bb Z}
\sup_{x,y \in \ms E(\ms M_p(k))}
|\phi_{p,\epsilon}(x) - \phi_{p,\epsilon}(y)| = 0\,.
\end{equation}

\subsection*{Convergence to $f_{p}$.}

Let $f_{p,\epsilon} \colon \ms S_p \to \bb R$ be given by
\begin{equation}
\label{70}
{\color{blue} f_{p,\epsilon} (\ms M_p(k))} \,:=\,
\phi_{p,\epsilon}(m^{-}_{p,k})\;.
\end{equation}
The main result of this section reads as follows.

\begin{proposition}
\label{l24}
For all $k\in \bb Z$, 
\begin{equation*}
\lim_{\epsilon \to 0} \big|\, f_{p,\epsilon} (\ms M_p(k))
\,-\, f_p(\ms M_{p}(k))\,\big| \,=\,  0\;,
\end{equation*}
where $f_p(\cdot)$ is the unique bounded solution of the 
reduced resolvent equation \eqref{42}.
\end{proposition}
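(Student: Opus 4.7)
The plan is to mirror the proof of Proposition~\ref{l05}: construct an appropriate test function $Q^{p,k}_\epsilon$ supported near $\ms E(\ms M_p(k))$ and its neighbours, plug it into the weak formulation of the resolvent equation \eqref{41}, and pass to the limit. By \eqref{54b} the family $(f_{p,\epsilon})_{\epsilon>0}$ is uniformly bounded, so it suffices to prove that every pointwise limit point is a bounded solution of \eqref{42}; then Lemma~\ref{l30} forces uniqueness of the limit.

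The test function will be an approximation of the equilibrium potential $\bb P^\epsilon_x[\tau(\ms E(\ms M_p(k))) < \tau(\ms E_p\setminus \ms E(\ms M_p(k)))]$. Its precise construction proceeds as in \eqref{12}: on each saddle $\sigma \in \ms W^{(p)}_{k,k\pm 1}$ realising the barrier $\mf h_p$, one inserts a copy of the linearised equilibrium $\rho_{\sigma,\epsilon}$ defined through \eqref{18}--\eqref{13} on a thin interval $\Lambda^p_\epsilon(\sigma) = [\sigma - \delta^p_\sigma, \sigma + \delta^p_\sigma]$ with $\delta^p_\sigma = \sqrt{2\epsilon\log\epsilon^{-1}/(-S''(\sigma))}$. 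On the "core" region around $\ms E(\ms M_p(k))$ one sets $J^{p,k}_\epsilon \equiv 1$, on the analogous cores around the neighbouring wells $\ms E(\ms M_p(k\pm 1))$ one sets $J^{p,k}_\epsilon \equiv 0$, and on any well $\ms E(m)$ for $m$ in an intermediate transient level-$p$ class (a local minimum sitting between $\ms E(\ms M_p(k))$ and $\ms E(\ms M_p(k\pm1))$) one assigns the constant value $\mtt h_p(m, \ms M_p(k))$ predicted by Corollary~\ref{l26}/\eqref{74}. Binding intervals analogous to $B_\epsilon^\pm$ interpolate linearly between the pieces. Finally $Q^{p,k}_\epsilon := J^{p,k}_\epsilon/\mss a$.

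Once this test function is in place, the analysis proceeds exactly as in Lemmas~\ref{as2} and~\ref{as1}. The mass term $(\lambda/\sqrt{2\pi\epsilon})\int Q^{p,k}_\epsilon \phi_{p,\epsilon}\,e^{-[S - S(\ms M_p(k))]/\epsilon}dx$ localises on $\ms E(\ms M_p(k))$ by Laplace's method, giving $\lambda\, \pi_p(k)\, f_{p,\epsilon}(\ms M_p(k))/\sqrt{2\pi}$ with $\pi_p(k)$ from \eqref{01} plus $o_\epsilon(1)$ by the local ergodicity \eqref{05p}. The right-hand side converges to $g(\ms M_p(k))\,\pi_p(k)/\sqrt{2\pi}$. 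The derivative term splits into contributions over the binding intervals (which are $o_\epsilon(1)$ by the argument of Lemma~\ref{as2}, using Lemma~\ref{l14} in place of Proposition~\ref{l06}) and contributions over each $\Lambda^p_\epsilon(\sigma)$, $\sigma \in \ms W^{(p)}_{k,k\pm 1}$. Each such contribution, after integration by parts and the Taylor expansion \eqref{19} applied at $\sigma$, produces a boundary term of the form $-(1/\sqrt{2\pi})\,(-S''(\sigma))^{-1/2}\,\mathbf{1}\{h^{p,\pm}_k = \mf h_p\}\,[f_{p,\epsilon}(\ms M_p(k\pm 1)) - f_{p,\epsilon}(\ms M_p(k))]$; summing over $\sigma \in \ms W^{(p)}_{k,k\pm 1}$ recovers precisely the jump rates $R_p$ of \eqref{50} through the weight $\sigma_p(k,k\pm1)$.

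The main obstacle, exactly as flagged in the opening paragraph of Section~\ref{sec5}, is the bulk integral appearing in the analogue of \eqref{31} and its companions over the binding sets. For $p=1$ one only needed to know that $\phi_{\epsilon}$ is nearly constant on each side of a saddle, because the diffusion falls directly into a new level-$1$ well. For $p>1$ the diffusion may, after crossing a saddle of height $\mf h_p$, wander into transient level-$p$ wells (whose depth is controlled only by $\mf h_{p-1}<\mf h_p$) and possibly return. The potential $\phi_{p,\epsilon}$ on these intermediate wells is not prescribed by the reduced resolvent equation but rather by the hitting probabilities $\mtt h_p$; this is why we assigned the value $\mtt h_p(m,\ms M_p(k))$ on the cores of such wells. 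Justifying this interpolation rigorously requires the sharper control on transitions through chains of small wells provided by Corollary~\ref{l32} (trajectories cannot escape a well below the time $e^{(\mf h_{p-1}+\kappa_0)/\epsilon} \ll \theta^{(p)}_\epsilon$) together with the fine hitting estimates of Corollary~\ref{l26} and Lemma~\ref{l28} (the magnitude of transition times between level-$p$ wells), so that the total contribution of these intermediate regions to both the mass and the flux integrals is $o_\epsilon(1)$ in the scale $\theta^{(p)}_\epsilon$. Once this is established the convergence $f_{p,\epsilon}\to f_p$ follows from \eqref{42} and Lemma~\ref{l30}, exactly as in the proof of Proposition~\ref{l05}.
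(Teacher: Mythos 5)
Your overall strategy is correct---uniform boundedness of $(f_{p,\epsilon})$, identification of limit points as bounded solutions of \eqref{42}, and uniqueness from Lemma~\ref{l30}, driven by a test function with $\rho_{\sigma,\epsilon}$-transitions at saddles---but your key formula for the saddle contributions is wrong, and the error reveals that you have mislocated the central difficulty.

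Your claim is that each saddle $\sigma\in\ms W^{(p)}_{k,k+1}$ contributes a boundary term proportional to $(-S''(\sigma))^{-1/2}\,[\,f_{p,\epsilon}(\ms M_p(k+1))-f_{p,\epsilon}(\ms M_p(k))\,]$, and that summing over $\sigma\in\ms W^{(p)}_{k,k+1}$ ``recovers precisely the jump rates $R_p$ through the weight $\sigma_p(k,k+1)$''. But the jump rate \eqref{50} carries the factor $1/\sigma_p(k,k+1)$, i.e.\ the \emph{harmonic} combination $\bigl(\sum_\sigma 1/\sqrt{-S''(\sigma)}\bigr)^{-1}$ up to a constant, whereas a direct sum of your boundary terms would produce $\sum_\sigma 1/\sqrt{-S''(\sigma)}\propto\sigma_p(k,k+1)$, which sits on the wrong side of the fraction. (The exponent $-1/2$ also appears to be a slip: compare with the $p=1$ computation in the proof of Lemma~\ref{as1}, where the boundary term is proportional to $\sqrt{-S''(\sigma_{k+1})}$, not its inverse; but even with the exponent corrected, a direct sum does not give a reciprocal.)

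The resolution is precisely the step you relegate to a footnote. With the paper's test function there is a single $\rho$-transition at the innermost saddle $\sigma^{p,-}_{k,k+1}$ and $J^{p,k}_\epsilon\equiv 0$ beyond it, so the boundary term from the integration by parts involves the unknown quantity $\phi_{p,\epsilon}(\sigma^{p,-}_{k,k+1}+\delta^{p,-}_{k,k+1})$---the value of the solution just \emph{after} the first saddle, inside the plateau of transient wells. This is not simply $f_{p,\epsilon}(\ms M_p(k+1))$. Lemma~\ref{l28} identifies it as a convex combination
\begin{equation*}
(1-\varpi^{+}_{p,k})\,f_{p,\epsilon}(\ms M_p(k))\;+\;\varpi^{+}_{p,k}\,f_{p,\epsilon}(\ms M_p(k+1))\;+\;o_\epsilon(1)\,,
\end{equation*}
with $\varpi^{+}_{p,k}=\bigl(1/\sqrt{-S''(\sigma^{p,-}_{k,k+1})}\bigr)\big/\sum_{\sigma\in\ms W^{(p)}_{k,k+1}}1/\sqrt{-S''(\sigma)}$, obtained from the hitting-probability estimate of Corollary~\ref{l26} together with the second assertion of Proposition~\ref{l04}. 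It is the product $\sqrt{-S''(\sigma^{p,-}_{k,k+1})}\cdot\varpi^{+}_{p,k}=\sqrt{2\pi}/\sigma_p(k,k+1)$ that produces the harmonic mean and hence the correct jump rate; without the weight $\varpi^{+}_{p,k}$ the factor $1/\sigma_p(k,k+1)$ never appears. So Lemma~\ref{l28} is not a justification of an interpolation choice, it \emph{is} the new ingredient needed for $p>1$, and it must be invoked in the course of the flux computation (as in Lemma~\ref{as5}), not afterwards. Your alternative staircase construction, with transitions at every $\sigma\in\ms W^{(p)}_{k,k\pm1}$ and intermediate levels on the plateau, can also be made to work, but each saddle again feeds through a $\varpi$-type weight for $\phi_{p,\epsilon}$ on the far side, and only after this correction does the telescoping reproduce the harmonic combination---the intermediate plateau levels then drop out and are in fact immaterial.
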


Recall the notation in introduced in \eqref{64}, and the definition of
$\delta^{p,\pm}_{k,k+1}$ presented in \eqref{21}. Let
\begin{equation*}
{\color{blue} \eta^{p,\pm}_{k,k+1}}
\,:=\, \frac{\epsilon}{\delta^{p,+}_{k,k+1}}
\;=\;
\sqrt{\frac{-\, \epsilon \, S''(\sigma^{p,\pm}_{k,k+1})}
{2  \,  \log \epsilon^{-1}}}\;,
\quad k\in \bb Z\;.
\end{equation*}
Define the \emph{core} sets $C_{p,k}$, $\Lambda^{p,\pm}_{k,k+1}$,
$A_{p,k}$, $k\in\bb Z$, [whose dependence on $\epsilon$ has been
omitted from the notation] by
\begin{gather*} 
{\color{blue} C_{p,k}}
\,:=\, \big[\, \sigma^{p,+}_{k-1,k} + \delta^{p,+}_{k-1,k} + \eta^{p,+}_{k-1,k}  \,,\,
\sigma^{p,-}_{k,k+1} -  \delta^{p,-}_{k,k+1} - \eta^{p,-}_{k,k+1} \,\big] \;,
\\
{\color{blue} \Lambda^{p,\pm}_{k,k+1}}
\,:=\, \big[\,  \sigma^{p,\pm}_{k,k+1}-
\delta^{p,\pm}_{k,k+1}  \,,\,
\sigma^{p,\pm}_{k,k+1}+
\delta^{p,\pm}_{k,k+1}  \,\big] \;,
\quad
{\color{blue} \Lambda_{p,k}}
\,:=\, \Lambda^{p,+}_{k-1,k} \cup \Lambda^{p,-}_{k,k+1} \,,
\\
{\color{blue}  A_{p,k} }  \; := \;
\Lambda^{p,+}_{k-1,k} \,\cup\, C_{p,k} \,\cup\,
\Lambda^{p,-}_{k,k+1}  \;;
\end{gather*}
the \emph{binding} sets $B_l(\sigma^{p,\pm}_{k,k+1})$,
$B_r(\sigma^{p,\pm}_{k,k+1})$, 
$B_{p,k}$, by
\begin{gather*}
{\color{blue} B_l(\sigma^{p,\pm}_{k,k+1})}  \,:=\,
(\sigma^{p,\pm}_{k,k+1} - \delta^{p,\pm}_{k,k+1} -
\eta^{p,\pm}_{k,k+1}  \,,\,
\sigma^{p,\pm}_{k,k+1} - \delta^{p,\pm}_{k,k+1})\;,
\\
{\color{blue} B_r(\sigma^{p,\pm}_{k,k+1})}  \,:=\,
(\sigma^{p,\pm}_{k,k+1} + \delta^{p,\pm}_{k,k+1} \,,\,
\sigma^{p,\pm}_{k,k+1} + \delta^{p,\pm}_{k,k+1} +
\eta^{p,\pm}_{k,k+1} )\;,
\\
{\color{blue} B_{p,k}} \,:=\,
B_l(\sigma^{p,+}_{k-1,k}) \,\cup\,
B_r(\sigma^{p,+}_{k-1,k} ) \,\cup\,
B_l(\sigma^{p,-}_{k,k+1}) \,\cup\,
B_r(\sigma^{p,-}_{k,k+1} ) \;;
\end{gather*}
and the $\ms M_{p}(k)$-neighbourhood $\Omega_{p,k}$ by
\begin{equation}
\label{kth_E_neigh}
{\color{blue}\Omega_{p,k}} \, :=\,
A_{p,k} \,\cup\, B_{p,k} \;.
\end{equation}
We refer to Figure \ref{fig-f2} for an illustration. 

Recall the definition of the solution of the linearised Poisson
equation $\rho_{\sigma, \epsilon}$, introduced in \eqref{13}.  The
test function $Q_{\epsilon}^{p,k}$ is defined as
$Q_{\epsilon}^{p,k} = J_{\epsilon}^{p,k}/\mss a$, where
$J_{\epsilon}^{p,k}$ is equal to
$\rho_{\sigma^{p,+}_{k-1,k}, \epsilon}$ in the neighbourhood
$\Lambda^{p,+}_{k-1,k} $ of $\sigma^{p,+}_{k-1,k}$ and as
$1 - \rho_{\sigma^{p,-}_{k,k+1}, \epsilon}$ in
$\Lambda^{p,-}_{k,k+1}$. More precisely, we first define the test
function on $\bb R \setminus B_{p,k}$. Let
$J_{\epsilon}^{p,k}\colon \bb R \setminus B_{p,k} \to \bb R$
be given by
\begin{equation}
\label{12b}
{\color{blue} J_{\epsilon}^{p,k}(x) } \;:=\;
\begin{cases} \rho_{\sigma^{p,+}_{k-1,k},\epsilon} (x-\sigma^{p,+}_{k-1,k}),
&\text{if } x\in \Lambda^{p,+}_{k-1,k} 
\\
1 , &\text{if } x\in C_{p,k}
\\
(1 - \rho_{\sigma^{p,-}_{k,k+1} ,\epsilon})  (x- \sigma^{p,-}_{k,k+1}), &\text{if }
x\in \Lambda^{p,-}_{k,k+1} 
\\
0, &\text{ otherwise. }
\end{cases}
\end{equation} 
We extend the definition of $J_{\epsilon}^{p,k}$ to $B_{p,k}$
by linear interpolation. 

The test function $Q_{\epsilon}^{p,k} \colon \bb R \to \bb R$ is given
by $\color{blue} Q_{\epsilon}^{p,k} := J_{\epsilon}^{p,k}/\mss a$.  It
is continuous, piecewise smooth, and its derivative is discontinuous
at the boundary of $B_{p,k}$.

Since $Q_{\epsilon}^{p,k}$ is piecewise smooth and continuous,
equation \eqref{17} holds with the index $1$ replaced by $p$. Multiply
both sides of this equation by
$\exp\{S(\ms M_p(k)) /\epsilon\} / \sqrt{2\pi \epsilon}$ to get that
\begin{equation}
\label{61}
\begin{aligned}
& \frac{\lambda }{\sqrt{2\pi \epsilon}}\,
\int_{\bb R}\phi_{p,\epsilon}(x)\, J_{\epsilon}^{p,k}(x)\,
\frac{1}{\mss a(x)}\, 
e^{- [S(x)-S(\ms M_p(k))]/\epsilon}\, dx
\\
&\quad 
\,+\, \epsilon \, \theta^{(p)}_{\epsilon}\,
\frac{1}{\sqrt{2\pi \epsilon}}\, \int_{\bb R} (\, \partial_x \,
J_{\epsilon}^{p,k} )(x) \, (\partial_x \phi_{p,\epsilon}) (x) \,
e^{-[S(x)-S(\ms M_p(k))]/\epsilon} \, dx
\\
& \quad \,=\, \frac{1}{\sqrt{2\pi \epsilon}}\, \int_{\bb R}
J_{\epsilon}^{p,k}(x)\, \frac{1}{\mss a(x)} \,  G (x) \,
e^{-[S(x)-S(\ms M_p(k))] /\epsilon} \, dx\;.
\end{aligned}
\end{equation}

\subsection*{The resolvent equation}

We estimate each term of \eqref{61} separately.  We start with the
right-hand side. Recall the definition of $\pi_p(k)$ introduced in
\eqref{01}. Since the support of
$J^{p,k}_{\epsilon} (\cdot) \, G(\cdot)$ is contained in
$\ms E(\ms M_p(k))$, where $G(\cdot)$ takes the value $g(\ms M_p(k))$
and $J^{p,k}_{\epsilon} (\cdot)$ is equal to $1$,
\begin{equation}
\label{62}
\begin{aligned}
& \frac{1}{\sqrt{2\pi \epsilon}}\, \int_{\bb R}
J_{\epsilon}^{p,k}(x)\, \frac{1}{\mss a(x)} \, G(x) \, e^{-[S(x)-S(\ms
M_p(k))] /\epsilon} \, dx
\\
&\quad \;=\; \frac{g(\ms M_p(k))}{\sqrt{2\pi \epsilon}}\, \int_{\ms
E(\ms M_p(k))} \frac{1}{\mss a(x)} \, e^{-[S(x)-S(\ms M_p(k))]
/\epsilon} \, dx
\\
&\quad \,=\, (1+ o_\epsilon(1)) \, g(\ms M_p(k))\,
\frac{1}{\sqrt{2\pi}}\, \pi_p(k) \,.
\end{aligned}
\end{equation}

On the other hand, since $\phi_{p,\epsilon}(\cdot)$
$J_{\epsilon}^{p,k} (\cdot)$ are uniformly bounded,
$J_{\epsilon}^{p,k}$ vanishes on $\Omega_{p,k}^c$, and the measure
$\exp \{-[S(x) - S(\ms M_p(k))] /\epsilon\} \, dx$ is concentrated on
the global minima of $S(\cdot)$ on $\Omega_{p,k}$, by
Lemma \ref{l14},

\begin{equation}
\label{22}
\begin{aligned}
& \frac{\lambda}{\sqrt{2\pi \epsilon}}\, \int_{\bb R} J_{\epsilon}^{p,k} (x)\,
\frac{1}{\mss a(x)} \, \phi_{p,\epsilon} (x) \, e^{-[S(x) - S(\ms M_p(k))] /\epsilon} \, dx
\\
&\quad \;=\;
\frac{\lambda}{\sqrt{2\pi \epsilon}}\,
\int_{\Omega_{p,k}}  J_{\epsilon}^{p,k} (x)\,
\frac{1}{\mss a(x)} \, \phi_{p,\epsilon} (x) \,
e^{-[S(x) - S(\ms M_p(k))] /\epsilon} \, dx
\\
&\quad \;=\;
(1+ o_\epsilon(1)) \, \lambda\,  f_{p,\epsilon}(\ms M_p(k)) \,
\frac{1}{\sqrt{2\pi}} \, \pi_p(k)  \,,
\end{aligned}
\end{equation}
where $f_{p,\epsilon}(\cdot)$ has been introduced in \eqref{70}.

We turn to the second term on the left-hand side of \eqref{61} which
is the most delicate one.  Since the test function
$J_{\epsilon}^{p,k}$ vanishes on $\Omega_{p,k}^c$, the integral can be
taken over the set $\Omega_{p,k}$.

\begin{lemma}
\label{l25}
For each $k\in \bb Z$,
\begin{equation*}
\begin{aligned}
& \epsilon \, \theta^{(p)}_{\epsilon}\, \frac{1}{\sqrt{2\pi \epsilon}}\,
\int_{\Omega_{p,k}} (\, \partial_x \,
J_{\epsilon}^{p,k} )(x) \, (\partial_x \phi_{p,\epsilon}) (x) \,
e^{-[S(x)-S(\ms M_p(k))]/\epsilon} \, dx
\\
&\quad \,=\, -\, \sum_{i=\pm 1} 
R_p(\ms M_p(k),\ms M_p(k+i))
\, [\, f_{p,\epsilon}(\ms M_p(k+i))- f_{p,\epsilon}(\ms M_p(k))\,]
\,+\, o_\epsilon(1)  \;. 
\end{aligned}
\end{equation*}
\end{lemma}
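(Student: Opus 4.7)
The plan follows the template of Lemma \ref{l03}, decomposing $\Omega_{p,k}$ into the core $C_{p,k}$, the binding region $B_{p,k}$, and the two linearisation sets $\Lambda^{p,+}_{k-1,k}$, $\Lambda^{p,-}_{k,k+1}$, and analysing each piece. On $C_{p,k}$ the test function $J^{p,k}_{\epsilon}$ equals $1$, so $\partial_x J^{p,k}_{\epsilon} = 0$ and the contribution vanishes. On $B_{p,k}$ I would run the analog of Lemma \ref{as2}: the linear interpolation makes $|\partial_x J^{p,k}_{\epsilon}|$ explicit in terms of the Gaussian tail of $\rho_{\sigma^{p,\pm},\epsilon}$, and the identity
\[
\theta^{(p)}_\epsilon \, e^{-[S(\sigma^{p,-}_{k,k+1}) - S(\ms M_p(k))]/\epsilon} \,=\, e^{[\mf h_p - h^{p,+}_k]/\epsilon} \,\le\, 1,
\]
valid since $\mf h_p \le h^{p,\pm}_k$, combines with Lemma \ref{l14}, a Taylor expansion of $S$, and the scales $\delta^{p,\pm}$, $\eta^{p,\pm}$ to give a bound of order $\sqrt{\epsilon}/(\delta^{p,-}_{k,k+1} \wedge \delta^{p,+}_{k-1,k}) = o_\epsilon(1)$.

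The substantive work is on $\Lambda^{p,-}_{k,k+1}$; $\Lambda^{p,+}_{k-1,k}$ is handled symmetrically. Integrating by parts splits the integral into a bulk and a boundary term. Because $J^{p,k}_{\epsilon}$ is built from the solution $\rho_{\sigma^{p,-}_{k,k+1},\epsilon}$ of the linearised Poisson equation \eqref{13}, the integrand of the bulk term is controlled by $S'(x) - S''(\sigma^{p,-}_{k,k+1})(x - \sigma^{p,-}_{k,k+1}) = O((\delta^{p,-}_{k,k+1})^2)$ by Lipschitz continuity of $S''$, producing a total bulk contribution of order $(\delta^{p,-}_{k,k+1})^3/\epsilon = o_\epsilon(1)$, exactly as in Lemma \ref{as1}. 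The surviving boundary term is
\[
-\, \frac{\sqrt{-S''(\sigma^{p,-}_{k,k+1})}}{2\pi}\, \theta^{(p)}_\epsilon \sum_{i=\pm 1} i\, \phi_{p,\epsilon}\big(\sigma^{p,-}_{k,k+1} + i\, \delta^{p,-}_{k,k+1}\big)\, e^{S''(\sigma^{p,-}_{k,k+1})(\delta^{p,-}_{k,k+1})^2/2\epsilon}\, e^{-[S(\sigma^{p,-}_{k,k+1} + i\delta^{p,-}_{k,k+1}) - S(\ms M_p(k))]/\epsilon}.
\]
A Taylor expansion as in \eqref{19} combined with the identity above shows the prefactor is exponentially small unless $h^{p,+}_k = \mf h_p$, matching the indicator in \eqref{50}, and converges to $\sqrt{-S''(\sigma^{p,-}_{k,k+1})}/2\pi$ when that equality holds. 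At the inner boundary $\sigma^{p,-}_{k,k+1} - \delta^{p,-}_{k,k+1} \in J^p_k$, Lemma \ref{l14} yields $\phi_{p,\epsilon} = f_{p,\epsilon}(\ms M_p(k)) + o_\epsilon(1)$.

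The key new difficulty, absent at $p=1$, is identifying $\phi_{p,\epsilon}$ at the outer boundary: when $\ms W^{(p)}_{k,k+1}$ contains several saddles at height $\mf h_p + S(\ms M_p(k))$, separated by $S$-local minima in $\ms M_{p-1}\setminus \ms M_p$, the diffusion crossing $\sigma^{p,-}_{k,k+1}$ may return to $\ms M_p(k)$ instead of proceeding to $\ms M_p(k+1)$, so local ergodicity no longer determines $\phi_{p,\epsilon}$. I would instead apply Proposition \ref{l04} with $a = m^+_{p,k}$, $b = m^-_{p,k+1}$, $F = G$, $\vartheta = \theta^{(p)}_\epsilon$. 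Controlling the error $\bb E^\epsilon_x[\tau(a,b)]/\theta^{(p)}_\epsilon$ uniformly in $k$ is the principal technical obstacle, since the naive bound from Lemma \ref{l01} only gives $\epsilon^{-1} e^{\mf h_p/\epsilon}$, which is not $\prec \theta^{(p)}_\epsilon$; the required sharper estimate exploits the hierarchical structure via postulate $\mc P_5(p)$—the internal barriers inside each intermediate $\bb X_{p-1}$-transient class are bounded by $\mf h_{p-1}$—together with Lemma \ref{l33}, so that mixing across these transient classes occurs at scale $\epsilon^{-1} e^{\mf h_{p-1}/\epsilon} \prec \theta^{(p)}_\epsilon$. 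Corollary \ref{l26}, applied with $\mf S = \ms W^{(p)}_{k,k+1}$, then yields
\[
\bb P^\epsilon_x\big[\tau(b) < \tau(a)\big] \,=\, \frac{1/\sqrt{-S''(\sigma^{p,-}_{k,k+1})}}{\sum_{\sigma \in \ms W^{(p)}_{k,k+1}} 1/\sqrt{-S''(\sigma)}} \,+\, o_\epsilon(1)
\]
at $x = \sigma^{p,-}_{k,k+1} + \delta^{p,-}_{k,k+1}$, since $\sigma^{p,-}_{k,k+1}$ is the unique element of $\ms W^{(p)}_{k,k+1}$ to the left of $x$. Substituting this into the surviving boundary expression and using $\sigma_p(k,k+1) = \sqrt{2\pi}\sum_{\sigma \in \ms W^{(p)}_{k,k+1}} 1/\sqrt{-S''(\sigma)}$ from \eqref{01}, the coefficient of $[f_{p,\epsilon}(\ms M_p(k+1)) - f_{p,\epsilon}(\ms M_p(k))]$ collapses to $-\mb 1\{h^{p,+}_k = \mf h_p\}/[\sqrt{2\pi}\,\sigma_p(k,k+1)]$, which matches the rate $R_p(\ms M_p(k), \ms M_p(k+1))$ up to the $\pi_p(k)/\sqrt{2\pi}$ normalisation carried uniformly across \eqref{61}. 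The symmetric analysis on $\Lambda^{p,+}_{k-1,k}$ gives the $i = -1$ term and completes the proof.
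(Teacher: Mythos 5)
Your decomposition of $\Omega_{p,k}$ into $C_{p,k}$, $B_{p,k}$ and the two $\Lambda$-sets, the treatment of the binding sets as in Lemma \ref{as2}, the integration by parts on the $\Lambda$-sets, and the use of Proposition \ref{l04} together with Corollary \ref{l26} to identify the outer boundary value are all in line with the paper. The one place where your route diverges is the choice of endpoints for Proposition \ref{l04}: you take $a = m^+_{p,k}$, $b = m^-_{p,k+1}$, whereas the paper's Lemma \ref{l28} takes $a = \sigma^{p,-}_{k,k+1} - \delta^{p,-}_{k,k+1}$, $b = \sigma^{p,+}_{k,k+1} + \delta^{p,+}_{k,k+1}$, i.e.\ tight brackets around the set of global saddles $\ms W^{(p)}_{k,k+1}$ rather than the neighbouring elements of $\ms M_p$. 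The paper's choice makes the error term immediate: by Lemma \ref{l27} the oscillation of $S$ over $[a,b]$ is strictly below $\mf h_p$, so Lemma \ref{l01} alone gives $\bb E^\epsilon_x[\tau(a,b)]/\theta^{(p)}_\epsilon = o_\epsilon(1)$, without invoking the hierarchical machinery. Your choice does work too, but the reason is not quite the one you state: the crucial observation, which you leave implicit, is that for $x\in(m^+_{p,k},m^-_{p,k+1})$ one has $\ms M_p\cap(m^+_{p,k},m^-_{p,k+1})=\varnothing$, so $\tau(\{m^+_{p,k},m^-_{p,k+1}\})=\tau(\ms M_p)$, and then Lemma \ref{l33} directly yields $\bb E^\epsilon_x[\tau(a,b)]\le C_0\,\epsilon^{-1}e^{\mf h_{p-1}/\epsilon}\prec\theta^{(p)}_\epsilon$. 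Your phrasing attributes the bound to ``mixing across transient classes'' and to postulate $\mc P_5(p)$, which is about the internal barriers of $\ms M_p(k)$ itself, not the transient wells between $m^+_{p,k}$ and $\sigma^{p,-}_{k,k+1}$ (that is controlled by Lemma \ref{l17}); the hitting time, not a mixing time, is what Proposition \ref{l04} requires. A side benefit of your endpoints is that $\phi_{p,\epsilon}(b)=f_{p,\epsilon}(\ms M_p(k+1))$ holds by the definition \eqref{70}, while the paper's choice needs Lemma \ref{l14} once more to identify $\phi_{p,\epsilon}(b)$. On balance both approaches are valid; the paper's is more self-contained at this point because it relies on Lemma \ref{l27} rather than the heavier Lemma \ref{l33}.
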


Mind that the right-hand side may vanish since the jump rates may
be equal to $0$.  The proof is carried out estimating the integral in each
region forming $\Omega_{p,k}$. By definition,
$\partial_x \, J_{\epsilon}^{p,k}$ vanishes on
$C_{p,k}(\epsilon)$. Lemma \ref{as4} takes care of the integral on
the set $B_{p,k}$, where $J^{p,k}_{\epsilon}$ is linear, and Lemma
\ref{as5} on the set
$\Lambda_{p,k}$.

\begin{lemma}
\label{as4}
For all $k\in \bb Z$,
\begin{equation*}
\lim_{\epsilon\to 0}
\Big|\, \frac{\epsilon\, \theta_{\epsilon}^{(p)}}{\sqrt{2\pi \epsilon}}\,
\int_{B_{p,k}} (\, \partial_x \,
J_{\epsilon}^{p,k} )(x) \, (\partial_x \phi_{p,\epsilon}) (x) \,
e^{-[S(x)-S(\ms M_p(k))]/\epsilon} \, dx  \, \Big| \,=\, 0\;.
\end{equation*}
\end{lemma}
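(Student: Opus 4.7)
The plan is to adapt the proof of Lemma \ref{as2} with the obvious substitutions: $\theta^{(p)}_\epsilon$ in place of $\theta^{(1)}_\epsilon$, the finer scales $\delta^{p,\pm}_{k,k+1}$, $\eta^{p,\pm}_{k,k+1}$ in place of $\delta_\epsilon(k)$, $\eta_\epsilon(k)$, and the level-$p$ local ergodicity Lemma \ref{l14} in place of Proposition \ref{l06}. The set $B_{p,k}$ is a disjoint union of four intervals of $B_l$/$B_r$ type around the two boundary saddles $\sigma^{p,+}_{k-1,k}$ and $\sigma^{p,-}_{k,k+1}$, and by symmetry it suffices to bound the contribution of one of them. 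I would focus on $B := B_l(\sigma^{p,-}_{k,k+1})$, which is contained in $J^{p,+}_k \subset J^p_k$ so that Lemma \ref{l14} applies without subtlety. Write $\sigma := \sigma^{p,-}_{k,k+1}$, $\delta := \delta^{p,-}_{k,k+1}$, $\eta := \eta^{p,-}_{k,k+1}$, with endpoints $\mtt l := \sigma-\delta-\eta$, $\mtt r := \sigma-\delta$.

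On $B$, by \eqref{12b} the test function $J^{p,k}_\epsilon$ is affine with $J^{p,k}_\epsilon(\mtt l) = 1$ and $J^{p,k}_\epsilon(\mtt r) = 1 - \rho_{\sigma,\epsilon}(-\delta) = \rho_{\sigma,\epsilon}(\delta)$, so $\partial_x J^{p,k}_\epsilon$ is the constant $-\rho_{\sigma,\epsilon}(-\delta)/\eta$. The standard Gaussian tail bound applied to \eqref{13} yields
\begin{equation*}
\bigl|(\partial_x J^{p,k}_\epsilon)(x)\bigr|
\,\le\, \frac{1}{\eta}\, \sqrt{\frac{-\epsilon}{2\pi S''(\sigma)}}\, \frac{1}{\delta}\, e^{S''(\sigma)\delta^2/2\epsilon}\;,\quad x\in B\;.
\end{equation*}
Next, since $\phi_{p,\epsilon}\in \mc H^1_{\rm loc}$, I would integrate by parts in $\int_B (\partial_x \phi_{p,\epsilon})(x)\, e^{-[S(x)-S(\ms M_p(k))]/\epsilon}\, dx$, producing a boundary contribution $[\phi_{p,\epsilon}(\mtt r)-\phi_{p,\epsilon}(\mtt l)]\, e^{-[S(\mtt r)-S(\ms M_p(k))]/\epsilon}$ and a bulk contribution $\epsilon^{-1}\int_B [\phi_{p,\epsilon}(x)-\phi_{p,\epsilon}(\mtt l)]\, S'(x)\, e^{-[S(x)-S(\ms M_p(k))]/\epsilon}\, dx$. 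As $B \subset J^p_k$, Lemma \ref{l14} controls the oscillation of $\phi_{p,\epsilon}$ on $B$ by $C_0\{\theta^{(p-1)}_\epsilon/(\epsilon\, \theta^{(p)}_\epsilon) + \sqrt{\epsilon}/\delta\}$, both summands being $o_\epsilon(1)$ thanks to $\mf h_p > \mf h_{p-1}$ from Proposition \ref{l11b}; moreover $|S'(x)| = |S'(x)-S'(\sigma)| \le C_0(\delta+\eta)$ on $B$ because $\sigma\in\mc W$.

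The last step is the exponential cancellation, completely analogous to Lemma \ref{as2}. By \eqref{max_br} and postulate $\mc P_6(p)$, $S(\sigma)-S(\ms M_p(k)) \ge \mf h_p$, so $\theta^{(p)}_\epsilon \le e^{[S(\sigma)-S(\ms M_p(k))]/\epsilon}$. The Taylor expansion
\begin{equation*}
\Bigl|\, \frac{S''(\sigma)\, \delta^2}{2\epsilon} \,+\, \frac{S(\sigma)-S(x)}{\epsilon}\, \Bigr|
\,\le\, \frac{C_0}{\epsilon}\, \delta\, \{\eta+\delta^2\}\;,\quad x\in B\;,
\end{equation*}
is bounded uniformly in $\epsilon$ by the definitions \eqref{21} (in particular $\delta\eta=\epsilon$ and $\delta^3/\epsilon = O(\delta\log\epsilon^{-1}) \to 0$), so the product $\theta^{(p)}_\epsilon\, e^{S''(\sigma)\delta^2/(2\epsilon)}\, \sup_B e^{-[S(x)-S(\ms M_p(k))]/\epsilon}$ is $O(1)$. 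Assembling the three bounds gives a total contribution of order $\sqrt{\epsilon}/\delta$, which tends to $0$ uniformly in $k$ by periodicity of $\mss a$, $\mss b$. The remaining three binding intervals are treated identically, applying Lemma \ref{l14} on the appropriate adjacent $J^p_\ell$. I do not foresee any substantive obstacle beyond this bookkeeping; the sole novelty compared with Lemma \ref{as2} is the extra factor $\theta^{(p-1)}_\epsilon/\theta^{(p)}_\epsilon$ appearing in the refined oscillation estimate, which is harmless by the strict monotonicity of $(\mf h_q)$.
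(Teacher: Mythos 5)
Your proposal matches the paper's own intent here, since the paper explicitly writes that the proof of Lemma \ref{as4} ``is omitted being similar to the one of Lemma \ref{as2}'', and your worked-out estimate for $B := B_l(\sigma^{p,-}_{k,k+1})$ is correct: that interval does lie in $J^{p,+}_k$, Lemma \ref{l14} controls the oscillation of $\phi_{p,\epsilon}$ there, the Gaussian tail bound on $\partial_x J^{p,k}_\epsilon$, the integration by parts, the bound $|S'(x)|\le C_0(\delta+\eta)$, and the Taylor cancellation $\theta^{(p)}_\epsilon e^{S''(\sigma)\delta^2/(2\epsilon)}\sup_B e^{-[S-S(\ms M_p(k))]/\epsilon}=O(1)$ (using $\mc P_6(p)$ in place of $\mf h_1\le S(\sigma_k)-S(m_k)$) all assemble as in Lemma \ref{as2}. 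The observation that the extra summand $\theta^{(p-1)}_\epsilon/(\epsilon\theta^{(p)}_\epsilon)$ in Lemma \ref{l14} is harmless by $\mf h_p>\mf h_{p-1}$ is also correct.

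However, your closing sentence, ``The remaining three binding intervals are treated identically, applying Lemma \ref{l14} on the appropriate adjacent $J^p_\ell$'', hides a real gap. Only $B_r(\sigma^{p,+}_{k-1,k})\subset J^{p,-}_k$ is handled the same way. The two \emph{outer} binding intervals $B_l(\sigma^{p,+}_{k-1,k})$ and $B_r(\sigma^{p,-}_{k,k+1})$ lie on the far side of the bounding saddles, i.e.\ in the ``corridor'' between $\sigma^{p,-}_{\cdot}$ and $\sigma^{p,+}_{\cdot}$, and when $\ms W^{(p)}_{\cdot}$ is not a singleton this corridor is macroscopic. In that case $B_l(\sigma^{p,+}_{k-1,k})$ is \emph{not} contained in $J^{p,+}_{k-1}=[m^-_{p,k-1},\sigma^{p,-}_{k-1,k}-\delta^{p,-}_{k-1,k}]$ (which stops at the \emph{leftmost} maximum, a fixed positive distance to the left), nor in any other $J^p_\ell$. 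Lemma \ref{l14} is therefore not applicable there. Note that this is genuinely new at level $p\ge 2$: at level $1$ the saddle is a single point, so all four pieces of $B_\epsilon(k)$ land in $J_{k-1}\cup J_k\cup J_{k+1}$, which is why the issue never arises in Lemma \ref{as2}.

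The fix is not hard but requires a different tool. For $x\in B_l(\sigma^{p,+}_{k-1,k})$, take $a=\mtt l$, $b=\sigma^{p,+}_{k-1,k}+\delta^{p,+}_{k-1,k}$ in Proposition \ref{l04}, \eqref{08}. On $[a,b]$ the excursion of $S$ is of order $\delta^2\sim\epsilon\log\epsilon^{-1}$, so Lemma \ref{l01} gives $\bb E^\epsilon_x[\tau(a,b)]\lesssim\epsilon^{-2}\prec\theta^{(p)}_\epsilon$, while Lemma \ref{l02} with $\eta'\sim\delta$ yields $\bb P^\epsilon_x[\tau(b)<\tau(a)]\lesssim\eta^{p,+}_{k-1,k}/\delta^{p,+}_{k-1,k}$. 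Hence the oscillation of $\phi_{p,\epsilon}$ on $B_l(\sigma^{p,+}_{k-1,k})$ is $O(\eta/\delta)=O(1/\log\epsilon^{-1})\to 0$, which plugged into your assembly still gives $o_\epsilon(1)$. The same argument applies to $B_r(\sigma^{p,-}_{k,k+1})$. With this supplement your proof is complete; without it the claim for the outer pieces is unproved.
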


The proof of this result is omitted being similar to the one of
Lemma \ref{as2}.  We turn to the set $\Lambda_{p,k}$. Recall the
definition of the weight $\sigma_p(j,j+1)$, $j\in \bb Z$, introduced
in \eqref{01}.  

\begin{lemma}
\label{as5}
For all $k\in \bb Z$, 
\begin{equation}
\label{65}
\begin{aligned}
& \frac{\epsilon\, \theta_{\epsilon}^{(p)}}{\sqrt{2\pi \epsilon}}\,
\int_{\Lambda_{p,k}} (\, \partial_x \,
J_{\epsilon}^{p,k} )(x) \, (\partial_x \phi_{p,\epsilon}) (x) \,
e^{-[S(x)-S(\ms M_p(k))]/\epsilon} \, dx
\\
\,=\,
& \quad -\, \frac{1}{\sqrt{2\pi}} \, \frac{1}{\sigma_p(k,k+1)} \,
\mtt 1\{\, h^{p,+}_{k} \,=\,  \mf h_{p} \,\}\,
[\,  f_{p,\epsilon} (\ms M_p(k+1))   -  f_{p,\epsilon} (\ms M_p(k)) \,]
\\
&\quad
-\, \frac{1}{\sqrt{2\pi}} \, \frac{1}{\sigma_p(k-1,k)}  \,
\mtt 1\{\, h^{p,-}_{k} \,=\,  \mf h_{p} \,\}\,
[\, f_{p,\epsilon} (\ms M_p(k-1))   - f_{p,\epsilon} (\ms M_p(k)) \,]
\,+\, o_\epsilon(1)\;.
\end{aligned}
\end{equation}
\end{lemma}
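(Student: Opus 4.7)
The plan is to follow the template of Lemma~\ref{as1} in the multi-minimum setting. Split $\Lambda_{p,k}=\Lambda^{p,+}_{k-1,k}\cup\Lambda^{p,-}_{k,k+1}$ and treat the two pieces separately; by the symmetry of the construction I focus on $\Lambda^{p,-}_{k,k+1}$, where $J^{p,k}_\epsilon(x)=1-\rho_{\sigma^{p,-}_{k,k+1},\epsilon}(x-\sigma^{p,-}_{k,k+1})$ is smooth. Since $\phi_{p,\epsilon}\in\mc H^1_{\mathrm{loc}}$, I integrate by parts, producing boundary contributions at $x_\pm:=\sigma^{p,-}_{k,k+1}\pm\delta^{p,-}_{k,k+1}$ together with a bulk integral.

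For the boundary contribution, the explicit formula \eqref{13} for $\rho_{\sigma,\epsilon}$ produces a Gaussian weight $\exp\{S''(\sigma^{p,-}_{k,k+1})(\delta^{p,-}_{k,k+1})^2/(2\epsilon)\}$ at $x_\pm$, and a second-order Taylor expansion of $S$ around $\sigma^{p,-}_{k,k+1}$, identical in form to \eqref{19}, combines it with the Boltzmann weight and the prefactor $\theta^{(p)}_\epsilon$ to yield $\exp\{-[h^{p,+}_k-\mf h_p]/\epsilon\}(1+o_\epsilon(1))$. This tends to $0$ when $h^{p,+}_k>\mf h_p$ and to $1$ when $h^{p,+}_k=\mf h_p$. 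In the critical case, the local ergodicity estimate \eqref{05p} (together with Lemma~\ref{l14} applied at the scale $\delta^{p,\pm}$) allows me to replace $\phi_{p,\epsilon}(x_-)$ and $\phi_{p,\epsilon}(x_+)$ by $f_{p,\epsilon}(\ms M_p(k))$ and $f_{p,\epsilon}(\ms M_p(k+1))$ respectively, up to an $o_\epsilon(1)$ error, giving the contribution
\begin{equation*}
-\,\frac{1}{\sqrt{2\pi}}\,\sqrt{-\,S''(\sigma^{p,-}_{k,k+1})}\,\mtt 1\{h^{p,+}_k=\mf h_p\}\,\big[\,f_{p,\epsilon}(\ms M_p(k+1))-f_{p,\epsilon}(\ms M_p(k))\,\big]\,+\,o_\epsilon(1).
\end{equation*}

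For the bulk term, the test function was engineered so that $\rho_{\sigma,\epsilon}$ satisfies $\ms L_{\sigma,\epsilon}\rho=0$; consequently the quantity $\epsilon\,\partial_x^2 J^{p,k}_\epsilon+(\mss b/\mss a)\,\partial_x J^{p,k}_\epsilon$ reduces to the residual $[\,S'(x)-S'(\sigma^{p,-}_{k,k+1})-S''(\sigma^{p,-}_{k,k+1})(x-\sigma^{p,-}_{k,k+1})\,]$ times $(\partial_x J^{p,k}_\epsilon)/\mss a$, and is $O((\delta^{p,-}_{k,k+1})^2\,|\partial_x J^{p,k}_\epsilon|)$ by the Lipschitz property of $S''$. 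Combining the uniform bound \eqref{54b} on $\phi_{p,\epsilon}$, the same exponential cancellation as for the boundary terms, and the length $2\delta^{p,-}_{k,k+1}$ of the integration region, the bulk is of size $(\delta^{p,-}_{k,k+1})^3/\epsilon = o_\epsilon(1)$ by \eqref{21}.

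The step requiring the most care is the case in which the barrier $\ms W^{(p)}_{k,k+1}$ contains several saddles at the common height $S(\ms M_p(k))+\mf h_p$: the construction \eqref{12b} must be augmented so that $J^{p,k}_\epsilon$ drops from $1$ on $C_{p,k}$ to $0$ via a separate $\rho$-bump centered at \emph{each} saddle of $\ms W^{(p)}_{k,k+1}$, glued across the intervening $S$-local minima (which lie in $\ms M_{p-1}\setminus\ms M_p$) by additional binding intervals. Corollary~\ref{l32} provides the escape-time control needed to extend the local ergodicity \eqref{05p} through these intermediate minima at the time scale $\theta^{(p)}_\epsilon$, so that the boundary contributions from all bumps add and $\sum_{\sigma\in\ms W^{(p)}_{k,k+1}}\sqrt{-\,S''(\sigma)/(2\pi)}$ precisely reproduces the weight $1/(\sqrt{2\pi}\,\sigma_p(k,k+1))$ via \eqref{01}. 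The analogous analysis on $\Lambda^{p,+}_{k-1,k}$ yields the companion term with $(k-1,k)$ in place of $(k,k+1)$, completing \eqref{65}.
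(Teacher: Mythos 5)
Your treatment of the integration by parts, the Gaussian/Taylor cancellation of the Boltzmann weight against $\theta^{(p)}_\epsilon$, and the bulk estimate is sound and matches the paper's strategy. The trouble is the identification of $\phi_{p,\epsilon}(x_+)$ with $f_{p,\epsilon}(\ms M_p(k+1))$ at the point $x_+=\sigma^{p,-}_{k,k+1}+\delta^{p,-}_{k,k+1}$. Lemma~\ref{l14} controls $\phi_{p,\epsilon}$ only on $J^{p}_{k+1}$, whose left endpoint is $\sigma^{p,+}_{k,k+1}+\delta^{p,+}_{k,k+1}$; when $\ms W^{(p)}_{k,k+1}$ contains more than one saddle one has $\sigma^{p,-}_{k,k+1}<\sigma^{p,+}_{k,k+1}$ and $x_+$ lies strictly to the left of $J^p_{k+1}$, so your replacement is valid only in the singleton case. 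You acknowledge this in the last paragraph, but the route you propose there diverges from the paper's and is left underspecified: the paper keeps the single-bump test function $J^{p,k}_\epsilon$ and instead proves Lemma~\ref{l28} (via the resolvent estimate of Proposition~\ref{l04} and the hitting-probability computation of Corollary~\ref{l26}), which expresses $\phi_{p,\epsilon}(x_+)$ as the convex combination $(1-\varpi^{+}_{p,k})f_{p,\epsilon}(\ms M_p(k))+\varpi^{+}_{p,k}f_{p,\epsilon}(\ms M_p(k+1))+o_\epsilon(1)$; multiplying the prefactor $\sqrt{-S''(\sigma^{p,-}_{k,k+1})}$ by $\varpi^{+}_{p,k}$ then produces exactly $\sqrt{2\pi}/\sigma_p(k,k+1)$ through~\eqref{01}, with the companion term coming from \eqref{67}.

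Your alternative — a multi-bump test function with a $\rho$-profile at each element of $\ms W^{(p)}_{k,k+1}$ — could plausibly be carried out, but as stated it has two gaps. First, you never specify the drop $\Delta J_\sigma$ at each saddle; for the contributions to telescope to $f_{p,\epsilon}(\ms M_p(k+1))-f_{p,\epsilon}(\ms M_p(k))$ with the correct normalisation you must take $\Delta J_\sigma\propto 1/\sqrt{-S''(\sigma)}$ with total drop $1$, and that normalisation is where $\sigma_p(k,k+1)$ would actually appear. Second, the telescoping requires $\phi_{p,\epsilon}$ to be asymptotically constant across each intermediate basin in $[\sigma^{p,-}_{k,k+1},\sigma^{p,+}_{k,k+1}]$, i.e.\ a local ergodicity statement at minima belonging to $\ms M_{p-1}\setminus\ms M_p$; Corollary~\ref{l32} does not furnish this — it controls how long the process started in $\ms M_p$ avoids $\ms E_{p-1}\setminus\ms E_p$, which is the reverse direction. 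You would need a separate Proposition~\ref{l06}/Lemma~\ref{l14}-type estimate inside those shallower intermediate basins. The paper sidesteps both issues by pushing the difficulty into the probabilistic Lemma~\ref{l28} rather than into the construction of the test function.
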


\begin{proof}
The proof of Lemma \ref{as1} yields that the left-hand side of
\eqref{65} is equal to
\begin{equation}
\label{66}
\begin{aligned}
& -\, \frac{1}{2\pi} \, 
\sqrt{-\, S''(\sigma^{p,-}_{k,k+1})}
\, \mtt 1\{\, h^{p,+}_{k} \,=\,  \mf h_{p} \,\}
\, [\, \phi_{p,\epsilon}(\sigma^{p,-}_{k,k+1}
+ \delta^{p,-}_{k,k+1}) - \phi_{p,\epsilon}(\sigma^{p,-}_{k,k+1}
- \delta^{p,-}_{k,k+1})\,]
\\
&\quad
-\, \frac{1}{2\pi} \, 
\sqrt{-\, S''(\sigma^{p,+}_{k-1,k})}
\, \mtt 1\{\, h^{p,-}_{k} \,=\,  \mf h_{p} \,\}
\, [\, \phi_{p,\epsilon}(\sigma^{p,+}_{k-1,k}
- \delta^{p,+}_{k-1,k}) - \phi_{p,\epsilon}(\sigma^{p,+}_{k-1,k}
+ \delta^{p,+}_{k-1,k})\,]
\,+\, o_\epsilon(1)\;.
\end{aligned}
\end{equation}
By Lemma \ref{l14}, we may replace in the previous formula,
$\phi_{p,\epsilon}(\sigma^{p,-}_{k,k+1} - \delta^{p,-}_{k,k+1})$ and
$\phi_{p,\epsilon}(\sigma^{p,+}_{k-1,k} + \delta^{p,+}_{k-1,k})$ by
$f_{p,\epsilon} (\ms M_p(k))$ at a cost $o_\epsilon(1)$. Hence, by Lemma
\ref{l28} below and \eqref{67}, the previous expression is equal to
\begin{equation*}
\begin{aligned}
& -\, \frac{1}{2\pi} \, 
\sqrt{-\, S''(\sigma^{p,-}_{k,k+1} )}
\, \varpi^{+}_{p,k} \,
\mtt 1\{\, h^{p,+}_{k} \,=\,  \mf h_{p} \,\}
[\,  f_{p,\epsilon} (\ms M_p(k+1))   -  f_{p,\epsilon} (\ms M_p(k)) \,]
\\
&\quad
-\, \frac{1}{2\pi} \, 
\sqrt{-\, S''(\sigma^{p,+}_{k-1,k}) }
\, \varpi^{-}_{p,k} 
\, \mtt 1\{\, h^{p,-}_{k} \,=\,  \mf h_{p} \,\}
[\, f_{p,\epsilon} (\ms M_p(k-1))   - f_{p,\epsilon} (\ms M_p(k)) \,]
\,+\, o_\epsilon(1)\;.
\end{aligned}
\end{equation*}
To complete the proof of the lemma, it remains to recall the
definition of $\varpi^{\pm}_{p,k} $.
\end{proof}

Recall from \eqref{max_br} that we denote by $\ms W^{(p)}_{j,j+1}$,
$j\in \bb Z$, the finite set where $S(\cdot)$ attains its global
maximum in the interval $[m^+_{p,j} ,, m^-_{p,j+1}]$.  In the next
lemma, we express
$\phi_{p,\epsilon}(\sigma^{p,-}_{k,k+1} + \delta^{p,-}_{k,k+1})$ as a convex
combination of $f_{p,\epsilon} (\ms M_p(k))$ and
$f_{p,\epsilon} (\ms M_p(k+1))$.  Mind that $\varpi^{+}_{p,k}$,
defined in the next lemma, is equal to $1$ if $\ms W^{(p)}_{k,k+1}$ is
a singleton,
$\ms W^{(p)}_{k,k+1} = \{\sigma^{p,-}_{k,k+1} \} =
\{\sigma^{p,+}_{k,k+1} \}$, that is if
$\sigma^{p,-}_{k,k+1} = \sigma^{p,+}_{k,k+1}$.

\begin{lemma}
\label{l28}
Fix $k\in \bb Z$, and suppose that
$S(\sigma^{p,-}_{k,k+1}) - S(\ms M_p(k)) = \mf h_p$. Then,
\begin{equation*}
\phi_{p,\epsilon}(\sigma^{p,-}_{k,k+1} + \delta^{p,-}_{k,k+1}) \,=\,
(1-\varpi^{+}_{p,k} )\, f^{(p)}_\epsilon (\ms M_p(k)) 
\,+\, \varpi^{+}_{p,k} \,   f^{(p)}_\epsilon (\ms M_{p}(k+1)) 
\,+\,o_\epsilon(1)\;,
\end{equation*}
where
\begin{equation*}
{\color{blue} \varpi^{+}_{p,k} }  \,:=\, \frac{1/\sqrt{- S''(\sigma^{p,-}_{k,k+1})}}
{\sum_{\sigma \in \ms W^{(p)}_{k,k+1}} 1/\sqrt{- S''(\sigma)}}  \;.
\end{equation*}
\end{lemma}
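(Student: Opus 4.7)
The plan is to apply Proposition~\ref{l04} with $\vartheta=\theta^{(p)}_\epsilon$, $F=G$, endpoints $a=m^+_{p,k}$, $b=m^-_{p,k+1}$, and base point $x=\sigma^{p,-}_{k,k+1}+\delta^{p,-}_{k,k+1}\in(a,b)$. This will express $\phi_{p,\epsilon}(x)$ as
\[
\phi_{p,\epsilon}(a)\,\bb P^\epsilon_x[\tau(a)<\tau(b)] + \phi_{p,\epsilon}(b)\,\bb P^\epsilon_x[\tau(b)<\tau(a)] + R_\epsilon,
\]
with $|R_\epsilon|\le 2\,\|G\|_\infty\,\bb E^\epsilon_x[\tau(a,b)/\theta^{(p)}_\epsilon]$.

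To control $R_\epsilon$, I would observe that by the choice of $a$ and $b$ the open interval $(a,b)$ contains no element of $\ms M_p$, so path continuity forces $\tau(a,b)=\tau(\ms M_p)$ for a diffusion starting inside $(a,b)$. Lemma~\ref{l33} then yields $\bb E^\epsilon_x[\tau(a,b)]\le C_0\,\epsilon^{-1}\,e^{\mf h_{p-1}/\epsilon}$, which divided by $\theta^{(p)}_\epsilon=e^{\mf h_p/\epsilon}$ is $o_\epsilon(1)$ thanks to the strict inequality $\mf h_p>\mf h_{p-1}$ established in Proposition~\ref{l11b}. Next, the local ergodicity \eqref{05p} gives $\phi_{p,\epsilon}(m^+_{p,k})=f_{p,\epsilon}(\ms M_p(k))+o_\epsilon(1)$ and $\phi_{p,\epsilon}(m^-_{p,k+1})=f_{p,\epsilon}(\ms M_p(k+1))+o_\epsilon(1)$, since $m^+_{p,k}\in\ms E(\ms M_p(k))$ and $m^-_{p,k+1}\in\ms E(\ms M_p(k+1))$.

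It remains to evaluate the crossing probability via Corollary~\ref{l26}. The hypothesis $h^{p,+}_k=\mf h_p$ together with postulates $\mc P_7(p)$ and $\mc P_8(p)$ (which give $R_p(\ms M_p(k),\ms M_p(k+1))>0$ and hence $S(\ms M_p(k+1))\le S(\ms M_p(k))$) guarantees $S(a)\vee S(b)<\max_{[a,b]} S = S(\sigma^{p,-}_{k,k+1})$, so the corollary applies with $\mf S=\ms W^{(p)}_{k,k+1}$. Since $\delta^{p,-}_{k,k+1}\to 0$ while the gaps between distinct elements of the finite set $\ms W^{(p)}_{k,k+1}$ are bounded below, for all sufficiently small $\epsilon$ the point $x$ lies strictly between $\sigma^{p,-}_{k,k+1}$ and the next element of $\ms W^{(p)}_{k,k+1}$. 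Hence the index set $I_x$ of Corollary~\ref{l26} reduces to the single leftmost saddle, yielding
\[
\bb P^\epsilon_x[\tau(b)<\tau(a)] \,=\, \varpi^{+}_{p,k}+o_\epsilon(1).
\]
Combining the three ingredients gives the claimed identity. The main delicate step is the bound on $R_\epsilon$: it hinges on the identification $\tau(a,b)=\tau(\ms M_p)$ inside $(a,b)$, which is what keeps the escape time safely below $\theta^{(p)}_\epsilon$ despite the presence of the transient level-$(p{-}1)$ minima that may lie between the two wells; once this is granted, everything else is a direct application of the local ergodicity and of the Corollary~\ref{l26} hitting-probability formula.
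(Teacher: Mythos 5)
Your proof is correct, and it takes a genuinely different route from the paper on the two technical points of the argument. Where the paper localizes the comparison interval tightly around the saddles, choosing $a=\sigma^{p,-}_{k,k+1}-\delta^{p,-}_{k,k+1}$ and $b=\sigma^{p,+}_{k,k+1}+\delta^{p,+}_{k,k+1}$, you pick $a=m^+_{p,k}$ and $b=m^-_{p,k+1}$, the two local minima that bracket the full inter-well region. This has two payoffs: first, $\tau(a,b)=\tau(\ms M_p)$ by path continuity, so the error $\bb E^\epsilon_x[\tau(a,b)]/\theta^{(p)}_\epsilon$ can be controlled directly by Lemma~\ref{l33} and the strict inequality $\mf h_p>\mf h_{p-1}$, whereas the paper must invoke Lemma~\ref{l01} together with Lemma~\ref{l27} (the strict lower bound on the floor between $\sigma^{p,-}_{k,k+1}$ and $\sigma^{p,+}_{k,k+1}$) to bound the escape time from the narrow window; second, with your endpoints the interval $[a,b]$ is exactly the interval defining $\ms W^{(p)}_{k,k+1}$ in \eqref{max_br}, so $\mf S=\ms W^{(p)}_{k,k+1}$ on the nose and the case $\sigma^{p,-}_{k,k+1}=\sigma^{p,+}_{k,k+1}$ (where $\varpi^+_{p,k}=1$) is absorbed into the general formula rather than split off as the paper does at the outset of its proof. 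Both arguments rely on the same observation that Corollary~\ref{l26} applies even though the base point $x$ depends on $\epsilon$, and both finish with the local ergodicity Lemma~\ref{l14} (via \eqref{05p}) to replace the boundary values of $\phi_{p,\epsilon}$ by $f_{p,\epsilon}$. Note that Lemma~\ref{l33} itself invokes Lemma~\ref{l27} in its proof, so the dependency is not really avoided, only repackaged; still, your version is arguably the cleaner exposition since it reuses a stated lemma rather than re-deriving the escape-time estimate in place.
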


\begin{proof}
If $\sigma^{p,-}_{k,k+1} = \sigma^{p,+}_{k,k+1}$, the assertion
follows from Lemma \ref{l14}.  Assume, therefore that
$\sigma^{p,-}_{k,k+1} < \sigma^{p,+}_{k,k+1}$.  The proof is based on
the second assertion of Proposition \ref{l04}.  Let
$x=x_\epsilon = \sigma^{p,-}_{k,k+1} + \delta^{p,-}_{k,k+1}$,
$a=a_\epsilon = \sigma^{p,-}_{k,k+1} - \delta^{p,-}_{k,k+1}$,
$b=b_\epsilon = \sigma^{p,+}_{k,k+1} + \delta^{p,+}_{k,k+1}$. Since
$S(\sigma^{p,-}_{k,k+1}) - S(\ms M_p(k)) = \mf h_p$, by Lemmata
\ref{l01} and \ref{l27},
\begin{equation*}
\bb E^\epsilon_{x}\big[\, \tau(a,b)\, \big] / \theta^{(p)}_\epsilon
\, =\, o_\epsilon(1)\;.
\end{equation*}

By the proof of Corollary \ref{l26},
\begin{equation*}
\bb P^\epsilon_{x}\{\tau(b)< \tau(a)\} \,=\,
\varpi^{+}_{p,k} \,+\, o_\epsilon (1)\;.
\end{equation*}
Note that we can not apply directly Corollary \ref{l26} as the initial
point depends on $\epsilon$. But the proof goes through without
problem. 

It follows from the previous estimates and the second
assertion of Proposition \ref{l04} that
\begin{equation*}
\phi_{p,\epsilon}(\sigma^{p,-}_{k,k+1} + \delta^{p,-}_{k,k+1}) \,=\,
(1-\varpi^{+}_{p,k})\, \phi_{p,\epsilon}(\sigma^{p,-}_{k,k+1} - \delta^{p,-}_{k,k+1}) 
\,+\, \varpi^{+}_{p,k} \,  \phi_{p,\epsilon}(\sigma^{-}_{p,k+1} + \delta^{+}_{p,k+1})  
\,+\,o_\epsilon(1)\;.
\end{equation*}
To complete the proof of the lemma, it remains to call Lemma \ref{l14}.
\end{proof}

Similarly, if $S(\sigma^{p,+}_{k-1,k}) - S(\ms M_p(k)) = \mf
h_p$, then,
\begin{equation}
\label{67}
\phi_{p,\epsilon}(\sigma^{p,+}_{k-1,k} - \delta^{p,+}_{k-1,k}) \,=\,
(1-\varpi^{-}_{p,k})\, f^{(p)}_\epsilon(\ms M_p(k)) 
\,+\, \varpi^{-}_{p,k} \,  f^{(p)}_\epsilon(\ms M_p(k-1)) 
\,+\,o_\epsilon(1)\;,
\end{equation}
where
\begin{equation*}
{\color{blue} \varpi^{-}_{p,k} } \,: =\,
\frac{1/\sqrt{- S''(\sigma^{p,+}_{k-1,k})}}
{\sum_{\sigma \in \ms W^{(p)}_{k-1,k}} 1/\sqrt{- S''(\sigma)}}  \;.
\end{equation*}

\section{Proof of Theorem \ref{mt3}}
\label{sec8}

We follow the strategy proposed in \cite{llm} and applied in
\cite{lls1, lls2} in the context of multidimensional diffusion
processes.  We start presenting a consequence of Theorem \ref{mt4}.

\subsection*{Metastability}

Fix $1\le p\le \mf q$, and denote by $Y^{(p)}_\epsilon (\cdot)$ the
process $X_\epsilon(\cdot)$ speeded-up by $\theta^{(p)}_\epsilon$:
$\color{blue} Y^{(p)}_\epsilon (t) := X_\epsilon
(t\theta^{(p)}_\epsilon)$.  Denote by
$\color{blue} \bb P^{p, \epsilon} _{x}$, $x\in \bb R$, the probability
measure on $D(\bb R_{+}, \bb R)$ induced by the process
$Y^{(p)}_\epsilon (\cdot)$ starting from $x\in\bb R$.  Note that
$\bb P^{p, \epsilon} _{x} = \bb P^{\theta^{(p)}_\epsilon, \epsilon}
_{x}$, where $\bb P^{\theta^{(p)}_\epsilon, \epsilon} _{x}$ has been
introduced at the beginning of Section \ref{sec2}.

Fix a subset $A$ of $\bb R$ and a trajectory
$\mtt x\colon \bb R_+\to \bb R$.  Denote by
$\mss T_A(t) = \mss T_{\mtt x(\cdot), A}(t)$, $t\ge 0$, the total time
the trajectory $\mtt x (\cdot)$ spent in $A$ in the time interval
$[0,t]$, and by $\mss S_A (\cdot) := \mss S_{\mtt x(\cdot),A} (\cdot)$
the generalised inverse of $\mss T_A(\cdot)$:
\begin{equation*}
{\color{blue} \mss T_A(t)} \,:=\, \int_{0}^{t}
\chi_{_{A}}(\mtt x (s))\, ds \;, \quad
{\color{blue}\mss S_A(t)} \, := \,
\sup\{s\geq 0: \mss T_A(s)\leq t\}\;,
\end{equation*}
where, recall, $\chi_{_{B}}$, $B\subset \bb R$, represents the indicator
function of the set $B$.

Let $\color{blue} (\ms F_{t} : t\ge 0)$ be the augmentation of the
natural filtration on $\bb C(\bb R_{+}, \bb R)$ generated by
$\mtt x(\cdot)$. For each $t\ge 0$, $A\subset \bb R$, $\mss S_A(t)$ is
a stopping time with respect to this filtration. Let
$(\mtt x^{\mss T, A} (t) : t\ge 0)$ be the process
$\mtt x (\cdot)$ time-changed by
$(\mss S_A (r) : r\ge 0)$:
\begin{equation}
\label{25}
{\color{blue} \mtt x^{\mss T, A} (t)} \,: =\,
\mtt x (\mss S_A(t))\;.
\end{equation}
Clearly, $\mtt x^{\mss T, A}(\cdot)$ is an $A$-valued trajectory
adapted to the filtration
$\color{blue} \ms G^{\mss T, A}_r := \ms F_{\mss S_A (r)}$, $r\ge
0$. We call $\mtt x^{\mss T, A} (\cdot)$ the \emph{trace process} of
$\mtt x(\cdot)$ on $A$.

Recall from Corollary \ref{l32} the definition of the set $\ms E_p$,
$1\le p\le \mf q$.
Define the projection $\Psi_p\colon \ms E_p \to \ms S_p$ by
\begin{equation*}
{\color{blue} \Psi_p(x)} \,:=\, \sum_{k\in\bb Z}  \ms M_p(k) \,
\chi_{_{\ms E (\ms M_p(k))}} (x)\;.
\end{equation*}
Define the $\ms S_p$-valued \emph{order process}
$\mtt y^{{\mtt T}, p}_\epsilon \colon C(\bb R_+, \bb R) \to D(\bb R_+,
\ms S_p)$ by
\begin{equation*}
{\color{blue}  \mtt y^{{\mtt T}, p} (t)} \,: =\,
\Psi_p(\mtt x^{\mss T, \ms E_p} (t))\;.
\end{equation*}

Denote by $\color{blue} \bb P_{x}^{{\mtt T}, p, \epsilon}$,
$x\in \ms E_p$, the probability measure on $D(\bb R_+, \ms E_p)$
obtained as the push-forward of $\bb P_{x}^{p, \epsilon}$ by
$\mtt x^{\mss T, \ms E_p}$ (understood as a function from
$C(\bb R_+, \bb R)$ to $D(\bb R_+, \ms E_p)$). Let
$\color{blue} \mtt Q_{x}^{{\mtt T}, p, \epsilon}$ be the push-forward
of $\bb P_{x}^{{\mtt T}, p, \epsilon}$ by $\Psi_p$, which corresponds
to the distribution of $\mtt y^{{\mtt T}, p}(\cdot)$ under
$\bb P_{x}^{{\mtt T}, p, \epsilon}$. Recall the definition of the
measure $\mtt Q_{\ms M_p(k)}^{(p)}$, $k\in \bb Z$, introduced below
\eqref{52}.  Next result follows from Theorem \ref{mt4} and
\cite[Theorem 2.3]{lms}.

\begin{theorem}
\label{l29}
Fix $1\le p\le \mf q$. Then,
\begin{equation}
\label{23}
\lim_{\epsilon \to 0} \sup_{x\in \ms E_p}
\bb E_{x}^{p,\epsilon}\Big[ \, \int_{0}^{t}
\chi_{_{\bb R \setminus \ms E_p}}
(\mtt x (s))\,  ds \, \Big]
\,=\, 0
\end{equation}
for every $t\geq 0$. Furthermore, for all $k\in \bb Z$ and sequences
$x_\epsilon \in \ms E(\ms M_p(k))$, {\rm
${\mtt Q}_{x_\epsilon}^{{\mtt T}, p, \epsilon}$} converges to {\rm
${\mtt Q}_{\ms M_p(k)}^{(p)}$}.
\end{theorem}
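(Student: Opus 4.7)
The plan is to derive both assertions from Theorem \ref{mt4} via the resolvent approach of \cite[Theorem 2.3]{lms}.

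For equation \eqref{23}, I would apply Theorem \ref{mt4} with the constant function $g\equiv 1$ on $\ms S_p$, so that $G=\chi_{_{\ms E_p}}$. Since $\bb L_p$ is the generator of a Markov chain, the unique bounded solution of $(\lambda-\bb L_p)f=1$ is $f\equiv 1/\lambda$. Combined with the stochastic representation \eqref{07}, Theorem \ref{mt4} yields
\[
\lim_{\epsilon\to 0}\sup_{x\in\ms E_p}\Big|\, \bb E_x^{p,\epsilon}\Big[\int_0^\infty e^{-\lambda s}\,\chi_{_{\ms E_p}}(\mtt x(s))\,ds\Big]-\frac{1}{\lambda}\,\Big|=0,
\]
and hence $\sup_{x\in\ms E_p}\bb E_x^{p,\epsilon}\big[\int_0^\infty e^{-\lambda s}\chi_{_{\bb R\setminus\ms E_p}}(\mtt x(s))\,ds\big]\to 0$. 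Bounding the integrand from below by $e^{-\lambda t}\,\chi_{_{\bb R\setminus\ms E_p}}(\mtt x(s))\,\mb 1\{s\le t\}$ yields \eqref{23} for any fixed $t>0$.

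For the convergence of the trace process, I would apply Theorem \ref{mt4} with a general bounded $g\colon\ms S_p\to\bb R$ and interpret $\phi_{p,\epsilon}(x)$ as the resolvent of the trace process $\mtt x^{\mss T,\ms E_p}$ applied to $G$, modulo a vanishing error. By the clock change $s\mapsto \mss T_{\ms E_p}(s)$,
\[
\phi_{p,\epsilon}(x)=\bb E_x^{p,\epsilon}\Big[\int_0^\infty e^{-\lambda\mss S_{\ms E_p}(u)}\,G\big(\mtt x^{\mss T,\ms E_p}(u)\big)\,du\Big]+o_\epsilon(1),
\]
and \eqref{23} together with the strong Markov property at successive hitting times of $\ms E_p$ allows one to replace $\mss S_{\ms E_p}(u)$ by $u$ modulo a further $o_\epsilon(1)$ error. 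Hence Theorem \ref{mt4} translates into convergence of the resolvent of the projected trace process $\mtt y^{\mss T,p}(\cdot)$ applied to $g$, uniformly on each well $\ms E(\ms M_p(k))$, to $f(\ms M_p(k))$; by Lemma \ref{l30} this $f$ is precisely the resolvent of $\bb X_p$ applied to $g$.

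From this resolvent convergence, finite-dimensional convergence of $\mtt Q_{x_\epsilon}^{\mss T,p,\epsilon}$ to $\mtt Q_{\ms M_p(k)}^{(p)}$ follows by iterating the Markov property and invoking the uniqueness of bounded solutions to \eqref{42} given by Lemma \ref{l30}. Tightness in the Skorohod topology is supplied by Corollary \ref{l32}, which prevents the trace process from making rapid successive transitions between wells. I expect the main obstacle to be the clock-change step: although \eqref{23} controls the expected time spent outside $\ms E_p$ in a single excursion, propagating this bound through the stochastic representation uniformly over bounded time intervals requires a careful iteration of the strong Markov property at the successive return times to $\ms E_p$, combined with the uniform-in-$x$ estimate already built into Theorem \ref{mt4}.
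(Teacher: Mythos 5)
Your proposal is correct and follows essentially the same route as the paper: the paper proves Theorem \ref{l29} by combining Theorem \ref{mt4} with the general metastability machinery of \cite[Theorem 2.3]{lms}, and what you have written is precisely an unrolling of the argument that \cite[Theorem 2.3]{lms} encapsulates (resolvent with $g\equiv 1$ gives \eqref{23}; clock change plus \eqref{23} and Lemma \ref{l30} identify the limiting resolvent; Markov property and tightness yield convergence of the trace measures). The only cosmetic caveat is that Corollary \ref{l32} as stated covers $2\le p\le\mf q$, so for the $p=1$ tightness ingredient one would invoke Lemma \ref{l31} directly rather than the corollary, but this does not affect the substance.
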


Next assertion is a consequence of \eqref{23}.  We refer to
\cite[display (3.2)]{llm} for a proof.

\begin{corollary}
\label{l37}
Fix $1\le p\le \mf q$. Then,
\begin{equation*}
\lim_{\epsilon \to 0} 
\sup_{x\in \ms E_p} 
\bb P^{p,\epsilon}_{x}\big[\, \mss S_{\ms E_p} (t)
> t+\delta\,\big] \,=\, 0
\end{equation*}
for all $t>0$ and $\delta > 0$.
\end{corollary}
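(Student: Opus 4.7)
The plan is to reduce Corollary \ref{l37} to the ergodic-type estimate \eqref{23} of Theorem \ref{l29} by a simple Markov inequality.

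First I would unpack the event $\{\mss S_{\ms E_p}(t) > t+\delta\}$. Since $\mss T_A(\cdot)$ is non-decreasing and $1$-Lipschitz continuous, if $\sup\{s\ge 0:\mss T_{\ms E_p}(s)\le t\} > t+\delta$ then there exists $s>t+\delta$ with $\mss T_{\ms E_p}(s)\le t$, and by monotonicity $\mss T_{\ms E_p}(t+\delta)\le t$. Because $\mss T_{\ms E_p}(t+\delta) + \mss T_{\bb R\setminus\ms E_p}(t+\delta) = t+\delta$, this inclusion reads
\begin{equation*}
\{\mss S_{\ms E_p}(t) > t+\delta\} \;\subseteq\; \big\{\mss T_{\bb R\setminus\ms E_p}(t+\delta) \,\ge\, \delta\big\}\;.
\end{equation*}

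Second, I would apply Markov's inequality to the non-negative random variable $\mss T_{\bb R\setminus\ms E_p}(t+\delta)$:
\begin{equation*}
\bb P^{p,\epsilon}_x\big[\mss S_{\ms E_p}(t) > t+\delta\big] \;\le\; \frac{1}{\delta}\, \bb E^{p,\epsilon}_x\Big[\int_0^{t+\delta} \chi_{_{\bb R\setminus \ms E_p}}(\mtt x(s))\,ds\Big]\;.
\end{equation*}
Taking the supremum over $x\in\ms E_p$ and sending $\epsilon\to 0$, the right-hand side vanishes by \eqref{23} of Theorem \ref{l29} applied at time horizon $t+\delta$. This yields the claim.

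Since the event-inclusion in the first step is elementary and the Markov bound is standard, there is no real obstacle here; the entire content of the corollary is already encoded in \eqref{23}, and the proof is essentially a restatement of that estimate in integrated form using the generalised inverse.
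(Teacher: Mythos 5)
Your argument is correct and is precisely the one the paper has in mind: the paper states that the corollary is a consequence of \eqref{23} and defers the details to \cite[display (3.2)]{llm}, where the same inclusion $\{\mss S_{\ms E_p}(t)>t+\delta\}\subseteq\{\mss T_{\bb R\setminus\ms E_p}(t+\delta)\ge\delta\}$ followed by Markov's inequality is used. The only cosmetic remark is that the Lipschitz continuity of $\mss T_{\ms E_p}$ is never actually used in your chain of implications; monotonicity alone suffices.
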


The next result is a consequence of the previous theorem. It can also
be obtained from Theorem \ref{mt4}, see \cite[Lemma 4.2]{lms}.

\begin{corollary}
\label{l07}
Fix $1\le p\le \mf q$. Then, for all $k\in\bb Z$,
\begin{equation*}
\lim_{a \to 0} \limsup_{\epsilon \to 0} 
\sup_{x\in \ms E(\ms M_p(k))} 
\bb P^{p,\epsilon}_{x}\big[\, \tau(\ms E_p \setminus \ms E(\ms M_p(k)))
\le  a  \,\big] \,=\, 0\;.
\end{equation*}
\end{corollary}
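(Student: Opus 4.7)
The plan is to dominate the event $\{\tau \le a\}$ by the analogous event for the order process $\mtt y^{{\mtt T}, p}(\cdot)$, and then invoke the second assertion of Theorem~\ref{l29}, which identifies its weak limit as the Markov chain $\bb X_p(\cdot)$. Writing $\tau := \tau(\ms E_p \setminus \ms E(\ms M_p(k)))$, set
\[
\tau^{{\mtt T}} \;:=\; \inf\big\{\,t>0 \,:\, \mtt y^{{\mtt T}, p}(t) \neq \ms M_p(k)\,\big\}\;.
\]
For $x\in \ms E(\ms M_p(k)) \subset \ms E_p$, continuity of $\mtt x(\cdot)$ and openness of the wells give $\mtt y^{{\mtt T}, p}(0) = \Psi_p(x) = \ms M_p(k)$ and $\mss S_{\ms E_p}(\mss T_{\ms E_p}(\tau))=\tau$; since at real time $\tau$ the diffusion sits in $\ms E(\ms M_p(\ell))$ for some $\ell\neq k$, the order process has already left $\ms M_p(k)$ by trace time $\mss T_{\ms E_p}(\tau)$. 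As the trace clock is bounded above by the identity, $\tau^{{\mtt T}} \le \mss T_{\ms E_p}(\tau) \le \tau$, whence
\[
\bb P_{x}^{p,\epsilon}\big[\,\tau \le a\,\big]
\;\le\;
\bb P_{x}^{p,\epsilon}\big[\,\tau^{{\mtt T}} \le a\,\big]
\qquad \text{for every } x\in \ms E(\ms M_p(k))\;,
\]
and it suffices to bound the right-hand side.

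Fix $a>0$ and an arbitrary sequence $x_\epsilon \in \ms E(\ms M_p(k))$. By Theorem~\ref{l29}, $\mtt Q_{x_\epsilon}^{{\mtt T}, p, \epsilon}$ converges weakly on $D(\bb R_+, \ms S_p)$ to $\mtt Q_{\ms M_p(k)}^{(p)}$, under which the first exit time $\tau^{(p)}$ from $\ms M_p(k)$ is exponentially distributed with rate $\Lambda_k := \sum_{\ell \ne k} R_p(\ms M_p(k), \ms M_p(\ell))$ (interpreted as $\tau^{(p)}\equiv +\infty$ when $\Lambda_k = 0$, i.e.\ when $\ms M_p(k)$ is $\bb X_p$-absorbing). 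In either case the law of $\tau^{(p)}$ carries no atom on $(0,\infty)$, so $\{\tau^{{\mtt T}} \le a\}$ is a continuity set for the limit, and the Portmanteau theorem yields
\[
\limsup_{\epsilon \to 0}\, \bb P_{x_\epsilon}^{p,\epsilon}\big[\,\tau^{{\mtt T}} \le a\,\big]
\;=\; \mtt Q_{\ms M_p(k)}^{(p)}\big[\, \tau^{(p)}\le a\,\big]
\;\le\; 1 - e^{-\Lambda_k a}\;,
\]
which vanishes as $a \to 0$.

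Uniformity over $x \in \ms E(\ms M_p(k))$ then follows by a standard extraction argument. If the double limit in the statement were positive, one could extract $\delta>0$ and sequences $a_n\to 0$, $\epsilon_n\to 0$, $x_n\in \ms E(\ms M_p(k))$ with $\bb P_{x_n}^{p,\epsilon_n}[\tau\le a_n]\ge \delta$; fixing any $a>0$ and using $a_n \le a$ for all large $n$, the first step gives $\bb P_{x_n}^{p,\epsilon_n}[\tau^{{\mtt T}}\le a]\ge \delta$, and applying the preceding display to the sequence $(x_n)_{n\ge 1}$ then forces $\delta \le 1 - e^{-\Lambda_k a}$ for every $a>0$, contradicting $\delta>0$. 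The main obstacle I anticipate is the continuity-set verification used in the Portmanteau step: $\tau^{{\mtt T}}$ is only lower semicontinuous in the Skorohod topology, so one must exploit both the atomless distribution of $\tau^{(p)}$ and the discrete state space of $\bb X_p$ to see that the boundary of $\{\tau^{{\mtt T}}\le a\}$ carries no mass under $\mtt Q_{\ms M_p(k)}^{(p)}$.
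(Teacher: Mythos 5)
Your proof is correct and follows the route the paper indicates, namely deriving the corollary from the weak convergence of the order process in Theorem~\ref{l29} via a Portmanteau argument for the closed set $\{\tau^{\mtt T}\le a\}$. Two technical remarks that confirm rather than threaten the argument: at time $\tau$ the diffusion is at $\partial\ms E(\ms M_p(\ell))$ (not inside the open well) but enters it immediately, so the domination $\tau^{\mtt T}\le\mss T_{\ms E_p}(\tau)\le\tau$ still holds; and since only the upper-bound direction of Portmanteau is used, the closedness of $\{\tau^{\mtt T}\le a\}$ — a consequence of the lower semicontinuity you already noted — suffices, so the atomlessness of the exponential exit-time law under $\mtt Q^{(p)}_{\ms M_p(k)}$ is not actually needed, which dissolves the obstacle flagged at the end of your proposal.
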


\begin{corollary}
\label{l40}
Fix $1\le p< \mf q$. Then,
\begin{equation*}
\lim_{t\to\infty}\limsup_{\epsilon\to 0}
\sup_{x\in \bb R} \bb P^{\epsilon}_x\big[ \, 
\tau (\ms M_{p+1}) > t \, \theta^{(p)}_\epsilon  \, \big] \,=\, 0\;.
\end{equation*}
\end{corollary}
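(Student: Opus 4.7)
My plan is to reduce the hitting of $\ms M_{p+1}$ to hitting the set $\mc R:=\bigcup_{k\in K}\mf R_p(k)$ of $\bb X_p$-recurrent states, since by construction $\ms M_{p+1}=\bigcup_{\ms M_p(i)\in\mc R}\ms M_p(i)$. I would first invoke Lemma \ref{l33} to obtain $\sup_{x\in\bb R}\bb E^\epsilon_x[\tau(\ms M_p)]\le C_0\,\epsilon^{-1}e^{\mf h_{p-1}/\epsilon}=o(\theta^{(p)}_\epsilon)$, so that Markov's inequality combined with the strong Markov property at $\tau(\ms M_p)$ reduces the claim to showing
\begin{equation*}
\lim_{t\to\infty}\limsup_{\epsilon\to 0}\sup_{m\in\ms M_p}\bb P_m^{\epsilon}\bigl[\tau(\ms M_{p+1})>t\,\theta^{(p)}_\epsilon\bigr]=0.
\end{equation*}
For $m\in\ms M_{p+1}$ the probability vanishes, so I may assume $m\in\ms M_p(k)$ with $\ms M_p(k)$ a $\bb X_p$-transient state.

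Next I would show that the speeded-up diffusion reaches $\ms E_{p+1}:=\bigcup_{\ms M_p(j)\in\mc R}\ms E(\ms M_p(j))$ in bounded time. Letting $\tau^{\mc R}$ denote the hitting time of $\mc R$ for $\bb X_p$, postulates $\mc P_7(p)$ and $\mc P_9(p)$ together with $\mc R\neq\varnothing$ (which holds because $p<\mf q$) force every maximal run of transient equivalence classes to be finite and bordered by recurrent ones, whence $\sup_k\mtt E^{p}_{\ms M_p(k)}[\tau^{\mc R}]\le T_0<\infty$ and $\sup_k\mtt Q^{(p)}_{\ms M_p(k)}[\tau^{\mc R}>t]\to 0$ as $t\to\infty$. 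I would then invoke Theorem \ref{l29}, which provides weak convergence of the order process $\mtt y^{\mtt T,p}$ to $\bb X_p$, together with Corollary \ref{l37} to control $\bb P^{p,\epsilon}_x[\mss S_{\ms E_p}(t)>t+\delta]$. Since $\{\mtt y^{\mtt T,p}(t)\in\mc R\}$ is a continuity event under the limit law (the discrete topology on $\ms S_p$ makes $\mc R$ clopen), a portmanteau argument should yield, for every $\delta>0$,
\begin{equation*}
\limsup_{\epsilon\to 0}\sup_k\sup_{x\in\ms E(\ms M_p(k))}\bb P^{p,\epsilon}_x\bigl[\tau(\ms E_{p+1})>t+\delta\bigr]\;\le\;\sup_k\mtt Q^{(p)}_{\ms M_p(k)}[\tau^{\mc R}>t],
\end{equation*}
with uniformity in $k\in\bb Z$ provided by the periodicity of $\mss a(\cdot)$, $\mss b(\cdot)$.

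Finally, once the diffusion enters some $\ms E(m)$ with $m\in\ms M_p(j)\subset\ms M_{p+1}$, I would apply Proposition \ref{l06} to the well around $m$: for $\epsilon$ small, $\ms E(m)$ is contained in the intervals $J^{\pm}$ associated with $m$, and the proposition (through Lemmas \ref{l01} and \ref{l02}) yields $\bb E^\epsilon_x[\tau(\{m\}\cup\{\text{neighbouring saddles}\})]=O(\epsilon^{-1})$ together with a vanishing probability of hitting a saddle before $m$. Thus the diffusion hits $m$ in additional original time $O(\epsilon^{-1})=o(\theta^{(p)}_\epsilon)$ with probability tending to one, uniformly in $m\in\ms M_{p+1}$. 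Chaining the three steps via the strong Markov property and letting first $\epsilon\to 0$ and then $t\to\infty$ will conclude the proof. The main obstacle I anticipate is the second step: upgrading the weak convergence of Theorem \ref{l29}, which is stated for a fixed initial well, to a hitting-time estimate uniform in $x\in\bb R$ while handling the fact that the diffusion can leave and re-enter $\ms E_p$ repeatedly during $[0,t\,\theta^{(p)}_\epsilon]$; uniformity in $x$ should be recovered by reducing to a single period via the coefficients' $1$-periodicity, and the leaks outside $\ms E_p$ are absorbed by Corollary \ref{l37}.
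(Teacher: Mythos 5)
Your plan is correct and follows essentially the same route as the paper: reduce via Lemma \ref{l33} to starting from $\ms M_p$, pass from $\ms M_{p+1}$ to $\ms E_{p+1}$ at negligible cost using Lemma \ref{l01}, use the trace/order-process convergence of Theorem \ref{l29} together with a portmanteau argument (the paper phrases it via closedness of $\{\tau(\mf R_p)\ge t-3\}$ in the Skorohod topology) to bound by $\sup_k \mtt Q^{(p)}_{\ms M_p(k)}[\tau(\mf R_p)>t-3]$, and finish by noting this vanishes as $t\to\infty$ uniformly in $k$ because $p<\mf q$ guarantees recurrent classes exist and postulate $\mc P_9(p)$ gives periodicity. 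The only cosmetic difference is that you invoke Corollary \ref{l37} where the paper applies display \eqref{23} directly, and you make explicit the nearest-neighbour-plus-periodicity reasoning that transient runs are bounded, which the paper leaves implicit.
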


\begin{proof}
We proceed in two steps. We first apply Lemma \ref{l33} to show that
the process $X_\epsilon(\cdot)$ reaches $\ms M_p$ in a time of order
$\theta^{(p)}_\epsilon$, and then use the convergence of the order
process to show that starting from $\ms M_p$ we reach a
$\bb X_p$-closed irreducible class in a time of order
$t\, \theta^{(p)}_\epsilon$ for $t$ large.

By Lemma \ref{l33} and Chebyshev inequality,
\begin{equation*}
\bb P^{\epsilon}_x\big[ \, 
\tau (\ms M_{p+1}) > t \, \theta^{(p)}_\epsilon  \, \big]
\,\le\, \bb P^{\epsilon}_x\big[ \, \tau (\ms M_{p}) \le
\theta^{(p)}_\epsilon \,,\,
\tau (\ms M_{p+1}) > t \, \theta^{(p)}_\epsilon  \, \big]
\,+\, R_\epsilon (x)\,,
\end{equation*}
where
\begin{equation}
\label{88}
\lim_{\epsilon\to 0} \sup_{x\in \bb R} |R_\epsilon (x) | \,=\, 0\;. 
\end{equation}
If $t>1$, by the strong Markov property, the first term on the
right-hand side is bounded by
\begin{equation*}
\sup_{m\in \ms M_{p}} \bb P^{\epsilon}_m\big[ \, 
\tau (\ms M_{p+1}) \ge (t-1) \, \theta^{(p)}_\epsilon  \, \big]
\,=\,
\sup_{m\in \ms M_{p}} \bb P^{p,\epsilon}_m\big[ \, 
\tau (\ms M_{p+1}) \ge t-1 \, \big] \;.
\end{equation*}

By Lemma \ref{l01}, we may replace in the previous equation
$\ms M_{p+1}$, $t-1$ by $\ms E_{p+1}$, $t-2$, respectively, at an
extra cost $R^{(1)}_\epsilon (m)$ satisfying \eqref{88} (with
$\sup_{m\in \ms M_{p}}$ replacing $\sup_{x\in \bb R}$). (We are using
here that once $\ms E_{p+1}$ is reached, $\ms M_{p+1}$ is attained
immediately after in view of Lemma \ref{l01}). By \eqref{23}, the
previous probability (with $\ms E_{p+1}$, $t-2$ replacing
$\ms M_{p+1}$, $t-1$, respectively) is less than or equal to
\begin{equation*}
\bb P^{p,\epsilon}_m\Big[ \, \int_0^{t-2}
\chi_{_{\bb R\setminus \ms E_p}} (\mtt x(s))\,  ds
\le 1\, ,\, 
\tau (\ms E_{p+1}) \ge t-2 \, \Big]
\,+\, R^{(2)}_\epsilon (m)\,.
\end{equation*}
where $R^{(2)}_\epsilon (m)$ fulfills \eqref{88}. By definition of the
trace process and the measure $\bb P^{\mss T, p,\epsilon}_m$,
introduced just before the statement of the Theorem \ref{l29}, the
previous probability is bounded by
\begin{equation*}
\bb P^{\mss T, p,\epsilon}_m\big[ \, \tau (\ms E_{p+1}) \ge t-3 \,
\big] \,.
\end{equation*}

Since the first time the trace process $\mtt x^{\mss T,p} (\cdot)$
hits $\ms E_{p+1}$ corresponds to the first time the order process
$\mtt y^{\mss T,p} (\cdot)$ hits a recurrent element, the previous
expression is equal to
\begin{equation*}
\mtt Q^{\mss T, p,\epsilon}_m\big[ \, \tau  (\mf R_p) \ge t-3 \,
\big] \,,
\end{equation*}
where $\mf R_p = \cup_{k\in \bb Z} \mf R_p(k)$, and, recall from
\eqref{24}, $\mf R_p(k)$ are the $\bb X_p$-recurrent classes.
By construction, $\mf R_p = \ms M_{p+1}$.

Since the set $\{\tau  (\mf R_p) \ge t-3\}$ is closed for the Skorohod
topology, by Theorem \ref{l29},
\begin{equation*}
\limsup_{\epsilon\to 0} \sup_{m\in \ms M_p}
\mtt Q^{\mss T, p,\epsilon}_m\big[ \, \tau  (\mf R_p) \ge t-3 \,
\big] \,\le\, \sup_{k\in\bb Z}
\mtt Q^{(p)}_{\ms M_p(k)} \big[ \, \tau  (\mf R_p) \ge t-3 \,
\big] \,.
\end{equation*}
As $p<\mf q$ the process $\bb X_p(\cdot)$ has recurrent classes.
Since the rates are periodic, this expression vanishes as
$t\to\infty$, uniformly in $k\in\bb Z$. 
\end{proof}

\subsection*{Finite-dimensional distributions}

The proof of Theorem \ref{mt3} is based on the next result, whose
proof is similar to the one of \cite[Proposition 2.1]{llm},
\cite[Proposition 10.2]{lls1}. Recall from the beginning of this
section the definition of the measure $\bb P^{p, \epsilon} _{x}$.

\begin{proposition}
\label{l11}
Fix $1\le p\le \mf q$, $r_0>0$ as in \eqref{71}, and $k\in \bb Z$. Then,
\begin{equation*}
\lim_{\epsilon\rightarrow0}
\mathbb{P}_{x_\epsilon}^{p,\epsilon}
\Big[\, \bigcap_{j=1}^{{\mf n}}
\big \{\, \mtt x (t_{j,\epsilon} )
\in \mc E( \ms M_p(k_j))\, \big\} \, \Big]
\,=\, \mtt Q^{(p)}_{\ms M_p(k)}
\big[\, \bigcap_{j=1}^{{\mf n}}
\big\{ \, \mtt x (t_{j}) = \ms M_p(k_j) \, \} \, \, \big]
\end{equation*}
for all ${\mf n}\ge 1$, $0<t_{1}<\cdots<t_{{\mf n}}$,
$k_1, \dots, k_{\mf n} \in \bb Z$, and sequences
$x_\epsilon \in \mc E(\ms M_p(k))$, $t_{j,\epsilon} \to t_j$.
\end{proposition}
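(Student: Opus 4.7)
The plan, following the strategy of \cite{llm} used also in \cite{lls1}, is to rewrite the event at the level of the order process $\mtt y^{\mss T, p}$ and then to invoke Theorem \ref{l29}. That theorem yields weak convergence of the law of $\mtt y^{\mss T, p}$ under $\bb P^{p,\epsilon}_{x_\epsilon}$ to $\mtt Q^{(p)}_{\ms M_p(k)}$ in the Skorohod topology on $D(\bb R_+, \ms S_p)$. Since the limiting Markov chain $\bb X_p(\cdot)$ has no deterministic times of discontinuity, this weak convergence delivers convergence of finite-dimensional distributions at the points $t_1, \ldots, t_{\mf n}$. The crux is therefore to reduce the left-hand side probability, which refers to the original (continuous-path) diffusion sampled at $t_{j,\epsilon} \to t_j$, to a finite-dimensional probability of the order process evaluated at $t_1, \ldots, t_{\mf n}$.

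The first step is to show that, for each fixed $j$, $\bb P^{p,\epsilon}_{x_\epsilon}[\mtt x(t_{j,\epsilon}) \notin \ms E_p] \to 0$ as $\epsilon \to 0$. I would pick a small parameter $\delta > 0$. Markov's inequality combined with \eqref{23} (applied on the interval $[0, t_{j,\epsilon} + \delta]$) gives that, outside an event of $\bb P^{p,\epsilon}_{x_\epsilon}$-probability $o_\epsilon(1)$, the Lebesgue measure of $\{s \in [t_{j,\epsilon} - \delta, t_{j,\epsilon}] : \mtt x(s) \notin \ms E_p\}$ is less than $\delta/2$; in particular there exists a (random) time $\tau \in [t_{j,\epsilon} - \delta, t_{j,\epsilon}]$ with $\mtt x(\tau) \in \ms E(\ms M_p(k'))$ for some $k' \in \bb Z$. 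The strong Markov property at $\tau$ together with Corollary \ref{l07} shows that the probability of reaching $\ms E_p \setminus \ms E(\ms M_p(k'))$ within the remaining time $t_{j,\epsilon} - \tau \le \delta$ is at most $\alpha(\delta)$ with $\alpha(\delta) \to 0$; coupled once more with \eqref{23} on $[\tau, t_{j,\epsilon}]$ (ensuring that the total sojourn in $\ms E_p^c$ after $\tau$ is negligible), this forces $\mtt x(t_{j,\epsilon})$ to lie in $\ms E(\ms M_p(k')) \subset \ms E_p$ up to an $o_\epsilon(1) + \alpha(\delta)$ error. Sending first $\epsilon \to 0$ and then $\delta \to 0$ yields the claim.

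On the event $\{\mtt x(t_{j,\epsilon}) \in \ms E_p\}$ one has the identity $\Psi_p(\mtt x(t_{j,\epsilon})) = \mtt y^{\mss T, p}(\mss T_{\ms E_p}(t_{j,\epsilon}))$ by the very definition of the order process. Hence, up to the $o_\epsilon(1)$ error controlled above,
\begin{equation*}
\bb P^{p,\epsilon}_{x_\epsilon}\Big[\,\bigcap_{j=1}^{\mf n} \{\mtt x(t_{j,\epsilon}) \in \ms E(\ms M_p(k_j))\}\,\Big] \,=\, \mtt Q^{\mss T, p, \epsilon}_{x_\epsilon}\Big[\,\bigcap_{j=1}^{\mf n} \{\mtt y(s_{j,\epsilon}) = \ms M_p(k_j)\}\,\Big] \,+\, o_\epsilon(1),
\end{equation*}
where $s_{j,\epsilon} := \mss T_{\ms E_p}(t_{j,\epsilon}) = t_{j,\epsilon} - \mss T_{\ms E_p^c}(t_{j,\epsilon})$. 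By Corollary \ref{l37} and \eqref{23}, $s_{j,\epsilon} \to t_j$ in probability under $\bb P^{p,\epsilon}_{x_\epsilon}$. Since $\mtt Q^{\mss T, p, \epsilon}_{x_\epsilon}$ converges weakly in the Skorohod topology to $\mtt Q^{(p)}_{\ms M_p(k)}$ by Theorem \ref{l29}, and under the limit measure none of the deterministic times $t_j$ is almost surely a jump time of $\bb X_p$, the random time shifts $s_{j,\epsilon}$ may be replaced by $t_j$ at negligible cost, and the right-hand side converges to $\mtt Q^{(p)}_{\ms M_p(k)}\big[\,\bigcap_j \{\mtt y(t_j) = \ms M_p(k_j)\}\,\big]$, as desired.

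The main obstacle is the first step: although \eqref{23} shows that the total occupation of $\ms E_p^c$ up to any finite time vanishes in $L^1$, this does not directly force $\mtt x$ to lie in $\ms E_p$ at the specific instant $t_{j,\epsilon}$. Bridging this gap requires combining \eqref{23} with the non-trivial no-short-escape estimate Corollary \ref{l07}, so that a brief excursion in $\ms E_p^c$ straddling $t_{j,\epsilon}$ returns to the \emph{same} well and is therefore irrelevant for identifying the current metastable state.
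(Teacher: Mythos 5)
Your overall plan --- convert the event to one about the order process, then invoke the weak convergence of Theorem \ref{l29} --- is the right strategy and is the one the paper follows. But there is a genuine gap in your Step 1, and it is precisely the point the paper isolates as a separate lemma.

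You try to deduce from \eqref{23} and Corollary \ref{l07} that $\bb P^{p,\epsilon}_{x_\epsilon}[\, \mtt x(t_{j,\epsilon}) \notin \ms E_p\,] \to 0$. This does not follow from those two inputs. Estimate \eqref{23} controls only the expectation of the occupation time of $\ms E_p^c$, and Corollary \ref{l07} controls only the probability of hitting a \emph{different} well in a short time; neither constrains the marginal distribution at a fixed instant. Writing $f(s) := \bb P^{p,\epsilon}_{x_\epsilon}[\, \mtt x(s) \notin \ms E_p\,]$, \eqref{23} tells you $\int_0^t f(s)\,ds \to 0$, but this places no constraint on $f(t_{j,\epsilon})$: the function $f$ could spike near the endpoint. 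Your sentence ``coupled once more with \eqref{23} on $[\tau, t_{j,\epsilon}]$, this forces $\mtt x(t_{j,\epsilon})$ to lie in $\ms E(\ms M_p(k'))$'' is the non-sequitur: small average occupation of $\ms E_p^c$ after $\tau$ is entirely compatible with being in $\ms E_p^c$ at the single time $t_{j,\epsilon}$.

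The missing ingredient is the pointwise-in-time estimate
\begin{equation*}
\limsup_{\epsilon\to0}\, \sup_{t\ge c}\, \sup_{x\in \bb R}\,
\bb P_{x}^{p, \epsilon}[\, \mtt x(t) \notin\ms E_p \, ] \;=\; 0\,,
\end{equation*}
which the paper establishes as Lemma \ref{l09} (equation \eqref{33}). Its proof uses inputs of a different nature than \eqref{23}: Lemma \ref{l33} to show $\ms M_p$ is hit in a sub-$\theta^{(p)}_\epsilon$ time, and Corollary \ref{l32} (resting on Lemma \ref{l31}, an explicit escape-probability estimate via a boundary-value problem) to show that, once at $\ms M_p$, the diffusion does not leave $\ms E_p$ within a time-scale $\vartheta^{(p)}_\epsilon \succ \epsilon^{-1}\theta^{(p-1)}_\epsilon$. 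These are tail bounds on escape times, not occupation-time averages, and they are what actually closes the gap. Without Lemma \ref{l09} your reduction does not go through.

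A further, smaller structural difference: the paper does not prove a two-sided identity between the diffusion event and the order-process event. It proves only a one-sided inequality (Lemma \ref{l08}), giving $\liminf_\epsilon \bb P^{p,\epsilon}_{x_\epsilon}[\mtt x(t_\epsilon) \in \ms E(\ms M_p(k))] \ge \mtt Q^{(p)}_{\ms M_p(j)}[\mtt x(t) = \ms M_p(k)]$, and then upgrades this to an equality by the ``sum to $1$'' argument over $k\in\bb Z$. That route is a bit lighter: one never has to replace random times $s_{j,\epsilon}$ by $t_j$ inside the Skorohod-weak limit, which is a delicate maneuver (you are implicitly exchanging a random time change with a weak limit). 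Your two-sided route can in principle be made rigorous, but it is more work, and in any case it rests on the same Lemma \ref{l09} you have not supplied.
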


\begin{proof}
The proof is similar to the one of \cite[Proposition 2.1]{llm}.
Assume that $\mf n=1$. The case $\mf n>1$ is obtained by induction
using the Markov property of the diffusion process
$X_\epsilon(\cdot)$.

Fix $t>0$, $j$, $k \in \bb Z$, and a sequence $x_\epsilon \in \mc E(\ms
M_p(j))$. By Theorem \ref{l29}, and the definition of the measure
$\mtt Q^{{\mtt T}, p, \epsilon}_x$,
\begin{equation*}
\begin{aligned}
{\mtt Q}_{\ms M_p(j)}^{(p)} [ \mtt x (t) = \ms M_p(k)]
\; & =\; \lim_{\delta \to 0}
{\mtt Q}_{\ms M_p(j)}^{(p)}
[ \, \mtt x (t-3\delta) = \ms M_p(k) \, ]
\\
\; & =\; \lim_{\delta \to 0}
\lim_{\epsilon \to 0}
\mtt Q^{{\mtt T}, p, \epsilon} _{x_\epsilon}
[ \, \mtt x  (t-3\delta) =  \ms M_p(k) \, ]
\\
\; & =\; \lim_{\delta \to 0}
\lim_{\epsilon \to 0}
\bb P^{{\mtt T} , p, \epsilon} _{x_\epsilon}
[ \, \mtt x  (t-3\delta) \in   \ms E(\ms M_p(k)) \, ]
\;.
\end{aligned}
\end{equation*}
Fix $t>0$ and a sequence $t_\epsilon\to t$. By Lemma \ref{l08} below,
\begin{equation}
\label{26}
{\mtt Q}_{\ms M_p(j)}^{(p)} [ \mtt x (t) = \ms M_p(k)]
\;\le \;
\liminf_{\epsilon\to 0}
\bb P^{p, \epsilon} _{x_\epsilon}
[ \, \mtt x  (t_\epsilon) \in  \ms M_p(k) \, ]\;.
\end{equation}

Suppose that the inequality is strict for some $k_0\in \bb Z$.
Then, there exists $\eta>0$ such that
\begin{equation*}
{\mtt Q}_{\ms M_p(j)}^{(p)} [ \mtt x (t) = \ms M_p(k_0)] \,+\, 2 \eta
\;\le \;
\liminf_{\epsilon\to 0}
\bb P^{p, \epsilon} _{x_\epsilon}
[ \, \mtt x  (t_\epsilon) \in  \ms M_p(k_0) \, ]\;.
\end{equation*}
Choose a finite subset $A$ of $\bb Z$ which contains $k_0$ and such
that
\begin{equation*}
1 - \eta \; \le \; \sum_{k\in A}
{\mtt Q}_{\ms M_p(j)}^{(p)} [ \mtt x (t) = \ms M_p(k)]\;.
\end{equation*}
By the previous estimates,
\begin{equation*}
1 - \eta \; \le \;
\liminf_{\epsilon\to 0} \sum_{k\in A}
\bb P^{p, \epsilon} _{x_\epsilon}
[ \, \mtt x  (t_\epsilon) \in  \ms M_p(k) \, ]
\,-\, 2\eta \;\le\; 1 - 2\eta\;,
\end{equation*}
which is a contradiction. Therefore, we have an identity in
\eqref{26} for each $k\in\bb Z$. If the $\liminf$ does not coincide
with the $\limsup$, we may repeat the same argument to obtain a
contradiction. Thus, the $\liminf$ in \eqref{26} is actually a limit
and the inequality is an identity. This completes the proof of the
proposition. 
\end{proof}

The next result is stated in \cite[Lemma 3.1]{llm} in the context of
discrete-valued Markov processes. It estimates the probability that
the trace process $Y^{{\mtt T}, p}_\epsilon (t)$ belongs to a set
$\ms E (\ms M_p(j))$ by the probability that the original process
$Y^{p}_\epsilon (t)$ belongs to this set at the price of changing the
time $t$ by $t-a$ for $a$ small.

\begin{lemma}
\label{l08}
Fix $1\le p\le\mf q$, and $j$, $k\in\bb Z$.  Then, 
\begin{equation*}
\bb P^{{\mtt T}, p, \epsilon}_{x}
\big[\, \mtt x (t-3b) \in \mathcal{E}(\ms M_p(k))\,\big] \, \le\,
\bb P^{p, \epsilon} _{x}
[ \, \mtt x  (t) \in  \ms E(\ms M_p(k)) \, ]
\,+ \, R_{\epsilon}(x,t,b)\;,
\end{equation*}
where 
\begin{equation*}
\lim_{b\rightarrow0}\,\limsup_{\epsilon\to0}\,
\sup_{x\in \ms E (\ms M_p(j))}
|\,R_{\epsilon}(x, t_\epsilon,b)\,| \,=\, 0
\end{equation*}
for all $t>0$, and sequences $t_\epsilon \to t$.
\end{lemma}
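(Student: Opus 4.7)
The plan is to show that on the event $F := \{\mtt x^{\mss T, \ms E_p}(t-3b) \in \ms E(\ms M_p(k))\}$, the original process at real time $t$ lies in $\ms E(\ms M_p(k))$ up to events of probability $o_\epsilon(1)+o_b(1)$. Let $s_\epsilon := \mss S_{\ms E_p}(t-3b)$; this is a stopping time for the filtration $(\ms F_t)$, satisfies $s_\epsilon \geq t-3b$ deterministically, and on $F$ we have $\mtt x(s_\epsilon) \in \overline{\ms E(\ms M_p(k))}$. The first step applies Corollary \ref{l37} with $\delta = b$ to obtain $\bb P^{p,\epsilon}_x[s_\epsilon > t-2b] = o_\epsilon(1)$ uniformly in $x \in \ms E(\ms M_p(j))$, so that on the event $G_1 := F \cap \{s_\epsilon \leq t - 2b\}$ one has $t - s_\epsilon \in [2b, 3b]$.

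The second step uses the strong Markov property at $s_\epsilon$ together with Corollary \ref{l07} applied with $a = 4b$: the conditional probability that the trajectory reaches $\ms E_p \setminus \ms E(\ms M_p(k))$ within real time $4b$ starting from any point in $\ms E(\ms M_p(k))$ is $o_b(1)$, uniformly in small $\epsilon$. On the further restricted event $G_2 := G_1 \cap \{\tau(\ms E_p \setminus \ms E(\ms M_p(k))) \circ \theta_{s_\epsilon} > 4b\}$, the trajectory remains in $\ms E(\ms M_p(k)) \cup (\bb R \setminus \ms E_p)$ throughout the interval $[s_\epsilon, s_\epsilon + 4b]$, which contains $[s_\epsilon, t+b]$. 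Applying Corollary \ref{l37} again at $t$ with $\delta = b$ gives $\mss S_{\ms E_p}(t) \leq t+b$ with probability $1-o_\epsilon(1)$, so on $G_2$ the trace-process value $\mtt x^{\mss T, \ms E_p}(t) = \mtt x(\mss S_{\ms E_p}(t))$ belongs to $\ms E(\ms M_p(k))$.

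The main obstacle is the final step: removing the possibility that $\mtt x(t) \in \bb R \setminus \ms E_p$ on $G_2$, so that the trace-process information at the slightly later time $\mss S_{\ms E_p}(t) \in [t, t+b]$ transfers to information at the exact time $t$. This requires showing $\bb P^{p,\epsilon}_x[\mtt x(t) \in \bb R \setminus \ms E_p] = o_\epsilon(1)$ for each fixed $t>0$, which I plan to establish by combining \eqref{23} (giving $\int_0^t \bb P[\mtt x(s) \in \bb R \setminus \ms E_p]\,ds = o_\epsilon(1)$) with Lemma \ref{l33} (rapid hitting of $\ms M_p$ on the speeded-up scale) and Corollary \ref{l32} (long residence in $\ms E_p$ once near $\ms M_p$) in order to promote the time-integrated bound to a pointwise bound, using the local Gaussian concentration of the process around each element of $\ms M_p$ and a Markov-property decomposition at a small time $\eta > 0$ before $t$.

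Collecting the estimates from the three steps gives $\bb P^{p,\epsilon}_x[F] \leq \bb P^{p,\epsilon}_x[\mtt x(t) \in \ms E(\ms M_p(k))] + R_\epsilon(x,t,b)$ with a remainder $R_\epsilon(x,t,b)$ satisfying $\lim_{b\to 0}\limsup_{\epsilon\to 0} \sup_{x\in \ms E(\ms M_p(j))} |R_\epsilon(x, t_\epsilon, b)| = 0$ for any sequence $t_\epsilon \to t$, as required.
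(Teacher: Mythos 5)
Your proposal is logically sound and rests on the same three pillars as the paper's own proof: Corollary \ref{l37} to control the time change, the strong Markov property combined with Corollary \ref{l07} to rule out a quick passage to $\ms E_p \setminus \ms E(\ms M_p(k))$, and the pointwise estimate $\bb P^{p,\epsilon}_x[\mtt x(t)\notin\ms E_p] = o_\epsilon(1)$. The paper organises the event splitting differently --- via the two events $A_\epsilon(t,b) = \{\mss S_{\ms E_p}(t-3b)>t-\gamma b\}$ and $B_\epsilon(t,b) = \{\mtt x(s)\in\ms E(\ms M_p(k))\text{ for some }s\in[t-3b,t-\gamma b]\}$ with $\gamma\in(2,3)$, then further splitting $B_\epsilon\cap\{\mtt x(t_\epsilon)\notin\ms E(\ms M_p(k))\}$ according to whether $\mtt x(t_\epsilon)$ lies in $\ms E_p\setminus\ms E(\ms M_p(k))$ or outside $\ms E_p$ --- whereas you route everything through the stopping time $s_\epsilon = \mss S_{\ms E_p}(t-3b)$ and the nested events $G_1$, $G_2$. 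This is a cosmetic reorganisation; the second invocation of Corollary \ref{l37} in your step concerning $\mss S_{\ms E_p}(t)\le t+b$ is superfluous, since once you know the path stays in $\ms E(\ms M_p(k))\cup\ms E_p^c$ on $[s_\epsilon,s_\epsilon+4b]$ and $\mtt x(t)\in\ms E_p$, you already get $\mtt x(t)\in\ms E(\ms M_p(k))$ directly without appealing to the trace process at time $t$.

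The one place where your plan genuinely diverges is the final pointwise estimate, which the paper isolates as Lemma \ref{l09} (and its strengthening \eqref{33}: $\sup_{t\ge c}\sup_{x\in\bb R}\bb P^{p,\epsilon}_x[\mtt x(t)\notin\ms E_p]\to 0$ for every $c>0$). You propose to reach this via \eqref{23} (a time-integrated bound), a local Gaussian concentration argument, and a Markov-property decomposition at $t-\eta$. The integral-to-pointwise upgrade and the Gaussian concentration step are unnecessary detours and, as outlined, are not fully specified (upgrading an averaged bound $\int_0^t\bb P[\mtt x(s)\notin\ms E_p]\,ds=o_\epsilon(1)$ to a pointwise statement at $t$ requires some form of regularity of $s\mapsto\bb P[\mtt x(s)\notin\ms E_p]$ that you would still have to establish). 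The paper's route is more direct and you already list its ingredients: apply the Markov property at time $t\theta^{(p)}_\epsilon - \vartheta_\epsilon$ for an intermediate scale $\epsilon^{-1}e^{\mf h_{p-1}/\epsilon}\prec\vartheta_\epsilon\prec e^{(\mf h_{p-1}+\kappa_0)/\epsilon}$, bound $\bb P_{y}^{\epsilon}[\tau(\ms M_p)>\vartheta_\epsilon]$ by Lemma \ref{l33} and Chebyshev, and bound $\bb P_{z}^{\epsilon}[\tau(\ms E_{p-1}\setminus\ms E_p)\le\vartheta_\epsilon]$ for $z\in\ms M_p$ by Corollary \ref{l32} (with Lemma \ref{l01} and Lemma \ref{l31} replacing them in the base case $p=1$), proceeding by induction on $p$. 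If you replace your sketch of the final step with this direct two-term Markov decomposition, the proof closes cleanly and coincides with the paper's.
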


\begin{proof}
Fix $t>0$, $j$, $k\in\bb Z$, $x\in \ms E(\ms M_p(j))$, a sequence
$t_\epsilon \to t$, and $2< \gamma  < 3$. By the definition \eqref{25} of
the trace process and the trivial fact that $S_{\epsilon}(t)\ge t$,
for $b\in(0,t/3)$
\begin{equation*}
\bb P^{{\mtt T}, p, \epsilon}_{x}
\big[\, \mtt x (t-3b) \in \ms E(\ms M_p(k))\,\big] 
\,=\, \bb P_{x}^{p, \epsilon}
\big [\, \mtt x  (S_{\epsilon}(t-3b)) \in \ms E(\ms M_p(k)) \,\big]
\;\le\;
\bb P_{x}^{p, \epsilon} [\,A_{\epsilon}(t,b)\,]
\,+\, \bb P_{x}^{p, \epsilon} 
[\,B_{\epsilon}(t,b)\,]\;,
\end{equation*}
where
\begin{gather*}
A_{\epsilon}(t,\,b) \,=\,
\{\,S_{\epsilon}(t-3b)>t- \gamma b\,\}\;,
\\
B_{\epsilon}(t,b)
\,=\,\{\, \mtt x (s)\in \ms E(\ms M_p(k))  \;\;
\text{for some}\ s\in[t-3b,t- \gamma b]\,\}\ .
\end{gather*}
By Corollary \ref{l37}, as $\gamma <3$,
\begin{equation*}
\limsup_{\epsilon\rightarrow0}\,
\sup_{x \in \ms E_p}\,
\bb P_{x}^{p, \epsilon} [\,A_{\epsilon}(t,b)\,]=0\ .
\end{equation*}
On the other hand,
\begin{equation*}
\bb P_{x}^{p, \epsilon} [B_{\epsilon}(t,b)]
\,\le\, \bb P_{x}^{p, \epsilon} 
\big [\, \mtt x(t_\epsilon)\in \ms E(\ms M_p(k))  \,\big ]
\,+\, \bb P_{x}^{p, \epsilon} 
\big  [\,B_{\epsilon}(t,b) \,,\, \mtt x(t_\epsilon)
\notin \ms E(\ms M_p(k)) \,\big ]
\end{equation*}
for any $t_\epsilon\ge 0$.

It remains to prove that
\begin{equation*}
\limsup_{b\rightarrow0}\,\limsup_{\epsilon\to 0}\,
\sup_{x\in \ms E(\ms M_p(j)) } \bb P_{x}^{p, \epsilon}  
\big[\, B_{\epsilon}(t,b) \,,\, \mtt x(t_\epsilon)
\notin \ms E(\ms M_p(k)) \,\big] \,=\, 0
\end{equation*}
if $t_\epsilon$ is a sequence converging to $t$.  By Corollary
\ref{l07}, the definition of $B_{\epsilon}(t,b)$, and the strong
Markov property,
\begin{equation*}
\limsup_{b\rightarrow0}\,\limsup_{\epsilon\to0}\,
\sup_{x\in\ms E(\ms M_p(j))}\,
\bb P_{x}^{p, \epsilon}   
\big[\,B_{\epsilon}(t,b) \,,\, \mtt x(t_\epsilon)\in
\ms E_p \setminus \ms E(\ms M_p(k)) \,\big] \,=\, 0\;.
\end{equation*}
On the other hand, as $\gamma>2$, for $\epsilon$ sufficiently small,
$t_\epsilon -s \in [2b,4b]$ for all $s\in [t-3b, t-\gamma b]$. Hence,
by the strong Markov property and Lemma \ref{l09},
\begin{equation*}
\limsup_{b\rightarrow0}\, \limsup_{\epsilon\to0}\,
\sup_{x\in\ms E(\ms M_p(j))}\,
\bb P_{x}^{p, \epsilon}   
[\, B_{\epsilon}(t,b) \,,\, \mtt x(t_\epsilon)
\notin\ms E_p \, ] \;=\; 0\;.
\end{equation*}
The assertion of the lemma follows from the previous estimates.
\end{proof}

\begin{lemma}
\label{l09}
Fix $1\le p\le\mf q$, and $k\in\bb Z$.  Then,
\begin{equation*}
\limsup_{b\rightarrow0}\, \limsup_{\epsilon\to0}\,
\sup_{t\in [2b,4b]}\, \sup_{x\in\ms E(\ms M_p(k))}\,
\bb P_{x}^{p, \epsilon}    [\, \mtt x(t) \notin\ms E_p \, ] \;=\; 0\;.
\end{equation*}
\end{lemma}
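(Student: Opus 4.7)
I plan to prove the stronger statement that for each fixed $b>0$,
\[
\limsup_{\epsilon\to0}\,\sup_{t\in[2b,4b]}\,\sup_{x\in\ms E(\ms M_p(k))}\, \bb P_x^{p,\epsilon}[\mtt x(t)\notin\ms E_p]\,=\,0,
\]
which trivially implies the claimed double $\limsup$. The two main ingredients will be the integrated bound \eqref{23} from Theorem \ref{l29} and the uniform hitting-time estimate from Lemma \ref{l33}.

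The first step is to extend \eqref{23} to arbitrary starting points. Lemma \ref{l33} (with the convention $\mf h_0=0$ when $p=1$) together with the inclusion $\ms M_p\subset\ms E_p$ gives $\sup_{y\in\bb R}\bb E_y^{p,\epsilon}[\tau(\ms E_p)]\le\sup_y\bb E_y^{p,\epsilon}[\tau(\ms M_p)]\le C_0\epsilon^{-1}e^{(\mf h_{p-1}-\mf h_p)/\epsilon}=o_\epsilon(1)$ in the accelerated scale. Decomposing at $\tau(\ms M_p)$ and applying the strong Markov property with \eqref{23} at the hitting point (which lies in $\ms M_p\subset\ms E_p$) yields, for every fixed $b>0$,
\[
\sup_{y\in\bb R}\,\bb E_y^{p,\epsilon}\Big[\int_0^{5b}\chi_{_{\bb R\setminus\ms E_p}}(\mtt x(r))\,dr\Big]\,=\,o_\epsilon(1).
\]

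For the main estimate, fix $b>0$, $t\in[2b,4b]$, and $x\in\ms E(\ms M_p(k))$. Define $\tau^+_t:=\inf\{r\ge t:\mtt x(r)\in\ms E_p\}$ and $\sigma^-_t:=\sup\{r\in[0,t]:\mtt x(r)\in\ms E_p\}$, the endpoints of the excursion outside $\ms E_p$ containing $t$ (if any). Split the event $\{\mtt x(t)\notin\ms E_p\}$ into three cases: (i) $\tau^+_t\ge t+b$; (ii) $\sigma^-_t\le t-b$; (iii) both $\tau^+_t<t+b$ and $\sigma^-_t>t-b$. On case (i) one has $\int_t^{t+b}\chi_{_{\bb R\setminus\ms E_p}}(\mtt x(r))\,dr=b$, so Markov's inequality combined with the Markov property at $t$ and the extended integrated bound shows the probability is at most $o_\epsilon(1)/b$; case (ii) is symmetric, handled via the Markov property at $t-b$.

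The main obstacle is case (iii), where the excursion containing $t$ has length strictly less than $2b$ and is contained in $(t-b,t+b)$. Integrating in $t$ and using Fubini gives $\int_{2b}^{4b}\bb P_x^{p,\epsilon}[(\text{iii})\text{ at }t]\,dt\le\bb E_x^{p,\epsilon}\big[\sum_{i:\ell_i<2b}\ell_i\big]\le o_\epsilon(1)$ by the extended integrated bound. Converting this integrated bound into a uniform pointwise bound in $t$ is the delicate step: I plan to use the metastable structure from Theorem \ref{l29}, namely the convergence of the trace/order process to the Markov chain $\bb X_p$ with finite jump rates, which bounds the expected number of excursions beginning in any interval of length $b$ near $t$ by $O(b)$ uniformly in $t$, and hence $\bb P_x^{p,\epsilon}[(\text{iii})\text{ at }t]\le O(b)+o_\epsilon(1)$. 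Combining the three cases, $\sup_{t\in[2b,4b]}\bb P_x^{p,\epsilon}[\mtt x(t)\notin\ms E_p]\le o_\epsilon(1)/b+O(b)$, and taking $\epsilon\to0$ followed by $b\to0$ completes the proof.
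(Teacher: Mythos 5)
Your cases (i) and (ii) and the extension of the integrated bound \eqref{23} to arbitrary starting points via Lemma \ref{l33} are sound, but the argument for case (iii) contains a genuine gap, and there is a logical inconsistency in how you present the plan.

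The inconsistency first: you announce that you will prove the stronger statement that $\limsup_{\epsilon\to0}\sup_{t}\sup_{x}\bb P_x^{p,\epsilon}[\mtt x(t)\notin\ms E_p]=0$ for each fixed $b>0$, but your own final estimate is $o_\epsilon(1)/b+O(b)$, which after $\epsilon\to0$ leaves an $O(b)$ residue and therefore still requires the outer limit $b\to0$. So the strengthening you claim is not what your argument would deliver, even if each step held.

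The real gap is case (iii). You invoke the convergence of the trace/order process (Theorem \ref{l29}) to conclude that ``the expected number of excursions beginning in any interval of length $b$ near $t$ is $O(b)$ uniformly in $t$.'' This is not a consequence of that theorem, and it is in fact false as stated. The order process $\mtt y^{\mss T,p}(\cdot)$ only records transitions \emph{between different} wells $\ms E(\ms M_p(j))$; excursions out of $\ms E_p$ that return to the \emph{same} well are invisible to it. Moreover, a one-dimensional diffusion crossing the boundary $\partial\ms E_p$ oscillates across it infinitely often in any time window, so the raw number of excursions out of $\ms E_p$ is not a finite quantity you can count. What the integrated bound \eqref{23} does control is the Lebesgue measure of the set of times spent outside $\ms E_p$, which is exactly why your Fubini step works, but it says nothing pointwise: a set of small measure in $[2b,4b]$ on which the probability is close to $1$ is not ruled out by the averaged estimate.

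The ingredient you are missing is an escape-time \emph{lower} bound. The paper proves the stronger statement \eqref{33}, namely $\limsup_{\epsilon\to0}\sup_{t\ge c}\sup_{x\in\bb R}\bb P^{p,\epsilon}_x[\mtt x(t)\notin\ms E_p]=0$ for every fixed $c>0$, by induction on $p$: it steps backward in time by a scale $\vartheta^{(p)}_\epsilon$ chosen so that $\epsilon^{-1}e^{\mf h_{p-1}/\epsilon}\prec\vartheta^{(p)}_\epsilon\prec e^{(\mf h_{p-1}+\kappa_0)/\epsilon}$, then applies the strong Markov property at $\tau(\ms M_p)$ and splits into (a) $\tau(\ms M_p)>\vartheta^{(p)}_\epsilon$, handled by Lemma \ref{l33} and Chebyshev, and (b) starting from $\ms M_p$ the process reaches $\ms E_{p-1}\setminus\ms E_p$ before $\vartheta^{(p)}_\epsilon$, handled by Corollary \ref{l32} (whose proof rests on Lemma \ref{l31}). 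It is precisely this quantitative lower bound on the escape time from a well (Lemma \ref{l31}) that converts time-averaged smallness into pointwise smallness, and your proposal never invokes it. Without it, I do not see how your case (iii) can be closed.
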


\begin{proof}
We actually prove a stronger statement, in which the well
$\ms E(\ms M_p(k))$, and the time interval $[2b,4b]$ are replaced by
$\bb R$, $[c,\infty)$, respectively: for all $c>0$
\begin{equation}
\label{33}
\limsup_{\epsilon\to0}\,
\sup_{t\ge c}\, \sup_{x\in \bb R}\,
\bb P_{x}^{p, \epsilon}    [\, \mtt x(t) \notin\ms E_p \, ] \;=\; 0\;.
\end{equation}

We proceed by induction in $p$. Assume that $p=1$, and let
$\vartheta^{(1)}_\epsilon$ be a time-scale such that
$1\prec \vartheta^{(1)}_\epsilon \prec \theta^{(1)}_\epsilon$. Then,
for all $t>0$, $x\in\bb R$, by the Markov property at time
$t\theta^{(1)}_\epsilon - \vartheta^{(1)}_\epsilon$,
\begin{equation*}
\bb P_{x}^{p, \epsilon}    [\, \mtt x(t) \notin\ms E_1 \, ] \,=\,
\bb P_{x}^{\epsilon}    [\, \mtt x(t\theta^{(1)}_\epsilon)
\notin\ms E_1 \, ] \,\le\,
\sup_{y\in\bb R} \bb P_{y}^{\epsilon} [\, \mtt x(\vartheta^{(1)}_\epsilon)
\notin\ms E_1 \, ]\;.
\end{equation*}
By the strong Markov property at time $\tau (\mc M)$, for any
$y\in\bb R$, the previous probability is bounded by
\begin{equation*}
\begin{aligned}
& \bb P_{y}^{\epsilon} [\, \tau (\mc M) > \vartheta^{(1)}_\epsilon \, ]
\,+\,
\bb P_{y}^{\epsilon} [\, \tau (\mc M) \le \vartheta^{(1)}_\epsilon
\,,\, \mtt x(\vartheta^{(1)}_\epsilon) \notin\ms E_1 \, ]
\\
&\quad
\le\, \sup_{y'\in \bb R}
\bb P_{y'}^{\epsilon} [\, \tau (\mc M) > \vartheta^{(1)}_\epsilon \, ]
\,+\, \sup_{z\in \mc M}
\bb P_{z}^{\epsilon} [\, \tau (\ms E^c_1) \le \vartheta^{(1)}_\epsilon \, ] \;.
\end{aligned}
\end{equation*}
It remains to find a time-scale $\vartheta^{(1)}_\epsilon$ which turns
the previous two expressions negligible.

By Lemma \ref{l01} and Chebyshev inequality, the first one tends to
$0$ as $\epsilon\to 0$ provided
$\vartheta^{(1)}_\epsilon \succ \epsilon^{-1}$. By Lemma \ref{l31},
the second term vanishes as $\epsilon\to 0$ provided
$\vartheta^{(1)}_\epsilon \le e^{h/\epsilon}$ for some $h<\mf
h_1$. This completes the proof of \eqref{33} for $p=1$.

Fix $2\le p\le \mf q$, and assume that \eqref{33} holds for $p-1$. In
this case, as $\theta^{(p)}_\epsilon \succ \theta^{(p-1)}_\epsilon$,
\begin{equation*}
\limsup_{\epsilon\to0}\,
\sup_{t\ge c}\, \sup_{x\in \bb R}\,
\bb P_{x}^{p, \epsilon}    [\, \mtt x(t) \notin \ms E_{p-1} \, ] \;=\; 0\;.
\end{equation*}
It remains to show that
\begin{equation}
\label{35}
\limsup_{\epsilon\to0}\,
\sup_{t\ge c}\, \sup_{x\in \bb R}\,
\bb P_{x}^{p, \epsilon}    [\, \mtt x(t) \in
\ms E_{p-1}  \setminus \ms E_p \, ] \;=\; 0\;.
\end{equation}
According to the first part of the proof, it is enough to find a
time-scale $\vartheta^{(p)}_\epsilon \prec \theta^{(p)}_\epsilon$ such
that 
\begin{equation*}
\begin{aligned}
\lim_{\epsilon\to 0} \sup_{y'\in \bb R}
\bb P_{y'}^{\epsilon} [\, \tau (\ms M_p) > \vartheta^{(p)}_\epsilon \, ]
\,=\, 0\,, 
\quad 
\lim_{\epsilon\to 0} \sup_{z\in \ms M_p}
\bb P_{z}^{\epsilon} [\, \tau (\ms E_{p-1} \setminus \ms E_p)
\le \vartheta^{(p)}_\epsilon \, ] \,=\, 0 \;,
\end{aligned}
\end{equation*}
where, recall from \eqref{24},
$\ms M_p = \cup_{k\in\bb Z} \ms M_p(k)$. By Corollary \ref{l32} and
Lemma \ref{l33}, these assertions holds provided
$\epsilon^{-1} \, e^{\mf h_{p-1}/\epsilon} \prec
\vartheta^{(p)}_\epsilon \prec e^{(\mf h_{p-1} + \kappa_0)/\epsilon}
$, where $\kappa_0$ is the postive constant introduced in Corollary
\ref{l32}. This completes the proof of \eqref{35} and the lemma.
\end{proof}

\subsection*{Proof of Theorem \ref{mt3}}

Fix $\mf n\ge 1$, $0<t_1< \dots <t_{\mf n}$,
$k_1, \dots, k_{\mf n} \in \bb Z$, $x\in \bb R$.  Fix a time-scale
$\vartheta_\epsilon$ such that
$\epsilon^{-1} e^{\mf h_{p-1}/\epsilon} \prec \vartheta_\epsilon \prec
e^{\mf h_{p}/\epsilon} $.  Then, by Lemma \ref{l33},
\begin{equation}
\label{86}
\begin{aligned}
& \bb P^{p,\epsilon}_x\Big[\,
\bigcap_{j=1}^{\mf n} \big\{\, \mtt x (t_{j})
\in \ms E (\ms M_p(k_j))\,\big\}\,\Big]
\\ &\quad 
\,=\,
\bb P^\epsilon_x\Big[\, \tau(\ms M_{p-1}) < \vartheta_\epsilon \,,\,
\bigcap_{j=1}^{\mf n} \big\{\, \mtt x (\theta_{\epsilon}^{(p)}
t_{j}) \in \ms E (\ms M_p(k_j))\,\big\}\,\Big]
\,+\, R^{(1)}_\epsilon (x) \,,
\end{aligned}
\end{equation}
where \eqref{88} holds for $R=R^{(i)}$.

As $\vartheta_\epsilon \prec e^{\mf h_{p}/\epsilon} $ y the strong
Markov property, the first term on the right-hand side of the
penultimate displayed formula is equal to
\begin{equation*}
\bb E^\epsilon_x\Big[\,
\mtt 1\big\{ \tau(\ms M_{p-1}) < \vartheta_\epsilon\,\big\}  \,
\bb P^\epsilon_{\mtt x(\tau(\ms M_{p-1}))} \Big[\,
\bigcap_{j=1}^{\mf n} \Big\{\, \mtt x (\theta_{\epsilon}^{(p)}
t_{j} - \tau(\ms M_{p-1}) ) \in
\ms E (\ms M_p(k_j))\,\Big\}\,\Big]\, \Big]\,. 
\end{equation*}
Recall from \eqref{85} the definition of $\mtt l_{p-1}(x)$,
$\mtt r_{p-1}(x)$. Let
$\color{blue} \mf m(\mtt l_{p-1}(x)) = m^+_{p-1, \mtt l_{p-1}(x)}$,
$\color{blue} \mf m(\mtt r_{p-1}(x)) = m^-_{p-1, \mtt r_{p-1}(x)}$
Clearly, $\bb P^\epsilon_x$-almost surely,
$\mtt x (\tau(\ms M_{p-1})) = \mf m(\mtt l_{p-1}(x))$ or
$\mtt x (\tau(\ms M_{p-1})) = \mf m(\mtt r_{p-1}(x))$. By Proposition
\ref{l11}, since $\vartheta_\epsilon \prec e^{\mf h_{p}/\epsilon}$,
for $k\in {\color{blue} I_x  :=\{\mtt l_{p-1}(x), \mtt r_{p-1}(x) \}}$,
\begin{equation*}
\lim_{\epsilon\to 0}
\bb P^\epsilon_{\mf m (k)} \Big[\,
\bigcap_{j=1}^{n} \Big\{\, \mtt x (\theta_{\epsilon}^{(p)}
t_{j} - \tau(\ms M_{p-1}) ) \in
\ms E (\ms M_p(k_j))\,\Big\}\,\Big]\,
\,=\, \mtt Q^{(p)}_{\ms M_p(k)}
\big[\, \bigcap_{j=1}^{{\mf n}}
\big\{ \, \mtt x (t_{j}) = \ms M_p(k_j) \, \} \, \, \big]
\,. 
\end{equation*}
Thus, the first term on the right-hand side of \eqref{86} is equal to
\begin{equation*}
\sum_{k\in I_x} \bb P^\epsilon_x\big[\,
\tau(\ms M_{p-1}) < \vartheta_\epsilon \,,\,
\mtt x(\tau(\ms M_{p-1})) = \mf m(k) \,\big]\,
\mtt Q^{(p)}_{\ms M_p(k)}
\big[\, \bigcap_{j=1}^{{\mf n}}
\big\{ \, \mtt x (t_{j}) = \ms M_p(k_j) \, \} \, \, \big]
\,+\, R^{(2)}_\epsilon (x) \,,
\end{equation*}
where \eqref{88} holds for $R=R^{(2)}$. By Lemma \ref{l33}, the
previous expression is equal to
\begin{equation*}
\sum_{k\in I_x} \bb P^\epsilon_x\big[\,
\mtt x(\tau(\ms M_{p-1})) = \mf m(k) \,\big]\,
\mtt Q^{(p)}_{\ms M_p(k)}
\big[\, \bigcap_{j=1}^{{\mf n}}
\big\{ \, \mtt x (t_{j}) = \ms M_p(k_j) \, \} \, \, \big]
\,+\, R^{(3)}_\epsilon (x) \,,
\end{equation*}
where \eqref{88} holds for $R=R^{(3)}$. Recall the definition of the
measure $\mtt h_p(x, \cdot)$ introduced in \eqref{74}. By Corollary
\ref{l26}, the previous expression is equal to
\begin{equation*}
\begin{aligned}
& \sum_{k\in I_x} \mtt h_p(x, \ms M_p(k))\,
\mtt Q^{(p)}_{\ms M_p(k)}
\big[\, \bigcap_{j=1}^{{\mf n}}
\big\{ \, \mtt x (t_{j}) = \ms M_p(k_j) \, \} \, \, \big]
\,+\, R^{(4)}_\epsilon (x)
\\
& \quad 
\,=\,
\mtt Q^{(p)}_{\mtt h_p(x, \cdot) }
\big[\, \bigcap_{j=1}^{{\mf n}}
\big\{ \, \mtt x (t_{j}) = \ms M_p(k_j) \, \} \, \, \big]
\,+\, R^{(4)}_\epsilon (x)
\,,
\end{aligned}
\end{equation*}
where \eqref{88} holds for $R=R^{(4)}$. This completes the proof of
the theorem. \qed

\section{Proof of Theorem \ref{mt1} and \ref{mt2}}
\label{sec9}

The proof relies on Lemma \ref{l36} below, which refines Proposition
\ref{l11}.  The first result is a simple consequence of the formula
\eqref{50} for the jump rates of the Markov chain $\bb X_p(\cdot)$,
and the fact that the jumps are strictly positive among the
nearest-neighbours of a $\bb X_p(\cdot)$-closed irreducible class.

\begin{lemma}
\label{l39}
Fix $1\le p< \mf q$, $k\in \bb Z$. Denote by $\ms M_{p}(i)$,
$\ell_1\le i\le \ell_2$, the sets forming the $\bb X_p$-closed
irreducible class $\ms M_{p+1}(k)$.  Then, the
stationary state of the Markov chain $\bb X_p(\cdot)$ restricted to
$\ms M_{p+1}(k)$ is given by $\pi(\ms M_{p}(i))/\pi(\ms M_{p+1}(k))$,
$\ell_1\le i\le \ell_2$.
\end{lemma}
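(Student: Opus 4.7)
The plan is to verify that the claimed measure satisfies detailed balance for the jump rates $R_p$ restricted to the irreducible class, and then invoke uniqueness of the stationary measure on a finite irreducible chain.

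First I would identify $\pi_p(i)$ as defined in \eqref{01} with $\pi(\ms M_p(i))$ as defined in \eqref{104}: both are equal to $\sum_{m\in \ms M_p(i)} (1/\mss a(m))\sqrt{2\pi/S''(m)}$. This is essentially a bookkeeping step. Then the normalization constant of the restricted measure is
\begin{equation*}
\sum_{i=\ell_1}^{\ell_2} \pi_p(i) \,=\, \sum_{i=\ell_1}^{\ell_2} \sum_{m\in \ms M_p(i)} \pi(m) \,=\, \pi(\ms M_{p+1}(k))\,.
\end{equation*}

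Next I would establish that all the sets $\ms M_p(i)$, $\ell_1 \le i \le \ell_2$, share a common depth. By postulate $\mc P_7(p)$, $\bb X_p(\cdot)$ only performs nearest-neighbour jumps. Since $\ms M_{p+1}(k)$ is an irreducible class, for each $\ell_1 \le i < \ell_2$ both $R_p(\ms M_p(i), \ms M_p(i+1))>0$ and $R_p(\ms M_p(i+1), \ms M_p(i))>0$, since communication between $i$ and $i+1$ can only proceed through direct jumps. By postulate $\mc P_8(p)$ applied to both rates, one gets $S(\ms M_p(i))\ge S(\ms M_p(i+1))$ and $S(\ms M_p(i+1))\ge S(\ms M_p(i))$, hence equality (this is also consistent with $\mc P_4(p+1)$).

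The detailed balance computation is then immediate. Using formula \eqref{50},
\begin{equation*}
\pi(\ms M_p(i))\, R_p(\ms M_p(i),\ms M_p(i+1))\,=\, \frac{\mtt 1\{h^{p,+}_i = \mf h_p\}}{\sigma_p(i,i+1)}\,,
\quad
\pi(\ms M_p(i+1))\, R_p(\ms M_p(i+1),\ms M_p(i))\,=\, \frac{\mtt 1\{h^{p,-}_{i+1} = \mf h_p\}}{\sigma_p(i,i+1)}\,.
\end{equation*}
Since the barrier $\Lambda(\cdot,\cdot)$ is symmetric in its arguments and $S(\ms M_p(i)) = S(\ms M_p(i+1))$ by the previous step, the definitions \eqref{47b} yield $h^{p,+}_i = h^{p,-}_{i+1}$, so the two expressions coincide. (In fact both indicators equal $1$, since both rates are positive inside the irreducible class, as argued above.) This gives reversibility of $\pi(\ms M_p(\cdot))$ on $\{\ms M_p(i):\ell_1\le i\le \ell_2\}$.

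Finally, since the chain restricted to the finite irreducible class $\ms M_{p+1}(k)$ has a unique stationary distribution, and the normalized measure $\pi(\ms M_p(i))/\pi(\ms M_{p+1}(k))$ is reversible, it must coincide with the stationary state, completing the proof. No real obstacle is expected; the only subtle point is justifying that both forward and backward rates inside the class are strictly positive, which relies on the nearest-neighbour structure from $\mc P_7(p)$ together with irreducibility.
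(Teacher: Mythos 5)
Your proof is correct and follows essentially the same route the paper intends: the paper disposes of Lemma~\ref{l39} with a one-line remark that it is "a simple consequence of the formula \eqref{50} for the jump rates... and the fact that the jumps are strictly positive among the nearest-neighbours of a $\bb X_p(\cdot)$-closed irreducible class," which is precisely the detailed-balance verification you carry out (the same computation also appears in the proof of Lemma~\ref{l30}, where $\pi_{p,j}$ is noted to be reversible). Your version simply makes the missing steps explicit — identifying $\pi_p(i)$ with $\pi(\ms M_p(i))$, deducing positivity of both adjacent rates from $\mc P_7(p)$ and irreducibility, and observing via $\mc P_8(p)$ that the indicators coincide — so there is nothing to add.
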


The proof of the next lemma is similar to the one of \cite[Lemma
12.8]{lls2}. We present it here for completeness.

\begin{lemma}
\label{l36}
Fix $1\le p\le \mf q$, $j$, $t>0$, $k\in\bb Z$ and $m \in \ms M_p(k)$.
Then, for any sequence $(\vartheta^{(p)}_{\epsilon} :\epsilon>0)$ such
that $\vartheta^{(p)}_{\epsilon}\prec\theta_{\epsilon}^{(p)}$,
\begin{equation*}
\lim_{\epsilon\to 0}\, \sup_{x \in\mathcal{E}(\ms M_p(j))}\,
\sup_{|s|\le\vartheta^{(p)}_\epsilon}\,\Big|\,
\mathbb{P}_{x}^{\epsilon} \big[\,\mtt x(\theta_{\epsilon}^{(p)}t+s)
\in\mathcal{E}(m)\,\big] \,-\,
\frac{\pi (m)}{\pi(\ms M_p(k))}
\,\mtt {Q}_{\ms M_p(j)}^{(p)}\big[\, \mtt
x(t)= \ms M_p(k)\,\big]\,\Big|\ =\ 0\,. 
\end{equation*}
\end{lemma}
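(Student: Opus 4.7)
The plan is to argue by induction on $p$, exploiting the time-scale separation $\theta^{(p-1)}_\epsilon \prec \theta^{(p)}_\epsilon$ to split the evolution of $X_\epsilon$ on $[0,\theta^{(p)}_\epsilon t]$ into a long \emph{transport} phase of duration $\sim \theta^{(p)}_\epsilon t$, during which $X_\epsilon$ redistributes among the $\bb X_{p-1}$-closed irreducible classes (identified with the sets $\ms M_p(k')$, $k'\in\bb Z$), followed by a much shorter \emph{equilibration} phase of duration $\sim r\theta^{(p-1)}_\epsilon$ during which it equilibrates inside the class it occupies. Proposition~\ref{l11} controls the transport phase, while the inductive hypothesis combined with Lemma~\ref{l39} and finite-state ergodicity produces the prefactor $\pi(m)/\pi(\ms M_p(k))$.

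\textbf{Base case $p=1$.} Here $\ms M_1(k)=\{m_k\}$ and $\pi(m)/\pi(\ms M_1(k))=1$, so the assertion reduces to the uniform form of Proposition~\ref{l11}. Since $t+s/\theta^{(1)}_\epsilon\to t$ uniformly for $|s|\le\vartheta^{(1)}_\epsilon$, the uniformity in $s$ is inherited from the pointwise-along-sequences convergence furnished by that proposition by a standard subsequence extraction.

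\textbf{Inductive step.} Fix $p\ge 2$ and assume the lemma at level $p-1$. Pick $r>0$ (to be sent to $\infty$ later) and set $\tau_\epsilon:=\theta^{(p)}_\epsilon t + s - r\theta^{(p-1)}_\epsilon$; then $\tau_\epsilon/\theta^{(p)}_\epsilon\to t$ uniformly in $|s|\le\vartheta^{(p)}_\epsilon$. The Markov property at time $\tau_\epsilon$ gives
\begin{equation*}
\bb P^\epsilon_x\bigl[X_\epsilon(\theta^{(p)}_\epsilon t+s)\in \mc E(m)\bigr] \,=\, \bb E^\epsilon_x\bigl[f_\epsilon(X_\epsilon(\tau_\epsilon))\bigr],\quad f_\epsilon(y):=\bb P^\epsilon_y\bigl[X_\epsilon(r\theta^{(p-1)}_\epsilon)\in \mc E(m)\bigr].
\end{equation*}
Applying the inductive hypothesis with $t=r$ (and a dummy shift) yields, uniformly in $y\in \mc E(\ms M_{p-1}(i))$ and uniformly in $i\in\bb Z$ by the $\mf u_{p-1}$-periodicity of the hierarchy (postulates $\mc P_3$, $\mc P_9$),
\begin{equation*}
f_\epsilon(y) \,=\, \phi(i,r)+o_\epsilon(1), \qquad \phi(i,r):=\frac{\pi(m)}{\pi(\ms M_{p-1}(\ell))}\,\mtt Q^{(p-1)}_{\ms M_{p-1}(i)}\bigl[\mtt x(r)=\ms M_{p-1}(\ell)\bigr],
\end{equation*}
where $\ell$ is the index with $m\in\ms M_{p-1}(\ell)$. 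The mass outside $\mc E_{p-1}$ is $o_\epsilon(1)$ because $\mc E_p\subset \mc E_{p-1}$ and display~\eqref{33} in the proof of Lemma~\ref{l09} gives $\bb P^\epsilon_x[X_\epsilon(\tau_\epsilon)\notin \mc E_p]=o_\epsilon(1)$ (with any lower bound $c\in(0,t)$).

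\textbf{Equilibration and conclusion.} By construction each $\ms M_p(k')$ is a $\bb X_{p-1}$-closed irreducible class, finite by Claim~A of Section~\ref{sec7}. Lemma~\ref{l39} identifies $\pi(\ms M_{p-1}(\cdot))/\pi(\ms M_p(k'))$ as its invariant measure, so the ergodicity of the restricted finite irreducible continuous-time chain gives, uniformly in $i$ by periodicity,
\begin{equation*}
\phi(i,r) \,\xrightarrow[r\to\infty]{}\, \frac{\pi(m)}{\pi(\ms M_p(k))}\,\mathbf 1\{\ms M_{p-1}(i)\subset \ms M_p(k)\}.
\end{equation*}
Given $\delta>0$, choose $r$ large enough that this approximation has error at most $\delta$ for every $i$. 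Summing over $i$, using $\bigcup_{\ms M_{p-1}(i)\subset \ms M_p(k)} \mc E(\ms M_{p-1}(i)) = \mc E(\ms M_p(k))$ and $\sum_i \bb P^\epsilon_x[X_\epsilon(\tau_\epsilon)\in \mc E(\ms M_{p-1}(i))]\le 1$, one obtains
\begin{equation*}
\bb E^\epsilon_x[f_\epsilon(X_\epsilon(\tau_\epsilon))] \,=\, \frac{\pi(m)}{\pi(\ms M_p(k))}\,\bb P^\epsilon_x\bigl[X_\epsilon(\tau_\epsilon)\in \mc E(\ms M_p(k))\bigr] + O(\delta) + o_\epsilon(1),
\end{equation*}
uniformly in $x\in \mc E(\ms M_p(j))$ and $|s|\le\vartheta^{(p)}_\epsilon$. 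Proposition~\ref{l11} converts the remaining probability into $\mtt Q^{(p)}_{\ms M_p(j)}[\mtt x(t)=\ms M_p(k)]$; sending $\epsilon\to 0$ and then $\delta\to 0$ concludes the argument. The main obstacle is ensuring that every $o_\epsilon(1)$ and every $r\to\infty$ limit is uniform across the countable index set $\bb Z$; this is handled throughout by the $\mf u_q$-periodicity of the hierarchy, which reduces all countable families of assertions to finitely many equivalence classes.
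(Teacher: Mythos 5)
Your argument is essentially the paper's: both prove the statement by induction on $p$, apply the Markov property at the intermediate time $\theta^{(p)}_\epsilon t+s - r\theta^{(p-1)}_\epsilon$ (the paper writes $s_0$ for your $r$), use the inductive hypothesis together with Lemma~\ref{l39} and the ergodicity of the $\bb X_{p-1}$-chain restricted to $\ms M_p(k)$ to produce the prefactor $\pi(m)/\pi(\ms M_p(k))$, and finish with Proposition~\ref{l11} and the mass-leakage estimate~\eqref{33}. One small inaccuracy in your writeup: the displayed limit $\phi(i,r)\to\frac{\pi(m)}{\pi(\ms M_p(k))}\,\mathbf 1\{\ms M_{p-1}(i)\subset \ms M_p(k)\}$ does \emph{not} hold for $\bb X_{p-1}$-transient indices $i$ (from such $i$ the chain may be absorbed into the class containing $\ell$ with positive probability), so you cannot claim it ``uniformly in $i$''; the argument nevertheless survives because, as you note, the mass in $\mc E_{p-1}\setminus\mc E_p$ is $o_\epsilon(1)$ by~\eqref{33} at level $p$, so the transient contribution is negligible regardless of the value of $\phi(i,r)$ there. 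The paper avoids this by conditioning directly on the three events $\ms A_1$, $\ms A_2$, $\ms A_3$ at the intermediate time, dispatching $\ms A_2$ with Corollary~\ref{l07} and $\ms A_3$ with~\eqref{33}, and applying the inductive hypothesis only on $\ms A_1$.
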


\begin{proof}
The proof is by induction. For $p=1$, as the sets 
$\ms M_1(\ell)$ are singletons, the assertion of the lemma
corresponds to the one of Proposition \ref{l11}.

Fix $p\ge2$ and assume that the lemma holds for $1\le p'<p$.  Fix 
$k\in \bb Z$, $m \in \ms M_p(k)$ and a sequence
$(\vartheta^{(p)}_\epsilon)_{\epsilon>0}$ such that
$\vartheta^{(p)}_\epsilon\prec\theta_{\epsilon}^{(p)}$.  By
construction,
$\ms M_p(k) = \cup_{\ell_1 \le i\le \ell_2} \ms M_{p-1}(i)$ for some
$\ell_1\le\ell_2$. Denote by $i_0 \in \{\ell_1, \dots, \ell_2\}$ the
index such that $m \in \ms M_{p-1}(i_0)$.

Fix $\eta>0$. Since
$\{ \ms M_{p-1}(i) : \ell_1 \le i\le \ell_2\}$ forms a
$\bb X_{p-1}$-recurrent class, by the paragraph preceding the
statement of the lemma, there exists $s_{0}=s_{0}(\eta)>0$ such that
\begin{equation}
\label{93}
\max_{\ell_1 \le i \le \ell_2}\,\Big |\,
\mtt Q_{\ms M_{p-1}(i)}^{(p-1)} \big[\, \mtt x (s_{0}) =
\ms M_{p-1}(i_0)\,\big]
\,-\, \frac{\pi(\ms M_{p-1}(i_0))}{\pi(\ms M_p(k))}\,\Big | \,\le\,
\eta\,.
\end{equation}

Fix $x\in \ms E(\ms M_p(k))$, $|s|\le\vartheta^{(p)}_\epsilon$,
and write
\begin{equation*}
\bb P_x^{\epsilon }\big[\, \mtt x(\theta_{\epsilon}^{(p)}t+s) \in
\ms E(m) \,\big]
\,=\,  \sum_{j=1}^{3} \bb P_x^{\epsilon } \big[\, \mtt x
(\theta_{\epsilon}^{(p)}t+s) \in \ms E(m)
\,\big|\,\mathscr{A}_{j}\,\big]\,
\bb P_x^{\epsilon } [\,\mathscr{A}_{j}\,]\;,
\end{equation*}
where
\begin{gather*}
\mathscr{A}_{1}\ =\ \{\, \mtt x (t_{\epsilon}) \in \ms E(\ms M_{p}(k))
\,\} \;,\quad \mathscr{A}_{2}\ =\ \{\, \mtt x (t_{\epsilon}) \in \ms
E(\ms M_{p}) \setminus \ms E(\ms M_{p}(k)) \,\}\;, \quad
\mathscr{A}_{3}\ =\ \{\, \mtt x (t_{\epsilon})\not\in \ms E(\ms M_{p})
\,\}\;,
\end{gather*}
and $t_{\epsilon}=\theta_{\epsilon}^{(p)}t+s-\theta_{\epsilon}^{(p-1)}s_{0}$.

We claim that the term corresponding to $\mathscr{A}_{2}$ is
negligible.  Bound $\bb P_x^{\epsilon } [\,\mathscr{A}_{2}\,]$ by
$1$ and apply the strong Markov property to get that this term is less
than or equal to
\begin{equation*}
\sup_{k'\neq k}\,\sup_{y\in \ms M_p(k')}\,
\mathbb{P}_{y}^{\epsilon}\big[\,\tau_{\ms E(\ms M_p(k))}
<\theta_{\epsilon}^{(p-1)}s_{0}\,\big]\;.
\end{equation*}
By Corollary \ref{l07} this expression vanishes as $\epsilon\to 0$.

By \eqref{33}, the term corresponding to $\mathscr{A}_{3}$ is also
negligible.  We turn to the term corresponding to
$\mathscr{A}_{1}$. We estimate the two probabilities separately. For
the first probability, by the strong Markov property,
\begin{align*}
& \sup_{|s|\le\vartheta^{(p)}_\epsilon}\, \Big|\,
\bb P_x^{\epsilon } \big[\, \mtt x
(\theta_{\epsilon}^{(p)}t+s) \in \ms E(m)
\,\big|\,\mathscr{A}_1 \,\big]
\,-\, \frac{\pi(m)}{\pi(\ms M_{p}(k))}
\, \Big|
\\
&\quad \le
\sup_{y\in \ms E(\ms M_p(k))}\, \Big|\,
\bb P_y^{\epsilon } \big[\, \mtt x
(\theta_{\epsilon}^{(p-1)}s_0) \in \ms E(m) \,\big]
\,-\, \frac{\pi(m)}{\pi(\ms M_{p}(k))}
\, \Big|\;.
\end{align*}
This expression is less than or equal to
\begin{align*}
& \max_{\ell_1\le i\le \ell_2}\, \sup_{y\in \ms E(\ms M_{p-1}(i))}
\,\Big|\, \bb P_y^{\epsilon } \big[\, \mtt x
(\theta_{\epsilon}^{(p-1)}s_0) \in \ms E(m) \,\big]
\,-\, \frac{\pi(m)}{\pi(\ms M_{p-1}(i_0))} 
\, \mtt Q^{(p-1)}_{\ms M_{p-1}(i)} \big[\,
\mtt x (s_{0}) = \ms M_{p-1}(i_0) \,\big] \,\Big|
\\
& \quad +\, \max_{\ell_1\le i\le \ell_2}\, \sup_{y\in \ms E(\ms M_{p-1}(i))}
\,\Big|\, \frac{\pi(m)}{\pi(\ms M_{p-1}(i_0))} 
\, \mtt Q^{(p-1)}_{\ms M_{p-1}(i)} \big[\,
\mtt x (s_{0}) = \ms M_{p-1}(i_0) \,\big]
\,-\,  \frac{\pi(m)}{\pi(\ms M_{p}(k))} \,\Big|\;.
\end{align*}
By the induction hypothesis, the first term converges to $0$ as
$\epsilon\to0$.  Therefore, by \eqref{93},
\begin{equation*}
\limsup_{\epsilon\to0}  \sup_{x \in\mathcal{E}(\ms M_p(j))}
\sup_{|s|\le\vartheta^{(p)}_\epsilon}\, \Big|\,
\bb P_x^{\epsilon } \big[\, \mtt x
(\theta_{\epsilon}^{(p)}t+s) \in \ms E(m)
\,\big|\,\mathscr{A}_1 \,\big]
\,-\, \frac{\pi(m)}{\pi(\ms M_{p}(k))}
\, \Big| \,\le\, \eta\;.
\end{equation*}

For the second probability, by Proposition \ref{l11},
\begin{equation*}
\limsup_{\epsilon\to0}  \sup_{x \in\mathcal{E}(\ms M_p(j))}
\sup_{|s|\le\vartheta^{(p)}_\epsilon}\, \Big|\,
\bb P_x^{\epsilon } \big[\, \mtt x (t_{\epsilon}) \in \ms E(\ms
M_{p}(k))  \,\big]
\,-\, \mtt Q^{(p-1)}_{\ms M_{p}(j)} \big[\,
\mtt x (s_{0}) = \ms M_{p}(k) \,\big]
\, \Big| \,=\, 0\;.
\end{equation*}
To complete the proof of the induction step and the one of the lemma,
it remains to recollect all the previous estimates, letting $\eta\to 0$
at the end.
\end{proof}

\begin{proof}[Proof of Theorem \ref{mt1} and \ref{mt2}.{\rm (a)}]
Fix a bounded, continuous function $u_0\colon \bb R \to \bb R$,
$x\in\bb R$, $t>0$, $1\le p\le \mf q$, and $\delta>0$. Let
$\mf m_{p,\mtt l}(x)$, $\mf m_{p,\mtt r}(x)$ be the $S$-local minima
given by
\begin{equation}
\label{100}
{\color{blue} \mf m_{p,\mtt l}(x)} \,:=\, 
\max\{ m\in \ms M_p : m\le x\,\}\,,\quad
{\color{blue} \mf m_{p,\mtt r}(x)} \,:=\,
\min\{ m\in \ms M_p : m\ge x\,\}\,,
\end{equation}
and let $\color{blue} \mtt k_{p,\mtt l}(x)$,
$\color{blue} \mtt k_{p,\mtt r}(x)$ be the integers such that
$\mf m_{p,\mtt l}(x) \in \ms M_p(\mtt k_{p,\mtt l}(x))$,
$\mf m_{p,\mtt r}(x) \in \ms M_p(\mtt k_{p,\mtt r}(x))$.  In the
notation introduced right before the statement of Theorem \ref{mt2},
$\mtt k_{p,\mtt l}(x) = \mtt l_p(x)$,
$\mtt k_{p,\mtt r}(x) = \mtt r_p(x)$

By the stochastic representation of the solution of the parabolic
equation \eqref{51}, \cite[Theorem 6.5.3]{Fri},
\begin{equation}
\label{90}
u(t \theta_{\epsilon}^{(p)},x) \;=\;
\bb E^{p,\epsilon}_x\big[ \,
u_0( \mtt x(t))\,\big] \;.
\end{equation}

\smallskip\noindent{\bf Step 1: Reduction to $x\in \ms M_p$.}  Fix a
time-scale $\vartheta^{(p)}_\epsilon \prec \theta^{(p)}_\epsilon$.
Repeat the arguments presented in the proof of Theorem \ref{mt3},
introducing the restriction
$\{\tau(\ms M_p) < \vartheta^{(p)}_\epsilon\}$, and then
$\{\mtt x(\tau(\ms M_{p})) = m \}$, $m = \mf m_{p,\mtt l}(x)$,
$\mf m_{p,\mtt r}(x)$ (see equation \eqref{86}, and below), to obtain that
\eqref{90} is equal to
\begin{equation}
\label{91}
\sum_{m\in M_p(x)} \bb E^{\epsilon}_{x} \Big[ \, \mtt 1\big\{ \tau(\ms M_p) <
\vartheta^{(p)}_\epsilon\,,\,
\mtt x(\tau(\ms M_{p})) = m \big\}\,
\bb E^{\epsilon}_{m} \big[ u_0( \mtt x(t\theta^{(p)}_\epsilon -
\tau(\ms M_p))) \,\big] \,\Big] \,+\, R^{(1)}_\epsilon (x)\;,
\end{equation}
where $R^{(1)}_\epsilon (x)$ satisfies \eqref{88},
${\color{blue} M_p(x)} := \{\mf m_{p,\mtt l}(x) , \mf m_{p,\mtt
r}(x)\}$, 

\smallskip\noindent{\bf Step 2: Metastability.}  It remains to
estimate
$\bb E^{\epsilon}_{m} \big[ u_0( \mtt x(t\theta^{(p)}_\epsilon - s))
\,\big]$ uniformly on $|s|\le \vartheta^{(p)}_\epsilon$ for
$m\in M_p(x)$.

Fix $\delta>0$.  By postulate $\mc P_9(p)$, the jump rates $R_p$ are
periodic. Thus, the holding times at each state $\ms M_p(j)$ is
uniformly bounded. As the Markov chain $\bb X_p(\cdot)$ jumps only to
nearest-neighbour sites, there exists, therefore,
$\ell_0 = \ell_0(t, \bb X_p(\cdot))$ such that
\begin{equation}
\label{94}
\sum_{j: |j-k|\ge \ell_0} p^{(p)}_t (\ms M_p(k), \ms M_p(j))
\,\le\,\delta \quad \text{for all}\;\; k\in\bb Z \;. 
\end{equation}

Let $k_m = \mtt k_{p,\mtt l}(x)$, if $m= \mf m_{p,\mtt l}(x)$ and
$k_m = \mtt k_{p,\mtt r}(x)$, if $m= \mf m_{p,\mtt r}(x)$.  Recall
from \eqref{71} the definition of the wells $\ms E(m)$, and choose
$r_0$ small enough so that
\begin{equation*}
\max_{j: |j-k_m|\le \ell_0}\sup_{y\in\ms E(m_j)} |u_0(y) -
u_0(m_j)| \,\le\, \delta \;.
\end{equation*}
By the previous estimate, for each $m\in M_p(x)$,
\begin{equation*}
\bb E^{\epsilon}_{m} \big[ u_0( \mtt x(t\theta^{(p)}_\epsilon - s))
\,\big] 
\,=\, \sum_{j: |j-k_m|\le \ell_0} \sum_{m'\in \ms M_p(j)} u_0(m')\, 
\bb P^{\epsilon}_m\big[ \, 
\mtt x(t\theta^{(p)}_\epsilon - s) \in \ms E(m') \, \big]
\,+\, R^{(2)}_\epsilon \,+\, R_\delta \;,
\end{equation*}
where $|R_\delta| \le \delta$, and 
\begin{equation*}
|R^{(2)}_\epsilon| \,\le\, \Vert u_0\Vert_\infty\;
\bb P^{\epsilon}_m\Big[ \, 
\mtt x(t\theta^{(p)}_\epsilon - s) \not\in \bigcup_{j: |j-k_m| \le  \ell_0}
\ms E(\ms M_p(j)) \,\Big] \;.
\end{equation*}
By Proposition \ref{l11} and \eqref{94},
\begin{equation*}
\begin{aligned}
& \lim_{\epsilon\to 0} \sup_{|s|\le \vartheta^{(p)}_\epsilon}
\bb P^{\epsilon}_m\Big[ \, 
\mtt x(t\theta^{(p)}_\epsilon - s) \not \in \bigcup_{j: |j-k_m| \le \ell_0}
\ms E(\ms M_p(j)) \,\Big]
\\
&\quad
\,=\, \lim_{\epsilon\to 0}  \sup_{|s|\le \vartheta^{(p)}_\epsilon}
\Big\{\, 1\, -\, \bb P^{\epsilon}_m\Big[ \, 
\mtt x(t\theta^{(p)}_\epsilon - s) \in \bigcup_{j: |j-k_m| \le \ell_0}
\ms E(\ms M_p(j)) \,\Big] \,\Big\} \,\le\,\delta\;.
\end{aligned}
\end{equation*}

Therefore, by Lemma \ref{l36},
\begin{equation*}
\lim_{\epsilon\to 0} \sup_{|s|\le \vartheta^{(p)}_\epsilon}
\Big|\, \bb E^{\epsilon}_{m} \big[ u_0( \mtt x(t\theta^{(p)}_\epsilon - s))
\,\big]  -\, U_p(t,k_m, \ell_0)  \,\,\Big|
\,\le\,  \delta\, (1+\Vert u_0\Vert_\infty)\;.
\end{equation*}
where
\begin{equation}
\label{95}
U_p(t,k_m, \ell_0) \,:=\, \sum_{j: |j-k_m|\le \ell_0}
\mtt {Q}_{\ms M_p(k_m)}^{(p)}\big[\, \mtt x(t)= \ms M_p(j)\,\big]
\sum_{m'\in \ms M_p(j)} u_0(m')\, 
\frac{\pi (m')}{\pi(\ms M_p(j))}
\end{equation}
In view of \eqref{94}, paying an extra
$\delta\, \Vert u_0\Vert_\infty$, we may replace in the penultimate
formula $U_p(t,k_m, \ell_0)$ by $U_p(t,k_m)$, where this later
function is defined as in the previous displayed equation, but with
the sum running over $j\in \bb Z$.

\smallskip\noindent{\bf Step 3: Conclusion.} In view of the previous
estimate, \eqref{91} is equal to
\begin{equation*}
\sum_{m\in M_p(x)} U_p(t,k_m) \, \bb P^{\epsilon}_{x}
\big[ \, \tau(\ms M_p) <
\vartheta^{(p)}_\epsilon\,,\,
\mtt x(\tau(\ms M_{p})) = m \,\big] \,+\, R^{(3)}_{\epsilon, \delta} (x)\;,
\end{equation*}
where
\begin{equation*}
\limsup_{\epsilon\to 0} |\, R^{(3)}_{\epsilon, \delta} (x) \,| \,\le
\, (1+ 2\Vert u_0\Vert_\infty) \, \delta  \;.
\end{equation*}
By Lemma \ref{l33} we may remove the indicator of the event
$\{ \tau(\ms M_p) < \vartheta^{(p)}_\epsilon\}$, provided we choose a
sequence $\vartheta^{(p)}_\epsilon$ such that
$\vartheta^{(p)}_\epsilon \succ \epsilon^{-1}\,
\theta^{(p-1)}_\epsilon$.  This can be done since the only restriction
required up to this point is that
$\vartheta^{(p)}_\epsilon \prec \theta^{(p)}_\epsilon$.  To complete
the proof of the theorem, it remains to apply Corollary \ref{l26} and
to let $\delta\to 0$.
\end{proof}

\subsection*{Proof of Theorem \ref{mt2}.{\rm (b)}}

The Theorem \ref{mt2} for intermediate scales proof's idea is quite
simple. In a time scale of order
$\theta^{(p)}_\epsilon \prec \varrho_\epsilon \prec
\theta^{(p+1)}_\epsilon$, the process reaches a set $\ms M_{p+1}(k)$
for some $k\in\bb Z$ (actually, the closest one to the left or right
of the starting point) and is not able to leave this set because
$\varrho_\epsilon \prec \theta^{(p+1)}_\epsilon$. Since
$\varrho_\epsilon \succ \theta^{(p)}_\epsilon$, and $\ms M_{p+1}(k)$
is a $\bb X_p$-closed irreducible class, the distribution of
$X_\epsilon(\varrho_\epsilon)$ among the wells $\ms M_p(j)$ which
compose $\ms M_{p+1}(k)$ will be given by the stationary state of the
reduced Markov chain $\bb X_p(\cdot)$ restricted to the set
$\ms M_{p+1}(k)$. The same argument may be applied to the distribution
inside the well $\ms M_{p}(j)$, and an inductive argument permits to
express the distribution of $X_\epsilon(\varrho_\epsilon)$ among the
wells $\ms E(m)$, $m\in \ms M_{p+1}(k)$, in terms of the stationary
states of the Markov chains $\bb X_1(\cdot), \dots, \bb X_p(\cdot)$.

Next result follows from the previous lemma, which states that the set
$\ms M_{p+1}$ is reached in times of order $\theta^{(p)}_\epsilon$,
and Corollary \ref{l32}, which asserts that starting from
$\ms M_{p+1}$ a time-scale or order
$\theta^{(p)}_\epsilon\, e^{\kappa_0/\epsilon}$ is required to attain
the set $\ms E_p \setminus \ms E_{p+1}$.

\begin{corollary}
\label{l41}
Fix $1\le p< \mf q$, and sequences $\varrho_\epsilon$,
$\vartheta_\epsilon$ such that
$\theta^{(p)}_\epsilon \prec \varrho_\epsilon$,
$\vartheta^{(p)}_\epsilon\prec \varrho_\epsilon$.  Then,
\begin{equation*}
\lim_{\epsilon\to 0} \sup_{x\in \bb R}
\sup_{|s| \le \vartheta^{(p)}_\epsilon}
\bb P^{\epsilon}_x\big[ \, 
\mtt x(\varrho_\epsilon - s) \in \ms E_p
\setminus \ms E_{p+1} \,\big] \,=\, 0 \;.
\end{equation*}
\end{corollary}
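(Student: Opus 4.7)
The plan is to condition on the hitting time of the level-$(p+1)$ recurrent set $\ms M_{p+1}$, exploiting two facts: this set is reached in a time of order $\theta^{(p)}_\epsilon\prec\varrho_\epsilon$ from any starting point, while once the diffusion is inside $\ms E_{p+1}$ it cannot reach $\ms E_p\setminus\ms E_{p+1}$ on a scale comparable to $\varrho_\epsilon$. Fix $\delta>0$. By Corollary~\ref{l40} applied at level $p$, I would choose $t_0=t_0(\delta)<\infty$ such that
$$\limsup_{\epsilon\to 0}\sup_{x\in\bb R}\bb P^\epsilon_x\bigl[\tau(\ms M_{p+1})>t_0\,\theta^{(p)}_\epsilon\bigr]\le\delta.$$
Because $\theta^{(p)}_\epsilon\prec\varrho_\epsilon$ and $\vartheta^{(p)}_\epsilon\prec\varrho_\epsilon$, for all $\epsilon$ small and all $|s|\le\vartheta^{(p)}_\epsilon$ one has $t_0\theta^{(p)}_\epsilon+|s|\le\varrho_\epsilon/2$, whence
$$\bb P^\epsilon_x\bigl[\mtt x(\varrho_\epsilon-s)\in\ms E_p\setminus\ms E_{p+1}\bigr]\,\le\,\delta\,+\,\bb P^\epsilon_x\bigl[\tau(\ms M_{p+1})\le t_0\theta^{(p)}_\epsilon\,,\,\mtt x(\varrho_\epsilon-s)\in\ms E_p\setminus\ms E_{p+1}\bigr]$$
uniformly in $x\in\bb R$.

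For the second term I would apply the strong Markov property at $\tau=\tau(\ms M_{p+1})$. On this event $X_\epsilon(\tau)\in\ms M_{p+1}$, and since being in $\ms E_p\setminus\ms E_{p+1}$ at the deterministic instant $\varrho_\epsilon-s$ forces the post-$\tau$ trajectory to hit that set by time $\varrho_\epsilon-s-\tau\le\varrho_\epsilon$, the second term is dominated by
$$\sup_{m\in\ms M_{p+1}}\bb P^\epsilon_m\bigl[\tau(\ms E_p\setminus\ms E_{p+1})\le\varrho_\epsilon\bigr],$$
which, by Corollary~\ref{l32} applied at level $p+1$, is $O(\epsilon\,e^{-\kappa_0/\epsilon})$ whenever $\varrho_\epsilon\le e^{[\mf h_p+\kappa_0]/\epsilon}$. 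Letting $\delta\downarrow 0$ then yields the claim.

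The main technical point will be ensuring that the time scale $\varrho_\epsilon$ is compatible with the exponential threshold of Corollary~\ref{l32}. In the intended applications—particularly Theorem~\ref{mt2}(b)—one additionally has $\varrho_\epsilon\prec\theta^{(p+1)}_\epsilon$, and inspecting the proof of Corollary~\ref{l32} at level $p+1$ one sees that the constant $\kappa_0$ may be taken arbitrarily close to $\mf h_{p+1}-\mf h_p$ by choosing $\sigma_l,\sigma_r$ in Lemma~\ref{l35} as the relevant level-$(p+1)$ saddles $\sigma^{p+1,\pm}_{k,k\pm 1}$. More precisely, a refined use of Lemma~\ref{l31} with $\delta=\delta(\epsilon)\downarrow 0$ slowly gives $\sup_{m\in\ms M_{p+1}}\bb P^\epsilon_m[\tau(\ms E_p\setminus\ms E_{p+1})\le t_\epsilon]\to 0$ for any $t_\epsilon\prec\theta^{(p+1)}_\epsilon$, which will be enough to conclude.
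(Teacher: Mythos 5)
Your opening steps reproduce the paper's proof: fix $\delta>0$, use Corollary~\ref{l40} to get $t_0$ with $\sup_x\bb P^\epsilon_x[\tau(\ms M_{p+1})>t_0\theta^{(p)}_\epsilon]\le\delta$, intersect with $\{\tau(\ms M_{p+1})\le t_0\theta^{(p)}_\epsilon\}$, and apply the strong Markov property to reduce to a quick-escape probability from $\ms M_{p+1}$ controlled by Corollary~\ref{l32}. Up to that point you are on the right track.

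The genuine gap is that your strong-Markov bound produces
$\sup_{m\in\ms M_{p+1}}\bb P^\epsilon_m[\tau(\ms E_p\setminus\ms E_{p+1})\le\varrho_\epsilon]$,
and Corollary~\ref{l32} (at level $p+1$) only controls this when the time horizon is at most $e^{\kappa_0/\epsilon}\theta^{(p)}_\epsilon$ for the fixed $\kappa_0$ produced by that corollary. The statement of Corollary~\ref{l41} imposes no upper bound on $\varrho_\epsilon$: it only assumes $\varrho_\epsilon\succ\theta^{(p)}_\epsilon$. For $\varrho_\epsilon\ge\theta^{(p+1)}_\epsilon$ the bound you invoke is vacuous (the diffusion \emph{will} escape $\ms E_{p+1}$ on that horizon), so the argument breaks down. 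You notice this and propose to restrict to the applications where $\varrho_\epsilon\prec\theta^{(p+1)}_\epsilon$, or to strengthen Corollary~\ref{l32} by pushing $\kappa_0$ toward $\mf h_{p+1}-\mf h_p$ via a $\delta(\epsilon)\downarrow 0$ tweak of Lemma~\ref{l31}; neither is quite right. The first proves a weaker statement than the one asserted, and the second is an unproved extension (in Lemma~\ref{l31} the prefactor $C_0/\eta^2$ blows up as $\delta\to0$, so $\delta(\epsilon)$ cannot be sent to zero without a compensating analysis, and in any case the statement must hold for $\varrho_\epsilon$ far beyond $\theta^{(p+1)}_\epsilon$).

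The missing idea is a case split together with a Markov-property time reduction. When $\varrho_\epsilon-s\le e^{\kappa_0/\epsilon}\theta^{(p)}_\epsilon$ your argument goes through verbatim. When $\varrho_\epsilon-s>e^{\kappa_0/\epsilon}\theta^{(p)}_\epsilon$, apply the Markov property at time $\varrho_\epsilon-s-e^{\kappa_0/\epsilon}\theta^{(p)}_\epsilon$ to dominate the target probability by
$\sup_{y\in\bb R}\bb P^\epsilon_y[\mtt x(e^{\kappa_0/\epsilon}\theta^{(p)}_\epsilon)\in\ms E_p\setminus\ms E_{p+1}]$,
which is now a time horizon of the right size, and then repeat the first-case argument. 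This renders the bound uniform in $\varrho_\epsilon$ regardless of its rate of divergence, because the process's memorylessness lets one discard the early part of any long time window.
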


\begin{proof}
Recall the definition of $\kappa_0$ introduced in Corollary \ref{l32},
and fix $\delta>0$. By Corollary \ref{l40}, we may choose $t>0$,
$\epsilon_0>0$ such
that
\begin{equation*}
\sup_{x\in \bb R} \bb P^{\epsilon}_x\big[ \, 
\tau (\ms M_{p+1}) > t \, \theta^{(p)}_\epsilon  \, \big] \,\le \, \delta
\end{equation*}
for all $0<\epsilon<\epsilon_0$.

Fix $|s| \le \vartheta^{(p)}_\epsilon$. If
$\varrho_\epsilon - s \le e^{\kappa_0/\epsilon} \,
\theta^{(p)}_\epsilon$ we estimate the probability appearing in the
statement of the corollary by
\begin{equation*}
\bb P^{\epsilon}_x\big[ \, \tau (\ms M_{p+1}) < t\,
\theta^{(p)}_\epsilon \,,\,
\mtt x(\varrho_\epsilon - s) \in \ms E_p
\setminus \ms E_{p+1} \,\big] \,+\, \delta \;.
\end{equation*}
By the strong Markov property, and since
$\varrho_\epsilon - s \le e^{\kappa_0/\epsilon} \,
\theta^{(p)}_\epsilon$, the probability appearing in this expression
is bounded by
\begin{equation*}
\sup_{m\in \ms M_{p+1}}
\sup_{0\le s'  \le t\theta^{(p)}_\epsilon }
\bb P^{\epsilon}_m\big[ \, 
\mtt x(\varrho_\epsilon - s - s') \in \ms E_p
\setminus \ms E_{p+1} \,\big] \,\le\,
\sup_{m\in \ms M_{p+1}} \bb P^{\epsilon}_m\big[ \, 
\tau(\ms E_p \setminus \ms E_{p+1}) \le e^{\kappa_0/\epsilon} \,
\theta^{(p)}_\epsilon \,\big] \;.
\end{equation*}
By Corollary \ref{l32}, this expression is bounded by $C_0 \,\epsilon \,
e^{-\kappa_0/\epsilon}$.

If
$\varrho_\epsilon - s > e^{\kappa_0/\epsilon} \,
\theta^{(p)}_\epsilon$ we first apply the Markov property (at time
$\varrho_\epsilon - s - e^{\kappa_0/\epsilon}$) to estimate the
probability appearing in the statement of the corollary by
\begin{equation*}
\sup_{x\in \bb R}
\bb P^{\epsilon}_x\big[ \, 
\mtt x(e^{\kappa_0/\epsilon} \, \theta^{(p)}_\epsilon) \in \ms E_p
\setminus \ms E_{p+1} \,\big]\;.
\end{equation*}
Then, we repeat the previous argument.
\end{proof}

\begin{lemma}
\label{l38}
Fix $1\le p< \mf q$, $k\in \bb Z$, and sequences $\varrho_\epsilon$,
$\vartheta_\epsilon$ such that
$\theta^{(p)}_\epsilon \prec \varrho_\epsilon \prec
\theta^{(p+1)}_\epsilon$, $\vartheta_\epsilon\prec \varrho_\epsilon$.
Then,
\begin{equation*}
\lim_{\epsilon\to 0} \sup_{|s|\le \vartheta_\epsilon} \sup_{x\in \ms E(\ms M_{p+1}(k))}
\Big|\,  \bb P^{\epsilon}_x\big[ \, 
\mtt x(\varrho_\epsilon + s) \in \ms E(m') \, \big]
\,-\, \frac{\pi(m')}{\pi(\ms M_{p+1}(k))} \,\Big| \,=\, 0
\end{equation*}
for all $m'\in \ms M_{p+1}(k)$.
\end{lemma}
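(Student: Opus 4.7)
The plan is to apply the Markov property at time $r_\epsilon := \varrho_\epsilon + s - T\theta^{(p)}_\epsilon$ with $T>0$ large but fixed, use Lemma \ref{l36} to identify the conditional probability with transition probabilities of $\bb X_p$, and exploit the ergodicity of $\bb X_p$ restricted to the $\bb X_p$-closed irreducible class $\ms M_{p+1}(k)$. The hypotheses $\theta^{(p)}_\epsilon \prec \varrho_\epsilon \prec \theta^{(p+1)}_\epsilon$ and $\vartheta_\epsilon \prec \varrho_\epsilon$ guarantee $\theta^{(p)}_\epsilon \prec r_\epsilon \prec \theta^{(p+1)}_\epsilon$ for $\epsilon$ small, uniformly in $|s|\le \vartheta_\epsilon$.

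Let $i^* \in \bb Z$ be such that $m'\in \ms M_p(i^*)$; note $\ms M_p(i^*)\subset \ms M_{p+1}(k)$. By the Markov property at time $r_\epsilon$, decomposition by the well containing $X_\epsilon(r_\epsilon)$, Lemma \ref{l36} (whose error is uniform over the finitely many equivalence classes of wells by periodicity), and Corollary \ref{l41} (to discard the contribution of $\{X_\epsilon(r_\epsilon)\notin \ms E_{p+1}\}$, whose probability vanishes), one obtains
\begin{equation*}
\bb P^\epsilon_x[X_\epsilon(\varrho_\epsilon+s)\in \ms E(m')] \,=\, \sum_{j\in \bb Z} \bb P^\epsilon_x[X_\epsilon(r_\epsilon)\in \ms E(\ms M_p(j))]\, \frac{\pi(m')}{\pi(\ms M_p(i^*))}\, \mtt Q^{(p)}_{\ms M_p(j)}[\mtt x(T)=\ms M_p(i^*)] \,+\, o_\epsilon(1).
\end{equation*}
Since each class $\ms M_{p+1}(k')$ is $\bb X_p$-closed, $\mtt Q^{(p)}_{\ms M_p(j)}[\mtt x(T)=\ms M_p(i^*)]=0$ whenever $\ms M_p(j)\subset \ms M_{p+1}(k')$ with $k'\neq k$; and for $j$ with $\ms M_p(j)\subset \ms M_{p+1}(k)$, Lemma \ref{l39} and the ergodic theorem for the finite irreducible chain $\bb X_p$ restricted to $\ms M_{p+1}(k)$ yield
\begin{equation*}
\lim_{T\to\infty}\max_{j:\,\ms M_p(j)\subset \ms M_{p+1}(k)}\Big| \mtt Q^{(p)}_{\ms M_p(j)}[\mtt x(T)=\ms M_p(i^*)] - \frac{\pi(\ms M_p(i^*))}{\pi(\ms M_{p+1}(k))}\Big| \,=\,0.
\end{equation*}
Substituting, the right-hand side becomes $\frac{\pi(m')}{\pi(\ms M_{p+1}(k))}\, \bb P^\epsilon_x[X_\epsilon(r_\epsilon)\in \ms E(\ms M_{p+1}(k))] + o_T(1) + o_\epsilon(1)$.

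The remaining step, where I expect most of the work, is to show that $\bb P^\epsilon_x[X_\epsilon(r_\epsilon)\in \ms E(\ms M_{p+1}(k))]\to 1$ uniformly in $x\in \ms E(\ms M_{p+1}(k))$ and $|s|\le \vartheta_\epsilon$. Using Corollary \ref{l41} once more, this reduces to bounding $\bb P^\epsilon_x[\tau(\ms E_{p+1}\setminus \ms E(\ms M_{p+1}(k)))\le r_\epsilon]$. Since $r_\epsilon\prec \theta^{(p+1)}_\epsilon$, for every fixed $a>0$ this is dominated, for $\epsilon$ small enough, by $\bb P^{p+1,\epsilon}_x[\tau(\ms E_{p+1}\setminus \ms E(\ms M_{p+1}(k)))\le a]$, which tends to $0$ as $\epsilon\to 0$ and then $a\to 0$ by Corollary \ref{l07} applied at level $p+1$. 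Letting first $\epsilon\to 0$ and then $T\to \infty$ concludes the proof.
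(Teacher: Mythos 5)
Your approach is essentially the one the paper uses: split at a time of the form $\varrho_\epsilon + s - T\theta_\epsilon^{(p)}$ via the Markov property, apply Lemma~\ref{l36} to the conditional law, invoke Lemma~\ref{l39} and ergodicity of $\bb X_p$ restricted to $\ms M_{p+1}(k)$, and show that the process is in $\ms E(\ms M_{p+1}(k))$ at the split time. (The paper fixes $t_0=t_0(\delta)$ and sends $\delta\to 0$ rather than a generic $T\to\infty$, which is a cosmetic difference.) Two small imprecisions are worth flagging. First, Corollary~\ref{l41} only controls $\{\mtt x(r_\epsilon)\in\ms E_p\setminus\ms E_{p+1}\}$; to discard the full event $\{\mtt x(r_\epsilon)\notin\ms E_{p+1}\}$ you also need the bound \eqref{33} (from Lemma~\ref{l09}) to handle $\{\mtt x(r_\epsilon)\notin\ms E_p\}$ --- the paper cites both. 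Second, your first display sums over all $j\in\bb Z$ with a per-term $o_\epsilon(1)$ error from Lemma~\ref{l36}, so one must either truncate the sum first (the paper does this by restricting on the event $\ms A_1=\{\mtt x(t_\epsilon)\in\ms E(\ms M_{p+1}(k))\}$ before applying the local ergodicity, leaving only the finitely many indices $\ell_1\le i\le\ell_2$) or argue that the tails are negligible; your sketch implicitly relies on the cancellation $\mtt Q^{(p)}_{\ms M_p(j)}[\mtt x(T)=\ms M_p(i^*)]=0$ for $\ms M_p(j)$ in other recurrent classes, which leaves the transient $j$'s unaccounted for until Corollary~\ref{l41} is brought in to kill them. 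Both gaps are easily fixed and do not change the argument; the substance matches the paper's proof.
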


\begin{proof}
The proof is similar to the one of Lemma \ref{l36}. Fix $\delta>0$,
$k\in \bb Z$, $x\in \ms E(\ms M_{p+1}(k))$. Assume that
$\ms M_{p+1}(k) = \cup_{\ell_1 \le i\le \ell_2} \ms M_{p}(i)$. By
Lemma \ref{l39}, there exists $t_{0}=t_{0}(\delta)>0$ such that
\begin{equation*}
\max_{\ell_1 \le i, j\le \ell_2}\,
\Big|\, \mtt Q^{(p)}_{\ms M_p(i)} \big[ \, \mtt x(t) = \ms M_p(j)\, \big]
\,-\, \frac{\pi (\ms M_p(j))}{\pi (\ms M_{p+1}(k))}\,
\Big|\,\le\,\delta 
\end{equation*}
for all $t\ge t_0$.

Fix $m'\in \ms M_{p+1}(k)$.  Write the probability appearing in the
lemma as
\begin{equation*}
\sum_{j=1}^{2} \bb P_x^{\epsilon } \big[\, \mtt x
(\varrho_\epsilon + s) \in \ms E(m')
\,\big|\,\mathscr{A}_{j}\,\big]\,
\bb P_x^{\epsilon } [\,\mathscr{A}_{j}\,]\;,
\end{equation*}
where
\begin{equation*}
\ms A_1 \,=\, \{ \mtt x (t_\epsilon) \in \ms E(\ms M_{p+1}(k)) \}\,,
\quad \ms A_2 \,=\, \{ \mtt x (t_\epsilon) \not \in \ms 
E( \ms M_{p+1}(k)) \}\,, 
\end{equation*}
and $t_{\epsilon}=\varrho_{\epsilon}+s-\theta_{\epsilon}^{(p)} t_{0}$.

As $\varrho_\epsilon \succ \theta_{\epsilon}^{(p)}$, by \eqref{33} (to
estimate the probability of the event
$\{\mtt x (t_\epsilon) \not \in \ms E_p\}$), and Corollary \ref{l40}
and Lemma \ref{l33} (to estimate the probability of the event
$\{\mtt x (t_\epsilon) \in \ms E_p \setminus \ms E_{p+1}\}$),
\begin{equation*}
\lim_{\epsilon\to 0} \sup_{|s|\le \vartheta_\epsilon} \sup_{x\in \ms E(\ms M_{p+1}(k))}
\bb P_x^{\epsilon } [\, \mtt x (t_\epsilon)
\not\in \ms E_{p+1}\,] \,=\, 0\;.
\end{equation*}
Hence, as $\varrho_\epsilon \prec \theta_{\epsilon}^{(p+1)}$, by
Corollary \ref{l07},
\begin{equation*}
\lim_{\epsilon\to 0} \sup_{|s|\le \vartheta_\epsilon} \sup_{x\in \ms E(\ms M_{p+1}(k))}
\bb P_x^{\epsilon } [\,\mathscr{A}_2 \,] \,=\, 0\;.
\end{equation*}
We turn to the first term. By the previous remark,
$\bb P_x^{\epsilon } [\,\mathscr{A}_1 \,]$ converges to $1$, uniformly
on $x\in \ms E(\ms M_{p+1}(k))$, $|s|\le \vartheta_\epsilon$. On the
other hand, we claim that
\begin{equation*}
\lim_{\epsilon\to 0} \sup_{x\in \ms E(\ms M_{p+1}(k))}
\sup_{|s|\le \vartheta_\epsilon}
\Big|\, \bb P_x^{\epsilon } \big[\, \mtt x
(\varrho_\epsilon + s) \in \ms E(m')
\,\big|\,\mathscr{A}_1 \,\big]
\,-\, \frac{\pi(m')}{\pi(\ms M_{p+1}(k))} \,\Big| \,=\, 0\;.
\end{equation*}

Indeed, by the Markov property, this expression is bounded by
\begin{equation*}
\max_{\ell_1\le i\le \ell_2} \sup_{y\in \ms E(\ms M_p(i))}
\Big|\, \bb P_y^{\epsilon } \big[\, \mtt x
(\theta_{\epsilon}^{(p)}\, t_{0}) \in \ms E(m')\,\big]
\,-\, \frac{\pi(m')}{\pi(\ms M_{p+1}(k))} \,\Big|\;.
\end{equation*}

Choose $\ell_1\le i_0\le \ell_2$ such that $m'\in \ms M_p(i_0)$, and bound
the previous expression by
\begin{equation*}
\begin{aligned}
& \max_{\ell_1\le i\le \ell_2} \sup_{y\in \ms E(\ms M_p(i))}
\Big|\, \bb P_y^{p,\epsilon } \big[\, \mtt x
(t_{0}) \in \ms E(m')\,\big]
\,-\, \mtt Q^p_{\ms M_p(i)} \big[\, \mtt x(t_0) = \ms M_p(i_0)\,]
\, \frac{\pi(m')}{\pi(\ms M_{p}(i_0))} \,\Big|
\\
&\quad +\,
\max_{\ell_1\le i\le \ell_2} 
\Big|\, \mtt Q^p_{\ms M_p(i)} \big[\, \mtt x(t_0) = \ms M_p(i_0)\,]
\, \frac{\pi(m')}{\pi(\ms M_{p}(i_0))} \,-\,
\frac{\pi(m')}{\pi(\ms M_{p+1}(k))} \,\Big|
\end{aligned}
\end{equation*}
By the choice of $t_0$, the second term is bounded by $\delta$.
By Lemma \ref{l36}, the first one vanishes as $\epsilon\to 0$. This
completes the proof of the lemma.
\end{proof}

\begin{proof}[Proof of Theorem \ref{mt2}.{\rm (b)}]
Fix a bounded, continuous function $u_0\colon \bb R \to \bb R$,
$x\in\bb R$, $1\le p< \mf q$, a sequence $\varrho_\epsilon$
such that
$\theta^{(p)}_\epsilon \prec \varrho_\epsilon \prec
\theta^{(p+1)}_\epsilon$, and $\delta>0$.
By the stochastic representation of the solution of the parabolic
equation \eqref{51}, \cite[Theorem 6.5.3]{Fri},
\begin{equation}
\label{90b}
u(\varrho_\epsilon,x) \;=\;
\bb E^{\epsilon}_x\big[ \,
u_0( \mtt x(\varrho_\epsilon))\,\big] \;.
\end{equation}

The proof is similar to the one of Theorem \ref{mt2}.(a).  Recall
the definition of the $S$-local minima $\mf m_{p+1,\mtt l}(x)$,
$\mf m_{p+1,\mtt r}(x)$ introduced in \eqref{100}.  Fix a time-scale
$\theta^{(p)}_\epsilon\prec \vartheta^{(p)}_\epsilon \prec
\varrho_\epsilon$.  Repeat the arguments presented in the proof of
Theorem \ref{mt3}, introducing the restriction
$\{\tau(\ms M_{p+1}) < \vartheta^{(p)}_\epsilon\}$, and then
$\{\mtt x(\tau(\ms M_{p+1})) = m \}$, $m = \mf m_{p+1,\mtt l}(x)$,
$\mf m_{p+1,\mtt r}(x)$, to obtain that \eqref{90b} is equal to
\begin{equation}
\label{91b}
\sum_{m\in M_{p+1}(x)} \bb E^{\epsilon}_{x}
\Big[ \, \mtt 1\big\{ \tau(\ms M_{p+1}) <
\vartheta^{(p)}_\epsilon\,,\,
\mtt x(\tau(\ms M_{p+1})) = m \big\}\,
\bb E^{\epsilon}_{m} \big[ u_0( \mtt x(\varrho_\epsilon -
\tau(\ms M_{p+1}))) \,\big] \,\Big] \,+\, R^{(1)}_\epsilon (x)\;,
\end{equation}
where
$M_{p+1}(x) = \{\mf m_{p+1,\mtt l}(x) , \mf m_{p+1,\mtt r}(x)\}$. By
Corollary \ref{l40}, $R^{(1)}_\epsilon (x)$ satisfies \eqref{88}.

Let $k_m = \mtt k_{p+1,\mtt l}(x)$ if $m= \mf m_{p+1,\mtt l}(x)$, and
$k_m = \mtt k_{p+1,\mtt r}(x)$ if $m= \mf m_{p+1,\mtt r}(x)$.  Recall
from \eqref{71} the definition of the wells $\ms E(m)$, and choose
$r_0$ small enough so that
\begin{equation*}
\max_{m\in \ms M_{p+1}(k_m) } \sup_{y\in\ms E(m)} |u_0(y) -
u_0(m)| \,\le\, \delta \;.
\end{equation*}
By the previous estimate, for each $m\in M_{p+1}(x)$, $|s| \le
\vartheta^{(p)}_\epsilon$, 
\begin{equation}
\label{87}
\bb E^{\epsilon}_{m} \big[ u_0( \mtt x(\varrho_\epsilon - s))
\,\big] 
\,=\, \sum_{m'\in \ms M_{p+1}(k_m)} u_0(m')\, 
\bb P^{\epsilon}_m\big[ \, 
\mtt x(\varrho_\epsilon - s) \in \ms E(m') \, \big]
\,+\, R^{(2)}_\epsilon (s)\,+\, R_\delta \;,
\end{equation}
where $|R_\delta| \le \delta$, and 
\begin{equation*}
|R^{(2)}_\epsilon (s)| \,\le\, \Vert u_0\Vert_\infty\;
\bb P^{\epsilon}_m\Big[ \, 
\mtt x(\varrho_\epsilon - s) \not\in \ms E(\ms M_{p+1}(k_m)) \,\Big] \;.
\end{equation*}

We claim that
\begin{equation}
\label{84}
\lim_{\epsilon\to 0} \sup_{|s| \le \vartheta^{(p)}_\epsilon}
\bb P^{\epsilon}_m\big[ \, 
\mtt x(\varrho_\epsilon - s) \not\in  \ms E(\ms M_{p+1}(k_m)) \,\big] \,=\, 0\;.
\end{equation}
Since
$\varrho_\epsilon \succ \vartheta^{(p)}_\epsilon \succ
\theta^{(p)}_\epsilon$, by \eqref{33} and Corollary \ref{l41},
\begin{equation*}
\lim_{\epsilon\to 0} \sup_{|s| \le \vartheta^{(p)}_\epsilon}
\bb P^{\epsilon}_m\big[ \, 
\mtt x(\varrho_\epsilon - s) \not\in \ms E_p
\,\big] \,=\, 0\;, \quad
\lim_{\epsilon\to 0} \sup_{|s| \le \vartheta^{(p)}_\epsilon}
\bb P^{\epsilon}_m\big[ \, 
\mtt x(\varrho_\epsilon - s) \in \ms E_p
\setminus \ms E_{p+1} \,\big] \,=\, 0 \;.
\end{equation*}
On the other hand, as
$\varrho_\epsilon + \vartheta^{(p)}_\epsilon \prec
\theta^{(p+1)}_\epsilon$, by Corollary \ref{l07},
\begin{equation*}
\lim_{\epsilon\to 0} \sup_{|s| \le \vartheta^{(p)}_\epsilon}
\bb P^{\epsilon}_m\big[ \, 
\mtt x(\varrho_\epsilon - s) \in \ms E_{p+1}
\setminus \ms E(\ms M_{p+1}(k_m)) \,\big] \,=\, 0 \;,
\end{equation*}
which completes the proof of \eqref{84}.

By Lemma \ref{l38}, the first term on the right-hand side of
\eqref{87} converges to
\begin{equation*}
U_{p+1} (m) \, :=\,
\sum_{m'\in \ms M_{p+1}(k_m)}  u_0(m')\,
\frac{\pi(m')}{\pi(\ms M_{p+1}(k_m))}\;\cdot
\end{equation*}

In view of the previous
estimate, \eqref{91b} is equal to
\begin{equation*}
\sum_{m\in M_p(x)} U_{p+1} (m) \, \bb P^{\epsilon}_{x}
\big[ \, \tau(\ms M_{p+1}) <
\vartheta^{(p)}_\epsilon\,,\,
\mtt x(\tau(\ms M_{p+1})) = m \,\big] \,+\, R^{(3)}_{\epsilon, \delta} (x)\;,
\end{equation*}
where
\begin{equation*}
\limsup_{\epsilon\to 0} \sup_{x\in\bb R}
|\, R^{(3)}_{\epsilon, \delta} (x) \,| \,\le
\, (1+ 2\Vert u_0\Vert_\infty) \, \delta  \;.
\end{equation*}
By Corollary \ref{l40} we may remove the indicator of the event
$\{ \tau(\ms M_{p+1}) < \vartheta^{(p)}_\epsilon\}$. To complete the proof
of the theorem, it remains to apply Corollary \ref{l26} and to let
$\delta\to 0$.
\end{proof}

\subsection*{Proof of Theorem \ref{mt2}.(c)}

The proof is identical to the one of part (b) provided we replace
$\theta^{(p)}_\epsilon$ by $\epsilon^{-1}$. There are only two
differences.

First, we apply Lemma \ref{l01}, instead of Corollary \ref{l40}, to
show that the term $R^{(1)}_\epsilon(x)$ in \eqref{91b} satisfies
\eqref{88}. This explains why we need to assume that
$\varrho_\epsilon \succ \epsilon^{-1}$.

Secondly, we replace Corollary \ref{l41} by the following result,
whose proof is analogous to the one of Corollary \ref{l41}. 
Fix  sequences $\varrho_\epsilon$,
$\vartheta_\epsilon$ such that
$\epsilon^{-1} \prec \varrho_\epsilon$,
$\vartheta_\epsilon\prec \varrho_\epsilon$.  Then,
\begin{equation*}
\lim_{\epsilon\to 0} \sup_{x\in \bb R}
\sup_{|s| \le \vartheta^{(p)}_\epsilon}
\bb P^{\epsilon}_x\big[ \, 
\mtt x(\varrho_\epsilon - s) \in \ms E_1^c \,\big] \,=\, 0 \;.
\end{equation*}

The details are left to the reader.

\qed

\subsection*{Proof of Theorem \ref{mt5}}

The proof is similar to the one of Theorem \ref{mt2}.(b).
Assume that the initial condition $u_0(\cdot)$ is $\ell$-periodic.
Fix $\delta>0$, and recall from \eqref{71} the definition of the wells
$\ms E(m)$. Since $u_0(\cdot)$ is periodic, we may assume that
$x\in [0, \ell)$, and choose $r_0$ small enough so that
\begin{equation}
\label{103}
\max_{m\in \mc M} \sup_{y\in\ms E(m)} |u_0(y) -
u_0(m)| \,\le\, \delta \;.
\end{equation}

Recall the definition of the Markov chain
$\bb X_{\mf q, \ell} (\cdot)$, whose stationary state is
represented by $\pi_{\mf q, \ell}$. As the chain is
irreducible, for every $\delta>0$, there exists $t_0>0$ such that
\begin{equation*}
\max_{0 \le i, j < \ell\, \mf u_{\mf q}}\,
\Big|\, \mtt Q^{\mf q, \ell}_{\ms M_{\mf q, \ell} (i)}
\big[ \, \mtt x(t_0) = \ms M_{\mf q, \ell} (j)\, \big]
\,-\, \pi_{\mf q, \ell} (\ms M_{\mf q, \ell} (j)) \,
\Big|\,\le\,\delta \;,
\end{equation*}
where $\mtt Q^{\mf q, \ell}_{\ms M_{\mf q, \ell} (i)}$
represents the probability measure on the path space induced by the
Markov chain $\bb X_{\mf q, \ell} (\cdot)$ starting
from $\ms M_{\mf q, \ell} (i)$.

As $\mtt a(\cdot)$ and $\mtt b(\cdot)$ are periodic, we may consider
that the diffusion $X_\epsilon(\cdot)$ evolves on the torus
${\color{blue} \bb T_\ell} := [0,\ell)$.  By the stochastic
representation \eqref{90b} of the solution, by the Markov property,
and the $\ell$-translation invariance of $u_0(\cdot)$, as
$\theta^{(\mf q)}_\epsilon \prec \varrho_\epsilon$,
\begin{equation*}
u(\varrho_\epsilon,x) \;=\;
\bb E^{\bb T_\ell, \epsilon}_{[x]_\ell} \Big[ \,
\bb E^{\bb T_\ell,\epsilon}_{\mtt x(\varrho_\epsilon - t_0 \theta^{(\mf q)}_\epsilon)}
\big[ \,
u_0( \mtt x(t_0\theta^{(\mf q)}_\epsilon))\,\big]\, \Big] \;,
\end{equation*}
where $[x]_\ell$ is the representant of $x$ in $\bb T_\ell$, and
$\color{blue} \bb P^{\bb T_\ell, \epsilon}_{y}$, $y\in \bb T_\ell$, is
the probability measure on the path space induced by the diffusion
evolving on $\bb T_\ell$ and starting from $y$. Therefore,
\begin{equation*}
\begin{aligned}
& \Big|\, u_{\epsilon}(\varrho_{\epsilon} , x)
\,-\, \sum_{k=0}^{\ell \mf u_{\mf q}- 1}
\pi_{\mf q, \ell} (\ms M_{\mf q}(k)))\,
\sum_{m \in \ms M_{\mf q} (k)}
\frac{\pi(m)}{\pi(\ms M_{\mf q}(k))} \, u_{0}(m) \, \Big|
\\
& \quad \le\;
\sup_{y\in \bb T_\ell}
\Big|\, \bb E^{\bb T_\ell,\epsilon}_{y}
\big[ \,
u_0( \mtt x(t_0 \theta^{(\mf q)}_\epsilon))\,\big]
\,-\, \sum_{k=0}^{\ell \mf u_{\mf q}- 1}
\pi_{\mf q, \ell} (\ms M_{\mf q}(k)))\,
\sum_{m \in \ms M_{\mf q} (k)}
\frac{\pi(m)}{\pi(\ms M_{\mf q}(k))} \, u_{0}(m) \, \Big|\,.
\end{aligned}
\end{equation*}

We first move the starting point from $y\in \bb T_\ell$ to
$m\in {\color{blue} \ms M_{\mf q, \ell}} := \cup_{j\in \ms S_{\mf q,
\ell}} \ms M_{\mf q, \ell} (j)$. Fix a time-scale
$\theta^{(\mf q-1)}_\epsilon \prec \vartheta_\epsilon\prec
\theta^{(\mf q)}_\epsilon$.  Introduce the restriction
$\{\tau(\ms M_{\mf q, \ell}) < \vartheta_\epsilon\}$, to obtain from
Corollary \ref{l40} that
\begin{align}
\label{91c}
& \bb E^{\bb T_\ell,\epsilon}_{y} \big[ \,
u_0( \mtt x(t_0 \theta^{(\mf q)}_\epsilon))\,\big]
\\
&\quad \,=\,
\sum_{m\in \ms M_{\mf q, \ell}} \bb E^{\bb T_\ell, \epsilon}_{y}
\Big[ \, \mtt 1\big\{ \tau(\ms M_{\mf q, \ell}) <
\vartheta_\epsilon \,,\, \mtt x(\tau(\ms M_{\mf q, \ell})) = m \}\, 
\bb E^{\bb T_\ell,\epsilon}_{m} \big[ u_0( \mtt x(t_0 \theta^{(\mf q)}_\epsilon
- \tau(\ms M_{\mf q, \ell}) )) \,\big] \,\Big]
\,+\, R^{(1)}_\epsilon (y)\;,
\nonumber
\end{align}
where $R^{(1)}_\epsilon (y)$ satisfies \eqref{88}.

By \eqref{103}, for all $0\le s \le \vartheta_\epsilon$,
$m\in \ms M_{\mf q, \ell}$,
\begin{equation}
\label{87b}
\bb E^{\bb T_\ell, \epsilon}_{m} \big[ u_0( \mtt x(t_0\theta^{(\mf
q)}_\epsilon - s))
\,\big] 
\,=\, \sum_{m'\in \ms M_{\mf q, \ell}} u_0(m')\, 
\bb P^{\bb T_\ell, \epsilon}_m\big[ \, 
\mtt x(t_0\theta^{(\mf q)}_\epsilon - s) \in \ms E(m') \, \big]
\,+\, R^{(2)}_\epsilon (s)\,+\, R_\delta \;,
\end{equation}
where $|R_\delta|\le \delta$, and
\begin{equation*}
|R^{(2)}_\epsilon (s)| \,\le\, \Vert u_0\Vert_\infty\;
\bb P^{\bb T_\ell, \epsilon}_m\Big[ \, 
\mtt x(t_0 \theta^{(q)}_\epsilon - s) \not\in
\ms E(\ms M_{\mf q, \ell}) \,\Big] \;.
\end{equation*}
By the proof of \eqref{84} with $p+1=\mf q$, $\varrho_\epsilon =
\theta^{(q)}_\epsilon$, $\vartheta^{(p)}_\epsilon = \vartheta_\epsilon$,
since $\vartheta_\epsilon \succ \theta^{(\mf q-1)}_\epsilon$
\begin{equation}
\label{84b}
\lim_{\epsilon\to 0} \sup_{|s| \le \vartheta_\epsilon}
|R^{(2)}_\epsilon (s)| \,=\, 0\;.
\end{equation}
Mind that we do not need the last part of the argument, where
Corollary \ref{l07} is used, because $\ms M_{\mf q, \ell}$
contains all the wells of the last layer.

It remains to estimate the probability appearing in \eqref{87b}.
Suppose that $m\in \ms M_{\mf q}(j)$, $m'\in \ms M_{\mf q}(k)$ for
some $0\le j, k < \ell \, \mf u_{\mf q}$.  We claim that
\begin{equation*}
\lim_{\epsilon\to 0} 
\sup_{|s|\le \vartheta_\epsilon}
\Big|\, \bb P_m^{\bb T_\ell,\epsilon } \big[\, \mtt x
(t_0 \theta^{(q)}_\epsilon  - s) \in \ms E(m') \,\big]
\,-\, \pi_{\mf q, \ell} (\ms M_{\mf q}(k)))\, \,
\frac{\pi(m')}{\pi(\ms M_{\mf q}(k))} \,\Big| \,=\, 0\;.
\end{equation*}
Bound this expression by
\begin{equation*}
\begin{aligned}
& 
\Big|\, \bb P_m^{\bb T_\ell,\epsilon } \big[\, \mtt x
(t_0 \theta^{(q)}_\epsilon  - s) \in \ms E(m')\,\big]
\,-\, \mtt Q^{\ell, \mf q}_{\ms M_{\mf q} (j)}
\big[\, \mtt x(t_0) = \ms M_{\mf q} (k)\,]
\, \frac{\pi(m')}{\pi(\ms M_{\mf q}(k))} \,\Big|
\\
&\quad +\,
\Big|\, \mtt Q^{\ell, \mf q}_{\ms M_{\mf q} (j)}
\big[\, \mtt x(t_0) = \ms M_{\mf q} (k)\,]
\, \frac{\pi(m')}{\pi(\ms M_{\mf q}(k))} \,-\,
\pi_{\mf q, \ell} (\ms M_{\mf q}(k)))\, 
\frac{\pi(m')}{\pi(\ms M_{\mf q}(k))} \,\Big|
\end{aligned}
\end{equation*}
By the choice of $t_0$, the second term is bounded by $\delta$.
By Lemma \ref{l36}, the first one vanishes as $\epsilon\to 0$. This
completes the proof of the theorem.

\appendix

\section{The hierarchical structure}
\label{sec4}

In this section, we prove the assertions of Section \ref{sec7}
regarding the construction of the hierarchy structure of the model,
and present further properties of this structure needed in the proof
of Theorems \ref{mt3}, \ref{mt4}.

\begin{proof}[Proof of Property $\mc P_{10}(1)$]
By postulate $\mc P_9(1)$, the jump rates $R_1$ are $N$-periodic.
There exists, therefore, $k\in \bb Z$ such that $\mf h_1 = h^1_k$.
Since $\pi_1(k)\, \sigma_1(k,k\pm 1)$, $k\in \bb Z$, is strictly
positive and finite,
$R_1(\ms M_{1}(k),\ms M_{1}(k+1)) + R_1(\ms M_{1}(k),\ms M_{1}(k-1))
>0$. The rest of the proof is analogous to the one of
$\mc P_{10}(p+1)$ presented in the proof of Proposition \ref{l10b}
below.
\end{proof}

\begin{proof}[Proof of Proposition \ref{l12b}]
Property $\mc P_1(p+1)$, $\mc P_2(p+1)$ have been proved right before
the statement of the proposition.  We turn to postulate
$\mc P_3(p+1)$.  By \eqref{75b}, the definition of the sets
$\ms M_{p+1} (j)$, $j\in \bb Z$, and condition $\mc P_2(p+1)$,
$1 + \ms M_{p+1} (0) = \ms M_{p+1} (k)$ for some $k\ge 0$.  The index
$k$ cannot be $0$ because we have already shown that the sets
$\ms M_{p+1} (j)$ are bounded. [If
$1 + \ms M_{p+1} (0) = \ms M_{p+1} (0)$, then
$1+m_{\mss j_{p+1}} \in 1 + \ms M_{p+1} (0) = \ms M_{p+1} (0)$, and by
induction $j+m_{\mss j_{p+1}} \in \ms M_{p+1} (0)$ for all $j\ge 1$].

By property $\mc P_2(p+1)$, $\ms M_{p+1} (0) < \ms M_{p+1} (1) <
\cdots < \ms M_{p+1} (k-1) < \ms M_{p+1} (k) = 1 + \ms M_{p+1}
(0)$. Thus $k$ represents the number of equivalent classes in $\ms
S_{p+1}$, which has been denoted by $\mf u_{p+1}$. This proves
property $\mc P_3(p+1)$.

Consider postulate $\mc P_4(p+1)$.  Fix $k\in \bb Z$.  By
construction,
$\ms M_{p+1}(k) = \cup_{j : \ms M_{p}(j) \in \mf R} \ms M_{p}(j)$ for
some $\bb X_p$-recurrent class $\mf R$.  By postulate $\mc P_4(p)$,
all elements of $\ms M_{p}(j)$ have the same depth. It remains to show
that $S(\ms M_{p}(j))$ is constant.

By postulate $\mc P_7(p)$, $\bb X_p(\cdot)$ jumps only to nearest
neighbour sets, and by condition $\mc P_2(p)$, the sets $\ms M_p(j)$
are ordered. Thus, as $\bb X^p(\cdot)$ has more than one recurrent
class, $\mf R = \{ \ms M_{p}(j) : a\le j\le b\}$ for some
$-\infty<a\le b<+\infty$. Denote $a$, $b$ by $l_{p+1}(k)$,
$r_{p+1}(k)$, respectively, so that
\begin{equation}
\label{43}
\ms M_{p+1}(k) \,:=\, \bigcup_{i=l_{p+1}(k)}^{r_{p+1}(k)} \ms M_p(j) \;.
\end{equation}
Moreover, $R_p(\ms M_{p}(i), \ms M_{p}(i+1))>0$,
$R_p(\ms M_{p}(i+1), \ms M_{p}(i))>0$ for all
$l_{p+1}(k)\le i < r_{p+1}(k)$, Thus, by postulate $\mc P_8(p)$,
$S(\ms M_{p}(j)) = S(\ms M_{p}(l_{p+1}(k)))$ for all
$l_{p+1}(k)\le j\le r_{p+1}(k)$, what proves postulate $\mc P_4(p+1)$.
\end{proof}

\begin{proof}[Proof of Proposition \ref{l11b}]
We first show that $\mf h_{p+1} > \mf h_p$. By the definition
\eqref{47b}--\eqref{38b} of $\mf h_{p+1}$, to prove this inequality,
we have to show that
\begin{equation}
\label{39}
\Lambda\, \big (\ms M_{p+1}(k+ 1) \,,\,
\ms M_{p+1}(k)\big) \,-\,
S(\ms M_{p+1}( k+i)) \,>\, \mf h_p
\end{equation}
for all $k\in \bb Z$, $i = 0$, $1$.

We prove \eqref{39} for $i=1$. The same argument applies to $i=0$. Fix
$k\in \bb Z$. Recall the notation introduced in \eqref{43}. By
definition, the set $\ms M_{p} (r_{p+1}(k)+1)$ does not belong to the
$\bb X^p(\cdot)$-recurrent class of $\ms M_{p} (r_{p+1}(k))$, so that
$R^p(\ms M_{p} (r_{p+1}(k)), \ms M_{p} (r_{p+1} (k)+1)) =0$. Hence, by
conditions $\mc P_6(p)$, $\mc P_7(p)$,
\begin{equation*}
\Lambda\, \big (\ms M_{p}(r_{p+1}(k)) \,,\,
\ms M_{p}(r_{p+1}(k)+1)\big) \,-\,
S(\ms M_{p}( r_{p+1}(k))) \,>\, \mf h_p \;.
\end{equation*}
Since $\ms M_{p} (r_{p+1}(k)) \,\subset\, \ms M_{p+1} (k)$, by definition
of $\ms M_{p+1} (k)$ and postulate $\mc P_4(p+1)$,
$S(\ms M_{p}( r_{p+1}(k))) = S(\ms M_{p+1} (k))$. On the other
hand, by definition of $l_{p+1}(k+1)$, $r_{p+1}(k)+1 \le l_{p+1}(k+1)$ so that
\begin{equation*}
\Lambda (\ms M_{p}(r_{p+1}(k)) \,,\,
\ms M_{p}(r_{p+1}(k)+1) )  \,\le\,  \Lambda\, (\ms M_{p}(r_{p+1}(k)) \,,\,
\ms M_{p}(l_{p+1}(k+1)))\;.
\end{equation*}
By definition of $r_{p+1}(k)$, $l_{p+1}(k+1)$, the right-hand side is
equal to $\Lambda\, (\ms M_{p+1}(k) \,,\, \ms M_{p+1}(k+1))$, which
completes the proof of \eqref{39}.

\smallskip We turn to the proof of postulate $\mc P_5(p+1)$.  Fix
$k\in \bb Z$, and assume that $\ms M_{p+1}(k)$ has two or more
elements. Let $m'$, $m'' \in \ms M_{p+1}(k)$. Suppose, first, that
$m'$, $m''$ belong to the same set
$\ms M_{p}(i) \subset \ms M_{p+1}(k)$. By postulates $\mc P_5(p)$,
$\mc P_{4}(p+1)$ and since $\mf h_p < \mf h_{p+1}$,
\begin{equation*}
\Lambda (m' ,m'') \le  S(\ms M_{p}(i) ) + \mf
h_{p-1}  \,=\,  S(\ms M_{p+1}(k) ) + \mf h_{p-1} <
S(\ms M_{p+1}(k) ) + \mf h_{p}\;.
\end{equation*} 

Recall the notation introduced in \eqref{43}, and suppose now that
$m' \in \ms M_{p}(i) $, $m'' \in \ms M_{p}(j) $ for some $i$,
$j \in \{l_{p+1}(k), \dots, r_{p+1} (k)\}$, $i<j$. Recall from
\eqref{49b} the definition of the rightmost, leftmost, $m^+_{p,i}$,
$m^-_{p,i}$, element of $\ms M_{p}(i)$, respectively.  Then,
\begin{equation*}
\Lambda (m',m'') \,\le \,
\max \Big\{ \Lambda (m', m^+_{p,i}) \,,\,
\Lambda (m^+_{p,i}, m^-_{p,i+1}) \,,\,
\dots \,,\,
\Lambda (m^+_{p,j-1}, m^-_{p,j}) \,,\,
\Lambda (m^-_{p,j}, m'') \, \Big\}\;.
\end{equation*}
As above, by postulates $\mc P_5(p)$, $\mc P_{4}(p+1)$, and since $\mf
h_p < \mf h_{p+1}$, 
\begin{equation*}
\Lambda (m^-_{p,o}, m^+_{p,o}) \,\le\,  S(\ms M_{p}(o) ) \,+\, \mf
h_{p-1} \,=\,  S(\ms M_{p+1}(k) ) \,+\,  \mf h_{p-1} 
\end{equation*}
for all $i<o<j$. The same argument permits to bound the first and last
term on the right-hand side of the penultimate displayed formula by
$S(\ms M_{p+1}(k) ) + \mf h_{p}$. On the other hand, as $\ms M_p(o)$,
$\ms M_p(o+1)$ belong to the same $\bb X^p(\cdot)$-recurrent class, by
definition of $m^+_{p,o}$ $m^-_{p,o+1}$, properties $\mc P_7(p)$,
$\mc P_{4}(p+1)$ 
\begin{equation*}
\Lambda (m^+_{p,o}, m^-_{p,o+1}) \,=\,
\Lambda ( \ms M _p(o), \ms M _p(o+1)) \,=\,
S(\ms M_{p}(o) ) \,+\, \mf h_p
\,=\,
S(\ms M_{p+1}(k) ) \,+\, \mf h_p \;. 
\end{equation*}
This completes the proof of postulate $\mc P_5(p+1)$.

Postulate $\mc P_6(p+1)$ follows from the definition \eqref{38b} of
$\mf h_{p+1}$.  This completes the proof of the proposition;
\end{proof}

\begin{proof}[Proof of Proposition \ref{l10b}]
Condition $\mc P_7(p+1)$ follows from the definition \eqref{50} of
the jump rates.  We turn to $\mc P_8(p+1)$.  Fix $j\in \bb Z$ and
suppose that $R_q(\ms M_{q}(j),\ms M_{q}(j + 1))>0$. The same argument
applies to $j-1$. By definition of the rate function,
$h^{p+1, +}_j = \mf h_{p+1}$ so that, by \eqref{47b},
\begin{equation*}
\Lambda\, \big (\ms M_{p+1}(j+ 1) \,,\,
\ms M_{p+1}(j)\big)-S(\ms M_{p+1}(j)) \,=\,
\mf h_{p+1}\;.
\end{equation*}
By $\mc P_6(p+1)$ (for $k=j+1$), this quantity is bounded above by
\begin{equation*}
h^{p+1, -}_{j+1} \,=\,
\Lambda\, \big (\ms M_{p+1}(j+ 1) \,,\,
\ms M_{p+1}(j)\big)-S(\ms M_{p+1}(j+1)) \;,
\end{equation*}
so that $S(\ms M_{p+1}(j+1)) \le S(\ms M_{p+1}(j))$, as required. If
$R_q(\ms M_{q}(j+1),\ms M_{q}(j))=0$, then, by definition of the jump
rates, $h^{p+1, -}_{j+1} > \mf h_{p+1}$. Thus, the penultimate
displayed equation is strictly less than the last one, and
$S(\ms M_{p+1}(j+1)) < S(\ms M_{p+1}(j))$. This completes
the proof of postulate $\mc P_8(p+1)$.

Consider postulate $\mc P_9(p+1)$. By the $1$-periodicity of
the functions $\mss a(\cdot)$, $\mss b(\cdot)$, and postulate
$\mc P_3(p+1)$, $\pi_{p+1}(k+\mf u_{p+1}) = \pi_{p+1}(k)$,
$\sigma_{p+1}(k+\mf u_{p+1}, k+ 1+ \mf u_{p+1}) = \sigma_{p+1}(k, k+1)$
for all $k\in \bb Z$. Hence, by \eqref{77}, postulate $\mc P_9(p+1)$
is in force.  

To prove claim \eqref{79}, note that the weights $\pi_{p+1}(j)$,
$\sigma_{p+1}(j,j+1)$, $j\in\bb Z$, are strictly positive. Hence,
\eqref{79} follows from the fact, mentioned just before the statement
of Proposition \ref{l11b}, that $\mf h^{p+1} = h^{p+1}_{k}$ for some
$k\in \bb Z$.

We complete the proof of the proposition showing that
$\mf u_{p+2} < \mf u_{p+1}$, which is postulate $\mc P_{10}(p+1)$.
Fix $k\in \bb Z$ for which \eqref{79} holds.  By postulate
$\mc P_9(p+1)$, there exists $0\le k'<\mf u_{p+1}$ for which
\eqref{79} holds. Since one of the jump rates is strictly positive,
$\ms M_{p+1}(k')$ is either a transient state or belongs to a
recurrent class with at least two states. In both cases
$\mf u_{p+2} < \mf u_{p+1}$.

More precisely. Suppose, first, that $\ms M_{p+1}(k')$ is a transient
state. Then, by the definition \eqref{24}, the number of sets
$\ms M^*_{p+2}(j)$ containing one of the sets $\ms M_{p+1}(i)$,
$0\le i < \mf u_{p+1}$, is at most $\mf u_{p+1} -1$, because
$\ms M_{p+1}(k')$ is a transient state. Thus,
$\mf u_{p+2} < \mf u_{p+1}$.

Suppose, next, that $\ms M_{p+1}(k')$ belongs to a recurrent class,
and assume without loss of generality that
$R_{p+1}(\ms M_{p+1}(k'),\ms M_{p+1}(k'-1)) >0$.  Since
$\ms M_{p+1}(k')$ belongs to a recurrent class,
$R_{p+1}(\ms M_{p+1}(k'+1),\ms M_{p+1}(k')) >0$ also.  Thus,
$\ms M_{p+1}(k')$ and $\ms M_{p+1}(k'-1)$ are both contained in the
same set $\ms M^*_{p+2}(j)$ so that $\mf u_{p+2} < \mf u_{p+1}$.
\end{proof}

\subsection*{Further properties of the metastable structure}
\label{sec3b}

We present in this subsection additional properties of the higher order
metastable structure. We start with simple observations. Let
$\ms T_1 = \varnothing$, and define recursively $\ms T_{p+1}$ as
\begin{equation}
\label{80}
{\color{blue}\ms T_{p+1} } \,:=\, \ms T_p \,\cup\,
\bigcup_{i\colon \ms M_p(i)  \in \mf
T_p} \ms M_p(i) \,, \; 1\le p\le \mf q\,, \;\;
\text{so that}\;\;
{\color{blue} \ms P_{q}} := \{\ms M_{q} , \ms T_{q} \} \,,\;
1\le q\le \mf q+1\,,
\end{equation}
is a partition of $\mc M$.

\begin{lemma}
\label{l18}
For $1\le p\le \mf q$, $\ms M_{p} (k) \subset (0,1)$, $1\le k<\mf
u_{p}$. In particular, by  $\mc P_2(p)$,
\begin{equation}
\label{eq:1}
\ms M_p(-1) <  0< \ms M_p(1) < \cdots < \ms M_p(\mf u_p-1) < 1\;,
\quad \text{and}\;\;
\ms M_p(0)\cap (0,1) \,\neq\, \varnothing \;.
\end{equation}
\end{lemma}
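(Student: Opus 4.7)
My plan is to combine postulates $\mc P_2(p)$ and $\mc P_3(p)$ with the definition of $\mss j_p$. The key preliminary observation is that $m_{\mss j_p} \in \ms M_p(0)$ and $\mss j_p \in \{0, \ldots, N-1\}$. The latter holds trivially for $p = 1$ (since $\ms M_1 = \mc M$ forces $\mss j_1 = 0$), and for $p \ge 2$ it is the content of Claim B in Section \ref{sec7}. Hence by the enumeration \eqref{68}, $m_{\mss j_p} \in (0, 1)$, which already yields $\ms M_p(0) \cap (0, 1) \neq \varnothing$.

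I then fix $1 \le k < \mf u_p$ and $m \in \ms M_p(k)$, and argue the two bounds separately. For $m > 0$: postulate $\mc P_2(p)$ gives $\ms M_p(0) < \ms M_p(k)$, so $m > m_{\mss j_p} > 0$. For $m < 1$ I argue by contradiction. Suppose $m \ge 1$. By $1$-periodicity of $\mss a$ and $\mss b$ the set $\mc M$ is invariant under integer translations, so $m - 1 \in \mc M$, and $\mc P_3(p)$ places $m - 1 \in \ms M_p(k - \mf u_p)$. Since $k - \mf u_p < 0$, postulate $\mc P_2(p)$ yields $\ms M_p(k - \mf u_p) < \ms M_p(0)$, hence $m - 1 < m_{\mss j_p} < 1$. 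Combined with $m - 1 \ge 0$, this exhibits $m - 1 = m_j$ for some $0 \le j < \mss j_p$ with $m_j \in \ms M_p$, contradicting the minimality in the definition of $\mss j_p$.

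Once the inclusion $\ms M_p(k) \subset (0,1)$ is established for $1 \le k < \mf u_p$, the chain of inequalities in \eqref{eq:1} is immediate from $\mc P_2(p)$, and the non-emptiness $\ms M_p(0) \cap (0, 1) \neq \varnothing$ was already recorded at the outset via $m_{\mss j_p}$. The only genuinely non-trivial step is the contradiction argument for the upper bound; the main obstacle there is to realize that $\mc P_3(p)$ (to shift by an integer), $\mc P_2(p)$ (to place the shifted set to the left of $\ms M_p(0)$), and the minimality in the definition of $\mss j_p$ must be invoked simultaneously.
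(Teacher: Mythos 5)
Your proof is correct and rests on the same three ingredients the paper uses: postulate $\mc P_2(p)$ for the ordering and lower bound, postulate $\mc P_3(p)$ for integer-translation invariance of $\ms M_p$, and the minimality in the definition of $\mss j_p$ for the upper bound. The only difference is cosmetic — the paper establishes the forward bound $\ms M_p(\mf u_p-1) < 1+m_0 $ and then $< 1$, whereas you assume $m\ge 1$ for some $m\in\ms M_p(k)$ and derive a contradiction with the minimality of $\mss j_p$ — so this is essentially the same argument.
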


\begin{proof}
Since, by $\mc P_2(p)$, $\ms M_{p}(0)$ contains
$m_{\mss j_{p}} \in (0,1)$, and the sets $\ms M_{p}(j)$ are ordered,
it is enough to show that $\ms M_{p}(\mf u_{p} -1) \subset (0,1)$.  To
prove this relation, observe that, by $\mc P_2(p)$, $\mc P_3(p)$,
$\ms M_{p}(\mf u_{p} -1) < \ms M_{p}(\mf u_{p}) = 1 + \ms
M_{p}(0)$. Thus, $\ms M_{p}(\mf u_p-1) < 1 + m_{\mss j_{p}}$.  By
definition of $\mss j_{p}$, $m_i\not\in\ms M_{p}$ for
$0\le i<\mss j_{p}$. Hence, by $\mc P_3(p)$,
$1+m_{i} \not\in\ms M_{p}$ for $0\le i<\mss j_{p}$. We may thus
improve the upper bound of $\ms M_{p}(\mf u_p-1)$ and get that
$\ms M_{p}(\mf u_p-1) < 1 + m_{0}$ because $\ms M_{p}(\mf u_p-1)$ only
contains $S$-local minima. Since there is no $S$-local minima in the
interval $[1,1+m_0)$, $\ms M_{p}(\mf u_p-1) < 1$.
\end{proof}

Fix $2\le p\le \mf q$ and $\ms M_p(k)$, $k\in \bb Z$. The set
$\ms M_p(k)$ is composed of local minima of $S(\cdot)$.
There might be other local minima of $S$ between two elements of
$\ms M_p(k)$. The first assertion of this section states
that these local minima are ``higher'' than the ones of
$\ms M_p(k)$, which, according to postulate $\mc P_4(p)$ are
all at the same height. A precise statement requires some notation.

We refer to Figure \ref{fig-f3} for an illustration of the next
definitions. Recall from \eqref{49b} the definitions of the leftmost
$m_{p,k}^-$ and rightmost $m_{p,k}^+$ elements of $\ms M_p(k)$,
respectively.  Keep in mind that $ m_{p,k}^-$, $ m_{p,k}^+$ might
coincide if $\ms M_{p}(k)$ is a singleton.

Recall from \eqref{max_br} the definition of the set
$\ms W^{(p)}_{j,j+1}$, $j\in \bb Z$.  Let $\sigma^{p,+}_{j, j+1}$,
$\sigma^{p,-}_{j, j+1}$ be the rightmost and leftmost maxima in
$\ms W^{(p)}_{j,j+1}$, respectively:
\begin{equation}
\label{64}
{\color{blue} \sigma^{p,+}_{j, j+1}} \,:=\,
\max\{\sigma\in \ms W^{(p)}_{j,j+1} \}\;,
\quad
{\color{blue}  \sigma^{p,-}_{j,j+1}}\, := \,
\min\{\sigma\in \ms W^{(p)}_{j,j+1}\}\;.
\end{equation}
Recall from \eqref{76} that $\mc M\subset \bb R$ represents the set of
local minima of $S(\cdot)$. Denote by $\mss M_p(k)$ the set of local
minima of $S(\cdot)$ which belong to the interval
$[\sigma^{p,+}_{k-1,k} , \sigma^{p,-}_{k,k+1}]$ and are not in
$\ms M_{p}(k)$:
\begin{equation*}
{\color{blue} \mss M_p(k) } \,:=\,
\big(\, \mc M \cap [\sigma^{p,+}_{k-1,k} ,
\sigma^{p,-}_{k,k+1}] \,\big)  \setminus \ms M_{p}(k) \;.
\end{equation*}
As illustrated in Figure \ref{fig-f3}, some elements of $\mss M_p(k)$
may belong to the interval $[\sigma^{p,+}_{k-1, k}, m^-_{p,k})$ or to
$[m^+_{p,k}, \sigma^{p,-}_{k,k+1})$.

\begin{figure}
\centering
\begin{tikzpicture}[scale=0.5]
\draw [rounded corners] (0.5,-3) -- (1,-1);
\draw[rounded corners] (1,-1) .. controls (1.5,1) .. (2,3);
\draw[rounded corners] (2,3) .. controls (2.5,5) .. (3,3);  
\draw[rounded corners] (3,3) -- (3.2, 2.2);
\draw[rounded corners] (3.2,2.2) -- (3.5, 1);
\draw[rounded corners] (3.5, 1) .. controls (3.8, 0) .. (4.1, 0.5);
\draw[rounded corners] (4.1, 0.5) .. controls (4.4, 1) .. (4.7, 0.5);
\draw[rounded corners] (4.7, 0.5) .. controls (5, 0) .. (5.3, 1);
\draw[rounded corners] (5.3, 1) .. controls (5.6, 2) .. (5.9, 1);
\draw[rounded corners] (5.9, 1) -- (6.4, -1);
\draw[rounded corners] (6.4,-1) .. controls (6.9, -3) .. (7.4, -1);
\draw[rounded corners] (7.4,-1) .. controls (7.9, 1) .. (8.4, -1);
\draw[rounded corners](8.4,-1) .. controls (8.7,-2)..(9,-1);
\draw[rounded corners](9,-1).. controls (9.3,0)..(9.6,-1);
\draw[rounded corners] (9.6,-1) .. controls (10.1,-3) .. (10.6, -1);
\draw[rounded corners](10.6,-1)--(11.1,1);
\draw[rounded corners](11.1,1)..controls (11.4,2)..(11.7,1);
\draw[rounded corners](11.7,1)--(12.2,-1);
\draw[rounded corners](12.2,-1).. controls (12.7,-3)..(13.2,-1);
\draw[rounded corners](13.2,-1)..controls (13.7,1)..(14.2,3);
\draw[rounded corners](14.2,3)..controls (14.5,4)..(14.8,3);
\draw[rounded corners](14.8,3)..controls (15.3,1)..(15.8,3);
\draw[rounded corners](15.8,3)..controls (16.3,5)..(16.8,3);
\draw[rounded corners](16.8,3)..controls (17.1,2)..(17.4,2.5);
\draw[rounded corners](17.4,2.5)..controls(17.7,3)..(18,2.5);
\draw[rounded corners](18,2.5)..controls(18.3,2)..(18.6,3);
\draw[rounded corners](18.6,3)..controls(19.1,5)..(19.6,3.5);
\draw[rounded corners](19.6,3.5)..controls (20.1,2)..(20.5,3.5);
\draw[rounded corners](20.5,3.5)..controls (21,5)..(21.5,3.5);
\draw[rounded corners](21.5,3.5)--(23,-2);
\draw[rounded corners](23,-2)--(23.5,-3.5);

\draw[dashed, black](2.5,4.6)-- (16.3,4.6);

\fill(16.3,-5.7)node[below, font=\small]{$\sigma^{p,-}_{k,k+1}$};
\fill(21,-5.7)node[below, font=\small]{$\sigma^{p,+}_{k,k+1}$};
\draw[thick] (16.3,-5.7) -- (16.3,-5.3);
\draw[thick] (21,-5.7) -- (21,-5.3);

\draw[thick,fill,cyan](6.9,-5.5)circle(.2);
\draw[thick,fill,cyan](12.7,-5.5) circle(.2);
\draw[thick,fill,cyan](10.1,-5.5)circle(.2);

\fill(6.9,-5.7)node[below, font=\small]{$m^{p,-}_k$};
\fill(12.7,-5.7)node[below, font=\small]{$m^{p,+}_k$};

\draw[thick, fill, teal](3.8,-5.5) circle(.2);
\draw[thick, fill, teal](5,-5.5) circle(.2);
\draw[thick, fill, teal](8.7,-5.5)circle(.2);
\draw[thick, fill, teal](15.3,-5.5) circle(.2);

\draw[solid, thick, black](0,-5.5)--(24,-5.5);

\draw (2.5,-5.7) node[below, font=\small]{$\sigma^{p,+}_{k-1,k}$};
\draw[thick] (2.5,-5.7) -- (2.5,-5.3);

\draw[thick, fill, orange](17.1,-5.5)circle(.2);
\draw[thick, fill, orange](18.3,-5.5)circle(.2);
\draw[thick, fill, orange](20.1,-5.5)circle(.2);

\end{tikzpicture}
\caption{Here we illustrate some of the important definitions
surrounding the structure of a higher order metastable state. Fixed
$p>1$ and $k\in \bb Z$, the state $\ms M_p(k)$ is represented in the
figure by the cyan circles, while the set $M_p(k)$ is composed by the
green circles. The orange circles represent the local minima in
between the left-most and right-most maxima separating the states
$\ms M_p(k)$ and $\ms M_p(k+1)$. As stated in Proposition \ref{l13},
the minima 
contained in $M_p(k)$ are "higher" than the ones contained in
$\ms M_p(k)$, which are all of the same height. Regarding the escape
barriers of the well $\ms E(\ms M_p(k))$, while we have only one
global maxima separating $\ms E(\ms M_p(k))$ and
$\ms E(\ms M_p(k-1))$, there are three between $\ms E(\ms M_p(k))$ and
$\ms E(\ms M_p(k+1))$. Here,
$\sigma_{k-1,k}^{p,+}=\sigma_{k-1,k}^{p,-}$, while
$\sigma_{k,k+1}^{p,-}\,<\,\sigma_{k,k+1}^{p,+}$.}

\label{fig-f3}
\end{figure}
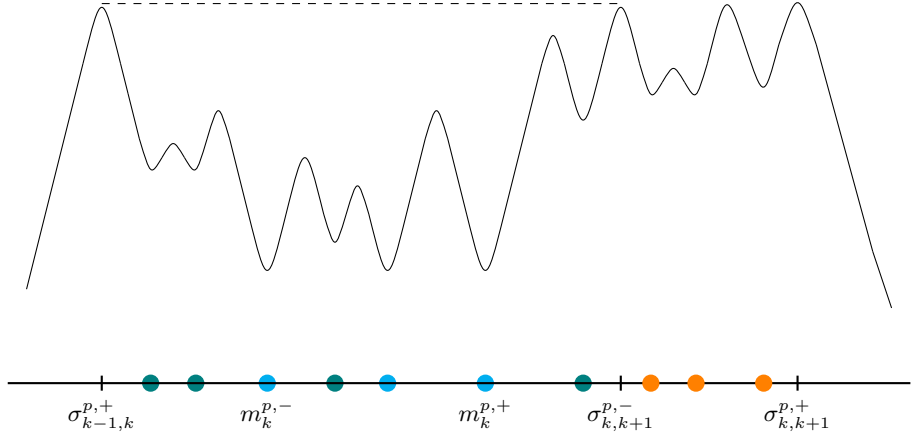

\begin{proposition}
\label{l13}
For all $k\in \bb Z$,
\begin{equation*}
\min \{ S(m) : m\in \mss M_p(k)\,\} \,>\, S(\ms M_{p}(k))\;.
\end{equation*}
\end{proposition}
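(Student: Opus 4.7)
The plan is to argue by strong induction on $p$. The base case $p=1$ is vacuous: since $\ms M_1(k)=\{m_k\}$ and $\sigma^{1,\pm}_{j,j+1}=\sigma_{j+1}$, the interval $[\sigma_k,\sigma_{k+1}]$ contains only the single local minimum $m_k$, so $\mss M_1(k)=\varnothing$.

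For the inductive step, I fix $k\in\bb Z$ and $m\in\mss M_{p+1}(k)$, and use the structural fact $\ms M_{p+1}(k)=\bigcup_{i=\ell_1}^{\ell_2}\ms M_p(i)$ given by the recursive construction, where $\{\ms M_p(\ell_1),\dots,\ms M_p(\ell_2)\}$ is a $\bb X_p$-recurrent class and, by $\mc P_4(p+1)$, $S(\ms M_p(i))=S(\ms M_{p+1}(k))$ for $\ell_1\le i\le\ell_2$. I would split on whether $m\in\ms M_p$. If $m\in\ms M_p(j)$ for some $j\notin\{\ell_1,\dots,\ell_2\}$, assume WLOG $j>\ell_2$; the relabeling in step $(\beta)$ of the construction (no $\ms M_{p+1}(k')$ lies strictly between $\ms M_{p+1}(k)$ and $\ms M_{p+1}(k+1)$) forces $\ms M_p(j)$ to be $\bb X_p$-transient. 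Tracing a positive-rate nearest-neighbour path from $\ms M_p(j)$ to a recurrent class via $\mc P_7(p)$ yields, by $\mc P_8(p)$, a non-increasing chain of $S$-values; the entry jump into the recurrent class is strict by the final clause of $\mc P_8(p)$, since $R_p(\ms M_p(\ell_2),\ms M_p(\ell_2+1))=0$ while its reverse is positive when the chain enters $\ms M_{p+1}(k)$, yielding $S(m)=S(\ms M_p(j))>S(\ms M_{p+1}(k))$. When the chain instead drains into $\ms M_{p+1}(k+1)$, the bound $m\le\sigma^{p+1,-}_{k,k+1}$ combined with $\mc P_6(p)$ and the barrier characterization in $\mc P_7(p)$ gives the same strict inequality against $S(\ms M_{p+1}(k))$.

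If $m\notin\ms M_p$, there is a unique $i$ with $m\in(m^+_{p,i},m^-_{p,i+1})$, and I locate $m$ with respect to the bounding saddles $\sigma^{p,-}_{i,i+1}\le\sigma^{p,+}_{i,i+1}$. The easy ranges $m\in(m^+_{p,i},\sigma^{p,-}_{i,i+1}]$ and $m\in[\sigma^{p,+}_{i,i+1},m^-_{p,i+1})$ reduce immediately to the induction hypothesis for $\mss M_p(i)$ or $\mss M_p(i+1)$ respectively (plus the transient argument from the previous paragraph if the relevant index happens to sit outside $\{\ell_1,\dots,\ell_2\}$). The delicate sub-case is the ``plateau'' $m\in(\sigma^{p,-}_{i,i+1},\sigma^{p,+}_{i,i+1})$, where several maxima of $\ms W^{(p)}_{i,i+1}$ sit at the common height $S(\ms M_p(i))+\mf h_p$; here I would pick the consecutive pair $\sigma_l<m<\sigma_r$ in $\ms W^{(p)}_{i,i+1}$ bracketing $m$, descend from $\sigma_l$ along $S$ to the first local minimum $m'$ to its right, observe that $m'\in\mss M_p(i)\cup\mss M_p(i+1)$ by the easy ranges, and iterate until reaching $m$ itself, picking up at each step a strict inequality $S(m')>S(\ms M_p(i))$ from the induction hypothesis.

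The main obstacle is precisely this plateau sub-case, together with the ``wrong-side drain'' variant in the transient argument: both require combining the strict-inequality clauses of $\mc P_8(p)$ with the induction hypothesis and the barrier postulates $\mc P_5(p)$--$\mc P_7(p)$. Periodicity of $\mss a(\cdot)$, $\mss b(\cdot)$ ensures all iterative paths terminate after finitely many steps.
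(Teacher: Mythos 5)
Your induction on $p$ is a genuinely different decomposition from the paper's, which fixes $p$ and does a backward induction on the level $q = p_m < p$ at which $m$ first appears in a transient $\ms M_q(j)$, descending the $\bb X_q$-chain inside the \emph{fixed} interval $(\sigma^{p,+}_{k-1,k},\sigma^{p,-}_{k,k+1})$. Your route has to cope with intermediate intervals that the paper never looks at, and that is where the gaps appear.

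The most serious gap is the plateau sub-case. You take $m\in(\sigma^{p,-}_{i,i+1},\sigma^{p,+}_{i,i+1})$, pick consecutive saddles $\sigma_l<m<\sigma_r$ in $\ms W^{(p)}_{i,i+1}$, descend from $\sigma_l$ to the first local minimum $m'$ to its right, and assert $m'\in\mss M_p(i)\cup\mss M_p(i+1)$. That is false: by construction $m'\in(\sigma_l,\sigma_r)\subset(\sigma^{p,-}_{i,i+1},\sigma^{p,+}_{i,i+1})$, but $\mss M_p(i)$ only contains local minima in $[\sigma^{p,+}_{i-1,i},\sigma^{p,-}_{i,i+1}]$ and $\mss M_p(i+1)$ only contains those in $[\sigma^{p,+}_{i,i+1},\sigma^{p,-}_{i+1,i+2}]$, so the plateau $(\sigma^{p,-}_{i,i+1},\sigma^{p,+}_{i,i+1})$ is disjoint from both. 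Your iteration never escapes the plateau and the induction hypothesis is never applicable inside it. The paper has a separate, non-trivial Lemma \ref{l27} precisely for the plateau; it is not a routine consequence of $\mc P_5$--$\mc P_8$, and its proof needs the containment estimate of Lemma \ref{l22} as a crucial input.

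Two further soft spots. (i) In Case A you say the ``wrong-side drain'' into $\ms M_{p+1}(k+1)$ is dispatched by $\mc P_6(p)$ and the barrier characterisation in $\mc P_7(p)$, but you never show that starting from a transient $\ms M_p(j)\subset(\sigma^{p+1,+}_{k-1,k},\sigma^{p+1,-}_{k,k+1})$ the $\bb X_p$-chain cannot escape the interval: that is exactly the statement of Lemma \ref{l22} (applied with $q=p$, $p$ replaced by $p+1$), which does not follow immediately from $\mc P_7(p)$ since the barrier $\Lambda(\ms M_p(i),\ms M_p(i+1))-S(\ms M_p(i))=\mf h_p$ is measured from $\ms M_p(i)$, not from $\ms M_{p+1}(k)$, so the saddle at $\sigma^{p+1,-}_{k,k+1}$ is not a priori unreachable. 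Once that containment is known, the drain must end at $\ms M_p(\ell_2)$ and your left-drain argument (which is correct, including the strictness at the last jump) gives the conclusion, so the ``wrong-side'' branch simply does not occur. (ii) In the easy ranges of Case B, when the bracketing index $i$ lies outside $\{\ell_1,\dots,\ell_2\}$, you need $\ms M_p(i)\subset(\sigma^{p+1,+}_{k-1,k},\sigma^{p+1,-}_{k,k+1})$ before Case A applies; this is the content of Corollary \ref{l21}, and it is not automatic from the postulates because $m^+_{p,i}$ could a priori lie to the left of $\sigma^{p+1,+}_{k-1,k}$ even though $m$ lies to its right.

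Summarising: the left-drain argument in Case A is sound, the base case is fine, and the overall plan is natural, but without proving analogues of Lemmas \ref{l21}, \ref{l22}, and \ref{l27} the forward induction does not close. The paper's backward induction on $p_m$, together with Lemmas \ref{l16}, \ref{l21}--\ref{l23}, sidesteps the intermediate intervals and therefore never encounters the plateau at all, which is why that decomposition is preferable here.
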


We assume below that $\mss M_p(k)$ is not empty, otherwise there is
nothing to prove.  The proof of this result requires some elementary
properties of the construction. 

\begin{lemma}
\label{l16}
For each $2\le p \le \mf q$, $k\in \bb Z$, $m\in \mss M_p(k)$, there
exist $1\le q<p$, $j\in \bb Z$, such that $m \in \ms M_{q} (j)$ and
$\ms M_{q} (j)$ is a $\bb X_q$-transient set. With the notation
introduced in \eqref{80}, $\ms M_{q} (j) \in \mf T_q$.
\end{lemma}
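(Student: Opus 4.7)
The backbone of the argument is the elementary observation that the hierarchical construction produces a decreasing chain of ``surviving'' local minima
\begin{equation*}
\mc M \,=\, \ms M_1 \,\supset\, \ms M_2 \,\supset\, \cdots \,\supset\, \ms M_{\mf q}\,,
\end{equation*}
which follows directly from \eqref{24}: since $\ms M^*_{q+1}(k)$ is a union of sets $\ms M_q(i)$, we have $\ms M_{q+1}\subset \ms M_q$. Hence any element of $\mc M$ which is not in $\ms M_p$ must drop out of the chain at some layer $q<p$, and the layer at which it drops out will supply the required transient state.

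First I would check that $m\in \mss M_p(k)$ actually forces $m\in \mc M \setminus \ms M_p$ (not merely $m\notin \ms M_p(k)$). By definition $\sigma^{p,+}_{k-1,k}\in [m^+_{p,k-1},m^-_{p,k}]$ and $\sigma^{p,-}_{k,k+1}\in [m^+_{p,k},m^-_{p,k+1}]$, with strict inequality $\sigma^{p,+}_{k-1,k}>m^+_{p,k-1}$ and $\sigma^{p,-}_{k,k+1}<m^-_{p,k+1}$ (otherwise a stable equilibrium would coincide with an unstable one). Combined with postulate $\mc P_2(p)$, which orders the sets $\ms M_p(k')$, this shows that no $\ms M_p(k')$ with $k'\neq k$ meets the interval $[\sigma^{p,+}_{k-1,k},\sigma^{p,-}_{k,k+1}]$. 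Thus $m\notin \ms M_p$ at all.

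Second, set
\begin{equation*}
q \,:=\, \max\{\, r\ge 1 : m\in \ms M_r\,\}\,.
\end{equation*}
The maximum is attained because $m\in \ms M_1=\mc M$ and the chain is decreasing, and it satisfies $1\le q<p$ because $m\notin \ms M_p$. By definition of $q$, there is $j\in\bb Z$ with $m\in \ms M_q(j)$, while $m\notin \ms M_{q+1}$. Since $\ms M_{q+1}$ is the union of all sets $\ms M_q(i)$ lying in some $\bb X_q$-closed irreducible class, the set $\ms M_q(j)$ cannot belong to any such class; that is, $\ms M_q(j)\in\mf T_q$ in the notation of \eqref{80}. This is exactly the conclusion of the lemma.

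I do not expect any serious obstacle; the only subtlety is the preliminary step that $m$ is absent from every $\ms M_p(k')$, which hinges on the strictness of the inequalities placing the saddle points $\sigma^{p,\pm}_{\cdot,\cdot}$ strictly between consecutive sets $\ms M_p(\cdot)$, and on the ordering postulate $\mc P_2(p)$.
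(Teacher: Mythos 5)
Your proof is correct and follows essentially the same route as the paper: first establish, via the ordering postulate $\mc P_2(p)$ and the placement of the saddle points $\sigma^{p,\pm}_{\cdot,\cdot}$ strictly between consecutive sets $\ms M_p(\cdot)$, that $m\notin\ms M_p$; then observe that $m$ must have dropped out of the decreasing chain $\ms M_1\supset\ms M_2\supset\cdots$ at some layer $q<p$, precisely because the set $\ms M_q(j)$ containing $m$ is $\bb X_q$-transient. The paper phrases the second step as a direct appeal to the recursive definition \eqref{80} of $\ms T_p$, whereas you make the same recursion explicit by defining $q$ as the largest index with $m\in\ms M_q$; these are the same argument.
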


\begin{proof}
Fix $m\in \mss M_p(k)$. By postulate $\ms P_2(p)$, $m$ does not belong
to any of the sets $\ms M _p(j)$, $j\in \bb Z$. [By definition it does
not belong to $\ms M_p(k)$. By postulate $\ms P_2(p)$ the sets
$\ms M_{p}(j)$ are ordered. By definition it is to the right of
$\ms M_p(k-1)$ and to the left of $\ms M_p(k+1)$.] Hence, by
\eqref{80}, $m\in \ms T_p$, and there exist $1\le q < p$,
$j \in \bb Z$, such that $m \in \ms M_{q} (j) \in \mf T_q$, as
claimed.
\end{proof}

For $m\in \mss M_p(k)$, denote by $\color{blue} p_m$ the index
$1\le q<p$ given by Lemma \ref{l16}.

\begin{remark}
\label{l19} 
By construction, for each $1\le q<p$, $k\in \bb Z$, there exists at
least one $\ell\in \bb Z$ such that $\ms M_{q}(\ell)$ is
$\bb X_q$-recurrent and $\ms M_{q}(\ell) \subset \ms M_{p}(k)$. 

Suppose that there exist $1\le q<p$, $j\in \bb Z$, such that
$\ms M_q(j) \cap (\sigma^{p,+}_{k-1,k} , \sigma^{p,-}_{k,k+1}) \neq
\varnothing$, and $\ms M_q(j)$ is $\bb X_q$-transient. By
construction, $\ms M_q(j) \cap \ms M_{p}(k) = \varnothing$. In this
case, by the previous paragraph, there exist at least two sets,
$\ms M_q(j)$ and $\ms M_{q}(\ell)$, whose intersection with the
interval $(\sigma^{p,+}_{k-1,k} , \sigma^{p,-}_{k.k+1})$ is not
empty. Corollary \ref{l21} asserts that $\ms M_q(j)$ is actually
contained in the interval
$(\sigma^{p,+}_{k-1,k} , \sigma^{p,-}_{k,k+1})$.
\end{remark}

Recall that $\mc W\subset \bb R$ represents the set of local maxima of
$S(\cdot)$. Next result states that the energetic barriers at
$\sigma^{p,+}_{k-1,k}$, $\sigma^{p,-}_{k,k+1}$ are higher than the ones in
the interior of the interval
$(\sigma^{p,+}_{k-1,k}\,,\, \sigma^{p,-}_{k,k+1})$.  It follows from this
result that the diffusion $X_\epsilon (t)$ is trapped in the interval
$(\sigma^{p,+}_{k-1,k}\,,\, \sigma^{p,-}_{k,k+1})$ in time-scales of smaller
order than $\theta^{(p)}_\epsilon$.

\begin{lemma}
\label{l15}
For each $2\le p \le \mf q$, $k\in \bb Z$,
\begin{gather*}
S(\sigma^{p,+}_{k-1,k}) \,>\, {\color{blue} S^-_{p,k}}
\:=\, \max \{ \, S(x) :
x\in (\sigma^{p,+}_{k-1,k} ,  m^+_{p,k}] \cap \mc W\, \big\}\;, 
\\
S(\sigma^{p,-}_{k,k+1})  
\,>\, {\color{blue} S^+_{p,k}} \,:=\,
\max \{ S(x) : x\in  [m^-_{p,k} , \sigma^{p,-}_{k,k+1} ) \cap \mc W\} \;.
\end{gather*}
\end{lemma}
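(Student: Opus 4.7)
The plan is to prove Lemma \ref{l15} by direct case analysis, using only postulates $\mc P_5(p)$, $\mc P_6(p)$, the definition \eqref{64} of $\sigma^{p,\pm}_{j,j+1}$, and the strict inequality $\mf h_p > \mf h_{p-1}$ established in Proposition \ref{l11b}. By the symmetric roles of the two inequalities, it suffices to prove the first. Since $m^+_{p,k}$ is a local minimum and so does not lie in $\mc W$, any candidate $\sigma' \in (\sigma^{p,+}_{k-1,k}, m^+_{p,k}] \cap \mc W$ lies either in the left region $(\sigma^{p,+}_{k-1,k}, m^-_{p,k})$ or in the interior $(m^-_{p,k}, m^+_{p,k})$.

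In the left region, $\sigma'$ sits in $(m^+_{p,k-1}, m^-_{p,k}) \cap \mc W$ strictly to the right of $\sigma^{p,+}_{k-1,k}$. Since $\sigma^{p,+}_{k-1,k}$ is by definition the rightmost element of $\ms W^{(p)}_{k-1,k}$, the set of points in $[m^+_{p,k-1}, m^-_{p,k}]$ where $S$ attains its maximum, $\sigma'$ cannot belong to $\ms W^{(p)}_{k-1,k}$, and therefore $S(\sigma') < S(\sigma^{p,+}_{k-1,k})$ holds directly from the characterisation of $\sigma^{p,+}_{k-1,k}$. This case uses nothing beyond definitions.

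For the interior case, I would use the observation that any local maximum $\sigma' \in (m^-_{p,k}, m^+_{p,k})$ automatically lies between two consecutive elements $m', m'' \in \ms M_p(k)$ (consecutive in the ordering of $\ms M_p(k)$), irrespective of any local minima of $\mss M_p(k)$ that may sit in between. Postulate $\mc P_5(p)$ then yields $\Lambda(\{m'\}, \{m''\}) \le S(\ms M_p(k)) + \mf h_{p-1}$, and since $S(\sigma') \le \Lambda(\{m'\}, \{m''\})$, we obtain $S(\sigma') \le S(\ms M_p(k)) + \mf h_{p-1}$. On the other hand, postulate $\mc P_6(p)$ gives $S(\sigma^{p,+}_{k-1,k}) - S(\ms M_p(k)) = h^{p,-}_k \ge \mf h_p$, and Proposition \ref{l11b} provides $\mf h_p > \mf h_{p-1}$, so
\begin{equation*}
S(\sigma^{p,+}_{k-1,k}) \,\ge\, S(\ms M_p(k)) + \mf h_p \,>\, S(\ms M_p(k)) + \mf h_{p-1} \,\ge\, S(\sigma').
\end{equation*}
If $\ms M_p(k)$ is a singleton the interior case is vacuous, and a mirror-image argument handles the second inequality. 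There is no real obstacle; the main conceptual point is recognising that $\mc P_5(p)$ already controls all interior maxima through the $\mf h_{p-1}$ barrier, so the finer hierarchical structure of $\mss M_p(k)$ never needs to be unpacked.
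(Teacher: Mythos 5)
Your proof is correct and follows essentially the same route as the paper's: split the interval at $m^-_{p,k}$, handle the left piece by the fact that $\sigma^{p,+}_{k-1,k}$ is the rightmost global maximizer of $S$ on $[m^+_{p,k-1}, m^-_{p,k}]$, and handle the interior $(m^-_{p,k}, m^+_{p,k})$ via $\mc P_5(p)$, $\mc P_6(p)$ and $\mf h_p > \mf h_{p-1}$. The only (immaterial) variation is that you pass through consecutive elements $m',m''$ of $\ms M_p(k)$ where the paper applies $\mc P_5(p)$ directly to the extremal pair $(m^-_{p,k}, m^+_{p,k})$.
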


\begin{proof}
We prove the first bound. The arguments extend to the second.
By definition of $\sigma^{p,+}_{k-1,k}$,
\begin{gather*}
S(\sigma^{p,+}_{k-1,k}) \,>\, \max \{ S(x) : x\in (\sigma^{p,+}_{k-1,k}
,m^-_{p,k} ]\cap \mc W\}\;,
\end{gather*}
This strict inequality can be extended to the local maxima of
$S(\cdot)$ in the interval $[m^-_{p,k}, m^+_{p,k} ]$. Indeed, suppose
that $m^-_{p,k}< m^+_{p,k}$, and fix a local maximum $\sigma$ in this
interval. By postulate $\mc P_5(p)$, and Proposition \ref{l11b},
$S(\sigma) - S(\ms M_p(k))\le \Lambda(m^-_{p,k}, m^+_{p,k}) - S(\ms
M_p(k)) \le \mf h_{p-1} < \mf h_p$. On the other hand, by condition
$\mc P_6(p)$, and since
$S(\sigma^{p,+}_{k-1,k}) = \Lambda(\ms M_p(k-1), \ms M_p(k))$,
\begin{equation}
\label{27}
S(\sigma^{p,+}_{k-1,k}) \,-\,  S(\ms M_p(k))  \,\ge\, \mf h_p  \;.
\end{equation}
Hence, $S(\sigma) < S(\sigma^{p,+}_{k-1,k})$, which completes the
proof of the lemma.
\end{proof}

\begin{corollary}
\label{l21}
Suppose that
$\ms M_{q}(j) \cap (\sigma^{p,+}_{k-1,k} \,,\,
\sigma^{p,-}_{k,k+1}) \neq \varnothing$ for some $j\in \bb Z$, $1\le
q<p$. Then,
$\ms M_{q}(j) \subset (\sigma^{p,+}_{k-1,k} \,,\,
\sigma^{p,-}_{k,k+1})$.
\end{corollary}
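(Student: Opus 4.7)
The plan is to proceed by contradiction. Assuming the corollary fails, there exist $m_1, m_2 \in \ms M_q(j)$ with $m_1 \in (\sigma^{p,+}_{k-1,k}, \sigma^{p,-}_{k,k+1})$ and $m_2$ outside this open interval. Since $m_2$ is a local minimum while the $\sigma^{p,\pm}$'s are local maxima, $m_2$ in fact lies strictly outside the closed interval; by the symmetry of the two endpoints I will treat only the case $m_2 > \sigma^{p,-}_{k,k+1}$ (the opposite case is handled identically with $\ms M_q(j-1)$, $\sigma^{p,+}_{k-1,k}$, and $m^+_{p,k-1}$ playing the roles of $\ms M_q(j+1)$, $\sigma^{p,-}_{k,k+1}$, and $m^-_{p,k+1}$). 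The strategy is to sandwich the quantity $S(\sigma^{p,-}_{k,k+1}) - S(\ms M_q(j))$ between $\mf h_q$ and $\mf h_{q-1}$, violating the strict increase of the heights established in Proposition \ref{l11b}.

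For the upper bound, postulate $\mc P_4(q)$ makes $S(\ms M_q(j))$ well defined, while $\mc P_5(q)$ applied to $\{m_1\}, \{m_2\} \subset \ms M_q(j)$ gives $\Lambda(\{m_1\}, \{m_2\}) \le S(\ms M_q(j)) + \mf h_{q-1}$; combined with $\Lambda(\{m_1\}, \{m_2\}) \ge S(\sigma^{p,-}_{k,k+1})$ (since $\sigma^{p,-}_{k,k+1} \in (m_1, m_2)$), this yields $S(\sigma^{p,-}_{k,k+1}) - S(\ms M_q(j)) \le \mf h_{q-1}$. For the lower bound, $\mc P_6(q)$ gives $\Lambda(\ms M_q(j), \ms M_q(j+1)) \ge S(\ms M_q(j)) + \mf h_q$, so it will be enough to establish
\[
\Lambda(\ms M_q(j), \ms M_q(j+1)) \,\le\, S(\sigma^{p,-}_{k,k+1}),
\]
which in turn reduces to the inclusion $[m^+_{q,j}, m^-_{q,j+1}] \subset [m^+_{p,k}, m^-_{p,k+1}]$.

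The hard part will be verifying this inclusion. The left endpoint is immediate since $m^+_{q,j} \ge m_2 > \sigma^{p,-}_{k,k+1} > m^+_{p,k}$. For the right endpoint I will locate the point $m^-_{p,k+1}$ in the level-$q$ partition: because $m^-_{p,k+1} \in \ms M_p \subset \ms M_q$, it belongs to a unique $\ms M_q(i)$, and the task is to rule out $i \le j$. The case $i < j$ is immediate from $\mc P_2(q)$ together with $m^-_{p,k+1} > \sigma^{p,-}_{k,k+1} > m_1 \ge m^-_{q,j}$. The delicate case is $i = j$; there, $m^-_{p,k+1} \in \ms M_p$ forces the level-$q'$ set containing $m^-_{p,k+1}$ to be $\bb X_{q'}$-recurrent for every $q' \in \{q, q+1, \dots, p-1\}$ (otherwise its elements would lie in $\ms T_{q'+1}$, while $m^-_{p,k+1} \in \ms M_p \subset \ms M_{q'+1}$), and iterating the hierarchical construction yields $\ms M_q(j) \subset \ms M_p(k^*)$ where $k^*$ is determined by $m^-_{p,k+1} \in \ms M_p(k^*) \cap \ms M_p(k+1)$, so $k^* = k+1$; but then $m_1 \in \ms M_q(j) \subset \ms M_p(k+1) \subset (\sigma^{p,-}_{k,k+1}, \infty)$, contradicting $m_1 < \sigma^{p,-}_{k,k+1}$. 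Hence $i \ge j+1$, so $m^-_{q,j+1} \le m^-_{p,k+1}$, and the upper and lower bounds combine to force $\mf h_q \le \mf h_{q-1}$, the desired contradiction.
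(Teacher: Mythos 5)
Your proof is correct, but it takes a genuinely different route from the paper's.

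The paper's argument, after reducing to the non-trivial case ($\ms M_q(j)\cap\ms M_p(k)=\varnothing$), picks an auxiliary set $\ms M_q(\ell)\subset\ms M_p(k)$ (whose existence comes from Remark \ref{l19}), then shows $S(\sigma^{p,+}_{k-1,k})<S(\ms M_q(j))+\mf h_q\le \Lambda(\ms M_q(j),\ms M_q(\ell))\le S^-_{p,k}$, which contradicts Lemma \ref{l15}. You instead bypass both $\ms M_q(\ell)$ and Lemma \ref{l15} entirely: you sandwich $S(\sigma^{p,-}_{k,k+1})-S(\ms M_q(j))$ between $\mf h_q$ and $\mf h_{q-1}$, with the upper bound from $\mc P_5(q)$ alone and the lower bound from $\mc P_6(q)$ together with the interval inclusion $[m^+_{q,j},m^-_{q,j+1}]\subset[m^+_{p,k},m^-_{p,k+1}]$, then invoke the strict increase of the heights. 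The price you pay is the verification of that inclusion, which forces you to locate $m^-_{p,k+1}$ in the level-$q$ partition and in particular to rule out $m^-_{p,k+1}\in\ms M_q(j)$ via the iterated recurrence argument (if the level-$q'$ cell containing $m^-_{p,k+1}$ were $\bb X_{q'}$-transient for some $q\le q'<p$, its elements would land in $\ms T_{q'+1}$, contradicting $m^-_{p,k+1}\in\ms M_p\subset\ms M_{q'+1}$; iterating then gives $\ms M_q(j)\subset\ms M_p(k+1)$, incompatible with $m_1<\sigma^{p,-}_{k,k+1}$). What your version buys is independence from Lemma \ref{l15} and from the auxiliary recurrent cell $\ms M_q(\ell)$: the contradiction $\mf h_q\le\mf h_{q-1}$ lands directly on the strict monotonicity of Proposition \ref{l11b}, which makes the mechanism more visible. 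What the paper's version buys is brevity once Lemma \ref{l15} is in hand, since that lemma already packages the interplay between $\mc P_5$, $\mc P_6$ and $\mf h_{p-1}<\mf h_p$.
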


\begin{proof}
Fix $m\in \ms M_{q}(j) \cap (\sigma^{p,+}_{k-1,k} \,,\,
\sigma^{p,-}_{k,k+1})$ for some $j\in \bb Z$, $1\le q<p$.

By construction, a set $\ms M_{q}(\ell)$, $\ell\in \bb Z$, is either
contained in $\ms M_{p}(k)$ or in its complement. Thus, if
$\ms M_{q}(j) \cap \ms M_{p}(k) \neq \varnothing$, there is noting to
prove because
$\ms M_{p}(k) \subset (\sigma^{p,+}_{k-1,k} \,,\,
\sigma^{p,-}_{k,k+1})$.

Assume therefore that $\ms M_{q}(j) \cap \ms M_{p}(k) = \varnothing$,
and that $\ms M_{q}(j)$ is not a singleton.  Let $m'\in \ms M_{q}(j)$,
$m'\neq m$, say $m'<m$. By construction, there exists $\ell\in \bb Z$
such that $\ms M_{q}(\ell) \subset \ms M_{p}(k)$. Let
$m_*\in \ms M_{q}(\ell)$.  Since, by postulate $\mc P_2(q)$, the sets
$\ms M_{q}(i)$ are ordered, either $\ms M_{q}(j) < \ms M_{q}(\ell)$ or
$\ms M_{q}(\ell) < \ms M_{q}(j)$.  If
$\ms M_{q}(\ell) < \ms M_{q}(j)$, there is nothing to prove because
$\sigma^{p,+}_{k-1,k} < \ms M_{q}(\ell) < m' <m<
\sigma^{p,-}_{k,k+1}$.  Assume, therefore, that
$\ms M_{q}(j) < \ms M_{q}(\ell)$.

Since $m'<m$, assume by contradiction that $m'<\sigma^{p,+}_{k-1,k}$ so
that $\Lambda (m,m') \ge S(\sigma^{p,+}_{k-1,k})$. On the other hand, by
$\mc P_5(q)$ and Proposition \ref{l11b},
$\Lambda (m,m') < S(\ms M_{q}(j)) + \mf h_q$, so that
$S(\sigma^{p,+}_{k-1,k}) < S(\ms M_{q}(j)) + \mf h_q$.

Since $m\in \ms M_{q}(j)$, $m_*\in \ms M_{q}(\ell)$, and
$\sigma^{p,+}_{k-1,k} <m<m_* < m^+_{p,k}$ [because
$m'\in \ms M_{q}(j)$, $m_*\in \ms M_{q}(\ell) \subset \ms M_{p}(k) $,
$\ms M_{q}(j) < \ms M_{q}(\ell) $], by \eqref{09}, and the definition
of $S^-_{p,k}$,
$\Lambda(\ms M_{q}(j) , \ms M_{q}(\ell)) \le \Lambda (m,m_*) \le
S^-_{p,k}$.

On the other hand, as $\ms M_{q}(j) < \ms M_{q}(\ell)$, by postulates
$\mc P_2(q)$, $\mc P_6(q)$, and the definition of the barrier
$\Lambda (\cdot, \cdot)$,
$\Lambda(\ms M_{q}(j) , \ms M_{q}(\ell)) - S(\ms M_{q}(j)) \ge
\Lambda(\ms M_{q}(j) , \ms M_{q}(j+1)) - S(\ms M_{q}(j)) \ge \mf h_q$.

Putting together the previous estimates yields that
\begin{equation*}
S(\sigma^{p,+}_{k-1,k}) \,<\, S(\ms M_{q}(j)) + \mf
h_q \,\le\, \Lambda(\ms M_{q}(j) , \ms M_{q}(\ell))
\,\le\, S^-_{p,k}\;,
\end{equation*}
which contradicts the assertion of the previous lemma.
\end{proof}

Fix $\ms M_{q}(j)$ for some $1\le q<p$, $j \in \bb Z$, such that
$\ms M_{q}(j) \subset (\sigma^{p,+}_{k-1,k}, \sigma^{p,-}_{k,k+1})$,
$\ms M_{q}(j) \cap \ms M_{p}(k) = \varnothing$.  Let
$\color{blue} \ms M_{q}(j_+)$, $\color{blue} \ms M_{q}(j_-)$ the
rightmost, leftmost set which can be attained by the Markov chain
$\bb X_q(\cdot)$ starting from $\ms M_{q}(j)$, respectively. Thus,
$j_+ = j$ if $R^q(\ms M_{q} (j), \ms M_{q}(j+1))=0$, and
$j_+ := \max \{r \ge j : R^q(\ms M_{q}(i-1), \ms M_{q}(i))>0 \;,\;\;
j<i\le r\}$, otherwise. Similarly, $j_- = j$ if
$R^q(\ms M_{q}(j), \ms M_{q}(j-1))=0$, and
$j_- := \min \{r \ge j : R^q(\ms M_{q}(i+1), \ms M_{q}(i))>0 \;,\;\; r
\le i<j\}$.  Since $q<p\le \mf q$, $\bb X_q(\cdot)$ has at least two
closed irreducible classes, up to equivalences, so that $j_\pm$ are
well defined and finite.  

\begin{lemma}
\label{l22}
Fix $\ms M_{q}(j)$ for some $1\le q<p$,
$j \in \bb Z$, such that
$\ms M_{q}(j) \subset (\sigma^{p,+}_{k-1,k},
\sigma^{p,-}_{k,k+1})$,
$\ms M_{q}(j) \cap \ms M_{p}(k) =
\varnothing$. Then, with the notation introduced above,
\begin{equation*}
\ms M_{q}(j_-) \cup \ms M_{q}(j_+) \,\subset\,
(\sigma^{p,+}_{k-1,k}, \sigma^{p,-}_{k,k+1})\;.
\end{equation*}
\end{lemma}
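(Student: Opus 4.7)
The plan is to prove the statement for $\ms M_q(j_+)$; the claim for $\ms M_q(j_-)$ follows by the symmetric argument with left and right exchanged. I argue by contradiction, so assume $\ms M_q(j_+)\not\subset(\sigma^{p,+}_{k-1,k},\sigma^{p,-}_{k,k+1})$. By Corollary \ref{l21} the set $\ms M_q(j_+)$ is then disjoint from this interval, and since $\ms M_q(j)$ lies inside with $j\le j_+$, postulate $\mc P_2(q)$ forces $\ms M_q(j_+)>\sigma^{p,-}_{k,k+1}$. Let $i\in[j,j_+)$ be the largest index with $\ms M_q(i)\subset(\sigma^{p,+}_{k-1,k},\sigma^{p,-}_{k,k+1})$, so that $\ms M_q(i+1)>\sigma^{p,-}_{k,k+1}$ and, by the definition of $j_+$, $R_q(\ms M_q(i),\ms M_q(i+1))>0$. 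Postulate $\mc P_7(q)$ gives $\Lambda(\ms M_q(i),\ms M_q(i+1))=S(\ms M_q(i))+\mf h_q$, while the inclusion $\sigma^{p,-}_{k,k+1}\in[m^+_{q,i},m^-_{q,i+1}]$ combined with $\mc P_6(p)$ yields $\Lambda(\ms M_q(i),\ms M_q(i+1))\ge S(\sigma^{p,-}_{k,k+1})\ge S(\ms M_p(k))+\mf h_p$. Invoking $\mf h_q<\mf h_p$ from Proposition \ref{l11b} produces the key lower bound
\begin{equation*}
S(\ms M_q(i))\;\ge\;S(\ms M_p(k))+\mf h_p-\mf h_q\;>\;S(\ms M_p(k)).
\end{equation*}

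I then pin down the position of $\ms M_q(i)$ using the block structure of $\ms M_p(k)$. Iterating the fact that each $\bb X_{q'}$-recurrent class at any intermediate level $q'$ is a block of consecutive $\ms M_{q'}$-indices yields $\ms M_p(k)=\ms M_q(l_1)\cup\cdots\cup\ms M_q(l_2)$ for some $l_1\le l_2$. If $\ms M_q(i)\cap\ms M_p(k)\neq\varnothing$ then $i\in\{l_1,\ldots,l_2\}$ and $\mc P_4(p)$ gives $S(\ms M_q(i))=S(\ms M_p(k))$, contradicting the strict bound just derived. Hence $\ms M_q(i)\cap\ms M_p(k)=\varnothing$. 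The well-ordering in $\mc P_2(q)$ of $\ms M_q(i)$ with each $\ms M_q(l)$, $l_1\le l\le l_2$, rules out $\ms M_q(i)$ straddling $\ms M_p(k)$, and the alternative $\ms M_q(i)\subset(\sigma^{p,+}_{k-1,k},m^-_{p,k})$ would force $\ms M_q(i)<\ms M_p(k)<\ms M_q(i+1)$, i.e.\ $l_1\ge i+1$ and $l_2\le i$, violating $l_1\le l_2$. Thus $\ms M_q(i)\subset(m^+_{p,k},\sigma^{p,-}_{k,k+1})$.

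The contradiction is closed by analyzing the left barrier of $\ms M_q(i)$. Postulate $\mc P_6(q)$ gives
\begin{equation*}
\Lambda(\ms M_q(i-1),\ms M_q(i))\;\ge\;S(\ms M_q(i))+\mf h_q\;\ge\;S(\sigma^{p,-}_{k,k+1}).
\end{equation*}
On the other hand, applying the ordering analysis to $\ms M_q(i-1)$ places it inside the interval (when $i>j$ this is automatic from $\ms M_q(i-1)\ge\ms M_q(j)$; when $i=j$, placing $\ms M_q(j-1)$ left of $\sigma^{p,+}_{k-1,k}$ would yield $l_1\ge j$ and $l_2\le j-1$, again violating $l_1\le l_2$) and rules out $\ms M_q(i-1)\subset(\sigma^{p,+}_{k-1,k},m^-_{p,k})$ by the same block argument, so $\ms M_q(i-1)\subset\ms M_p(k)\cup(m^+_{p,k},\sigma^{p,-}_{k,k+1})$. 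Consequently $[m^+_{q,i-1},m^-_{q,i}]\subset[m^-_{p,k},\sigma^{p,-}_{k,k+1})$, and on this range every local maximum of $S$ is strictly below $S(\sigma^{p,-}_{k,k+1})$: on $(m^+_{p,k},\sigma^{p,-}_{k,k+1})$ directly by Lemma \ref{l15}, and on $(m^-_{p,k},m^+_{p,k})$ by $\mc P_5(p)$ combined with $\mf h_{p-1}<\mf h_p$ (Proposition \ref{l11b}) and $\mc P_6(p)$. Since there are only finitely many local maxima in any bounded interval (by periodicity of $\mss a$ and $\mss b$), the sup of $S$ on $[m^+_{q,i-1},m^-_{q,i}]$ is attained at one such maximum, so $\Lambda(\ms M_q(i-1),\ms M_q(i))<S(\sigma^{p,-}_{k,k+1})$, contradicting the lower bound. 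The main obstacle is this position analysis of $\ms M_q(i-1)$, which leans heavily on the block structure $\ms M_p(k)=\ms M_q(l_1)\cup\cdots\cup\ms M_q(l_2)$ together with $\mc P_2(q)$; once that is in hand, pairing Lemma \ref{l15} with the internal-barrier bound from $\mc P_5(p)$ supplies the needed strict upper bound.
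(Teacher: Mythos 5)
Your proof is correct, but it takes a genuinely different route from the paper's. The paper centres the argument on the original set $\ms M_q(j)$ and a pivot $\ms M_q(\ell)\subset\ms M_p(k)$ supplied by Remark~\ref{l19}: it establishes the key estimate $\mf h_q\le S^-_{p,k}-S(\ms M_q(j))$ from $\mc P_6(q)$, then, assuming $\ms M_q(j_-)$ escapes the interval, uses the monotonicity postulate $\mc P_8(q)$ along the jump chain and the boundary index $j_0$ to get the competing bound $\mf h_q\ge S(\sigma^{p,+}_{k-1,k})-S(\ms M_q(j))$, combining them to contradict Lemma~\ref{l15}. You instead work at the crossing index $i$ directly, extract the barrier identity $\Lambda(\ms M_q(i),\ms M_q(i+1))=S(\ms M_q(i))+\mf h_q$ from $\mc P_7(q)$, compare it to $\mc P_6(p)$ to get the strict inequality $S(\ms M_q(i))>S(\ms M_p(k))$ via $\mf h_p>\mf h_q$, and then close the contradiction by contrasting the lower bound on $\Lambda(\ms M_q(i-1),\ms M_q(i))$ from $\mc P_6(q)$ against the upper bound supplied by Lemma~\ref{l15} over $[m^+_{q,i-1},m^-_{q,i}]$. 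Your argument avoids $\mc P_8(q)$ and Remark~\ref{l19} entirely, at the cost of a more detailed position analysis. One small inaccuracy: the decomposition $\ms M_p(k)=\ms M_q(l_1)\cup\cdots\cup\ms M_q(l_2)$ is not literally a \emph{contiguous} block of $\ms M_q$-indices once $q<p-1$, since $\bb X_{q'}$-transient sets at intermediate levels may be interleaved (these are exactly the elements of $\mss M_p(k)$). However, everything you deduce from it only uses $l_1=\min J$, $l_2=\max J$, where $J$ is the index set with $\ms M_p(k)=\bigcup_{l\in J}\ms M_q(l)$, together with the well-ordering $\mc P_2(q)$, so the argument survives once the claim is restated this way. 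Likewise the intermediate conclusion $\ms M_q(i-1)\subset\ms M_p(k)\cup(m^+_{p,k},\sigma^{p,-}_{k,k+1})$ is slightly too strong (elements of $\mss M_p(k)$ in $(m^-_{p,k},m^+_{p,k})$ are also possible), but the fact you actually use, namely $m^+_{q,i-1}\ge m^-_{p,k}$, does follow.
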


\begin{proof}
We prove the result for $\ms M_{q}(j_-)$, as the
same argument applies to $\ms M_{q}(j_+)$.

Let $\ell \in \bb Z$ such that $\ms M_{q}(\ell) \subset \ms M_{p}(k)$
given by Remark \ref{l19}.  By postulate $\mc P_2(q)$, the sets
$\ms M_{q}(r)$, $r\in \bb Z$, are ordered. Assume that
$\ms M_{q}(j) < \ms M_{q}(\ell)$. The same argument applies for the
other case.

As $\ms M_{q}(j) < \ms M_{q}(\ell)$ and
$\ms M_{q}(\ell) \subset \ms M_{p}(k) \subset
(\sigma^{p,+}_{k-1,k}, \sigma^{p,-}_{k,k+1})$,
$\ms M_{q}(j_-) < \{\sigma^{p,-}_{k,k+1}\}$. It remains
to show that
$\ms M_{q}(j_-) > \{\sigma^{p,+}_{k-1,k}\}$.

We claim that
\begin{equation}
\label{28}
\mf h_q \,\le\, S^-_{p,k} - S(\ms M_{q}(j))\;.
\end{equation}
Indeed, by postulate $\mc P_6(q)$,
$\mf h_q \le \Lambda(\ms M_{q}(j), \ms M_{q}(j +1 ) ) - S(\ms
M_{q}(j))$. Since $\ell > j$, by the definition of the barrier height
$\Lambda (\cdot, \cdot)$, the previous expression is less than or
equal to $\Lambda(\ms M_{q}(j), \ms M_{q}(\ell)) - S(\ms
M_{q}(j))$. Since, by hypothesis, the sets $\ms M_{q}(j)$,
$\ms M_{q}(\ell)$ are contained in the interval
$(\sigma^{p,+}_{k-1,k}, m^+_{p,k}]$, by the definition of $S^-_{p,k}$,
the previous expression is bounded by $S^-_{p,k} - S(\ms M_{q}(j))$. To
complete the proof of \eqref{28}, it remains to recollect the previous
estimates.

We are now in a position to show that
$\ms M_{q}(j_-) > \{\sigma^{p,+}_{k-1,k}\}$.  
Suppose, by contradiction, that
$\ms M_{q}(j_-) < \{\sigma^{p,+}_{k-1,k}\}$.  In
particular, $j_-<j$.

As $R^q(\ms M_q(r+1), \ms M_q(r))>0$, for $j_- \le r<j$, by postulate
$\mc P_8(q)$, $S(\ms M_{q}(j-n))$ is a decreasing sequence for
$0\le n\le j-j_-$. Hence $S(\ms M_{q}(r)) \le S(\ms M_{q}(j))$ for
$j_- \le r \le j$.

Recall the assertion of Corollary \ref{l21}, and denote by $j_0$ the
index such that
$\ms M_{q}(j_0) < \{\sigma^{p,+}_{k-1,k}\} < \ms M_{q}(j_0+1)$. As
$\ms M_{q}(j_-) < \{\sigma^{p,+}_{k-1,k}\}$, $j_0\ge j_-$. In
particular, $R^q(\ms M_q(j_0+1), \ms M_q(j_0)>0$.  Thus, by postulate
$\mc P_7(q)$,
$\mf h_k = \Lambda(\ms M_{q}(j_0), \ms M_{q}(j_0+1)) - S(\ms
M_{q}(j_0+1))$. By definition of $j_0$, this quantity is larger than
or equal to $S(\sigma^{p,+}_{k-1,k}) - S(\ms M_{q}(j_0+1))$.  Since
$S(\ms M_{q}(j-n))$ is a decreasing sequence, and $j_0+1\le j$ [by
definition of $j_0$ and because
$\ms M_{q}(j) \subset (\sigma^{p,+}_{k-1,k}, \sigma^{p,-}_{k,k+1})]$,
this quantity is bounded below by
$S(\sigma^{p,+}_{k-1,k}) - S(\ms M_{q}(j))$.  Adding together the
previous estimates yields that
$\mf h_k \ge S(\sigma^{p,+}_{k-1,k}) - S(\ms M_{q}(j))$.

By the previous inequality and \eqref{28},
$S(\sigma^{p,+}_{k-1,k}) \le S^-_{p,k}$, in contradiction with Lemma
\ref{l15}. This shows that
$\ms M_{q}(j_-) > \{\sigma^{p,+}_{k-1,k}\}$, and
completes the proof of the lemma.
\end{proof}

In the next result, we keep the same notation of Lemma \ref{l22}, and
we further assume that $\ms M_{q}(j)$ is $\bb X_q$-transient.

\begin{lemma}
\label{l23}
Fix $\ms M_{q}(j)$ for some $1\le q<p$,
$j \in \bb Z$, such that
$\ms M_{q}(j) \subset (\sigma^{p,+}_{k-1,k},
\sigma^{p,-}_{k,k+1})$,
$\ms M_{q}(j) \cap \ms M_{p}(k) =
\varnothing$. Assume that $\ms M_{q}(j)$ is $\bb X_q$-transient.
Then, there exists
${\color{blue} \mf s(j)} \in \{j_-, \dots, j_+\}$ such that
$S(\ms M_{q}(\mf s(j))) < S(\ms M_{q}(j))$,
$\ms M_{q}(\mf s(j)) \subset (\sigma^{p,+}_{k-1,k},
\sigma^{p,-}_{k,k+1})$.
\end{lemma}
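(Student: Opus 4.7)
The plan is to exploit transience of $\ms M_q(j)$ to produce a closed irreducible class reachable from it, and then use the monotonicity of depths along positive-rate paths (postulate $\mc P_8(q)$) to identify an index of strictly smaller depth.

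First, I would observe that because $\ms M_q(j)$ is $\bb X_q$-transient and the reachable set $\{\ms M_q(r) : j_- \le r \le j_+\}$ is finite (this finiteness is exactly what was asserted just before the statement of Lemma \ref{l22}), the chain starting from $\ms M_q(j)$ must enter a $\bb X_q$-closed irreducible class $\mf R$ with $\mf R \subset \{\ms M_q(r) : j_- \le r \le j_+\}$. By postulate $\mc P_7(q)$, $\mf R$ is a set of consecutive indices, say $\mf R = \{\ms M_q(r) : a \le r \le b\}$. Since $\ms M_q(j)$ is transient while the sets in $\mf R$ are recurrent, either $j < a$ or $j > b$; assume $j < a$ (the other case is symmetric).

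Next I would follow the nearest-neighbour path $j \to j{+}1 \to \cdots \to a$ used by the chain to enter $\mf R$: by construction $R_q(\ms M_q(r), \ms M_q(r{+}1)) > 0$ for all $j \le r \le a-1$, so postulate $\mc P_8(q)$ gives the non-increasing chain of depths
\begin{equation*}
S(\ms M_q(j)) \,\ge\, S(\ms M_q(j{+}1))\, \ge\, \cdots \,\ge\, S(\ms M_q(a-1))\, \ge\, S(\ms M_q(a))\;.
\end{equation*}
At the last step, closedness of $\mf R$ forces $R_q(\ms M_q(a), \ms M_q(a-1)) = 0$, so the strict part of $\mc P_8(q)$ yields $S(\ms M_q(a-1)) > S(\ms M_q(a))$. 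Chaining gives $S(\ms M_q(a)) < S(\ms M_q(j))$, and I would set $\mf s(j) := a$ (resp. $\mf s(j) := b$ in the symmetric case).

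Finally, I would verify the containment statement. By definition of $j_\pm$ we have $\mf s(j) \in \{j_-, \dots, j_+\}$. By Lemma \ref{l22}, both $\ms M_q(j_-)$ and $\ms M_q(j_+)$ lie in the open interval $(\sigma^{p,+}_{k-1,k}, \sigma^{p,-}_{k,k+1})$; postulate $\mc P_2(q)$ orders the sets $\ms M_q(r)$, so every $\ms M_q(r)$ with $j_- \le r \le j_+$ is sandwiched between $\ms M_q(j_-)$ and $\ms M_q(j_+)$ and hence also lies in that interval. In particular $\ms M_q(\mf s(j)) \subset (\sigma^{p,+}_{k-1,k}, \sigma^{p,-}_{k,k+1})$, completing the proof. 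The only nontrivial point is the first step, where one uses that transience plus nearest-neighbour jumps plus finiteness of the reachable set guarantee the existence of a closed irreducible class inside $\{j_-, \dots, j_+\}$; everything after that is a mechanical application of $\mc P_7(q)$, $\mc P_8(q)$, and Lemma \ref{l22}.
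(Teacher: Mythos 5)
Your proof is correct and rests on the same three ingredients as the paper's: transience of $\ms M_q(j)$ forces a one-way nearest-neighbour jump somewhere in $\{j_-, \dots, j_+\}$, postulate $\mc P_8(q)$ turns that one-way jump into a strict drop in $S$, and Lemma \ref{l22} together with the ordering $\mc P_2(q)$ gives the containment. The only difference is cosmetic: you locate the one-way jump at the near boundary of the closed irreducible class that the chain eventually enters, whereas the paper locates it at the first index $k_+ + 1$ past which return to $\ms M_q(j)$ becomes impossible (which additionally gives $S(\ms M_q(j)) = \cdots = S(\ms M_q(k_+))$, an equality you do not need).
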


\begin{proof}
Recall the notation introduced above the statement of Lemma \ref{l22}.
Let $\ms M_{q}(k_+)$, $k_+\ge j$, be the rightmost set $\ms M_{q}(r)$
which can attain $\ms M_{q}(j)$ with left jumps: $k_+=j$ if
$R^q(\ms M_{q} (j+1), \ms M_{q}(j))=0$, and
$k_+ := \max \{ r \ge j : R^q(\ms M_{q} (i), \ms M_{q}(i-1))>0 \;,\;\;
j<i\le r\}$, otherwise. Define $\ms M_{q}(k_-)$, $k_-\le j$,
similarly, as the leftmost set $\ms M_{q}(r)$ which can attain
$\ms M_{q}(j)$ with right jumps. Since $q<p\le \mf q$, $\bb X_q$ has
at least two closed irreducible class, up to equivalences, so that
$k_\pm$ are finite and well defined.  We do not claim that the sets
$\ms M_{q}(k_-)$, $\ms M_{q}(k_+)$ are contained in the interval
$(\sigma^{p,+}_{k-1,k}, \sigma^{p,-}_{k,k+1})$.

Suppose that $k_-\le j_-$ and $k_+\ge j_+$. In this case,
$\ms M_{q}(j)$ belongs to a $\bb X_q$-closed irreducible class, in
contradiction with the initial assumption that it is
$\bb X_q$-transient. Hence, $k_-> j_-$ or $k_+ < j_+$.

Assume, without loss of generality, that $k_+ < j_+$. Thus, the Markov
chain $\bb X_q$ may jump back and forth between $\ms M_{q}(i)$ and
$\ms M_{q}(i+1)$ for $j\le i < k_+$, it may jump from $\ms M_{q}(k_+)$
to $\ms M_{q}(k_++1)$, and it can not jump from $\ms M_{q}(k_++1)$ to
$\ms M_{q}(k_+)$. By postulate $\mc P_8(q)$,
$S(\ms M_{q}(j)) = S(\ms M_{q}(j +1)) = \cdots = S(\ms M_{q}(k_+)) >
S(\ms M_{q}(k_+ +1))$. By Lemma \ref{l22}, since $k_+ < j_+$,
$\ms M_{q}(k_+ +1) \subset (\sigma^{p,+}_{k-1,k},
\sigma^{p,-}_{k,k+1})$. Thus, $\mf s(j) = k_+ +1$ fullfils all
requisites of the lemma.
\end{proof}

Recall from Lemma \ref{l16} that we denote by $p_m$,
$m\in \mss M_p(k)$, the index of $m$.

\begin{proof}[Proof of Proposition \ref{l13}]
Let ${\color{blue} q_1} := \min \{p_m : m\in \mss M_p(k)\}$,
$q_{i+1} := \min \{p_m > q_i : m\in \mss M_p(k)\}$, $i\ge 1$. Since
$q_i$ is an increasing sequence bounded by $p$, it has only a finite
number of terms, $q_1 < \cdots < q_{i_0} <p$. The proof is by backward
induction on $q_i$: we prove that the result holds for
$m\in \mss M_p(k)$ such that $p_m=q_{i_0}$; and that it holds for
$m\in \mss M_p(k)$ such that $p_m=q_i$ if it holds for
$m'\in \mss M_p(k)$ such that $p_{m'}=q_j$, $i<j\le i_0$.

Fix $\color{blue} q=q_i$ for some $1\le i\le i_0$, and
$m\in \mss M_p(k)$ such that $p_m=q$.  As $p_m=q$, $m\in \ms M_{q}(j)$
for some $\bb X_q$-transient set $\ms M_{q}(j)$, $j\in \bb Z$. By
Remark \ref{l19}, $\ms M_{q}(j) \cap \ms M_{p}(k) = \varnothing$, and
by Corollary \ref{l21},
$\ms M_{q}(j) \subset (\sigma^{p,+}_{k-1,k}, \sigma^{p,-}_{k,k+1})$.
Hence, by Lemma \ref{l23}, there exists $j'$ such that
\begin{equation}
\label{29}
\ms M_{q}(j') \,\subset\,
(\sigma^{p,+}_{k-1,k}, \sigma^{p,-}_{k,k+1})\,,
\quad\text{and}\quad
S(\ms M_{q}(j')) \,<\,
S(\ms M_{q}(j))\;.
\end{equation}

Suppose that $\ms M_{q}(j')$ is a $\bb X_q$-transient set. Then, by
Remark \ref{l19}, $\ms M_{q}(j') \cap \ms M_{p}(k) =
\varnothing$. Therefore, by the first assertion of \eqref{29},
$\ms M_{q}(j')$ satisfies the hypotheses of Lemma \ref{l23}.  In
particular, as the Markov chain $\bb X_q$ has a finite number of
states in the interval $(\sigma^{p,+}_{k-1,k}, \sigma^{p,-}_{k,k+1})$,
we may repeat the argument until we find an index $j'$ such that
$\ms M_{q}(j')$ belongs to a $\bb X_q$-closed irreducible class.

Assume therefore $\ms M_{q}(j')$ is $\bb X_q$-recurrent.  Fix
$m'\in \ms M_{q}(j')$. Since this later state is $\bb
X_q$-recurrent, by construction, $p_{m'}> q = p_m$. If $p_{m'} \ge p$,
by Lemma \ref{l16}, $m'\not\in \mss M_p(k)$, so that, by the first
assertion in \eqref{29}, $m'\in \ms M_{p}(k)$. Hence, by the second
assertion in \eqref{29}, and postulates $\mc P_4(q)$, $\mc P_4(p)$,
$S(\ms M_{q}(j)) > S(\ms M_{q}(j')) = S(m')=S(\ms M_{p}(k))$, as
claimed.

If $p_{m'}<p$, we may iterate the argument until we reach a point
$m''\in \mc M$ such that $p_{m''} \ge p$ and argue as in the previous
paragraph. 
\end{proof}

We conclude this section with a result needed in the proof of the
local ergodicity, and one needed in the proof of the convergence of
the solution of the resolvent equation. Note that $\sigma\le m$ in the
first estimate and $m\le \sigma$ in the second one.

\begin{lemma}
\label{l17}
For all $k\in \bb Z$,
\begin{equation*}
\begin{gathered}
\max_{\sigma \in [m^-_{p,k}, \sigma^{p,-}_{k,k+1})  \cap \mc W}
\;\; \max_{m\in [\sigma, \sigma^{p,-}_{k,k+1}] \cap \mc M}
[S(\sigma)-S(m)]
\,\le\, \mf h_{p-1} \;,
\\
\max_{\sigma \in (\sigma^{p,+}_{k-1,k} , m^{+}_{p,k}]  \cap \mc W}
\;\; \max_{m\in [\sigma^{p,+}_{k-1,k} , \sigma] \cap \mc M}
[S(\sigma)-S(m)] \,\le\, \mf h_{p-1} \;.
\end{gathered}
\end{equation*}
\end{lemma}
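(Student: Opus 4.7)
The approach is to reduce the question to the level-$(p-1)$ hierarchical structure on the interval $(\sigma^{p,+}_{k-1,k},\sigma^{p,-}_{k,k+1})$, combining Proposition~\ref{l13}, the postulates $\mc P_5$--$\mc P_8$ at levels $p$ and $p-1$, and Lemmas~\ref{l22}--\ref{l23}. I treat the first displayed inequality; the second follows by a symmetric argument (reflecting about $m^-_{p,k}$ and interchanging the leftward/rightward roles). The case $p=1$ is vacuous since $[m^-_{1,k},\sigma^{1,-}_{k,k+1})\cap\mc W=\varnothing$, so assume $p\ge 2$.

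\textbf{Preliminary monotonicity.} Write $\ms M_p(k)=\bigcup_{\ell_{p,k}\le i\le r_{p,k}}\ms M_{p-1}(i)$. The first step is to show that for every $\bb X_{p-1}$-transient set $\ms M_{p-1}(j)$ with $j>r_{p,k}$ whose elements lie in $(\sigma^{p,+}_{k-1,k},\sigma^{p,-}_{k,k+1})$,
\begin{equation*}
R_{p-1}\bigl(\ms M_{p-1}(j),\ms M_{p-1}(j-1)\bigr)\,>\,0\,,
\end{equation*}
so that by $\mc P_8(p-1)$ the sequence $S(\ms M_{p-1}(j))$ is non-decreasing in $j$ along these indices. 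The argument is purely combinatorial: by Corollary~\ref{l21} and the ordering of the $\ms M_p$-sets, the unique $\bb X_{p-1}$-closed irreducible class with elements in the interval is $\{\ms M_{p-1}(i):\ell_{p,k}\le i\le r_{p,k}\}$; by Lemma~\ref{l22} the chain initialised at $\ms M_{p-1}(j)$ stays in the interval; by nearest-neighbour transitions and transience it must eventually reach $\ms M_{p-1}(r_{p,k})$ via $\ms M_{p-1}(r_{p,k}+1)$, which forces each intermediate leftward rate to be strictly positive.

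\textbf{Case 1: $\sigma\le m^+_{p,k}$.} Since $\sigma\in\mc W$ and $m^\pm_{p,k}\in\mc M$, one has $m^-_{p,k}<\sigma<m^+_{p,k}$, so $\ms M_p(k)$ has at least two elements. Postulate $\mc P_5(p)$ gives
\begin{equation*}
S(\sigma)\,\le\,\Lambda(\{m^-_{p,k}\},\{m^+_{p,k}\})\,\le\, S(\ms M_p(k))+\mf h_{p-1}\,.
\end{equation*}
Meanwhile $m\in[\sigma,\sigma^{p,-}_{k,k+1}]\cap\mc M\subset \ms M_p(k)\cup\mss M_p(k)$, so Proposition~\ref{l13} yields $S(m)\ge S(\ms M_p(k))$, and the bound follows.

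\textbf{Case 2: $\sigma> m^+_{p,k}$.} Let $m_l<\sigma<m_r$ be the local minima of $S$ adjacent to $\sigma$, with $m_l\in\ms M_{p-1}(i_l)$ and $m_r\in\ms M_{p-1}(i_r)$. Consecutiveness of $m_l,m_r$ and the ordering of the $\ms M_{p-1}$-sets force either (a) $i_l=i_r$, or (b) $i_r=i_l+1$ with $m_l=m^+_{p-1,i_l}$, $m_r=m^-_{p-1,i_r}$. A routine check using $m_r>m^+_{p,k}$ and $m_r<\sigma^{p,-}_{k,k+1}<m^-_{p,k+1}$ shows that $\ms M_{p-1}(i_r)$ is $\bb X_{p-1}$-transient (and similarly for $\ms M_{p-1}(i_l)$ in sub-case (a)). In sub-case (a), $\mc P_5(p-1)$ applied to $m_l,m_r\in\ms M_{p-1}(i_l)$ gives $S(\sigma)\le S(\ms M_{p-1}(i_l))+\mf h_{p-2}$. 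In sub-case (b), the preliminary monotonicity ensures $R_{p-1}(\ms M_{p-1}(i_r),\ms M_{p-1}(i_l))>0$ (which covers both $i_l=r_{p,k}$ and $i_l>r_{p,k}$), so $\mc P_7(p-1)$ gives $S(\sigma)=\Lambda(\ms M_{p-1}(i_l),\ms M_{p-1}(i_r))=S(m_r)+\mf h_{p-1}$. Finally, any local minimum $m\ge\sigma$ in $[\sigma,\sigma^{p,-}_{k,k+1}]\cap\mc M$ lies in $\ms M_{p-1}(j)$ for some $j\ge i_l$ in case (a) or $j\ge i_r$ in case (b), all in the monotone range, so $S(m)\ge S(\ms M_{p-1}(i_l))$ or $S(m)\ge S(m_r)$; in either case $S(\sigma)-S(m)\le\mf h_{p-1}$, using $\mf h_{p-2}\le \mf h_{p-1}$ from Proposition~\ref{l11b}.

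\textbf{Main obstacle.} The delicate step is the preliminary monotonicity, i.e.\ proving that $R_{p-1}(\ms M_{p-1}(j),\ms M_{p-1}(j-1))>0$ even when both sets are $\bb X_{p-1}$-transient. This rests on the confinement supplied by Lemma~\ref{l22} combined with the fact (from Corollary~\ref{l21} and the construction of $\ms S_p$) that the only recurrent level-$(p-1)$ class in the interval lies to the left of all the transient sets, forcing the chain to pass through every intermediate position via leftward jumps.
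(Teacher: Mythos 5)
Your Case~1 and the overall decomposition are the same as the paper's, and your ``preliminary monotonicity'' idea — confinement via Lemma~\ref{l22} forcing positive leftward rates — is a nice and essentially sound reformulation at level $p-1$ of what the paper extracts from Lemmas~\ref{l22} and~\ref{l23}. However, Case~2 has a genuine gap.

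You write ``Let $m_l<\sigma<m_r$ be the local minima of $S$ adjacent to $\sigma$, with $m_l\in\ms M_{p-1}(i_l)$ and $m_r\in\ms M_{p-1}(i_r)$,'' and later ``any local minimum $m\ge\sigma$ in $[\sigma,\sigma^{p,-}_{k,k+1}]\cap\mc M$ lies in $\ms M_{p-1}(j)$ for some $j\ge i_l$.'' Both statements presuppose that the relevant local minima belong to $\ms M_{p-1}=\bigcup_j\ms M_{p-1}(j)$. But for $p\ge 3$ this set is a \emph{strict} subset of $\mc M$: by \eqref{24}, $\ms M_{p-1}$ consists only of those local minima lying in a $\bb X_{p-2}$-recurrent class. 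A local minimum $m'$ with $p_{m'}<p-1$ (in the notation of Lemma~\ref{l16}) sits in no set $\ms M_{p-1}(j)$ at all, so $i_l$, $i_r$, and the index $j$ are not defined, and neither $\mc P_5(p-1)$ nor $\mc P_7(p-1)$ can be invoked. A concrete scenario: some $\ms M_{p-1}(i)\subset\ms M_p(k)$ is separated from the next level-$(p-1)$ set by several shallow wells absorbed into $\ms T_{p-2}$; if $\sigma$ lies between two such shallow minima, both $m_l$ and $m_r$ are outside $\ms M_{p-1}$. (For $p=2$ there is no issue, since $\ms M_1=\mc M$.)

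The paper sidesteps this by not fixing the level. It takes $m$ to be the \emph{global} minimizer of $S$ over the local minima in $(\sigma,\sigma^{p,-}_{k,k+1})$ and works at the variable level $q=p_m<p$ at which $m$ actually lives. At level $q$ it shows, using the confinement from Lemma~\ref{l22} and the minimality of $S(m)$, that all sets $\ms M_q(i)$ with $j_-\le i$ having an element to the right of $\sigma$ sit in the same $\bb X_q$-equivalence class and share the value $S(\ms M_q(j))$; it then invokes the $\mf s(j)$ from Lemma~\ref{l23} to locate a strictly lower set to the left of $\sigma$, which together with $\mc P_4(q)$, $\mc P_7(q)$ yields $S(\sigma)-S(m)\le\mf h_q\le\mf h_{p-1}$. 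To repair your argument you would have to descend to the level $p_{m_r}$ (respectively $p_m$) rather than pinning everything at $p-1$, which is essentially what the paper does; as written, the proof is incomplete for $p\ge 3$.
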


\begin{proof}
We prove the first estimate.  Suppose first that
$\sigma \in (m^-_{p,k}, m^+_{p,k}) \cap \mc W$, and fix
$m\in [\sigma, \sigma^{p,-}_{k,k+1}]$. In this case, by definition of
the energy barrier,
$S(\sigma)-S(m) \le \Lambda (m^-_{p,k}, m^+_{p,k}) - S(m)$.  By
postulate $\mc P_5(p)$, this expression is less than or equal to
$S(\ms M_{p}(k)) + \mf h_{p-1} - S(m)$. By Proposition \ref{l13}, this
difference is bounded by $\mf h_{p-1}$, as claimed.

Suppose that $\sigma \in (m^+_{p,k}, \sigma^{p,-}_{k,k+1}) \cap \mc W$.
Denote by $m$ a local minimum of $S(\cdot)$ in
$(\sigma, \sigma^{p,-}_{k,k+1})$ which attains the minimum value of
$S(\cdot)$ in this interval. There may be more than one, but this is
irrelevant. Let $q=p_m$. This value is smaller than $p$ because $m$ is
larger than $m^+_{p,k}$: $m>\sigma>m^+_{p,k}$.

Fix $j\in \bb Z$ so that $m\in \ms M_{q}(j)$ and recall the notation
introduced in Lemma \ref{l22}.  We claim that $\ms M_{q}(j)$,
$\ms M_{q}(j_+)$ belong to the same $\bb X_q$-equivalent class.

Indeed, by Lemma \ref{l22},
\begin{equation}
\label{53}
\ms M_{q}(j_+) \,\subset\, (\sigma^{p,+}_{k-1,k},
\sigma^{p,-}_{k,k+1})\;.
\end{equation}
If $\bb X_q$ is allowed to jump from $\ms M_{q}(\ell)$ to
$\ms M_{q}(\ell+1)$ but not from $\ms M_{q}(\ell+1)$ to
$\ms M_{q}(\ell)$ for some $j\le \ell < j_+$, then, by postulate
$\mc P_8(q)$,
\begin{equation}
\label{58}
S(\ms M_{q}(i)) \,<\, S(\ms M_{q}(j))
\end{equation}
for some $j < i \le j_+$.

By postulate $\mc P_2(q)$, the sets $\ms M_{q}(r)$, $r\in \bb Z$, are
ordered. Thus, as $j < i \le j_+$, by \eqref{53},
$\ms M_{q}(j) < \ms M_{q}(i) < \sigma^{p,-}_{k,k+1}$.  This fact and
\eqref{58} contradicts the definition of $m$.

The same argument shows that all set $\ms M_{q}(i)$, such that
$j_- \le i$ and $\sigma < m'$ for some $m'\in \ms M_{q}(i)$ belong to
the $\bb X_q$-equivalent class of $\ms M_{q}(j)$.  In particular,
$S(\ms M_{q}(\ell)) \,=\, S(\ms M_{q}(j))$ for all
$j_- \le \ell \le j_+$ such that $m''>\sigma$ for some
$m''\in \ms M_{q}(\ell)$.

Recall the definition of $\mf s(j)$ introduced in Lemma \ref{l23}. By
this result and the previous observation, $\ms M_{q}(\mf s(j))$ can
not have elements to the right of $\sigma$ because
$S(\ms M_{q}(\mf s(j))) < S(\ms M_{q}(j))$. That is,
$\ms M_{q}(\mf s(j))<\sigma$.  As $\ms M_{q}(\mf s(j))$ can be reached
from $\ms M_{q}(j)$ with left jumps, by postulates $\mc P_4(q)$,
$\mc P_7(q)$,
$S(\sigma) - S(m) = S(\sigma) - S(\ms M_{q}(j)) \le \mf h_q \le \mf
h_{p-1}$. This completes the proof of the lemma.
\end{proof}

\begin{remark}
\label{l17b}
It might be easier to understand the assertion of Lemma \ref{l17}
rewriting the inequalities as 
\begin{equation*}
\begin{gathered}
\max_{\sigma \in [m^{p,-}_{k}, \sigma^{p,-}_{k,k+1})  \cap \mc W}
\Big\{\, S(\sigma) \,-\, 
\min_{m\in [\sigma, \sigma^{p,-}_{k,k+1}] \cap \mc M} S(m)\,\Big\}
\,\le\, \mf h_{p-1} \;,
\\
\max_{\sigma \in (\sigma^{p,+}_{k-1,k} , m^{p,+}_{k}]  \cap \mc W}
\Big\{\, S(\sigma) \,-\, 
\min_{m\in [\sigma^{p,+}_{k-1,k} , \sigma] \cap \mc M} S(m)\,\Big\}
\,\le\, \mf h_{p-1} 
\end{gathered}
\end{equation*}
for all $k\in \bb Z$,
\end{remark}

Recall the definition of $\sigma^{p,\pm}_{k,k+1}$, $k\in \bb Z$,
introduced in \eqref{64}. The case $p=1$ is excluded in the next lemma
because $\sigma^{1,-}_{k,k+1} =\sigma^{1,+}_{k,k+1}$ for all
$k\in \bb Z$. In the proof of this result, the $\bb X_q$-equivalent
classes are to be understood in the sense of Markov chains and not in
the sense of the equivalence relation introduced at the beginning of
Section \ref{sec7}.

\begin{lemma}
\label{l27}
Fix $2\le p\le \mf q$, $k\in Z$, and suppose that $\sigma^{p,-}_{k,k+1}
<\sigma^{p,+}_{k,k+1}$. Then,
\begin{equation*}
\min \big\{ S(x) : x \in
[\sigma^{p,-}_{k,k+1} \,,\, \sigma^{p,+}_{k,k+1} ] \,\big\} \, >\,
S(\ms M_{p}(k)) \;.
\end{equation*}
\end{lemma}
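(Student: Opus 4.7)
The plan is to argue by contradiction, closely following the strategy of Proposition \ref{l13} but adapted to the plateau interval $(\sigma^{p,-}_{k,k+1}, \sigma^{p,+}_{k,k+1})$, on which $S$ attains the common maximum $L := \Lambda(\ms M_p(k), \ms M_p(k+1))$ at both endpoints. Postulate $\mc P_6(p)$ gives $L - S(\ms M_p(k)) \ge \mf h_p > 0$, so if there were $x^*$ in the closed interval with $S(x^*) \le S(\ms M_p(k))$, continuity together with $S = L$ at the endpoints would force the minimum of $S$ over this interval to be attained at an interior critical point $m^* \in \mc M$. The ordering $\mc P_2(p)$ places every $\ms M_p(k')$ outside $(m^+_{p,k}, m^-_{p,k+1})$, hence outside the plateau interval, so $m^* \in \ms T_p$ and I may fix the minimal $q < p$ with $m^* \in \ms M_q(j)$ and $\ms M_q(j)$ being $\bb X_q$-transient.

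The key step, and the main obstacle, is to establish a ``trapping'' property analogous to Corollary \ref{l21} and Lemma \ref{l22}: every set produced by the descent of Lemma \ref{l23} starting from $\ms M_q(j)$ stays inside $(\sigma^{p,-}_{k,k+1}, \sigma^{p,+}_{k,k+1})$. First, for $\ms M_q(j)$ itself, if some $m'' \in \ms M_q(j)$ lay outside the interval, any path from $m^*$ to $m''$ would cross one of $\sigma^{p,\pm}_{k,k+1}$, forcing $\Lambda(m^*, m'') \ge L$; but $\mc P_4(q)$ and $\mc P_5(q)$ give $\Lambda(m^*, m'') \le S(\ms M_q(j)) + \mf h_{q-1} \le S(\ms M_p(k)) + \mf h_{q-1}$, and the strict monotonicity of $(\mf h_r)$ yields $\mf h_{q-1} < \mf h_p \le L - S(\ms M_p(k))$, a contradiction. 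Second, along the nearest-neighbour path constructed in the proof of Lemma \ref{l23} every consecutive rate is positive, so by $\mc P_7(q)$ and $\mc P_8(q)$ each barrier on the path equals $S(\ms M_q(j)) + \mf h_q \le S(\ms M_p(k)) + \mf h_q < L$. Hence the contiguous region swept by the path stays strictly below the value $L$, so it cannot contain $\sigma^{p,-}_{k,k+1}$ or $\sigma^{p,+}_{k,k+1}$, and $\ms M_q(\mf s(j))$ remains in the open interval.

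Iterating this descent and lifting to layer $q+1$ whenever a $\bb X_q$-recurrent class is reached, the enlarged set $\ms M_{q+1}(\ell)$ inherits containment by the same $\mc P_5$ argument, because its depth is still at most $S(\ms M_p(k))$ and $\mf h_q < \mf h_p$. The trapping bound $\mf h_r + S < L$ therefore remains available at every layer $r < p$ that is encountered, so the construction never leaks out of $(\sigma^{p,-}_{k,k+1}, \sigma^{p,+}_{k,k+1})$. Each descent strictly lowers $S$, and $S$ takes only finitely many values on $\mc M$-sets within one period, so after finitely many iterations one must land in a recurrent class of $\bb X_p$, i.e.\ in some $\ms M_p(k') \subset (\sigma^{p,-}_{k,k+1}, \sigma^{p,+}_{k,k+1})$, which is impossible by $\mc P_2(p)$. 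The delicate point throughout is that the whole trapping argument rests in an essential way on the strict inequality $\mf h_r < \mf h_p$ for every $r < p$, which is what prevents the descent from escaping through the extremal maxima of height $L$.
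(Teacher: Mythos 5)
Your proposal correctly identifies the crucial technical point, which is the same one the paper has to confront: the trapping established in Corollary~\ref{l21} and Lemma~\ref{l22} must be re-proved on the plateau interval $(\sigma^{p,-}_{k,k+1},\sigma^{p,+}_{k,k+1})$ (rather than on $(\sigma^{p,+}_{k-1,k},\sigma^{p,-}_{k,k+1})$), and this is exactly where the strict chain $\mf h_{q-1}<\mf h_q<\mf h_p\le L-S(\ms M_p(k))$ enters. After that, however, your route diverges from the paper's. You follow the descent scheme of Proposition~\ref{l13}: locate the transient level $q=p_{m^*}$, invoke (the adapted) Lemma~\ref{l23} to strictly lower $S$ while staying inside the interval, and iterate. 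The paper's own proof of Lemma~\ref{l27} never invokes Lemma~\ref{l23}: it fixes the minimiser $m$ with $S(m)\le S(\ms M_p(k))$ and shows by induction on $q<p$ that the $\bb X_q$-communicating class of the set $\ms M_q(j)\ni m$ is trapped in the interval and---implicitly, by minimality of $S(m)$---closed, so $m$ lifts to $\ms M_{q+1}$ and eventually to $\ms M_p$, contradicting disjointness. Both routes are legitimate; the paper's is shorter because it avoids the descent machinery entirely.

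Two remarks on your version. First, the whole iteration in your last paragraph is dispensable: since $m^*$ is chosen to be a \emph{minimiser} of $S$ over the closed plateau interval, the very first application of Lemma~\ref{l23} already produces a set $\ms M_q(\mf s(j))$ inside the interval with $S(\ms M_q(\mf s(j)))<S(m^*)$, and that alone contradicts minimality---no lifting, no finiteness argument. Second, the terminal claim that ``after finitely many iterations one must land in a recurrent class of $\bb X_p$, i.e.\ in some $\ms M_p(k')\subset(\sigma^{p,-}_{k,k+1},\sigma^{p,+}_{k,k+1})$'' is phrased as if the iterate genuinely reaches $\ms M_p$. It never does (the iterates stay trapped in a set disjoint from $\ms M_p$); the correct formulation is that the iteration would have to reach such a state in order to terminate, yet it must terminate because $S$ strictly decreases and there are finitely many local minima in the interval, and this tension is the contradiction. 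The phrase ``recurrent class of $\bb X_p$'' should also read ``state $\ms M_p(k')$ of $\bb X_p$''; a recurrent class of $\bb X_p$ is a set $\ms M_{p+1}(\cdot)$.
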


\begin{proof}
Let $m\in [\sigma^{p,-}_{k,k+1} \,,\, \sigma^{p,+}_{k,k+1} ]$ such that
\begin{equation*}
S(m) \,=\, \min \big\{ S(x) : x \in
[\sigma^{p,-}_{k,k+1} \,,\, \sigma^{p,+}_{k,k+1} ] \,\big\} \;,
\end{equation*}
and suppose that $S(m) \le S(\ms M_{p}(k))$. We obtain a contradiction
showing that, under this condition, $m\in \ms M_{p}(i)$ for some
$i\in\bb Z$.

Fix $q<p$ and suppose that $m\in \ms M_{q}(j)$ for some $j\in\bb
Z$. Recall the definition of the indices $j_\pm$ introduced in the
proof of Lemma \ref{l22}. Since $S(m) \le S(\ms M_{p}(k))$, by the
proof of Lemma \ref{l22},
$\ms M_{q}(j_-) \cup \ms M_{q}(j_+) \subset (\sigma^{p,-}_{k,k+1}
\,,\, \sigma^{p,+}_{k,k+1})$. [Here we use the fact that
$S(\ms M_{q}(j)) = S(m) \le S(\ms M_{p}(k)) \le
S(\sigma^{p,+}_{k,k+1}) -  \mf h_p$.]

Let $\ms M_{q}(i)$, $r_-\le i\le r_+$ be the $\bb X_q$-equivalent class
of $\ms M_{q}(j)$. In particular, $j_-\le r_- \le j\le r_+\le j_+$. By
the previous paragraph, the $\bb X_q$-equivalent class of
$\ms M_{q}(j)$ is contained in
$(\sigma^{p,-}_{k,k+1} \,,\, \sigma^{p,+}_{k,k+1})$.

Thus, $m\in \ms M_{q+1}(j')$ for some $j'\in\bb Z$. Iterating this
argument yields that $m\in \ms M_{p}(j'')$ for some $j''\in\bb Z$, in
contradiction with the definition of $\sigma^{p,+}_{k,k+1}$ [as the
largest local maximum separating $\ms M_{p}(k)$ from
$\ms M_{p}(k+1)$].
\end{proof}

\section{The resolvent equation}
\label{sec-ap1}

In this section, we prove the existence and uniqueness of the
resolvent equation weak solution, as well as its stochastic
representation. We refer to \cite{le} for the definitions omitted
below.  Let $I$ be an open interval of $\bb R$, which can be equal to
$\bb R$.  Denote by $L_{loc}(I)$ the set of Lebesgue locally
integrable functions $f\colon I \to\bb R$. Let $C_c^{\infty}(I)$ be
the space smooth functions $f\colon I \to \bb R$ with compact support.

Given a positive integer $\alpha\in \bb Z_{+}$, let
$\mc H^{\alpha}(I)$ be the Sobolev space of $L^2(I)$ functions $u$
such that, for each $1\leq k \leq \alpha$, the $k$-th weak derivative
of $u$, denoted by $\partial_x^{k}u$, exists and belongs to
$L^2(I)$. For $u\in \mc H^{\alpha}(I)$, define the norm
\begin{equation*}
\lVert u \rVert^2_{\mc H^{\alpha}(I)} :=
\lVert u \rVert_{L^2(I)}^{2}
+ \sum_{k=1}^{\alpha}
\, \lVert \partial_x^{k}u \rVert_{L^2(I)}^{2} \,.
\end{equation*} 
Denote as $\mc H_{loc}^{\alpha}(I)$ the space of locally integrable
functions $u\in L_{loc}(I)$ such that, for every finite open set
$J\subset I$, the restriction of $u$ to $J$ belongs to
$\mc H^{\alpha}(J)$.

\begin{definition}[Weak solution]
Let $F\colon I \to \bb R$ be a bounded measurable function. We say that
a function $\phi \in \mc H^1_{loc}(I)$ is a weak solution of the
resolvent equation
\begin{equation}
\label{111}
(\lambda - \vartheta_\epsilon \,\ms L_\epsilon)\,\phi = F,
\end{equation}
if 
\begin{align}
\label{115}
& \lambda\,\int_{\bb R}\,\phi(x)\,\psi(x)\,dx
\,+\, \epsilon\, \vartheta_\epsilon\, \int_{\bb R}\,
\partial_x\phi(x)\,\partial_x\big[ \mss a(x) \psi(x)\big] \,dx\,
- \, \vartheta_\epsilon\,
\int_{\bb R} \mss b(x)\,\partial_x\phi(x)\,\psi(x)\, dx
\notag \\
&= \int_{\bb R}\,F(x)\,\psi(x)\,dx\,,
\end{align}
holds for all $\psi \in C_c^\infty(I)$.
\end{definition}

It is sometimes convenient to rewrite this equation as follows. Take
as test function $\psi = \hat\psi e^{-S/\epsilon} / \mss a$, for some
$\hat \psi \in C_c^{\infty}(\bb R)$, and recall the definition of $S$
to obtain that
\begin{equation}
\label{113}
\begin{aligned}
& \lambda\,\int_{\bb R}\, \frac{1}{\mss a(x)}\,
\phi(x)\,\hat  \psi(x)\, e^{-S(x)/\epsilon}\,
dx
\,+\, \vartheta_\epsilon\, \int_{\bb R}\,
\partial_x\phi(x)\,\partial_x \hat \psi(x)\,  e^{-S(x)/\epsilon}\, \,dx\,
\notag \\
&= \int_{\bb R} \frac{1}{\mss a(x)} \,F(x)\,
\hat \psi(x)\,  e^{-S(x)/\epsilon} \,dx\,.
\end{aligned}
\end{equation}

Let $I_r=(-r,r)$, $r>0$.  Fix a bounded measurable function
$G\colon I_r\to \bb R$, $\lambda>0$ and constants $\mf c_\pm$.  By
Theorem 6.2.3 with $\gamma=0$ and the Remark just after the definition
of weak solutions in Section 6.1 of \cite{le}, there exists a unique
solution $u\in \mc H^1(I_r)$  to the Dirichlet
problem
\begin{equation}
\label{109}
\left\{
\begin{aligned}
& (\lambda - \vartheta_\epsilon \ms L_\epsilon) \, u = H \quad
\text{in}\;\; I_r \,, \\
& u (\pm r) = \mf c_\pm \,.
\end{aligned}
\right.
\end{equation}
Clearly, if $H=\lambda \, \mf c$, for some constant $\mf c\in \bb R$,
and $\mf c_\pm = \mf c$, the unique solution is $u = \mf c$. This solution is
denoted by $\color{blue} u_{r, \mf c}$.

Suppose that $v\in \mc H^1(I_s)$ is a weak solution of \eqref{111} in
$I_s$ (in other words, \eqref{115} holds for all
$\psi \in C_c^{\infty}(I_s)$). Then, by \cite[Theorem 6.3.1]{le}, for
all $0<r< s$, there exists a finite constant
$C_r = C_r (\epsilon, s, \lambda, \mss a, \mss b)$ such that
\begin{equation}
\label{110}
\Vert v \Vert_{\mc H^2(I_r)} \,\le\,
C_r\, (\Vert H \Vert_{L^2(I_s)} + \Vert v\Vert_{L^2(I_s)})\;.
\end{equation}

Suppose that $H$ in \eqref{109} is uniformly H\"older continuous and
$\mf c_\pm =0$. By the stochastic representation of the solution of
elliptic equations, \cite[Theorem 6.5.1]{Fri},
\begin{equation}
\label{108}
u(x) \;=\;
\bb E^{\epsilon, \vartheta_\epsilon}_x\Big[ \, \int_0^{\tau_r}
e^{-\lambda t}\, H( \mtt x(t))\,dt\, \Big] \;,
\end{equation}
where $\tau_r$ is the hitting time of the boundary of
$(-r,r)$. \smallskip 

We turn to the existence of solutions of \eqref{41}, obtained as the
limit of solutions of Dirichlet problems. By linearity, it is enough
to prove the existence for positive functions $G$.  Denote by $u_n$
the unique solution to the Dirichlet problem \eqref{109} with $r=n$,
$\mf c_\pm =0$, $H=G$.

Approximate $G$ from above and from below by sequences of bounded
uniformly H\"older continuous functions $G_{k,+}$, $G_{k,-}$ in such a
way that $G_{k,-} \le G \le G_{k,+}$,
$\sup_{x\in \bb R} [\, G_{k,+} (x) - G_{k,-} (x) \,] \le 1/k$.  Denote
by $u_{n,k,\pm}$ the unique solution to the Dirichlet problem
\eqref{109} with $r=n$, $\mf c_\pm =0$, $H=G_{k,\pm}$.  By the maximum
principle \cite[Theorem 8.1]{GT}, $u_{n,k,-} \le u_{n} \le
u_{n,k,+}$. Hence, by \eqref{108} for $u_{n,k,\pm}$ and the dominated
convergence theorem,
\begin{equation}
\label{108b}
u_n(x) \;=\;
\bb E^{\epsilon, \vartheta_\epsilon}_x\Big[ \, \int_0^{\tau_n}
e^{-\lambda t}\, G( \mtt x(t))\,dt\, \Big] \;.
\end{equation}
Thus, the stochastic representation of the solution can be extended to
positive linear combination of sets indicators. Moreover, by this
formula, for all $n_0\ge 1$,
\begin{equation}
\label{116}
\lim_{m_0\to\infty}
\sup_{n,m\ge m_0} \sup_{x\in I_{n_0}} \big|\, u_n(x) - u_m(x)\,\big|
\,=\, 0\;.
\end{equation}

By \eqref{116}, \eqref{110}, for each $n_0\ge 1$, the sequence
$(u_n:n\ge 1)$ is Cauchy in $L^\infty(I_{n_0})$ and in
$\mc H^2(I_{n_0})$. By a standard argument one concludes the existence
of a bounded solution to \eqref{111} in $\mc H^2_{\rm loc}(\bb R)$
which can be represented as
\begin{equation}
\label{112}
u(x) \;=\;
\bb E^{\epsilon, \vartheta_\epsilon}_x\Big[ \, \int_0^{\infty}
e^{-\lambda t}\, G( \mtt x(t))\,dt\, \Big] \;.
\end{equation}
In particular,
$\Vert u \Vert_\infty \le \lambda^{-1} \Vert G\Vert_\infty$.

A bounded solution to \eqref{111} in $\mc H^2_{\rm loc}$ is unique.
Indeed, let $u_1$, $u_2$ be two bounded solutions to \eqref{111} in
$\mc H^2_{\rm loc}$, so that $v=u_1-u_2$ is a bounded solution to
\eqref{111} in $\mc H^2_{\rm loc}$ with $H=0$. Fix $n\ge 1$.  The
solution $v$ restricted to $I_n$ solves the Dirichlet problem
\eqref{109} with $H=0$ and boundary conditions $\mf c_\pm = v(\pm n)$.
By the stochastic representation of the solution of elliptic
equations, \cite[Theorem 6.5.1]{Fri},
\begin{equation*}
v(x) \;=\;
\bb E^{\epsilon, \vartheta_\epsilon}_x\Big[ \, v(\mtt x(\tau_n)) \, 
e^{-\lambda \tau_n} \, \Big] \;, \quad x\in I_n\;.
\end{equation*}
As $v$ is bounded, for each fixed $x\in \bb R$, the right-hand side
vanishes as $n\to\infty$. This proves that $v=0$ as claimed.

\subsection*{Acknowledgements}
C. L. has been partially supported by FAPERJ CNE E-26/201.117/2021, by
CNPq Bolsa de Produtividade em Pesquisa PQ 305779/2022-2.


\begin{thebibliography}{99}

\bibitem{BLS} N. Blassel, T.  Leli{\`e}vre, G. Stoltz: Quantitative
low-temperature spectral asymptotics for reversible diffusions in
temperature-dependent domains, (2025) arXiv:2501.16082

\bibitem{BEGK} A. Bovier, M. Eckhoff,V. Gayrard, M. Klein: Metastability
in reversible diffusion process I. Sharp asymptotics for capacities
and exit times. J. Eur. Math. Soc. \textbf{6}, 399--424 (2004)

\bibitem{BF08} A. Bovier, A. Faggionato: Spectral analysis of Sinai's
walk for small eigenvalues.  Ann. Probab. {\bf 36}, 198--254 (2008)

\bibitem{BR86} Brox, T. (1986) A one-dimensional diffusion process
in a Wiener medium. {\it Ann. Probab.} {\bf 14} 1206--1218.

\bibitem{Chel15} D. Cheliotis: Metastable states in Brownian energy
landscape. Ann. Inst. H. Poincaré Probab. Statist. {\bf 51}, 917--934
(2015)

\bibitem{dv75} M. D. Donsker, S. R. S. Varadhan: Asymptotic evaluation
of certain Markov proces expectations for large time, I. Comm. Pure
Appl. Math. {\bf 28}, 1--47 (1975).

\bibitem{le} L. Evans: Partial Differential Equations. Second
edition. Graduate Studies in Mathematics {\bf 19}, American
Mathematical Society Providence, Rhode Island, (2010)

\bibitem{fg1} A. Faggionato and D. Gabrielli: A representation formula
for large deviations rate functionals of invariant measures on the one
dimensional torus. Annales Inst. Henri Poincar\'e -- Probab. Stat.
{\bf 48}, 212--234 (2012)

\bibitem{fk10a} M. I. Freidlin, L. Koralov: Nonlinear stochastic
perturbations of dynamical systems and quasi-linear parabolic PDE's
with a small parameter Probab. Theory Relat. Fields \textbf{147},
273--301 (2010)

\bibitem{fk10b} M. I. Freidlin, L. Koralov: Metastability for nonlinear
random perturbations of dynamical systems. Stoch. Proc. Appl. \textbf{120},
1194--1214 (2010)

\bibitem{fw} M. I. Freidlin, A. D. Wentzell: Random perturbations of
dynamical systems. Second edition.  Grundlehren der Mathematischen
Wissenschaften [Fundamental Principles of Mathematical Sciences], {\bf
260}. Springer-Verlag, New York, (1998).

\bibitem{Fri} A. Friedman: \textit{Stochastic differential equations
and applications}. Academic Press, Cambridge, 1975.

\bibitem{GT} D. Gilbarg, N. S. Trudinger. \textsl{Elliptic partial
differential equations of second order}. Springer, 2015.

\bibitem{is15} H. Ishii, P. E. Souganidis: Metastability for parabolic
equations with drift: Part I. Indiana Univ. Math. J., \textbf{64},
875-913 (2015)

\bibitem{is17} H. Ishii, P. E. Souganidis: Metastability for parabolic
equations with drift: part II. The quasilinear case, Indiana Univ.
Math. J. \textbf{66}, 315--360 (2017)

\bibitem{kt16} L. Koralov, L. Tcheuko: Quasi-linear equations with
a small diffusion Term and the evolution of hierarchies of cycles.
J. Theor. Probab. \textbf{29}, 867--895 (2016)

\bibitem{l-review} C. Landim: Metastable Markov chains. Probability
Surveys \textbf{16}, 143--227 (2019)

\bibitem{lls1} C. Landim, J. Lee, I. Seo: Metastability and time
scales for parabolic equations with drift 1: the first time scale.
Arch. Ration. Mech. Anal. arXiv:2309.05546. (2024)

\bibitem{lls2} C. Landim, J. Lee, I. Seo: Metastability and time
scales for parabolic equations with drift 2: the general time scale.
(2024).

\bibitem{llm} C. Landim, M. Loulakis, M.Mourragui: Metastable Markov
chains: from the convergence of the trace to the convergence of the
finite-dimensional distributions. Electron. J. Probab. {\bf 23}, 1-34
(2018)

\bibitem{lms} C. Landim and D. Marcondes and I. Seo: A Resolvent
Approach to Metastability.  J. Eur. Math. Soc. {\bf 27}, 1563--1618
(2025).

\bibitem{LMS2} C. Landim, M. Mariani, I. Seo: A Dirichlet and a Thomson
principle for non-selfadjoint elliptic operators, Metastability in
non-reversible diffusion processes. Arch. Rational Mech. Anal. \textbf{231},
887--938 (2017)

\bibitem{ls} C. Landim and I. Seo: Metastability of one-dimensional,
non-reversible diffusions with periodic boundary
conditions. Ann. Inst. H. Poincaré Probab. Statist. {\bf 55}
1850--1889, 2019.

\bibitem{LS-22} J. Lee, I. Seo: Non-reversible metastable diffusions
with Gibbs invariant measure I: Eyring--Kramers formula. Probab.
Theory Related Fields. \textbf{182}, 849--903 (2022)

\bibitem{LS-22b} J. Lee, I. Seo: Non-reversible metastable diffusions
with Gibbs invariant measure II: Markov chain convergence. J. Stat.
Phys. \textbf{189}, 25 (2022)

\bibitem{LM} D. Le Peutrec and L. Michel. Sharp spectral asymptotics
for nonreversible metastable diffusion processes. Probability and
Mathematical Physics. \textbf{1}, 3--53 (2019)

\bibitem{mar} M. Mariani: A $\Gamma$-convergence approach to large
deviations.  Ann. Sc. Norm. Super. Pisa Cl. Sci. {\bf 18}, 951--976
(2018).

\bibitem{Miclo2} L. Miclo: Une \'{e}tude des algorithmes de recuit
simul\'{e} sous-admissibles. Ann. Fac. Sci. Toulouse
Math. \textbf{4}, 819--877 (1995)

\bibitem{RS} F. Rezakhanlou, I. Seo: Scaling limit of small random
perturbation of dynamical systems. Ann. Inst. H. Poincar\'{e}
Probab. Statist. \textbf{59}, 867--903 (2023)


\end{thebibliography}
\end{document}